\pdfoutput=1
\documentclass[12pt,reqno]{amsart}
\usepackage{amsmath}
\usepackage{amsfonts}
\usepackage{amscd}
\usepackage{amssymb}
\usepackage{amsthm}
\usepackage{mathdots}
\usepackage{mathtools}
\usepackage{bm}
\usepackage[all]{xy}
\usepackage[linktocpage=true]{hyperref}
\usepackage{mathrsfs}

\usepackage{geometry}
\usepackage{pdflscape}

\usepackage{tikz,tikz-cd, color}
\usetikzlibrary{shapes,arrows,positioning}

\usepackage{wasysym, stackengine, makebox, graphicx}

\usepackage{adjustbox}
\usetikzlibrary{matrix}
\usetikzlibrary{decorations.pathmorphing}

\tikzset{
  symbol/.style={
    draw=none,
    every to/.append style={
      edge node={node [sloped, allow upside down, auto=false]{$#1$}}}
  }
}

\usepackage{stmaryrd}
\usepackage[shortlabels]{enumitem}
\usepackage{xcolor}
\usepackage{booktabs}
\usepackage{float}
\usepackage{diagbox}
\usepackage{blindtext}

\tikzset{
  symbol/.style={
    draw=none,
    every to/.append style={
      edge node={node [sloped, allow upside down, auto=false]{$#1$}}}
  }
}
\usetikzlibrary{3d,positioning}
\usepackage{mathrsfs}
\usepackage{multirow}
\usepackage{tabularx}

\mathchardef\mhyphen="2D % Define a "math hyphen"

\newtheorem{thm}{Theorem}
\newtheorem*{thm*}{Theorem}

\newtheorem{prop}[thm]{Proposition}

\newtheorem{proposition}[thm]{Proposition}

\newtheorem{cl}[thm]{Corollary}
\newtheorem{thm&defn}[thm]{Theorem \& Definition}

\newtheorem{theorem}[thm]{Theorem}
\newtheorem{lemma}[thm]{Lemma}
\newtheorem{lm}[thm]{Lemma}

\newtheoremstyle{named}{}{}{\itshape}{}{\bfseries}{.}{.5em}{#3}
\theoremstyle{named}

\theoremstyle{definition}

\newtheorem{definition}[thm]{Definition}
\newtheorem*{notation*}{Notation}

\newtheorem{remark}[thm]{Remark}
\newtheorem{rmk}[thm]{Remark}

\theoremstyle{remark}

\newtheorem*{rem*}{Remark}

\numberwithin{equation}{section}
\numberwithin{thm}{section}

\usepackage{fullpage}

\newcommand{\widehata}{\widehat{\phantom{a}}}
\newcommand{\qsstar}{\star_{\operatorname{small}}}

\newcommand{\properideal}{
\mathrel{\ooalign{$\lneq$\cr\raise.22ex\hbox{$\lhd$}\cr}}}

\renewcommand{\P}{\mathbb{P}}
\renewcommand{\O}{\mathcal{O}}

\ProvideDocumentCommand{\xrightleftarrows}{ O{}m }{%
            \mathrel{%
            \vcenter{\hbox{%
            \begin{tikzpicture}
              \node[minimum width=0.2cm,minimum height=1ex,anchor=south,align=center] (a){\vphantom{hg}${\scriptstyle #2}$\\ \vphantom{hg}${\scriptstyle #1}$};
              \draw[->] ([yshift=0.35ex]a.west) -- ([yshift=0.35ex]a.east);
              \draw[<-] ([yshift=-0.35ex]a.west) -- ([yshift=-0.35ex]a.east);
            \end{tikzpicture}}}%
            }%
            }

\newcommand{\N}{\mathbb{N}}
\newcommand{\Z}{\mathbb{Z}}
\newcommand{\Q}{\mathbb{Q}}
\newcommand{\R}{\mathbb{R}}
\newcommand{\C}{\mathbb{C}}

\newcommand{\vn}{\varnothing}

\makeatletter
\newcommand*{\rom}[1]{\expandafter\@slowromancap\romannumeral #1@}
\makeatother

\DeclarePairedDelimiter{\gen}{\langle}{\rangle}

\newcommand{\quotient}[2]{{\raisebox{.2em}{$#1$}\!\!\left/\!\!\raisebox{-.2em}{$#2$}\right.}}

\let\hom\relax% Set equal to \relax so that LaTeX thinks it's not defined
\DeclareMathOperator{\hom}{Hom}
\newcommand{\6}{\partial}

\newcommand{\fSigma}{\mathbf{\Sigma}}
\newcommand{\bbox}{\operatorname{Box}}
\newcommand{\X}{\mathcal X}

\DeclareMathOperator{\Cone}{Cone}

\newcommand{\ignore}[1]{}

\usepackage{accents}

\title{Quantum invariance under Non-smooth Toric Flops of the simplest type}

\author{Tsung-Chen Chen}
\address[]{Department of Mathematics, National Taiwan University, Taipei 10617, Taiwan}
\email{d13221003@ntu.edu.tw}

\author{Jia-Hua Chong}
\address[]{Department of Mathematics, National Taiwan University, Taipei 10617, Taiwan}
\email{d12221001@ntu.edu.tw}

\author{Hui-Wen Lin}
\address[]{Department of Mathematics and Taida Institute for Mathematical Sciences (TIMS), National Taiwan University, Taipei 10617, Taiwan}
\email{linhw@math.ntu.edu.tw}

\begin{document}
\begin{abstract}
    For a non-smooth toric flop of the simplest type (see \eqref{tflop}, \eqref{ab}), we provide the {\it quantum product formulas} and construct the {\it degree-preserving} quantum correspondence via {\it generalized} analytic continuation which is achieved through a {\it regularization} map. Moreover, the generating functions of Gromov-Witten invariants with ancestors are invariant for {\it all genera}.
\end{abstract}
\maketitle

%\tableofcontents

%\tableofcontents %Help me find the content quickly when I writing the paper, I will remove it if necessary when we publish.

\section{Introduction}

 It is well known that in Mori's theory the minimal models of a given projective manifold of complex dimension at least 3 are not unique. These birationally equivalent minimal models are also known to be connected by a sequence of geometric surgeries called flops (by Y. Kawamata). It is thus a fundamental question to study the geometry of flops. In the smooth category, flops preserve cohomology groups but not the classical cup product structure. During the period from 2010 to 2016, Y.-P. Lee, H.-W. Lin and C.-L. Wang proved the result for “ordinary flops” that the quantum product structure is indeed preserved in the sense of “analytic continuations” across the boundary of the Kahler cones.

\begin{theorem}[\cite{LLW1, LLW2, LLW3, LLW4}] For a general ordinary $\mathbb{P}^r$-flop $X \dashrightarrow X'$, the big quantum cohomology rings of $X$ and $X'$ are invariant under the correspondence given by the graph closure in the sense of analytic continuation over the Novikov variables corresponding to the extremal rays. 
\end{theorem}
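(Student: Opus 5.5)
The plan follows the three-step strategy of Lee--Lin--Wang: reduce to the local model, establish invariance there, and then lift back to the global $X$ through degeneration and the quantum Lefschetz/Birkhoff machinery.

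\textbf{Step 1: reduction to the local model.} For an ordinary $\mathbb{P}^r$-flop $f: X \dashrightarrow X'$ with exceptional loci $Z \cong \mathbb{P}^r$ and $Z' \cong \mathbb{P}^r$, I would apply the degeneration to the normal cone. This splits $X$ into the pair $(Y, E)$, where $Y$ is the blow-up along $Z$, and the local piece $\tilde X = \mathbb{P}_Z(N \oplus \mathcal{O})$, which is a toric bundle over $Z$ with fiber the local model $\mathcal{O}_{\mathbb{P}^r}(-1)^{\oplus(r+1)}$. The same construction applies to $X'$, and the $Y$-part is literally shared. By the degeneration formula, every genus-zero invariant of $X$ is then a sum over relative invariants of $(Y,E)$ glued to relative invariants of the local piece. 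The graph correspondence $\mathcal{F}=[\bar\Gamma_f]_*$ is compatible with this splitting. Hence it suffices to prove that the relative invariants of the local pieces match under $\mathcal{F}$ after analytic continuation in the extremal Novikov variable $q^\ell$. The relative invariants must then be converted back to absolute invariants of the local model via the relative/absolute correspondence, which is a triangular, invertible change of basis.

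\textbf{Step 2: the local model.} For the pure local model I would compute the $I$-functions by Givental's mirror theorem, extended to toric bundles by Brown. Invariance of the small $J$-function then follows from an explicit hypergeometric identity. Under the substitution $q^\ell \mapsto q^{-\ell}$, the series $\sum_d q^{d\ell}\prod(\cdots)$ is a rational function in $q^\ell$ with poles only at $q^{\ell}=(-1)^{r+1}$; it is invariant up to the classical defect term, which is exactly what $\mathcal{F}$ accounts for. Next I would use the Birkhoff factorization and generalized mirror transformation (the BF/GMT algorithm) to pass from the $I$-function to the $J$-function, and from $J$ to the big $\mathcal{D}$-module generated by derivatives. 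Invariance of the quantum $\mathcal{D}$-module under $\mathcal{F}$ then yields invariance of the full big quantum ring in the analytic-continuation sense, because the $\mathcal{D}$-module reconstructs all $n$-point invariants once we know that $\mathcal{F}$ preserves the Poincaré pairing and the divisor equation.

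\textbf{Step 3: globalization and general case.} The remaining issue is that the general flop has a nontrivial bundle $N$ over an arbitrary $Z$ and non-split normal bundles. I would use the quantum Leray--Hirsch theorem, i.e.\ the Picard--Fuchs system over the base, to show that the fiberwise invariance lifts, working uniformly in base curve classes. I expect this step, together with controlling the relative invariants whose contact with $E$ involves non-extremal classes, to be the main obstacle. My first attempt would be induction on the degree in base classes and on the contact order, using the relative-to-absolute reconstruction, with the fiberwise analytic continuation supplying the base case. Finally, I would check that the analytic continuation is consistent, meaning that all generating series are rational in $q^\ell$ with a common pole set, so that the identification $\mathcal{F}\,\mathcal{QH}(X)\cong\mathcal{QH}(X')$ holds as an isomorphism of rings over the continued Novikov ring.
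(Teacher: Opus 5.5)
The paper does not prove this statement. It is quoted from Lee--Lin--Wang, and the only supporting material here is the remark that for smooth toric flops every structure constant is built from $f(q^\ell)=q^\ell/(1-(-1)^{r+1}q^\ell)$ and its derivatives, with $f(q^\ell)+f(q^{-\ell})=(-1)^r$.

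Measured against the cited work, your outline has the right architecture: degeneration to $\mathbb{P}_Z(N\oplus\mathcal{O})$, relative-to-absolute reduction, hypergeometric $I$-functions, Birkhoff factorization with generalized mirror transformation, and a quantum Leray--Hirsch theorem. But Step 3 has a genuine gap. For a general ordinary flop, $Z=\mathbb{P}_S(F)$ and $N=\bar\psi^*F'\otimes\mathcal{O}_Z(-1)$, where $F,F'$ are arbitrary rank $r+1$ bundles on an arbitrary smooth base $S$. The local model is a toric bundle over $S$ only when $F$ and $F'$ split. Otherwise Brown's theorem supplies no $I$-function, there is no hypergeometric Picard--Fuchs system over the base, and the quantum Leray--Hirsch argument you invoke has nothing to act on. Invariants of base degree zero are harmless: they are fiber integrals given by universal expressions in the Chern classes of $F$ and $F'$, so the classical splitting principle reduces them to the split case. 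Classes of nonzero base degree are exactly what your induction on base degree and contact order must handle. For these, the claim that the invariants are governed by $F$, $F'$ and the Gromov--Witten theory of $S$, compatibly with $\mathcal{F}$, is itself a nontrivial theorem, and the induction does not produce it. The missing idea is a \emph{quantum} splitting principle, which is how the last paper of the cited series finishes the proof. One passes to a flag bundle over $S$ on which $F$ and $F'$ split, runs the split-case argument there, and descends, which requires controlling the new curve classes in the flag-bundle fibers. As written, your argument covers at most split ordinary flops.

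A smaller problem is in Step 2: the extremal series in $I$ or $J$ is not rational in $q^\ell$. Already for the Atiyah flop, on $\mathbb{P}_{\mathbb{P}^1}(\mathcal{O}(-1)^{\oplus 2}\oplus\mathcal{O})$ the degree-$d\ell$ term reduces to $(\xi-h)^2/(h+dz)^2$, using $h^2=0$ and $\xi(\xi-h)^2=0$. This produces $\mathrm{Li}_2(q^\ell)$ and $\mathrm{Li}_3(q^\ell)$. Their inversion formulas under $q^\ell\mapsto q^{-\ell}$ generate polynomials in $\log q^\ell$ with coefficients involving $i\pi$ and $\pi^2$. The top-degree parts account for the classical defect, but the lower-degree parts do not cancel. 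For instance, a nonzero multiple of $\pi^2(\xi'-h')^2$ survives in the $z^{-2}$-coefficient, so $\mathcal{F}J\neq J'$ even after continuation. Rational functions with poles only at $q^\ell=(-1)^{r+1}$ appear only after three derivatives, i.e.\ for $n\geq 3$-point functions. That is why the theorem, like Definition~\ref{quantum correspond} in this paper, concerns the ring rather than $J$. Step 2 should therefore compare structure constants, or the quantum $\mathcal{D}$-modules modulo such low-order terms, rather than assert invariance of the $J$-function.
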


By supplementing Reid's theory on toric minimal model program, we got two facts that
{\it any two $K$-equivalent $\mathbb{Q}$-factorial terminal
toric varieties can be connected to each other by a sequence of
toric flops} and {\it any smooth toric flop is an ordinary flop} (See \cite{MR5082801} A.1+A.2).
Therefore, $K$-equivalent toric manifolds which are connected by smooth toric flops admit canonically
isomorphic integral cohomology groups via the graph closure and thus own Quantum invariance. Next, we have to study non-smooth toric flops. Since Gromov-Witten theory has been extended to the category of orbifolds, the first non-smooth case to be treated is the category of simplicial toric varieties. However, the quantum invariance is already unknown in the literature for the simplest possible case when some stabilizer is of order 2. The current paper provides answers to this case.  

In \cite{BCS}, the authors generalized a simplicial toric variety to a toric Deligne--Mumford stack corresponding to a combinatorial object called a stacky fan. Let $\Sigma$ be a simplicial fan in $N\otimes \Q$ with $N$ a finitely generated abelian group of free rank $n$ and $X_{\Sigma}$ be the corresponding simplicial toric variety. Denote by $v_\rho\in N$ the minimal lattice point along $\rho\in\Sigma(1)$, where $\Sigma(1)$ is the set of 1-dimensional faces of $\Sigma$. $X_\Sigma$ is covered by orbifolds of the form $\C^n/N(\sigma)$ with $\sigma$ running through maximal cones of $\Sigma$, where $N(\sigma):=N/(\sum_{\rho\in\sigma(1)}\Z v_\rho)$ acts on $\C^n$ diagonally, so a smooth replacement for $X_\Sigma$ is constructed via some stack $\mathcal X(\mathbf\Sigma)\longrightarrow X_{\Sigma}$ on which some decent theories on smooth varieties are carried over. Explicitly, let $\beta:\Z^{\Sigma(1)}\longrightarrow N$ be a homomorphism that maps the standard basis $\{e_\rho:\rho\in\Sigma(1)\}$ to $b_\rho\in N$ which lies in the direction of $v_\rho$ in $N\otimes\Q$. The triple $\mathbf\Sigma=(N,\Sigma,\beta)$ is called a {\it stacky fan}. 
A stacky fan encodes a group action on a quasi-affine variety and the quotient is called the toric Deligne--Mumford stack $\mathcal X(\mathbf\Sigma)$.

For $\tau\in\Sigma$, we define $\mathcal V(\tau)\subset\mathcal X(\mathbf\Sigma)$ to be a toric substack whose stacky fan is $\mathbf\Sigma/\tau:=(\Sigma/\tau,\beta_\tau,N(\tau))$ where $\beta_\tau$ is the restriction of $\beta$ to $e_{\rho}$ for $\rho\in\Sigma(1)\setminus\tau(1)$ and $\tau+\rho\in\Sigma$. 
One of the distinguished features of the stacky structure is that it naturally gives rise to an additional object called the inertia stack $\mathcal I\mathcal X$. It can be described combinatorially as follows.
For $v\in N$, let $\sigma(\bar v)\in\Sigma$ be the minimal cone containing $\bar v\in N_\Q$. We define $\operatorname{Box}(\mathbf\Sigma)$ to be the set of elements $v\in N$ such that $\bar v=\sum_{\rho\in\sigma(\bar v)(1)}a_\rho v_\rho$ for some $a_\rho\in[0,1)$. In \cite{BCS},
they showed that
if $\Sigma$ is a complete fan, then \[\mathcal I\mathcal X(\mathbf\Sigma)=\coprod_{v\in\operatorname{Box(\mathbf\Sigma)}}\mathcal V(\sigma(\bar v))=\coprod_{v\in\operatorname{Box}(\fSigma)}\mathcal{X}(\fSigma/\sigma(\overline{v})).\]
The orbifold Chow group of $\mathcal X(\mathbf\Sigma)$ can be defined by 
$$ A_{\operatorname{orb}}^*(\mathcal X(\mathbf\Sigma)) = \bigoplus_{v\in\operatorname{Box}(\fSigma)}A^{*-{\rm age}(\mathcal{X}(\fSigma/\sigma(\overline{v})))}(\mathcal{X}(\fSigma/\sigma(\overline{v}))),$$
equipped with the orbifold Poincar\'e pairing
\[(T_1,T_2)_{\mathcal X}=\int_{\mathcal I\mathcal X(\Sigma)}T_1\cup\iota^*T_2\quad\text{for }T_1,T_2\in A^*_{\operatorname{orb}}(\mathcal X(\fSigma)),\]
where $\iota$ is the involution $\mathcal I\mathcal X\to\mathcal I\mathcal X,~(x,g)\mapsto (x,g^{-1})$.
%Also, the orbifold product 
%\[\cup_{\operatorname{orb}} : A^*(\mathcal X(\fSigma/\sigma(\bar v_1))\otimes A^*(\mathcal X(\fSigma/\sigma(\bar v_2))\to A^*(\mathcal X(\fSigma/\sigma(\bar v_3))\]
%is defined by
%   \[D_{\rho_{i_1}}\cdots D_{\rho_{i_k}}y^{v_1}\otimes D_{\rho_{j_1}}\cdots D_{\rho_{j_l}}y^{v_2}\mapsto
%        D_{\rho_{i_1}}\cdots D_{\rho_{i_k}}D_{\rho_{j_1}}\cdots D_{\rho_{j_l}}\prod_{\rho\in\sigma}&&%{D_\rho}^{e_\rho}y^{v_3}\]
%if there exists $\sigma\in\Sigma$ containing $v_1,v_2$ where $v_1+v_2\in N$ splits into decimal part $v_3$ and integral part $\sum_{\rho\in\sigma} e_\rho v_\rho$. The right hand side is set to be 0 when no such $\sigma$ exists.

For simplicial toric flops, their local models can be described as follows. Let $\vec{a}=(a_0,\ldots,a_r)\in \N^{r+1}$ and $\vec{b}=(b_0,\ldots,b_{r'})\in \N^{r'+1}$ such that 
\begin{equation}\label{flop condition}
    \sum_i a_i=\sum_j b_j.
\end{equation}Let $N=\Z^{r+1+r'+1}/\langle(\vec{a},-\vec{b})\rangle$ be the abelian group with generators $v_0,\ldots,v_r,w_0,\ldots,w_{r'}$ and the relation $\sum a_iv_i-\sum b_jw_j=0$. Set $w_{r'+1}=-\sum a_iv_i=-\sum b_jw_j$ and let $\Sigma^+$ be the fan in $N\otimes\R$ with maximal cones
\[\sigma_{ij}=\Cone(v_0,\ldots,\widehat{v}_i,\ldots,v_r,w_0,\ldots,\widehat{w}_j,\ldots,w_{r'+1})\]
where $i\in\{0,1,\ldots,r\}$ and $j\in\{0, 1, \ldots,r'+1\}$.
The morphism $\rho:\Z^{r+r'+3}\to N$ is defined by sending $v_0,\ldots,v_r,w_0,\ldots,w_{r'+1}$ to 
$\bar{v}_0,\ldots,\bar{v}_r$, $\bar{w}_0,\ldots,\bar{w}_{r'+1}$. The morphism $\rho$ and the fan $\Sigma^+$ give rise to a stacky fan $\fSigma^+$, which defines a toric Deligne--Mumford stack
\[\mathbb{P}(\vec{a};\vec{b})\coloneqq\mathcal{X}(\fSigma^+)=\left[\begin{tikzcd}[row sep=1pt]
(\C^\times)^2 \arrow[rr,"\alpha"] && (\C^{r+1}\setminus \{0\})\times(\C^{r'+2}\setminus \{0\}) \\
(s,u) \arrow[rr] && (s^{\vec{a}},u^{\vec{b}}s^{-\vec{b}},u)
\end{tikzcd}\right]\]
where $s^{\vec{a}}=(s^{a_0},\ldots,s^{a_r})$. Indeed, this is the weighted projective stack bundle $\mathbb{P}_{\mathbb{P}(\vec{a})}(\O\oplus\O(-\vec{b}))$ over the weighted projective stack $\mathbb{P}(\vec{a})$ with weights $1$ on $\O$ and $b_i$ on $\O(-b_i)$.
To get a toric flop of $\mathbb{P}(\vec{a};\vec{b})$, we consider the weighted blow-up of $\P(\vec{a};\vec{b})$ along the zero section $\P(\vec{a})$, 
which turns out to be the weighted projective stack bundle
$\mathbb{P}_{\P(\vec{a})\times\P(\vec{b})}(\O\oplus\O(-1,-1))= \mathcal{X}(\widetilde{\fSigma})$ with a stacky fan $\widetilde{\fSigma}$. The blow-up morphism $\varphi^+:\mathbb{P}_{\P(\vec{a})\times\P(\vec{b})}(\O\oplus\O(-1,-1))\to\P(\vec{a};\vec{b})$ restricted on exceptional locus is the projection $\P(\vec{a})\times\P(\vec{b})\to\P(\vec{a})$, so we may blow-down another direction and get the morphism $\varphi^-:\P_{\P(\vec{a})\times\P(\vec{b})}(\O\oplus\O(-1,-1))\to\P(\vec{b};\vec{a})$. Here, by symmetry of $\vec{a}$ and $\vec{b}$, there is a stacky fan $\fSigma^-$ such that $\mathbb{P}(\vec{b};\vec{a}) = \mathcal{X}(\fSigma^-)$. The simplicial toric flop can be connected by common blow-up in the following diagram:
\begin{equation} \label{tflop}
\begin{tikzcd} 
& \mathcal{Y}\coloneqq\P_{\P(\vec{a})\times\P(\vec{b})}(\O\oplus\O(-1,-1)) \arrow[ld,"\varphi^+"'] \arrow[rd,"\varphi^-"] & \\
\mathcal{X}^+\coloneqq\P(\vec{a};\vec{b}) && \mathcal{X}^-\coloneqq\P(\vec{b};\vec{a}).
\end{tikzcd}
\end{equation}

There is a surjective map
\[\nu^+:\Lambda_{\mathcal{X}^+}\coloneqq\left\{(d_1,d_3)\in\Q_{\geq 0}^2\middle|\ d_1\in\tfrac{1}{a_i}\Z,\ d_3-d_1\in\tfrac{1}{b_j}\Z \text{ or } d_3\in\Z \text{ for some } i,j\right\}\longrightarrow\operatorname{Box}(\fSigma^+)\]
by $\nu^+\colon (d_1,d_3)\longmapsto\sum_{i=0}^r\lceil a_id_1\rceil v_i+\sum_{i=0}^{r'}\lceil b_i(d_3-d_1)\rceil w_i+\lceil d_3\rceil w_{r'+1}$. Let $\mathcal{X}^+_{(d_1,d_3)}$ be the twisted sector corresponding to $\nu^+(d_1,d_3)\in\operatorname{Box}(\mathbf{\Sigma^+})$ and then 
\[A_{\operatorname{orb}}^*(\mathcal{X}^+) = \bigoplus_{(d_1, d_3)\in \Lambda_{\mathcal{X}^+}}A^{*-{\rm age}(\mathcal{X}^+_{(d_1, d_3)})}(\mathcal{X}^+_{(d_1, d_3)}).\]
A similar result holds for $\mathcal{X}^-$. For $I\mathcal{Y}$, we consider
\[\nu:\Lambda_{\mathcal{Y}}=\left\{(d_1,d_2,d_3)\in\Q_{\geq 0}^3\middle|\ d_1\in\tfrac{1}{a_i}\Z,\ d_2\in\tfrac{1}{b_j}\Z,\ d_3\in\Z \text{ or } d_3-d_1-d_2\in\Z\right\}\longrightarrow\operatorname{Box}(\widetilde{\fSigma})\]
by $\nu:(d_1,d_2,d_3)\longmapsto\sum_{i=0}^r\lceil a_id_1\rceil v_i+\sum_{i=0}^{r'}\lceil b_id_2\rceil w_i+\lceil d_3\rceil w_{r'+1}+\lceil d_3-d_2-d_1\rceil(-w_{r'+1})$ and 
$$ A_{\operatorname{orb}}^*(\mathcal{Y}) = \bigoplus_{(d_1, d_2, d_3)\in \Lambda_{\mathcal{Y}}}A^{*-{\rm age}(\mathcal{Y}_{(d_1, d_2, d_3)})}(\mathcal{Y}_{(d_1, d_2, d_3)}).$$ 
More precisely, the morphisms $\varphi^+$ and $\varphi^-$ induce maps in twisted sectors via projections
\[\begin{tikzcd}[row sep=1pt]
\operatorname{Box}(\widetilde{\fSigma}) \arrow{r} & \operatorname{Box}(\fSigma^+) && \operatorname{Box}(\widetilde{\fSigma}) \arrow{r} & \operatorname{Box}(\fSigma^-) \\
(d_1,d_2,d_3) \arrow[r] & (d_1,d_3) &&(d_1,d_2,d_3) \arrow[r] & (d_2,d_3).
\end{tikzcd}\]
For $d_3\notin\Z$, 
\[\mathcal{X}^+_{(d_1,d_3)}\simeq\P(\{a_i:a_id_1\in \Z\})\times\P(\{b_j:b_j(d_3-d_1)\in\Z\})\]
lies in the infinite divisor of the projective stack bundle $\mathcal{X}^+$ and thus lies in the isomorphism part of $\varphi^+$. For $d_3\in\Z$ and $d_1+d_2\notin\Z$, the morphism between the twisted sectors is given by
\[\begin{tikzcd}[column sep=3pt]
\mathcal{Y}_{(d_1,d_2,d_3)}=\P(\{a_i:a_id_1\in\Z\})\times\P(\{b_j:b_jd_2\in\Z\}) \arrow[d] & \\
\P(\{a_i:a_id_1\in\Z\}) \arrow[r,hook,"\text{zero section}"] & \mathcal{X}^+_{(d_1,d_3)}.
\end{tikzcd}\]
For $d_3\in\Z$ and $d_1+d_2\in\Z$, the morphism between twisted sectors
\[\mathcal{Y}_{(d_1,d_2,d_3)}\longrightarrow\mathcal{X}^+_{(d_1,d_3)}\]
is a weighted blow-up.

When orbifold Chow groups have nontrivial twisted sectors, the divisor-generation property of cohomology rings does not hold. For $\mathcal{X}^\pm$, to resolve this problem, we consider the set $S_o\subset \Sigma^\pm$ which consists of the extra $\operatorname{age}$ 1 elements appearing in the mirror transform of small $I$-function and find the following useful result about generating sets. 
%Namely, $I =J(\tau,-z)$ with $\tau=t_1h+t_2\xi+\sum_{s\in S_o}\tau_s y^s$ for some $\tau_s=\tau_s(Q)$. In general, when $S_o\neq\emptyset,$ $\tau$ is not invertible.
 
\begin{theorem}{\rm(= Theorem\autoref{generator of quantum cohomology})}
    The orbifold quantum cohomology of ${\mathcal X}^\pm$ with product $\star_\tau$ restricted to $\tau\in A^1({\mathcal X}^\pm)+\langle y^s:s\in S_o\rangle$ is generated by $A^1({\mathcal X}^\pm)+\langle y^s:s\in S_o\rangle$.
\end{theorem}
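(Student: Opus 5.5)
The plan is to prove generation through the quantum $D$-module (Birkhoff factorization) formalism for toric stacks, as developed by Coates–Corti–Iritani–Tseng via the extended $I$-function. Fix $\tau \in A^1(\mathcal X^\pm)+\langle y^s : s\in S_o\rangle$. We work with the extended stacky fan obtained by adjoining the box elements $s\in S_o$ as extra rays. Since each such $s$ has age $1$, the extended $I$-function
\[
I(t,Q,z)=z\,e^{\sum t_i D_i/z}\sum_{d} Q^d e^{dt}\,\frac{\prod(\text{Gamma-type factors})}{\prod(\cdots)}\,y^{\nu(d)}
\]
still satisfies $I = z + \tau(t,Q) + O(z^{-1})$. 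The mirror map $\tau(t,Q)$ then takes values exactly in $A^1(\mathcal X^\pm)\oplus\langle y^s : s\in S_o\rangle$. By the mirror theorem we have $I(t,-z)=J(\tau(t),-z)$, and the mirror map is locally invertible onto the given subspace because its linear part is the identity in the variables $t_1,t_2,\tau_s$. Consequently the restricted quantum $D$-module is generated over $\C[z][\![Q,t]\!]$ by $J(\tau,-z)$ together with its derivatives $z\partial_{t}$.

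First, I use the standard fact that $z\partial_{t_\alpha} J(\tau,-z)$ corresponds to quantum multiplication by $\partial_{t_\alpha}\tau$. Applying a monomial $\prod (z\partial_{t_\alpha})$ to $J$, one finds that the $z^0$-part of the Birkhoff-normalized result is a quantum product of elements $\partial_{t_\alpha}\tau \in A^1+\langle y^s\rangle$, up to lower-order corrections. Second, I write these derivatives explicitly on $I$: the operator $z\partial_{t_\alpha}$ inserts the factor $D_\alpha + z(\text{degree})$. The fractional-degree parts of the hypergeometric sum show that every box element $y^{v}$, with $v\in \operatorname{Box}(\fSigma^\pm)$ indexed by $(d_1,d_3)\in\Lambda_{\mathcal X^\pm}$, occurs as the leading coefficient (the $Q^0$ term after setting $t=0$) of some differential monomial in the extended variables. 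Concretely, the monomial $\prod_s (z\partial_{\tau_s})^{k_s}$ applied with suitable powers produces $y^{\nu^\pm(d)}$ times a nonzero product of twisted-sector classes. Untwisted classes come from products of $h,\xi$, since $A^*$ of the weighted projective bundle is generated by divisors rationally.

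Third, I run a triangularity argument. I filter $A^*_{\rm orb}$ by age and $Q$-adic order, and show that the map $\C[z][\![Q]\!]\langle \text{monomials}\rangle \to H$, reduced modulo $z$ and $Q$, is surjective. Then Nakayama's lemma over the $Q$-adically complete ring, together with a Birkhoff factorization that removes the positive $z$-powers, lifts this surjectivity to the quantum ring at $\tau$. Surjectivity at $Q=0$ amounts to the classical statement that the Chen–Ruan orbifold cup product ring of $\mathcal X^\pm$ is generated by divisors and the age-one classes $y^s$.

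I expect the main obstacle to be this classical step for the non-smooth flop. It is not obvious that every twisted sector $\mathcal X^+_{(d_1,d_3)}$ of higher age is reached by orbifold products of the $y^s$, $s\in S_o$, together with $h$ and $\xi$. My first approach would be to use the Borisov–Chen–Smith presentation of the orbifold Chow ring as a quotient of the deformed group ring $\Q[N]^{\Sigma}$. Given $v=\nu^\pm(d_1,d_3)$, I would decompose $v$ inside a single cone as a sum of age-one box elements, or of rays and age-one box elements, using $\sum a_i=\sum b_j$ (Calabi–Yau-type condition) to guarantee that such decompositions exist. If a direct decomposition fails for some $v$, the fallback is to use the quantum corrections themselves. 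The $Q$-linear terms of the $I$-function's $z$-expansion connect the missing sectors via $z\partial$ derivatives, so generation holds only in the quantum ring with $Q$ inverted along the flopping curve, which is still compatible with the statement restricted to the given $\tau$.
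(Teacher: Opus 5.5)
There is a genuine gap, and it sits exactly at the step you flagged as the obstacle. That step is not merely hard; it is false. Your main line (triangularity plus Nakayama over the $Q$-adically complete ring) needs surjectivity modulo $Q$, i.e.\ that the Chen--Ruan ring $A^*_{\operatorname{orb}}(\mathcal X^\pm)$ is generated by $A^1$ and the $y^s$, $s\in S_o$.

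By definition, $S_o$ only contains age-one box elements whose minimal cone lies in $\Sigma^\pm\setminus\Sigma^\mp$, so sectors over common cones are never included. Here the only twisted sector of $\mathcal X^+$ is $y^+_\infty$, which lies over $\Cone(e,\,w_j:b_j=1)\in\Sigma^+\cap\Sigma^-$. On $\mathcal X^-$, the exceptional sector $y^-_{1/2}$ has age $p+q$. So for $p+q\ge 2$ one has $S_o=\emptyset$, and your monomials $\prod_s(z\partial_{\tau_s})^{k_s}$ are vacuous. Since untwisted classes form a subring at $Q=0$, the classes $y^\pm_\infty$ and $y^-_{1/2}$ are never reached classically.

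Generation only holds after inverting Novikov variables. For example, Proposition~\ref{infinite_cohomo_quantize_+} with $a=0$ gives $y^+_\infty=2^pQ_2^{-1/2}(\xi-h)\qsstar\xi(\xi-h)^{p+q-1}$. Note that the variable inverted here is $Q_2=Q^{\gamma}$, the fiber direction, not the flopping-curve variable. For the same reason, your assertion that the restricted $D$-module is generated over $\C[z][\![Q,t]\!]$ by $J$ and its $z\partial_t$-derivatives is false: modulo $Q$ these derivatives only produce untwisted classes. It is also essentially the statement to be proved.

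Your fallback (use the quantum corrections and invert $Q$) points the right way. However, once $Q$ is inverted Nakayama is no longer available, and you give no mechanism that actually produces the twisted classes. The missing idea is the paper's. Pass to the fully extended $I^S$ with $S=\bbox(\fSigma)$, and use, for every box element $s$, the Picard--Fuchs operator
\[\square_s=z\partial_{x_s}-Q^{-d_s}\prod_{\rho}\prod_{a=1}^{\lceil\lambda_\rho\rceil}\Bigl(z\partial_{t_\rho}-\sum_{s'}s'_\rho x_{s'}z\partial_{x_{s'}}-(a-1)z\Bigr),\]
where $\lambda_\rho=D_\rho\cdot d_s$ and $d_s\in\operatorname{NE}(\mathcal X)$ is chosen with $s=\sum_\rho\lceil\lambda_\rho\rceil v_\rho$.

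The argument then runs in three steps.
\begin{enumerate}
\item Iterating $\square_sI^S=0$ $x$-adically expresses $z\partial_{x_s}I^S$ as $R_sI^S$, with $R_s$ a divisor-derivative operator in the localized ring $\{Q^{d_s}\}^{-1}\C[z][\![\operatorname{NE}(\mathcal X),x]\!][z\partial_{t_\rho}]$.
\item A Birkhoff-type correction, by induction along a monomial order on $\Lambda^S\cap\operatorname{NE}^S(\mathcal X)$, removes the positive $z$-powers. This yields $P_\mu$ with $P_\mu I^{S_o}=(\hat T_\mu J)(\tau,-z)$.
\item Dequantization as in Cox--Katz gives $T_\mu=P_\mu(0,t,Q,D_\rho\star_\tau,y^s\star_\tau)1$.
\end{enumerate}
This mechanism controls exactly which negative powers $Q^{-d_s}$ enter. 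It is what replaces your mod-$Q$ surjectivity, and without it your argument does not go through.
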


For a smooth toric flop, i.e. an ordinary $\mathbb{P}^r$-flop, the extremal curve class $\ell^+$ is sent to $-(\ell^-)$ on the other side and thus the comparison can be made only after analytic continuation. Indeed, we consider the basic rational
function
\begin{equation*}
f(q^\ell) := \frac{q^\ell}{1 - (-1)^{r + 1}q^\ell} = \sum_{d \ge 1} (-1)^{(d
-1)(r + 1)} (q^\ell)^{d},
\end{equation*}
which satisfies the functional equation
\begin{equation*}
f(q^\ell) + f(q^{-\ell}) = (-1)^r.
\end{equation*}
Since all functions involved are expressed in terms of this basic rational function and its derivatives, the quantum product $\star$ is invariant under a smooth toric flop via such a simple analytic continuation. 
For a non-smooth toric flop, say  $\vec{a}=(1,1,1,1), \vec{b}=(2,2)$,
we find that 
\[(\xi-h)\star (\xi-h)=g_0^+(q^\ell)(\xi-h)^2,\quad g_0^+(q^\ell)=\sum_{m\geq 0}\binom{2m}{m}^2(q^\ell)^m.\]
Under the analytic continuation $q^\ell\mapsto (q^\ell)^{-1}$ via Mellin--Barnes integral, 
\[g_0^+\simeq \frac{i}{4\pi}g_1^-+\left(\frac{1}{4}-\frac{2i\log 2}{\pi}\right)g_0^-,\]
where $g_0^-=\sum_{k\geq 0}\frac{1}{2^{8n}}(q^\ell )^{k+1/2}\binom{2k}{k}^2$ and $g_1^-=g_0^-\log (q^\ell)+4\sum_{k\geq 1}\frac{1}{2^{8n}}(q^\ell)^{k+1/2}\binom{2k}{k}^2\sum_{n=1}^k\frac{1}{2k-1}$.
However, Gromov--Witten invariants are by definition logarithm-free, so a suitable framework for generalizing analytic continuation is needed. 

Our strategy is to put all functions in the set of coefficients for the small quantum product $\star$ on $\mathcal X^+$ into some Picard--Vessiot extension $F^+$, to put all functions in the set of coefficients for the small quantum product $\star$ on $\mathcal X^-$ under $q^\ell\mapsto (q^\ell)^{-1}$ into another Picard--Vessiot extension $F^-$ and then construct a differential isomorphism from $F^+$ to $F^-$ to obtain the regularization map. The notion of {\it regularization maps} was created by a joint idea with S.-Y. Lee (\cite{Lee26}).

In this paper, we carry out this strategy for {\it non-smooth toric flops of the simplest type}: 
\begin{equation}\label{ab}
\begin{split}    
\vec a &=(1)^{\times (2p+q)}=(1, 1, \ldots,1),\\ 
\vec b &=(2)^{\times p}\times (1)^{\times q}=(2,\ldots,2,1,\ldots,1)
\end{split}
\end{equation}
with $p\geq 1$, $q\geq 0$. In Section \ref{small-product}, we identify all the formulas of the small quantum product with $h$ and $\xi$, and all the functions that appear in them. The key principle underlying the derivation of all formulas is the following lemma.
\begin{lm}{\rm(= Lemma\autoref{birkhoff factorization})}
 If $\{T_\mu\}$ is a basis for $A^*_{\operatorname{orb}}(\mathcal X)$,
    then for each $\mu$, there exists a mirror transformation $\tau$ and an operator 
    
    $P_\mu\in \{Q^{d_s}:s\in \bbox(\fSigma)\}^{-1}\C[z][\![\operatorname{NE}(\mathcal X),x]\!]\langle z\frac{\partial}{\partial t_\rho}:\rho\in\Sigma(1)\rangle$ such that $P_\mu I^{S_o}=(\hat T_\mu J)(\tau,-z)$.   
\end{lm}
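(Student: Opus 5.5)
We would argue via Givental's Lagrangian cone formalism for the toric stacks $\mathcal X^\pm$ (and likewise $\mathcal Y$). The plan has four steps.

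\textbf{Step 1: the $S$-extended $I$-function lies on the cone.} By the mirror theorem for toric Deligne--Mumford stacks (Coates--Corti--Iritani--Tseng), the extended $I$-function $I^{S_o}(t,x,z)$, with extra variables $x_s$ for $s\in S_o$, lies on the Lagrangian cone $\mathcal L$. Since $S_o$ consists of age-$1$ box elements, $I^{S_o}$ has the form $z+\tau(t,x,Q)+O(z^{-1})$. Here $\tau$ is the mirror map, valued in $A^1\oplus\langle y^s: s\in S_o\rangle$, with coefficients in $\C[\![\operatorname{NE},x]\!]$ possibly localized at the $Q^{d_s}$. Hence $I^{S_o}(t,x,-z)=J(\tau,-z)$.

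\textbf{Step 2: differential operators produce every basis element.} Apply the operators $z\partial_{t_\rho}$ to $I^{S_o}$. In $I^{S_o}$ the variable $t_\rho$ enters through $e^{\sum t_\rho D_\rho/z}Q^d$, so $z\partial_{t_\rho}$ acts by multiplying each summand by $D_\rho+z(\text{degree pairing})$. This lets us realize monomials in the toric divisors on each twisted sector $y^{v}$. Each sector $y^v$ is itself reached from the untwisted $I$-function by a shift of degree $d_v$, with $\nu(d_v)=v$. Multiplying by $Q^{-d_v}$ and applying a suitable product of $z\partial_{t_\rho}$ gives, for each basis element $T_\mu=D^{\vec k}y^v$, an element of $z\cdot$(tangent space) of the form $T_\mu+O(Q,x)+O(z^{-1})$. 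This is the source of the localization $\{Q^{d_s}\}^{-1}$ in the coefficient ring of $P_\mu$.

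\textbf{Step 3: Birkhoff factorization.} The tangent space $T$ to $\mathcal L$ at $I^{S_o}$ is a $\C[z][\![\operatorname{NE},x]\!]$-module, and it is closed under $z\partial_{t_\rho}$ because $z T\subset\mathcal L$. Moreover, $T$ is spanned by $\{z\,\partial_{\tau_\nu}J(\tau,-z)\} = \{(\hat T_\nu J)(\tau,-z)\}$, the derivatives of $J$ along the cone. So the element $P^{(0)}_\mu I^{S_o}$ built in Step 2 expands as
\[
P^{(0)}_\mu I^{S_o}=\sum_\nu c_{\mu\nu}(z,Q,x)\,(\hat T_\nu J)(\tau,-z),\qquad c_{\mu\nu}\in \C[z][\![\operatorname{NE},x]\!]_{\rm loc}.
\]
We then correct $P^{(0)}_\mu$ inductively, by degree in $z$ and by $Q$-adic and $x$-adic order. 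Two facts drive the induction: $c_{\mu\nu}\equiv\delta_{\mu\nu}$ modulo $(Q,x)$ up to positive powers of $z$, and the triangularity of the leading terms. At each stage we subtract $c_{\mu\nu}P^{(0)}_\nu$ for $\nu\neq\mu$, which eliminates the off-diagonal terms, and we divide out the invertible diagonal coefficient. The outcome is $P_\mu$ with $P_\mu I^{S_o}=(\hat T_\mu J)(\tau,-z)$ exactly. This is the standard Birkhoff factorization, which solves the triangular system $C(z)=C_+(z)$ with $C_+$ polynomial in $z$.

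\textbf{Main obstacle.} We expect the difficulty to lie in Step 2 together with convergence in Step 3. For twisted sectors outside $S_o$, and sectors of age greater than $1$, we must show that the required leading term actually appears. We must also show that the inverses of $Q^{d_s}$ stay in a controlled, finitely generated localization, so that the inductive correction converges $Q$- and $x$-adically. To handle this, we would first verify on the explicit $I$-function of $\mathcal X^\pm$ that every box element $\nu^\pm(d_1,d_3)$ is reached with an effective shift. We would then check that the $z$-degrees stay bounded by the orbifold degree of $T_\mu$.
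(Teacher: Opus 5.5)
Your overall architecture matches the paper's: the mirror theorem, then an element of $zT_{I}\mathcal L$ with leading term $T_\mu$, then Birkhoff factorization. Your Step~3 is a legitimate variant of the paper's elimination. The paper kills positive $z$-powers one monomial at a time along a monomial order on $\Lambda^S\cap\operatorname{NE}^S(\mathcal X)$; you instead expand in the $\C[z]$-basis $\hat T_\nu J$ of $zT$ and invert a matrix $C\equiv\mathrm{id}$ mod $(Q,x)$ by a Neumann series. That works once its input exists.

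\textbf{The gap is Step~2.} This is where all the content for twisted sectors lies, and you leave it as an unresolved ``main obstacle''.
\begin{enumerate}
\item Multiplying by $Q^{-d_v}$ and applying some product of $z\partial_{t_\rho}$ to $I^{S_o}$ a priori produces terms $Q^{d-d_v}$ with $d-d_v\notin\operatorname{NE}(\mathcal X)$, already from the summand $d=0$. You give no mechanism showing that these vanish, or that the surviving $Q^0z^0$ term is exactly $y^v$ rather than something else or nothing. Your proposed remedy (checking ``effective shifts'' and bounding $z$-degrees) does not touch this point.
\item When $S_o\neq\emptyset$, a pure product of the $z\partial_{t_\rho}$ is not the right operator. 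The correct one has $z\partial_{t_\rho}$ replaced by $z\partial_{t_\rho}-\sum_{s'}s'_\rho x_{s'}z\partial_{x_{s'}}$, and the operators $z\partial_{x_{s'}}$ are not in the ring where $P_\mu$ must live. So they must be eliminated too.
\end{enumerate}

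\textbf{The missing idea is the paper's device.} Enlarge to the fully extended $I^S$ with $S=\bbox(\fSigma)$. There $z\partial_{t_{\rho_1}}\cdots z\partial_{t_{\rho_k}}\,z\partial_{x_s}\,z^{-1}I^S\equiv D_{\rho_1}\cdots D_{\rho_k}y^s$ mod $(Q,x)$ holds on the nose, so every basis element is reached directly. Then choose $d_s$ with $\lambda_\rho=D_\rho\cdot d_s$ and $s=\sum_\rho\lceil\lambda_\rho\rceil v_\rho$, and use the Picard--Fuchs operators
\[\square_s=z\partial_{x_s}-Q^{-d_s}\prod_\rho\prod_{a=1}^{\lceil\lambda_\rho\rceil}\Bigl(z\partial_{t_\rho}-\sum_{s'}s'_\rho x_{s'}z\partial_{x_{s'}}-(a-1)z\Bigr),\qquad \square_sI^S=0.\]
These trade each $z\partial_{x_s}$ for an operator in the $z\partial_{t_\rho}$ with coefficients in the localized ring. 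Iterating $x$-adically removes the $x_{s'}z\partial_{x_{s'}}$ terms, and one then restricts to $I^{S_o}$.

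This single relation does three things at once: it accounts for the localization at $Q^{d_s}$, it forces the cancellation of all negative $Q$-powers, and it identifies the leading term. In the case $S_o=\emptyset$ used later in the paper, it says exactly
\[Q^{-d_v}\prod_\rho\prod_{a=1}^{\lceil\lambda_\rho\rceil}\bigl(z\partial_{t_\rho}-(a-1)z\bigr)\,I=\bigl(z\partial_{x_v}I^{\{v\}}\bigr)\big|_{x_v=0}.\]
Its right-hand side manifestly has no negative $Q$-powers and leading term $y^v$, which is precisely the justification your Step~2 lacks.
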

For computing a small quantum product on $A^*_{\operatorname{orb}}(\mathcal X)$, we make use of the small $I$-function. Since the nonzero box elements $(d_1,d_3)$ with $d_3\in\Z$ have ages greater than 1, this implies that $S_o = \emptyset$ and thus $I(t_1h+t_2\xi)=J(t_1h+t_2\xi)$ and by Theorem\autoref{generator of quantum cohomology}, the small quantum cohomology ring is generated by divisors. We compute two-point invariants, i.e. $h\star_{small}$ and $\xi{\star_{small}}$, by extracting the $z^0$-terms in $\hat h P_\mu(z^{-1}I)=h\star_{small}T_\mu+O(z^{-1})$ and $\hat \xi P_\mu(z^{-1}I)=\xi\star_{small}T_\mu+O(z^{-1})$ from the above lemma and \eqref{qde}.  
Our entire effort is devoted to finding $P_\mu$ for  each $T_\mu$ in a fixed basis of $A^*_{\operatorname{orb}}(\mathcal X)$. 

We now proceed to find a Picard-Vessoit extension that contains all relevant functions.  
To this end, we introduce a hypergeometric differential operator $\mathcal{L}$  from the Picard--Fuchs equation $\square_\ell$ of $I$ and the Picard-Vessiot extension $F_{\mathcal{L}}$ of $\mathcal{L}$ is the differential field we are looking for. 
\begin{thm}{\rm(= Corollary\autoref{proper-field})}
The differential fields
\[F^+((Q_2^+)^{1/2})(\eqqcolon F_0^+) \text{ and } F^-((Q_1^-Q_2^-)^{1/2})(\eqqcolon F_0^-)\]
are the smallest differential field extensions of $\mathbb{C}(Q^+_1,(Q^+_2)^{1/2})$ and $\mathbb{C}(Q_1^-,(Q_1^-Q_2^-)^{1/2})$ respectively that contain all three-point invariants $\langle D,T_1,T_2 \rangle^{\mathcal{X}^\pm}$ for any divisor $D$ and arbitrary orbifold cohomology classes $T_1$ and $T_2$.
\end{thm}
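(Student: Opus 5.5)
The statement has two halves: containment (every invariant $\langle D,T_1,T_2\rangle^{\mathcal X^\pm}$ lies in $F_0^\pm$) and minimality (no proper differential subfield of $F_0^\pm$ containing $\mathbb{C}(Q^+_1,(Q^+_2)^{1/2})$, resp. $\mathbb{C}(Q_1^-,(Q_1^-Q_2^-)^{1/2})$, contains them all). I treat $\mathcal X^+$; $\mathcal X^-$ is symmetric after $q^\ell\mapsto (q^\ell)^{-1}$.

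\textbf{Containment.} I use the divisor equation and the definition of the small product:
\[\langle D,T_1,T_2\rangle = (D\star_{small}T_1,\,T_2)_{\mathcal X}.\]
So it suffices to show that every structure coefficient of $h\star_{small}$ and $\xi\star_{small}$, in a fixed basis $\{T_\mu\}$ of $A^*_{\operatorname{orb}}(\mathcal X^+)$, lies in $F_0^+$. Because $S_o=\emptyset$, we have $I=J$ on the small parameter space. Lemma\autoref{birkhoff factorization} then gives, for each $\mu$, an operator $P_\mu$ with $P_\mu I=\hat T_\mu J$. Reading off the $z^0$-term of $\hat h P_\mu(z^{-1}I)$ writes $h\star_{small}T_\mu$ as a differential polynomial in the components of $I$, with coefficients in $\C[Q^{\pm d_s}]$.

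The components of $I$ along $\ell$ are hypergeometric series annihilated by the factors of $\square_\ell$, hence by $\mathcal L$. They therefore lie in the Picard--Vessiot field $F^+=F_{\mathcal L}$. The twisted sectors with $d_3\notin\Z$ contribute the half-integer powers of $Q_2^+$, which is why $(Q_2^+)^{1/2}$ is adjoined. The Birkhoff factorization involves inverses of the leading series such as $g_0^+$, and a field is closed under these. Since $F_0^+$ is a differential field, all coefficients land in it.

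\textbf{Minimality.} I must show the invariants generate $F_0^+$ as a differential field over the base field. Concretely, I would exhibit particular invariants that recover a full fundamental system of $\mathcal L$. The natural first candidate is $(\xi-h)\star(\xi-h)=g_0^+(q^\ell)(\xi-h)^2$. Pairing this with a suitable class gives $g_0^+$, which is a solution of $\mathcal L$ and so lies in the generated field. Its derivatives $\theta^k g_0^+$ also lie there, but these alone do not give the whole Picard--Vessiot field. For the remaining solutions I would try two sources. First, the structure constants on the twisted sectors, namely the products of $h$ or $\xi$ with $y^{s}$ for $s$ of the form $(d_1,d_3)=(0,\tfrac12)$. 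Second, the Wronskian-type combinations forced by the associativity of $\star_{small}$.

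\textbf{Main obstacle.} The hard step is showing that these invariants generate all of $F^+$ rather than a proper differential subfield. I would compute the differential Galois group of $\mathcal L$; for this hypergeometric operator I expect it to act irreducibly, or at least with small solvable part. Then I would show that the span of the obtained solutions is invariant and nonzero. If it is a Galois submodule, irreducibility forces it to be the full solution space, and the differential field it generates is $F^+$. Finally, I check that $(Q_2^+)^{1/2}$ itself appears as a factor in some twisted-sector three-point invariant, which forces the adjunction and completes the minimality.
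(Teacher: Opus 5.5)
Both halves of your argument have a genuine gap.

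\textbf{Containment.} The components of $I^+$ are not all annihilated by $\mathcal L^+$, so they do not all lie in $F^+$. In the expansion $(\widehat\xi-\widehat h)^{p+q}(I^+_{(0,0)}+I^+_{0,0})=\sum_n g_n^+\,h^nz^{-n}(\xi-h)^{p+q}$, only $g_0^+,\dots,g_{p-1}^+$ solve $\mathcal L^+$.

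For $p\le n\le 2p+q-1$ one has instead $\mathcal L^+(g_n^+)=t_1^{n-p}/(n-p)!$. For example, when $p+q\ge 2$, $\mathcal L^+(g_{p+1}^+)=\log Q_1^+$. But $\log Q_1^+\notin F^+$, because $G_{\mathcal L^+}$, the isometry group of $B_{\mathcal L^+}$, admits no surjection onto $\mathbb G_a$. In addition, the products on the $y_\infty^+$-sector involve the functions $g^+_{a,b,n}$, which solve the different operators $\mathcal L^+_{a,b}$.

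So knowing that a structure constant is a (log-free) rational expression in components of $I$ does not put it in $F^+$. Membership has to be checked for the specific combinations that occur, and these do involve $g_n^+$ with $n\ge p$: already $W_p^+$ appears in $h\star_{small}h^{p-1}(\xi-h)^{p+q}$. The paper handles this as follows:
\begin{enumerate}
\item Lemma~\ref{Wron_k^+} gives $W_k^+=W_{p-1}^+(1-(-1)^q2^{2p}Q_1^+)^{-(k+1-p)}$ for $k\ge p-1$.
\item Lemma~\ref{gen_Wron^+_in_F^+} identifies the ratios $(W^+)^{[k]\setminus\{n\}}_{[k-1]}/W_k^+$, $k\ge p$, with coefficients of an explicit polynomial in $Q_1^+$; this yields Corollary~\ref{quan_prod_+_iden_simplify}.
\item The B\'ezout identity $M^1_{a,b}(x)x^{p-a}(x-1)^a=M^2_{a,b}(x)(x-\tfrac12)^{p-b}-1$, combined with Lemma~\ref{g_a,b^+}, writes $g^+_{a,b,n}$ for $n\le a$ as a differential operator applied to $g_n^+$ with $n\le p-1$.
\end{enumerate}

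\textbf{Minimality.} Let $K$ be the differential field generated by the invariants. The space $V_{\mathcal L^+}\cap K$ of solutions lying in $K$ is not known to be $G_{\mathcal L^+}$-stable. That would be automatic if $K$ were a Picard--Vessiot subextension, but this is essentially what you are trying to prove. So irreducibility of $V_{\mathcal L^+}$ under $G_{\mathcal L^+}$ tells you nothing about it.

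Likewise, $g_0^+$ and its derivatives only give the fixed field of the stabilizer of $g_0^+$, which is a nontrivial subgroup for $p\ge2$. The remaining solutions enter the structure constants only through Wronskian-type expressions, not individually.

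The paper instead proves directly (Proposition~\ref{preserve_quan_prod_implies_id}, on the $\mathcal X^-$ side) that any $\sigma\in G_{\mathcal L^-}$ fixing all coefficients of $D\star_{small}T$ is the identity, and then applies the Galois correspondence. The steps are:
\begin{enumerate}
\item Fixing $g_0^-$ and $W^-_{k-1}W^-_{k+1}/(W^-_k)^2$ for $k\le p-1$ forces $\sigma(W_k^-)=W_k^-$, so $\sigma$ is unipotent upper triangular in the Frobenius basis.
\item Fixing the $y^-_\infty$-coefficients $\sum_i(-1)^{i+n}g^-_{a,b,i}(W^-)^{[k]\setminus\{n\}}_{[k]\setminus\{i\}}/W_k^-$ and inverting the cofactor matrix gives $\sigma(g^-_{a,b,i})=g^-_{a,b,i}$.
\item Since $g_i^-\in\mathfrak D_{Q_1^-}\cdot g^-_{a,b,i}$, $\sigma$ then fixes every $g_i^-$.
\end{enumerate}
The idea you are missing is this use of the twisted-sector products, through the auxiliary functions $g_{a,b,i}$, to pin down the Galois element.

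Your final check on $(Q_2^+)^{1/2}$ is unnecessary, since it already lies in the base field.
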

Any orbifold cohomology class $[\mathscr{F}]\in A^*_{\operatorname{orb}}(\mathcal{X}^+\times\mathcal{X}^-)$ induces an additive group homomorphism
\[\begin{tikzcd}[row sep=0pt]
A^*_{\operatorname{orb}}(\mathcal{X}^+) \arrow[rr] && A^*_{\operatorname{orb}}(\mathcal{X}^-)  \\
T \arrow[rr,mapsto] && (p_-)_*\left([\mathscr{F}]\cup_{\operatorname{orb}}p_+^*I^*T\right),
\end{tikzcd}\]
where $p_\pm$ is the projection from $\mathcal{X}^+\times\mathcal{X}^-$ to $\mathcal{X}^\pm$ and $[\mathscr{F}]$ preserves degree of classes if and only if $[\mathscr{F}]\in A_{\operatorname{orb}}^{\dim\mathcal{X}^+}(\mathcal{X}^+\times\mathcal{X}^-)$. 
Our goal is to find a ``quantum correspondence" arising from the fiber product of the flop and it was shown in \cite{MR5082801} that for toric flops, the fiber product shares the same support as the graph closure, whose normalization coincides with the common blow-up. Therefore, the correspondence should come from  the common blow-up $\mathcal{Y}$ of $\mathcal{X}^+$ and $\mathcal{X}^-$  with corrected orbifold degree 
\[[\mathscr{F}_{c,c_\infty}]=[\mathcal{Y}_{(0,0,0)}]+(-1)^{p+q}\sum_{i=0}^{p-1}c_ih_+^{p-1-i}h_-^i[\mathcal{Y}_{(0,\frac{1}{2},0)}]+c_\infty[\mathcal{Y}_{(1,\frac{1}{2},\frac{1}{2})}],\]
where $h_\pm$ is the pullback of $h$ from $\mathcal{X}^\pm$ to $\mathcal{Y}$. To ensure a perfect correspondence between the orbifold Chow groups and the quantum products, we introduce the following new notion of ``quantum correspondence".
\begin{definition}{\rm(= Definition \ref{quantum correspond})}
A \textbf{quantum correspondence} for $\mathcal X^+\dashrightarrow\mathcal X^-$ consists of the following data:
\begin{enumerate}
    \item a correspondence $[\mathscr F]\in A^{\dim \mathcal X}_{\operatorname{orb}}(\mathcal X\times \mathcal X')_\C$ preserving orbifold Poincar\'e pairing,

    \item differential field extensions $L^\pm$ of $\C(Q^\pm_1,Q_2^\pm)$ containing the $n$-point invariants with $n\geq 3$,
    
    \item a differential field isomorphism $\phi:L^+\to L^-$ 
    under $Q^d\mapsto Q^{\mathscr F(d)}$ for $d=\ell$, $\gamma$.
    \end{enumerate}
    satisfying that the induced map
    \[\Phi\coloneqq [\mathscr{F}]\otimes\phi:A_{\operatorname{orb}}^*(\mathcal{X}^+)_\C\otimes L^+[\![\tau^\mu]\!]\longrightarrow A_{\operatorname{orb}}^*(\mathcal{X}^-)_\C\otimes L^-[\![\tau^\mu]\!]\]
    is a ring isomorphism with respect to the big quantum product structure, that is,
    \begin{equation}
    \Phi(T_1\star_{\tau_{\operatorname{big}}}T_2)=\Phi(T_1)\star_{\Phi(\tau_{\operatorname{big}})}\Phi(T_2),
    \end{equation}
    where $\tau_{\operatorname{big}}=\sum\tau^\mu T_\mu$ and $\Phi$ acts trivially on $\tau^\mu$.
We denote it by $(\phi,[\mathscr F])$ and call $\phi$ a \textbf{regularization map}.
\end{definition}
To find the partner $\phi$ of the correspondence $[\mathscr{F}_{c,c_\infty}]$, we introduce a bilinear form on the solution space $V_{\mathcal{L}}$ of $\mathcal{L}$ to achieve it.
\begin{lm}{\rm (See Proposition \ref{bilinear_pairing})}
If $G_{\mathcal{L}}$  is the differential Galois group of $F_{\mathcal{L}}$ over $\mathbb{C}(Q)$, then there exists a $G_\mathcal{L}$-invariant nondegenerate bilinear form $B_{\mathcal{L}}:V_{\mathcal L}\times V_{\mathcal L}\to\C$. Moreover, if $\{g_i\}_{i=0}^{n-1}$ is the normalized Frobenius basis of $\mathcal{L}$ at $Q=0$, then we have 
\[B_{\mathcal L}(g_i,g_j)=(-1)^j\delta_{i+j,n-1}.\]
\end{lm}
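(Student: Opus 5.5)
We construct $B_{\mathcal L}$ from the self-adjointness of the hypergeometric operator $\mathcal L$, via the classical bilinear concomitant (Lagrange identity). Then we pin down its values on the Frobenius basis by a leading-term computation at $Q=0$.

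\textbf{Step 1: self-adjointness.} Write $\mathcal L$ in terms of $\theta = Q\frac{d}{dQ}$. Since $\mathcal L$ comes from the Picard--Fuchs operator $\square_\ell$ of a flop, the weights satisfy $\sum a_i=\sum b_j$. We therefore expect, after suitable normalization, a form
\[\mathcal L=\theta^{n}-Q\prod_k(\theta+\alpha_k),\]
with the exponent set $\{\alpha_k\}$ stable under $\alpha\mapsto 1-\alpha$ (here the exponents are $1/2$ and $1$ with multiplicities). The first task is to check that there is a rational function $u(Q)$ with
\[u\,\mathcal L=(-1)^n\mathcal L^*u,\]
where $\mathcal L^*$ is the formal adjoint. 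This is exactly the statement that $\mathcal L$ is self-adjoint up to a twist, and it follows from the symmetry of the exponents. We verify it directly on the two terms: $\theta^*=-\theta$ with respect to $dQ/Q$, and conjugation by a factor such as $(1-Q)^{c}$ fixes the shift.

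\textbf{Step 2: the bilinear form.} Lagrange's identity gives a bilinear concomitant $\mathcal B(f,g)$, built from $u$ and the coefficients of $\mathcal L$ and bilinear in the derivatives $\theta^if,\theta^jg$ with $i+j\le n-1$. It satisfies
\[\theta\,\mathcal B(f,g)=u\bigl(g\,\mathcal Lf-(-1)^nf\,\mathcal L^*g\bigr)\]
up to the twist, so $\mathcal B(f,g)$ is constant whenever $f,g\in V_{\mathcal L}$.

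We then set $B_{\mathcal L}(f,g)=\mathcal B(f,g)$. The form is $G_{\mathcal L}$-invariant because $\mathcal B$ is a differential polynomial with coefficients in $\mathbb C(Q)$, which elements of $G_{\mathcal L}$ fix. Applying $\sigma\in G_{\mathcal L}$ therefore gives $\mathcal B(\sigma f,\sigma g)=\sigma(\mathcal B(f,g))$, and this equals $\mathcal B(f,g)$ since $\mathcal B(f,g)$ is a constant.

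Nondegeneracy follows from the fact that the coefficient of $\theta^{n-1}f\cdot g$ in $\mathcal B$ is a unit. Concretely, if $\mathcal B(f,\cdot)=0$ on all of $V_{\mathcal L}$, then the Wronskian-type matrix forces $f=0$.

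\textbf{Step 3: normalization on the Frobenius basis.} Since $B_{\mathcal L}(g_i,g_j)$ is constant, we may evaluate it as $Q\to0$. The normalized Frobenius basis has the form
\[g_i=\sum_{k\le i}\frac{(\log Q)^k}{k!}\,h_{i-k}(Q),\qquad h_0(0)=1,\quad h_m(0)=0 \text{ for } m>0,\]
and as $Q\to0$ each $g_i$ behaves like $(\log Q)^i/i!$. Near $Q=0$ the concomitant reduces to that of $\theta^n$ (taking $u(0)=1$ after normalization), namely
\[\sum_{k=0}^{n-1}(-1)^k\,\theta^{n-1-k}f\cdot\theta^k g.\]
The Frobenius basis is obtained by applying $\partial_\epsilon$ to $Q^\epsilon\cdot(\text{series})$. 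Working this out, $\mathcal B(g_i,g_j)$ is the coefficient extraction
\[\sum_k(-1)^k[\epsilon^i]\epsilon^{n-1-k}\,[\delta^j]\delta^k,\]
whose only nonzero term is $k=j$ with $n-1-j=i$. This gives $(-1)^j\delta_{i+j,n-1}$. The remaining point is that all $\log Q$ terms cancel; this is guaranteed by constancy, or equivalently by taking the constant term.

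\textbf{Main obstacle.} The main obstacle is Step 1: producing the twisting factor $u$ and confirming self-adjointness for the specific $\mathcal L$ with its exponents and normalization, since a wrong normalization spoils $u(0)=1$. If the direct check is awkward, we will instead define $B$ on the Frobenius basis by the displayed formula. We would then prove $G_{\mathcal L}$-invariance in two parts:
\begin{itemize}
\item invariance under local monodromy at $Q=0$, which is unipotent, $\exp(2\pi i N)$ with $N$ acting as a shift on the $g_i$, and preserves the form by antisymmetry of the shift;
\item invariance of the whole group, by showing $G_{\mathcal L}$ lies in the orthogonal or symplectic group of that form, using irreducibility and self-duality of the hypergeometric $\mathcal L$.
\end{itemize}
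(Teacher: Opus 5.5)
Your main line is the paper's proof. The form is the Lagrange bilinear concomitant, which exists because of the Calabi--Yau symmetry $c_k(-k-\theta)=(-1)^nc_k(\theta)$; this is your exponent symmetry $\alpha\mapsto 1-\alpha$, and with $\theta^\vee=-\theta$ no extra $(1-Q)^c$ twist is needed. It is $G_{\mathcal L}$-invariant because it is a constant differential polynomial with $\C(Q)$-coefficients, and its values on the Frobenius basis come from the constant term in $\C[\log Q][\![Q]\!]$, where the $Q^k$-terms drop out and only the $\theta^n$-concomitant survives.

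The one thing to adjust is that your normalization ($u(0)=1$, $g_i\sim(\log Q)^i/i!$) only covers local exponent $0$. For $\mathcal L^-$ the exponent at $Q=0$ is $\tfrac12$ and the twist is $u=Q^{-1}$, so, as the paper does, pass to $\widetilde{\mathcal L}=Q^{-\alpha}\mathcal L Q^{\alpha}$ and set $B_{\mathcal L}(f,g)=B_{\widetilde{\mathcal L}}(Q^{-\alpha}f,Q^{-\alpha}g)$.
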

By Theorem \ref{well_def_regularization},
for any $c=(c_0,\ldots,c_{p-1})$ such that $c_ic_{p-1-i}=(-1)^{p+q}$, the $\C$-linear map
\[\begin{tikzcd}[row sep=1pt]
V_{\mathcal L^+} \arrow[rr,"\phi_c"] & & V_{\mathcal L^-} \\
g_i^+ \arrow[rr, maps to] && c_i^{-1}2^{-p}g_i^-
\end{tikzcd}\]
preserves the bilinear form  under classical analytic continuation $\gamma: Q_1^+ \rightarrow (Q_1^-)^{-1}$, so one can find $\sigma\in\mathrm{O}(V_{\mathcal{L}^-},B_{\mathcal{L}^-})=G_{\mathcal{L}^-}$ such that $\sigma(\gamma(g_i^+))=c_i^{-1}2^{-p}g_i^-$
and $\psi_c=\sigma\circ\gamma$ is a well-defined differential field isomorphism from $F^+$ to $F^-$. Although all the data we have collected so far come from small quantum products, the result for big quantum products can be obtained by applying Theorem \ref{generator of quantum cohomology} and WDVV equation. We are thus led to the following main theorem.
\begin{thm}{\rm(=Theorem\autoref{main result})}
 If $c_ic_{p-1-i}=(-1)^{p+q}$ for all $0\leq i\leq p-1$ and $c_\infty=1$, then $(\phi_c,[\mathscr{F}_{c,1}])$ is a quantum correspondence. 
\end{thm}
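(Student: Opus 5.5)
The plan is to check the three data of Definition \ref{quantum correspond} in turn, and then reduce the ring-isomorphism property to the small quantum product.

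\emph{Step 1: the pairing.} First I will show that $[\mathscr{F}_{c,1}]$ preserves the orbifold Poincar\'e pairing. Sector by sector, compute the induced map on $A^*_{\operatorname{orb}}$:
\begin{enumerate}
    \item On the untwisted sector, $[\mathcal{Y}_{(0,0,0)}]$ acts by the graph-closure correspondence.
    \item The term $[\mathcal{Y}_{(0,\frac12,0)}]$, weighted by $\sum_i c_i h_+^{p-1-i}h_-^i$, sends the twisted sectors with $d_3\in\Z$, which are weighted projective spaces, to each other.
    \item The term $c_\infty[\mathcal{Y}_{(1,\frac12,\frac12)}]$ identifies the sectors lying in the infinity divisor. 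These are isomorphic on both sides, since $\varphi^\pm$ is an isomorphism there.
\end{enumerate}
The pairing condition on the middle term then becomes exactly the symmetry $c_ic_{p-1-i}=(-1)^{p+q}$. The infinity term needs $c_\infty^2=1$, and I take $c_\infty=1$ so that it is compatible with the untwisted part.

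\emph{Step 2: the fields and the regularization map.} For data (2) I take $L^\pm=F_0^\pm$ from Corollary \ref{proper-field}, which contain all three-point invariants with a divisor insertion. For data (3) I take $\phi_c=\psi_c=\sigma\circ\gamma$ from Theorem \ref{well_def_regularization}, extended to the adjoined square roots by matching $(Q_2^+)^{1/2}$ with the appropriate square root on the minus side via $Q^d\mapsto Q^{\mathscr F(d)}$.

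\emph{Step 3: the small quantum product by divisors.} This is the core. I will verify
\[
\Phi(D\star_{\operatorname{small}}T)=\Phi(D)\star_{\operatorname{small}}\Phi(T)
\]
for $D\in\{h,\xi\}$ and all $T$ in the fixed basis. By Lemma \ref{birkhoff factorization}, the structure constants are explicit $\C(Q)$-polynomial expressions in the Frobenius solutions $g_i^\pm$ and their $Q\frac{d}{dQ}$-derivatives, as found in Section \ref{small-product}. Since $\phi_c$ is a differential field map with $g_i^+\mapsto c_i^{-1}2^{-p}g_i^-$, each structure constant on $\mathcal{X}^+$ is sent to a combination of the minus-side functions. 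I then compare this, term by term, with the action of $[\mathscr{F}_{c,1}]$ on the basis. The factors $c_i$ in the correspondence are designed to cancel the factors $c_i^{-1}$ coming from $\phi_c$. The powers $2^{-p}$ and the signs $(-1)^{p+q}$ must match the normalization of the twisted-sector classes.

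I expect this bookkeeping to be the main obstacle. The hardest part should be the products that mix untwisted and twisted sectors, for example $(\xi-h)\star$ applied to twisted classes. There the rational parts (the analogue of $f(q)+f(q^{-1})=(-1)^r$) and the transcendental parts (the $g_i$) must both match. I would first treat the $G_{\mathcal L}$-invariant combinations through the bilinear form of Proposition \ref{bilinear_pairing}, because pairings of the form $B(g_i,g_j)$ are constants that $\sigma$ does not change. That reduces the remaining checks to classical analytic continuation of rational functions.

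\emph{Step 4: passage to the big product.} Finally I lift the statement to big quantum products. By Theorem \ref{generator of quantum cohomology}, with $S_o=\emptyset$, the ring is generated by divisors. The divisor axiom and the WDVV equation determine all $n$-point invariants recursively from the two-point data $D\star T$ and the pairing. Since $\Phi$ respects the pairing (Step 1), the divisor products (Step 3) and is a differential field isomorphism (so it commutes with the derivatives that appear in the reconstruction), induction on the number of insertions shows that $\Phi$ intertwines all $n$-point invariants. Along the way this shows that they lie in $L^\pm[\![\tau^\mu]\!]$, which gives $\Phi(T_1\star_{\tau_{\operatorname{big}}}T_2)=\Phi(T_1)\star_{\Phi(\tau_{\operatorname{big}})}\Phi(T_2)$.
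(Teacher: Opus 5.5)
Your Steps 1, 2 and 4 follow the paper's architecture. These are the pairing criterion of Lemma~\ref{lm_preserve_orbifold_prod}, the choice $L^\pm=F_0^\pm$ with $\phi_c=\sigma\circ\gamma$, and the reduction of the big product to divisor products. That reduction uses generation over $F_0^+$ (Remark~\ref{rmk_of_generator_of_quantum_cohomology}) and the WDVV/divisor-axiom induction of Theorem~\ref{preserve_small_implies_preserve_big}.

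One correction to Step 1. Since $\vec a=(1,\ldots,1)$, $\mathcal X^+$ has no nontrivial sector with $d_3\in\Z$, so the middle term of $[\mathscr F_{c,1}]$ does not relate twisted sectors to each other. Instead it sends the \emph{untwisted} classes $h^a(\xi-h)^b$ with $p+q\le a+b\le 2p+q-1$ into the sector $y^-_{1/2}$. The condition $c_ic_{p-1-i}=(-1)^{p+q}$ comes from pairing $h^i(\xi-h)^{p+q}$ with $2^{-p}h^{2p+q-1-i}$. The actual reason for $c_\infty=1$ (the pairing only gives $c_\infty^2=1$) is the product $(\xi-h)\star_{\operatorname{small}}h(\xi-h)^{p-1}y^+_\infty$, whose $(Q_2^+)^{1/2}$-term lands in the untwisted sector.

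The genuine gap is Step 3. It is the whole content of the theorem beyond the preceding lemmas, and the mechanism you propose for it does not work. The bilinear form enters only in \emph{constructing} $\phi_c$, to produce $\sigma\in\mathrm O(V_{\mathcal L^-},B_{\mathcal L^-})=G_{\mathcal L^-}$. It cannot reduce the check to classical analytic continuation: by Proposition~\ref{preserve_quan_prod_implies_id}, the only element of $G_{\mathcal L^-}$ fixing all divisor structure constants is the identity. So these constants are far from invariant, and their images depend on $\sigma$. The matching must be done directly from $g_i^+\mapsto c_i^{-1}2^{-p}g_i^-$ and $\theta_{Q_1^+}\mapsto-\theta_{Q_1^-}$, and it rests on identities your outline does not supply.
\begin{enumerate}
\item For $T=h^k(\xi-h)^{p+q}$ with $k\le p-1$, which goes to $c_kh^ky^-_{1/2}$, the Wronskian ratios of Proposition~\ref{quan_prod_+_iden} acquire the factor $c_kc_{k+1}^{-1}$ (up to sign). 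This factor is compensated by the coefficients of $[\mathscr F_{c,1}]$. At $k=p-1$, the rational term in Proposition~\ref{quan_prod_-_exceptional} comes from the closed form $W_k^+=(1-(-1)^q2^{2p}Q_1^+)^{-k-1+p/2}$ of Lemma~\ref{Wron_k^+}. This, not a bilinear-form argument, is the real analogue of $f(q)+f(q^{-1})=(-1)^r$.
\item For $k\ge p$, Lemma~\ref{gen_Wron^+_in_F^+} collapses the transcendental coefficients (Corollary~\ref{quan_prod_+_iden_simplify}).
\item Most importantly, take $T=h^{p+q+a}(\xi-h)^by_\infty^+$ with $a+b\ge p$. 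The structure constants in Propositions~\ref{quan_isom^+_hard} and~\ref{quan_isom^-_hard} are built from auxiliary functions $g^\pm_{a,b,n}$ and $W^\pm_{a,b}$, which solve $\mathcal L^\pm_{a,b}$ rather than $\mathcal L^\pm$. That they are differential polynomials in the $g_i^\pm$ at all is the content of Lemma~\ref{g_a,b^+}, via $\square_\gamma$ and the B\'ezout identity for $M^1_{a,b},M^2_{a,b}$.
\end{enumerate}

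To compare the two sides in the last case, you need that $\gamma$ carries $\mathcal L^+_{a,b}$ to a left multiple of $\mathcal L^-_{b,a}$. The index swap mirrors $h\leftrightarrow\xi-h$. This must hold compatibly with the intertwiners $\theta^a$ and $(\theta-\tfrac12)^{p-b}$ of Lemmas~\ref{g_a,b^+} and~\ref{g_a,b^-}. The result is $\phi_c(g^+_{a,b,n})=\pm c_n^{-1}g^-_{b,a,n}$, up to a constant shift at $n=a$. When $b=p-1$, that constant is exactly what cancels the extra $h^ay^-_{1/2}$-component produced by applying $[\mathscr F_{c,1}]$ to the term $\frac{(Q_2^+)^{1/2}}{2^p}h^{p+q+a}$ in $\xi\star_{\operatorname{small}}T$. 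Without these identities, your term-by-term comparison has nothing to close it.
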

Once we have this crucial quantum correspondence in hand, the method in \cite{ILLW12} allows us to extend our result to all genera. 
\begin{thm}{\rm (= Theorem\autoref{higher-genus-invariance})}
The quantum correspondence $\Phi_c=\phi_c\otimes[\mathscr{F}_c]$ preserves the ancestor potentials in the sense that
\[\phi_c(\mathcal{A}_{\mathcal{X}^+}(t,\tau)^{48})=\mathcal{A}_{\mathcal{X}^-}(\Phi_c(t),\Phi_c({\tau}))^{48},\]
where $\Phi_c(t)$ acts trivially on $t_k^\mu$ and $\overline{\psi}^k$. Moreover, every Gromov--Witten invariant 
\[\gen{T_1,\ldots,T_n}_{g,n}^{\mathcal{X}^\pm}(\tau),\quad \forall g\geq 0\]
is valued in $F_0^\pm[\![\tau]\!]$, and $\Phi_c$ identifies Gromov--Witten potentials for all genus $g\geq 0$ in the sense that
\[\phi_c(\overline{F}_g^{\mathcal{X}^+}(0,\tau))=\overline{F}_g^{\mathcal{X}^-}(0,\Phi_c(\tau)).\]
\end{thm}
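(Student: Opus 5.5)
Following \cite{ILLW12}, the plan is to reduce the all-genus statement to the genus-zero quantum correspondence of Theorem~\ref{main result} via Givental--Teleman reconstruction. The key point is that $\mathcal{X}^\pm$ are toric Deligne--Mumford stacks with a torus action whose fixed points are isolated. Hence the $T$-equivariant quantum cohomology at generic $\tau$ is semisimple. Teleman's classification, in the orbifold form of Zong and Coates--Corti--Iritani--Tseng, then expresses the equivariant ancestor potential as a quantized operator applied to a product of Witten--Kontsevich $\tau$-functions:
\[\mathcal{A}_{\mathcal{X}}(t,\tau)=\widehat{\Psi}\widehat{R}(\tau)\,\prod_{\text{fixed pts}}\mathcal{D}_{\mathrm{pt}}.\]
Here $\Psi$ and $R$ are determined by the Frobenius structure together with the Euler and grading data, through the flatness equation and the homogeneity condition. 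The ambiguity is fixed by the equivariant limit, i.e.\ the $R$-matrix of the fixed-point contributions at $Q=0$.

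The steps are as follows.

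\textbf{Step 1: equivariant lift.} Lift the correspondence $\Phi_c$ to the $T$-equivariant setting, and check that it still intertwines the equivariant big quantum products. Theorem~\ref{generator of quantum cohomology} and WDVV reduce this to the divisor and small products. The required identities are the same hypergeometric identities, now with equivariant parameters. So the Picard--Vessiot extensions $F_0^\pm$ are replaced by the corresponding fields over $\C(\lambda)$, and $\psi_c$ extends $\lambda$-linearly.

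\textbf{Step 2: transport $\Psi$ and $R$.} Canonical coordinates are eigenvalues of the operators $D\star_\tau$. A ring isomorphism $\Phi_c$ therefore matches idempotents and canonical coordinates. Since $[\mathscr F_{c,1}]$ preserves the orbifold Poincar\'e pairing, it also matches the normalized idempotents, up to signs; fixing these signs is where the power $48$ enters. The $R$-matrix is the unique solution of
\[[dR,dU]\text{-type flatness equations}\]
with the correct homogeneity. Since $\phi_c$ is a differential field isomorphism commuting with $Q\partial_Q$, and $\Phi_c$ preserves degree, $\phi_c(R^+)$ solves the equations on the $-$ side. Uniqueness up to the diagonal constant ambiguity then gives $\phi_c(R^+)=R^-$.

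\textbf{Step 3: conclude.} Apply $\phi_c$ to Givental's formula, using Step 2, to get the equality of ancestor potentials. Pass to the nonequivariant limit: each genus-$g$ ancestor invariant is polynomial in $\lambda$, and $\phi_c$ is $\lambda$-linear. The statement that the invariants $\langle T_1,\ldots,T_n\rangle_{g,n}(\tau)$ are valued in $F_0^\pm[\![\tau]\!]$ follows from the same formula. The entries of $R$ and $\Psi$ are differential-algebraic over the small quantum data, so they lie in $F_0^\pm$ after adjoining equivariant parameters, and the limit stays there. Finally, the statement for $\overline F_g$ follows by setting $t=0$.

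\textbf{Main obstacle.} The hardest step is Step 2's normalization. $R$ is determined only up to $\exp(\sum a_k z^{2k-1})$ constants. One must show that $\phi_c$ sends the initial condition of $R^+$ to that of $R^-$. Because $\phi_c$ is not a naive substitution, $Q\mapsto Q^{-1}$ is not followed by ``$Q\to0$''. Moreover, $\phi_c$ contains the Galois twist $\sigma$. My plan is to avoid initial conditions altogether by using the homogeneity (divisor/Euler field) argument of \cite{ILLW12}: the constants are forced by $\deg$-compatibility, which $\Phi_c$ preserves. The residual sign or root-of-unity ambiguity from $\sqrt{\Delta}$ in $\Psi$ and from the square roots $(Q_2)^{1/2}$ is what the $48$th power absorbs, since $\tau$-function prefactors and these roots are defined only up to $24$th/$2$nd roots of unity.
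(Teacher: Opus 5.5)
\textbf{Gap in Step 1.} Your Step 1 is not justified, and the trouble carries over into your normalization of $R$. The regularization map $\phi_c=\sigma\circ\gamma$ exists only because of specific features of the non-equivariant operators $\mathcal{L}^\pm$ over $\C(Q_1^\pm)$:
\begin{itemize}
\item the Calabi--Yau symmetry, which yields the invariant form $B_{\mathcal{L}^\pm}$;
\item the identification $G_{\mathcal{L}^-}=\mathrm{O}(V_{\mathcal{L}^-},B_{\mathcal{L}^-})$;
\item the comparison of $B_{\mathcal{L}^+}$ with $B_{\mathcal{L}^-}$ under the continuation $\gamma$.
\end{itemize}
Likewise, $[\mathscr{F}_{c,1}]$ was only checked against non-equivariant products. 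With equivariant parameters, the Picard--Fuchs system changes, and so do its solution space and its differential Galois group over $\C(\lambda)(Q_1)$. The correspondence would need equivariant correction terms. Nothing guarantees that a Galois element $\sigma$ with the required normalization still exists. Saying ``the same hypergeometric identities, now with equivariant parameters'' is not an argument.

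Your plan is also internally inconsistent. Equivariantly, the Euler field is not a vector field on the Frobenius manifold, since it needs a $\lambda\partial_\lambda$ term. So homogeneity does not fix the constants $\exp(\sum a_kz^{2k-1})$ in $R$. That is why equivariant treatments normalize $R$ by fixed-point data at $Q=0$, and, as you note yourself, $\phi_c$ does not respect $Q\to 0$. The homogeneity argument you want to use is available only in the non-equivariant theory.

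\textbf{How the paper avoids the detour.} The equivariant lift is unnecessary, because the non-equivariant small quantum ring is already semisimple after a finite algebraic extension. Dequantizing $\square_\ell$ and $\square_{\ell+\gamma}$ gives the Batyrev presentation $\widetilde{F}_0^+[h,\xi]/(h^{2p+q}-2^{2p}Q_1^+(\xi-h)^{2p+q},\ \xi h^{2p+q}-Q_1^+Q_2^+)$. It has dimension $(2p+q)(2p+q+1)$, and there $\xi\star$ has distinct eigenvalues, namely the roots of $\Xi_0^+$. Hensel's lemma then propagates this to a formal neighbourhood of $\tau=0$.

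Since $\phi_c(\Xi_0^+)=\Xi_0^-$ and $\Phi_c$ preserves the pairing, $\phi_c$ extends to the splitting fields $\widetilde{F}^\pm$, which also contain the square roots $\sqrt{(\epsilon_i,\epsilon_i)_{\operatorname{orb}}}$. The extended map matches idempotents, canonical coordinates (through $E^\pm$) and $\Psi^\pm$. The homogeneous $R$-matrix is then unique, so $\phi_c(R^+)=R^-$ with no initial-condition issue.

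\textbf{Step 3 and the exponent $48$.} Your Step 3 is incorrect as stated. $\Psi$, $R$ and $\det(\epsilon_i,\epsilon_j)_{\operatorname{orb}}$ lie in $\widetilde{F}^\pm$, not in $F_0^\pm$; the eigenvalues involve $(Q_2^+)^{1/(2p+q+1)}$ and $(Q_1^+)^{1/(2p+q)}$. Getting $\overline{F}_g$ into $F_0^\pm[\![\tau]\!]$ therefore needs a descent step. The identity $\phi_c(\overline{F}_g^{\mathcal{X}^+})=\overline{F}_g^{\mathcal{X}^-}$ holds for every lift of $\phi_c|_{F_0^+}$ to $\widetilde{F}^+$. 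Hence the coefficients are fixed by $\operatorname{Gal}(\widetilde{F}^\pm/F_0^\pm)$.

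Finally, the exponent $48$ is not there to absorb the signs of $\tilde\epsilon_i$; those are fixed by choosing branches compatibly with the extended $\phi_c$. It is needed because $e^{\bar c(\tau)}=\det(\epsilon_i,\epsilon_j)_{\operatorname{orb}}^{1/48}$ does not lie in $\widetilde{F}^\pm$. So $\phi_c$ can only be applied to $\mathscr{A}^{48}$, after which one sets $t=0$ and separates genera by comparing powers of $\hbar$.
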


Finally, we would like to mention some history related to this work. In \cite{coates2009crepant}, Coates analyzed an example about the toric flop ($\vec{a}=(1, 1, 1), \vec{b}=(2,1))$ and gave the correspondence via classical analytic continuation. In \cite{coates2018crepant}, the authors claimed that quantum cohomology rings are ``abstractly" isomorphic under K-equivalent toric Deligne--Mumford stacks related by a single toric wall-crossing. Under D equivalence, it is well-known that the cohomology support of the Fourier-Mukai kernel leads to equivalence of cohomology (by GRR) but in general ``with degrees mixed". 
 
In this paper, we have constructed the ``degree-preserving" and ``pairing-preserving" quantum correspondence for the simplest non-trivial example, which we believe is the first such construction in the study of quantum invariants under orbifold flops. In the course of studying other examples, we came to realize that generalized analytic continuation plays an important role. Moreover, we have accumulated considerable evidence that finding the quantum correspondence is equivalent to finding the {\it appropriate regularization map}. We hope to complete this project by developing a unified approach in the subsequent work.

\emph{Acknowledgments.} 
We are grateful to C.-L. Wang for calling our attention to deal with the higher genus case. We also thank S.-Y. Lee for useful discussions related to this paper. H.-W. Lin is supported by the National Science and Technology Council (NSTC). We are grateful to Taida Institute of Mathematical Sciences (TIMS) for its constant support which makes this collaboration possible.

\section{Preliminary}\label{sec:preliminary}

\subsection{Gromov-Witten theory for smooth Deligne-Mumford stacks}
In this section, we briefly review the Gromov-Witten theory for smooth Deligne-Mumford stacks and particularly for toric stacks which were developed in \cite{abramovich2008gromov}, \cite{tseng2010orbifold} and \cite{coates2015mirror}. Let $\mathcal X$ be a proper smooth Deligne-Mumford stack with projective coarse moduli space $X$ and let $d\in H_2(X,\Z)\cap \operatorname{NE}(\mathcal X)$. Let $\overline{\mathcal M}_{g,n}(\mathcal X,d)$ be the moduli space of $n$-pointed stable maps $f:(\mathcal C,x_1,\ldots,x_n)\to \mathcal X$ from a twisted curve of genus $g$ and with $f_*[\mathcal C]=d$ (\cite{abramovich2008gromov}). The moduli space carries
\begin{enumerate}
    \item evaluation maps $\operatorname{ev}_i:\overline{\mathcal M}_{g,n}(\mathcal X,d)\to \mathcal I\mathcal X,$ for $1\leq i\leq n,$
    \item descendant classes $\psi_i\in A^1(\overline{\mathcal M}_{g,n}(\mathcal X,d))$ for $1\leq i\leq n,$
    \item weighted virtual fundamental class $[\overline{\mathcal M}_{g,n}(\mathcal X,d)]^{w}\in A_*(\overline{\mathcal M}_{g,n}(\mathcal X,d))$.
\end{enumerate} (See \cite{tseng2010orbifold} and \cite{coates2015mirror}).
For $T_1,\ldots, T_n\in A^*_{\operatorname{orb}}(\mathcal X),~k_1,\ldots,k_n\in\Z_{\geq0},$ the Gromov-Witten invariant with descendants is defined to be 
\[\langle T_1\psi_1^{k_1},\ldots, T_n\psi_n^{k_n}\rangle_{g,n,d}\coloneqq\int_{[\overline{\mathcal M}_{g,n}(\mathcal X,d)]^w}(\operatorname{ev_1}^*T_1)\psi_1^{k_1}\cdots(\operatorname{ev}_n^*T_n)\psi_n^{k_n}.\]
Axioms like WDVV equation, fundamental class axiom, string equation, divisor equation, dilaton equation and topological recursive relation for $g=0$ hold (see (\cite{tseng2010orbifold}).

The (big) quantum product $\star_\tau$ is defined by
\[T_\mu\star_\tau T_\nu=\sum_n\sum_d\frac{Q^d}{n!}\langle T_\mu,T_\nu,T^\gamma,\tau^{\otimes n}\rangle_{0,n+3,d}T_\gamma=\gen{T_\mu,T_\nu,T^\gamma}(\tau)\cdot T_\gamma,\]
where $\{T_\gamma\}$ is a basis for $A^*_{\operatorname{orb}}(\mathcal X)$ and $\{T^\gamma\}$ denotes its dual basis with respect to orbifold Poincar\'e pairing.
\subsection{Givental's Lagrangian cone}The Givental's symplectic space is 
$$\mathcal H=A_{\operatorname{orb}}^*(\mathcal X)[\![\operatorname{NE(\mathcal X)\cap H_2(X,\Z)}]\!]((z^{-1}))$$
which is equipped with the symplectic form
\[\Omega(f,g)=\operatorname{Res}_{z=0}(f(-z),g(z))_\mathcal X^{\operatorname{orb}}\]
and has the polarization $\mathcal H=\mathcal H_+\oplus\mathcal H_-$
where $\mathcal H_+=A_{\operatorname{orb}}^*(\mathcal X)[\![\operatorname{NE(\mathcal X)\cap H_2(X,\Z)}]\!][z],~\mathcal H_-=z^{-1}A_{\operatorname{orb}}^*(\mathcal X)[\![\operatorname{NE(\mathcal X)\cap H_2(X,\Z)}]\!][z^{-1}]$. The Givental's Lagrangian cone $\mathcal L_{\mathcal X}\subset\mathcal H$ consists of elements of the form
\[-1z+\mathbf t(z)+\sum_n\sum_d\frac{Q^d}{n!}\bigg\langle\mathbf t^{\otimes n},\frac{T_\mu}{-z-\psi}\bigg\rangle_{0,n+1,d}T^\mu\]for some $\mathbf t(z)\in \mathcal H_+$.
The $J$-function is defined to be a finite dimensional slice:
\[\tau=\sum t^\mu T_\mu(\in A^*_{\operatorname{orb}}(\mathcal X))\mapsto \mathcal L_\mathcal X\cap(-z+\tau+\mathcal H_-)=\{J(\tau,-z)\}.\] Using TRR, one can show that 
\begin{equation}\label{qde}
    z\frac{\partial}{\partial t^\mu}\left(z\frac{\partial}{\partial t^\nu} J\right)=\sum_\gamma C_{\mu,\nu}^\gamma(\tau)\left(z\frac{\partial}{\partial t^\gamma}J\right),\tag{QDE}
\end{equation}
which recovers the quantum product $T_\mu\star_\tau T_\nu=\sum_\gamma C_{\mu,\nu}^\gamma(\tau)T_\gamma$.

The axioms of Gromov-Witten invariants reflect geometric property of $\mathcal L_\mathcal X$ (see \cite{tseng2010orbifold}):
If $\mathbf f\in \mathcal L_\mathcal X, T_\mathbf f=T_\mathbf f\mathcal L_\mathcal X,$ then 
\begin{enumerate}
    \item $T_\mathbf f\cap\mathcal L_\mathcal X=zT_\mathbf f$
    \item $\mathcal L_\mathcal X$ is ruled by $zT_{J(\tau,-z)}$ over $\tau\in A_{\operatorname{orb}}^*(\mathcal X)[\![\operatorname{NE}(\mathcal X)\cap H_2(X,\Z)]\!]$.
\end{enumerate}

\subsection{Orbifold cohomology of toric Deligne-Mumford stack}
The orbifold cohomology for a proper smooth Deligne-Mumford stack $\mathcal X$ is defined to be $A^*(\mathcal I\mathcal X)$ with multiplication $T_\mu\cup_{\operatorname{orb}}T_\nu=\sum_\gamma\langle T_\mu,T_\nu,T^\gamma\rangle_{0,3,0}T_\gamma,$ i.e. the quantum product restricted to $d=\tau=0$. For a toric Deligne-Mumford stack, a presentation of ``Stanley-Reisner" style has been given in \cite{BCS}:
Let $\Q[N]^\Sigma$ denote the $\Q$-vector space of the group ring $\Q[N]$ with multiplication
\[y^{c_1}\cdot y^{c_2}=\begin{cases}
    y^{c_1+c_2}&\text{if there exists }\sigma\in \Sigma\text{ such that $\bar c_1,\bar c_2\in\sigma$},\\
    0&\text{otherwise}.
\end{cases}\]
Then there is a graded-algebra-isomorphism
\[A_{\operatorname{orb}}^*(\mathcal X)\simeq\frac{\Q[N]^\Sigma}{\left\langle\sum_{\rho\in\Sigma(1)}\theta(v_\rho)y^{v_\rho}:\theta\in\hom(N,\Z)\right\rangle}\]
under the following identification: For $v\in N,$ let $\sigma\in\Sigma$ be a cone containing $v$, i.e. $v=\sum_{\rho\in\sigma(1)} a_\rho v_\rho$ for some $a_\rho\geq0$. We set $s=\sum_\rho\langle a_\rho\rangle v_\rho\in\operatorname{Box(\fSigma)}$ and $\deg_{\operatorname{orb}} y^v=\sum a_\rho$. Then $y^v$ is identified with the cohomology class in ordinary cup product $\prod_{\rho\in\sigma(1)}i^*D_\rho^{\lfloor a_\rho\rfloor}y^s\in A^*(\mathcal X(\fSigma/\sigma(\bar s)))\subset A^*(\mathcal I\mathcal X)$ where $y^s$ denotes the fundamental cycle of $\mathcal X(\fSigma/\sigma(\bar s))$, $i:\mathcal X(\fSigma/\sigma(\bar s))\to\mathcal X$ is the closed immersion and $D_\rho$ is the first chern class associated to the tautological line bundle $L_\rho,$ $\rho\in\Sigma(1)$.

\subsection{Mirror theorem}
The mirror theorem for toric Deligne-Mumford stacks with semi-projective coarse moduli spaces was established into \cite{coates2015mirror}.
Let $S\to \operatorname{Box}(\fSigma)$ be a map from a set $S$ in $N$.

\begin{enumerate}
    \item The $S$-extended Mori cone $\operatorname{NE}^S(\mathcal X)$ is identified as the image of the embedding $\operatorname{NE}(\mathcal X)_\Q\oplus\bigoplus_{s\in S}\Q_{\geq0}\to \mathbb L^S_\Q=\ker (\beta^S:\Q^{\Sigma(1)\sqcup S}\to N,~(\lambda_\rho)\times (n_s)_{s\in S}\mapsto \sum_\rho \lambda_\rho e_\rho+\sum_sn_s s)$ via
    \begin{equation}
        (d,(n_s)_{s\in S})\mapsto(D_\rho.d-\sum_{s\in S} s_\rho n_s)_\rho\times(n_s)_{s\in S}.
    \end{equation}
    
    \item $\Lambda^S(\subset\mathbb L_\Q^S)$ consists of $(\lambda_\rho)_\rho\times(n_s)_{s\in S}$ such that $n_s\in\Z$ for all $s\in S$ and $\{\rho:\lambda_\rho\notin\Z\}$ defines a cone in $\Sigma$. 
    
    \item The reduction function $\nu^S:\Lambda^S\to\operatorname{Box}(\fSigma)$ is defined by
    \begin{align}
        \lambda\mapsto\sum_\rho\lceil\lambda_\rho\rceil e_\rho+\sum_{s\in S}\lceil n_s\rceil s.
    \end{align}
    Note that $\bar \nu^S(\lambda)=\sum_{\rho}\langle-\lambda_\rho\rangle e_\rho\in \bar N$ and every $s\in\operatorname{Box}(\fSigma)$ is of the form $\sum_\rho\lceil\lambda_\rho\rceil e_\rho$ for some $(\lambda_\rho)_\rho\in\Lambda^{\emptyset}$.
    
    \item For $\lambda=(d,(n_s)_{s\in S})\in H_2^S(X)$, the extended Novikov variables are 
    $$\widetilde Q^\lambda:=q^de^{\sum_\rho t_\rho(D_\rho.d)}\prod_{s\in S}x_s^{n_s}, \quad Q^\lambda:=Q^d:=q^de^{\sum_\rho t_\rho(D_\rho.d)}.$$
    
    \item The $A^*_{CR}(\mathcal X)[\![\operatorname{NE}^S(X)]\!]((z^{-1}))$-valued function
    \begin{equation}
    I^S(t,x,Q,z)=ze^{\frac{\sum_\rho t_\rho D_\rho}{z}}\sum_{\lambda\in\Lambda^S\cap \operatorname{NE}^S(X)}[y^{v^S(\lambda)}]\frac{\widetilde Q^\lambda}{\prod_{s\in S}(n_s!z^{n_s})}\prod_{\rho}\frac{\prod_{a\leq0,\langle a\rangle=\lambda_\rho}(D_\rho+az)}{\prod_{a\leq\lambda_\rho,\langle a\rangle=\lambda_\rho}(D_\rho+az)}
    \end{equation} lies in the Lagrangian cone, where $y^s$ denotes the fundamental class of the inertia component $\mathcal X(\fSigma/\sigma(\bar s))\subset\mathcal I\mathcal X$.
\end{enumerate}

%comment: after 3.2 describe inertia components and maps 

\section{Generation by divisors}
The divisor-generating property of cohomology rings is useful in reconstructing the Gromov--Witten invariants from invariants of fewer points for the case of smooth toric varieties. When orbifold Chow groups have nontrivial twisted sectors, the property does not hold. In this section, we will give a strategy to resolve it for local models of simplicial toric flops.

\subsection{Local models for simplicial toric flops} Let $\vec{a}=(a_0,\ldots,a_r)\in \N^{r+1}$, $\vec{b}=(b_0,\ldots,b_{r'})\in \N^{r'+1}$ and $v_0, \dots, v_r, w_0,\dots, w_{r'}$ be $r+r'+2$ elements.
We define
\begin{enumerate}
    \item the finitely generated abelian group $N=\Z v_0\oplus\cdots\oplus\Z w_{r'}/(\sum a_i v_i-\sum b_j w_j)$,
    \item the homomorphism $\beta:\Z v_0\oplus\cdots\oplus\Z w_{r'}\oplus\Z e\to N,~v_i\mapsto v_i,w_j\mapsto w_j,e\mapsto -\sum a_iv_i$,
    \item the homomorphism $\widetilde\beta:\Z v_0\oplus\cdots\oplus\Z w_{r'}\oplus\Z e\oplus\Z(-e)\to N,~v_i\mapsto v_i,w_j\mapsto w_j,e\mapsto -\sum a_iv_i,~-e\mapsto \sum a_iv_i$,
    \item $\Sigma^+$ to be the fan in $N_\R$ containing maximal cones in
\begin{align*}
&\{\Cone(v_0,\ldots,\widehat{v}_i,\ldots,v_r,w_0,\ldots,\widehat{w}_j,\ldots,w_{r'},e):0\leq i\leq r,0\leq j\leq r'\}\\&\cup\{\Cone(v_0,\ldots,\widehat{v}_i,\ldots,v_r,w_0,\ldots,w_{r'}):0\leq i\leq r\},
\end{align*}
 \item $\Sigma^-$ to be the fan in $N_\R$ containing maximal cones in
\begin{align*}
&\{\Cone(v_0,\ldots,\widehat{v}_i,\ldots,v_r,w_0,\ldots,\widehat{w}_j,\ldots,w_{r'},e):0\leq i\leq r,0\leq j\leq r'\}\\&\cup\{\Cone(v_0,\ldots,v_r,w_0,\ldots,\widehat{w}_j,\ldots,w_{r'}):0\leq j\leq r'\},
\end{align*}
\item $\widetilde\Sigma$ to be the common subdivision of $\Sigma$ and $\Sigma'$ defined by adjoining $-e=\sum a_iv_i$,
\item three stacky fans $\fSigma^+:=(N,\Sigma^+,\beta),~\fSigma^-:=(N,\Sigma^-,\beta)$, and $~\widetilde\fSigma:=(N,\widetilde\Sigma,\widetilde\beta)$.
\end{enumerate}
The local model of a toric flop is defined to be the associated toric stack (rational) morphism (following the construction in \cite{BCS})\[\mathcal X^+\coloneqq\mathcal X(\fSigma^+)\dashrightarrow\mathcal X^-\coloneqq \mathcal X(\fSigma^-).\]
Alternatively, $\mathcal X^+$ is constructed as the quotient stack
\[\mathbb{P}(\vec{a};\vec{b})\coloneqq\left[\quotient{\left((\C^{r+1}\setminus \{0\})\times (\C^{r'+2}\setminus \{0\}\right)}{\ (\C^\times)^2}\right]\]
under the action of $(\C^\times)^2$ on $(\C^{r+1}\setminus \{0\})\times(\C^{r'+2}\setminus \{0\})$ by
\[\begin{tikzcd}[row sep=1pt]
(\C^\times)^2 \arrow[rr] && (\C^{r+1}\setminus \{0\})\times(\C^{r'+2}\setminus \{0\}) \\
(s,u) \arrow[rr,|->] && (s^{\vec{a}},u^{\vec{b}}s^{-\vec{b}},u)
\end{tikzcd}.\]
For $\mathcal X^-$, the quotient stack is defined similarly by interchanging $\vec a, \vec b$. The birational map is resolved by considering the weighted blow-up of $\P(\vec{a};\vec{b})$ along the closed substack
\[\left[\quotient{\left((\C^{r+1}\setminus \{0\})\times \{0\}^{\times (r'+1)}\times\C^\times\right)}{\ (\C^\times)^2}\right]\simeq\P(\vec{a}),\]
which turns out to be the weighted projective stack bundle $\mathcal Y=\mathbb{P}_{\P(\vec{a})\times\P(\vec{b})}(\O\oplus\O(-1,-1))$ via
\[\begin{tikzcd}[row sep=1pt]
(\C^\times)^3 \arrow[rr] && (\C^{r+1}\setminus \{0\})\times(\C^2\setminus \{0\})\times(\C^{r'+1}\setminus \{0\}) \\
(s,t,u) \arrow[rr,|->] && (s^{\vec{a}},u,s^{-1}t^{-1}u,t^{\vec{b}})
\end{tikzcd}\]
and $\mathcal Y$ is exactly the toric stack corresponding to $\widetilde\fSigma$. Also, the blow-up morphism $$\varphi^+:\mathbb{P}_{\P(\vec{a})\times\P(\vec{b})}(\O\oplus\O(-1,-1))\to\P(\vec{a},\vec{b})$$ comes from the morphism
\[\begin{tikzcd}[row sep=1pt]
(\C^{r+1}\setminus \{0\})\times(\C^2\setminus \{0\})\times(\C^{r'+1}\setminus \{0\}) \arrow[rr] && (\C^{r+1}\setminus \{0\})\times (\C^{r'+2}\setminus\{0\}) \\
(\vec{x},z_1,z_2,\vec{y}) \arrow[rr,|->] && (\vec{x},\vec{y}z_2^{\vec{b}},z_1)
\end{tikzcd}\]
together with the morphism $(\C^\times)^3\to(\C^\times)^2$ defined by $(s,t,u)\mapsto (s,u)$. The morphism $\varphi^+$ restricted on exceptional locus is the projection $\P(\vec{a})\times\P(\vec{b})\to\P(\vec{a})$ and we can
contract the $\P(\vec a)$-fiber to get the morphism $\varphi^-:\mathcal Y\to \mathcal \mathcal X^-$ via
\[\begin{tikzcd}[row sep=1pt]
    (\C^{r+1}\setminus \{0\})\times(\C^2\setminus \{0\})\times(\C^{r'+1}\setminus \{0\}) \arrow[rr] && (\C^{r'+1}\setminus \{0\})\times (\C^{r+2}\setminus\{0\}) \\
(\vec{x},z_1,z_2,\vec{y}) \arrow[rr,|->] && (\vec y, \vec x z_2^{\vec a}, z_1)
\end{tikzcd}\]
together with the morphism $(\C^\times)^3\to(\C^\times)^2,~(s,t,u)\mapsto (t,u)$. 

\subsection{Mori cone and box elements}
$A^1_\Q(\mathcal X^+)$ is generated by $h^+,\xi^+$ which correspond to the line bundles associated to the characters $(s,u)\mapsto s$ and $(s,u)\mapsto u$ respectively. Indeed, $D_{v_i}=a_i h^+,~D_{w_j}=b_j(\xi^+-h^+),~D_e=\xi^+$ for $0\leq i\leq r,0\leq j\leq r'$.
The Mori cone $\operatorname{NE}_\Q(\mathcal X^+)$ is generated by classes $\ell^+,\gamma^+$ representing exceptional curves and fiber curves respectively. We have the pairing:

\[\begin{tabular}{c|c|c}
& $h^+$ & $\xi^+$ \\
\hline $\ell^+$ & $1$ & $0$ \\
\hline $\gamma^+$ & $0$ & $1$ \\
\end{tabular}\]
Hence $\bbox(\fSigma)$ is parametrized as follows by $\nu=\nu^{S=\emptyset}$ defined in Section 2.4:
\[\Lambda_{\mathcal{X}^+}\coloneqq\left\{(d_1,d_3)\in\Q_{\geq 0}^2\middle|\ d_1\in\tfrac{1}{a_i}\Z,\ d_3-d_1\in\tfrac{1}{b_j}\Z \text{ or } d_3\in\Z \text{ for some } i,j\right\}\longrightarrow\bbox(\fSigma)\]
\[\nu^+: (d_1,d_3)\longmapsto\sum_{i=0}^r\lceil a_id_1\rceil v_i+\sum_{i=0}^{r'}\lceil b_i(d_3-d_1)\rceil w_i+\lceil d_3\rceil e.\]
Similarly, we have $h^-$, $\xi^-$, $\ell^-$, $\gamma^-$ for $\mathcal{X}^-$ and the parametrization for $\bbox(\fSigma')$:
\[\Lambda_{\mathcal{X}^-}\coloneqq\left\{(d_2,d_3)\in\Q_{\geq 0}^2\middle|\ d_2\in\tfrac{1}{b_j}\Z,\ d_3-d_2\in\tfrac{1}{a_i}\Z \text{ or } d_3\in\Z \text{ for some } i,j\right\}\longrightarrow\bbox(\fSigma')\]
\[\nu^-: (d_2,d_3)\longmapsto\sum_{i=0}^r\lceil a_i(d_3-d_2)\rceil v_i+\sum_{i=0}^{r'}\lceil b_id_2\rceil w_i+\lceil d_3\rceil e.\]
Also, we have the parametrization for $\bbox(\widetilde\fSigma)$:
\[\Lambda_{\mathcal{Y}}=\left\{(d_1,d_2,d_3)\in\Q_{\geq 0}^3\middle|\ d_1\in\tfrac{1}{a_i}\Z,\ d_2\in\tfrac{1}{b_j}\Z,\ d_3\in\Z \text{ or } d_3-d_1-d_2\in\Z\right\}\longrightarrow\bbox(\bar{\fSigma})\]
\[(d_1,d_2,d_3)\longmapsto\sum_{i=0}^r\lceil a_id_1\rceil v_i+\sum_{i=0}^{r'}\lceil b_id_2\rceil w_i+\lceil d_3\rceil e+\lceil d_3-d_2-d_1\rceil(-e).\]

By abuse of notation, we will write $(d_1,d_3)\in\bbox(\fSigma^+)$ in place of its image and similarly for $\bbox(\fSigma^-)$ and $\bbox(\widetilde\fSigma)$. As shown in \cite{BCS}, each box element corresponds to an inertia component and we can determine them for $\mathcal X^+$, $\mathcal X^-$ and $\mathcal Y$ respectively as follows.
For $d\in\Z$, let $\vec a(d)\coloneqq\{a_i:{a_i d\in\Z}\}$, $\vec b(d)\coloneqq\{b_j:{b_j d\in\Z}\}$.
% To each element in box, we associate components $\mathcal X^+_{(\ell_1,d_3)}\subset\mathcal I\mathcal X^+,\mathcal X^-_{(d_2,d_3)}\subset\mathcal I\mathcal Y,~\mathcal Y_{(\ell_1,d_2,d_3)}\subset\mathcal I\mathcal Y$
% as follows: 
\begin{enumerate}        
    \item $\X^+_{(d_1,d_3)}=\begin{cases}
        \P(\vec a(d_1);\vec b (d_1))&\text{if }d_3\in\Z\\
        \P(\vec a(d_1))\times \P(\vec b(d_3-d_1))&\text{if }d_3\notin\Z
    \end{cases}$
    \item $\X^-_{(d_2,d_3)}=\begin{cases}
        \P(\vec a(d_2);\vec b (d_2))&\text{if }d_3\in\Z\\
        \P(\vec a(d_2))\times \P(\vec b(d_3-d_2))&\text{if }d_3\notin\Z
    \end{cases}$
    \item $\mathcal Y_{(d_1,d_2,d_3)}=\begin{cases}
        \P_{\P(\vec a(d_1))\times \P(\vec b(d_1))}(\mathcal O\oplus \mathcal O(-1,-1))&\text{\fbox{Type \rom{1}} if }d_3\in\Z\text{ and }d_1+d_2\in\Z\\\P(\vec a(d_1))\times \P(\vec b(d_2))&\text{\fbox{Type \rom{2}} if }d_3\in\Z\text{ and }d_1+d_2\notin\Z\\
        \P(\vec a(d_1))\times\P(\vec b(d_2))&\text{\fbox{Type \rom{3}} if }d_3\notin\Z
    \end{cases}$
\end{enumerate}

The simplicial toric flop $\mathcal X^+\leftarrow\mathcal Y\rightarrow \mathcal X^-$ induces morphisms on inertia components 
$\mathcal X^+_{(d_1,d_3)}\leftarrow \mathcal Y_{(d_1,d_2,d_3)}\rightarrow \mathcal X^-_{(d_2,d_3)}$ which can be classified into the following three types.
\begin{enumerate}[label=\fbox{Type \Roman*}, leftmargin=*, align=left]
    \item 
    \[\begin{tikzcd}
    &\arrow[ld,"\text{blow up}",swap]\P_{\P(\vec a(d_1))\times \P(\vec b(d_1))}(\mathcal O\oplus \mathcal O(-1,-1))\arrow[rd,"\text{blow up}"]&\\
    \P(\vec a(d_1);\vec b (d_1))&&\P(\vec a(d_2);\vec b (d_2))
    \end{tikzcd}\]
    \item 
    \[\begin{tikzcd}
    \arrow[d,"\text{zero section}",swap]\P(\vec a(d_1))&\arrow[l,"\text{projection}",swap]\arrow[ld]\P(\vec a(d_1))\times \P(\vec b(d_2))\arrow[r,"\text{projection}"]\arrow[rd]&\P(\vec b(d_2))\arrow[d,"\text{zero section}"]\\
    \P(\vec a(d_1);\vec b (d_1))&&\P(\vec a(d_2);\vec b (d_2))
    \end{tikzcd}\]
    \item 
    \[\begin{tikzcd}
    &\arrow[ld,"id",swap]\P(\vec a(d_1))\times \P(\vec b(d_2))\arrow[rd,"id"]&\\
    \P(\vec a(d_1))\times \P(\vec b(d_2))&&\P(\vec a(d_1))\times \P(\vec b(d_2))
    \end{tikzcd}\]
\end{enumerate}

\subsection{Generators for quantum product}
In this section, $\mathcal X=\mathcal X(\fSigma)$ is either $\mathcal X(\fSigma^+)$ or $\mathcal X(\fSigma^-)$.
Let $S_o$ consist of elements in $\bbox(\fSigma^+)$ such that $\operatorname{age}(s)=1,\sigma(\bar s)\in\Sigma^+\setminus \Sigma^-$ or elements in $\bbox(\fSigma^-)$ such that $\operatorname{age}(s)=1,\sigma(\bar s)\in\Sigma^-\setminus \Sigma^+$. By counting degree, one can show that $I^{S_o}=J(\tau,z)$ for some mirror transformation $\tau=t_1h+t_2\xi+\sum_{s\in S_o}\tau_s y^s$ with $\tau\equiv t_1h+t_2\xi+\sum _{s\in S_o}x_sy^s~\operatorname{mod}Q,|x|^2$. In particular, $\tau$ is invertible as a formal series in $Q,x$. We will show that the orbifold quantum cohomology of $\mathcal X$ with quantum product $\star_\tau$ restricted to $\tau\in A^1(\mathcal X)+\langle y^s:s\in S_o\rangle$  is generated by $A^1(\mathcal X)+\langle y^s:s\in S_o\rangle$. Here, the product $\star_\tau$ can be regarded as a extended small quantum product. Define the localized operator ring
\[\mathcal{R}\coloneqq\{Q^{d_s}:s\in \bbox(\fSigma)\}^{-1}\C[z][\![\operatorname{NE}(\mathcal X),x]\!][z{\partial}/{\partial t_\rho}:\rho\in\Sigma(1)],\]
which inherits the natural $x$-adic structure. One has the following important lemma.

\begin{lemma}\label{birkhoff factorization}
    If $\{T_\mu\}$ is a basis for $A^*_{\operatorname{orb}}(\mathcal X)$,
    then for each $\mu$, there exists a mirror transformation $\tau$ and an operator $P_\mu\in\mathcal{R}$ such that $P_\mu I^{S_o}=(\hat T_\mu J)(\tau,-z)$.
\end{lemma}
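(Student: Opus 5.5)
We realize each basis class $T_\mu$ as the $z^0$-part of a differential operator applied to $I^{S_o}$, and then correct the remaining positive powers of $z$ using the Lagrangian cone. We proceed in three steps: express $T_\mu$ as a monomial operator, compare leading terms modulo $Q$ and $x$, and straighten the higher $z$-terms by induction.

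\emph{Step 1: a monomial operator for each $T_\mu$.} Use the Stanley--Reisner presentation of $A^*_{\operatorname{orb}}(\mathcal X)$. Every $T_\mu$ can be written as $y^{v}$ for some $v\in N$ lying in a cone $\sigma$. Write $v=\sum_{\rho\in\sigma(1)}a_\rho v_\rho$, so that $y^v=\prod_\rho D_\rho^{\lfloor a_\rho\rfloor}y^{s}$ with $s\in\bbox(\fSigma)$. There are two kinds of pieces.
\begin{itemize}
\item For the divisor part, use $z\partial/\partial t_\rho$. Since $I^{S_o}$ carries the prefactor $e^{\sum t_\rho D_\rho/z}$, this operator acts on $I^{S_o}$ as multiplication by $D_\rho$ plus terms of the form $(D_\rho\cdot d)\,Q^d$ and higher.
\item For the twisted part $y^s$, find a class $\lambda\in\Lambda\cap\operatorname{NE}(\mathcal X)$ with $\nu(\lambda)=s$. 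The $I$-function then contains the term $[y^s]Q^{d_s}$, multiplied by an explicit hypergeometric factor. Take $P^{(0)}_\mu=Q^{-d_s}\prod_\rho\prod_{0<a\le\lambda_\rho,\langle a\rangle=\langle\lambda_\rho\rangle}(z\partial_{t_\rho}+az)$ composed with the divisor monomial. This operator cancels the denominator factors, and this is exactly why the ring $\mathcal R$ is localized at the $Q^{d_s}$.
\end{itemize}

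\emph{Step 2: leading-term comparison.} Consider $z^{-1}P^{(0)}_\mu I^{S_o}$ modulo $Q$ and $x$. Its leading term should be $T_\mu$ plus terms with $z$-powers that are nonnegative but are multiplied by classes of higher filtration (degree or age). To see this, use the degree count already invoked for $I^{S_o}=J(\tau,z)$: $I$ is homogeneous, and the only age-$\le 1$ box elements contributing at the relevant order are those in $S_o$. One must also check that $\Lambda$ contains lifts with small enough $d_s$ that the Pochhammer factors have no poles. This is where one uses that the box element is realized by the minimal $\lambda$ with $\lceil\lambda_\rho\rceil$ describing $s$.

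\emph{Step 3: Birkhoff-type straightening.} We use the cone geometry: $\mathcal L_{\mathcal X}$ is ruled by $zT_{J(\tau,-z)}$, and every element of $zT_{J}$ has the form $\sum_\nu c_\nu(z,Q,x)\,z\partial_\nu J$. It follows that $P^{(0)}_\mu I^{S_o}$ lies in $zT$ at the point $J(\tau,-z)$ with the mirror map $\tau$. So it equals $\sum_\nu c_\nu(z)\,(\hat T_\nu J)$, where $c_\nu\in\C[z][\![Q,x]\!]$ and $c_\nu$ is the identity matrix modulo $(Q,x)$ by Step 2. We then remove the positive $z$-powers and the off-diagonal terms inductively in the $(Q,x)$-adic filtration, ordering basis elements by orbifold degree. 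At each stage, subtract $c_\nu^{(k)}(z)$ times the already constructed operators $P_\nu$ of lower order. By Theorem\autoref{generator of quantum cohomology} and (QDE), each $\hat T_\nu J$ is itself obtained from operators in $z\partial_{t_\rho}$ applied to $J$, which are expressible through $I^{S_o}$. Convergence in the $x$-adic, $Q$-adic topology of $\mathcal R$ gives $P_\mu$.

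The main obstacle is Step 3. Because the ring is not divisor-generated, the $\hat T_\nu$ of twisted classes do not directly come from $z\partial_{t_\rho}$. The induction must therefore be organized so that twisted classes are handled through the localized $Q^{-d_s}$ operators of Step 1 rather than through (QDE) alone. The homogeneity and degree bookkeeping needed to guarantee that the correction terms stay in $\mathcal R$, with only finitely many negative $Q^{d_s}$ powers, is the delicate point.
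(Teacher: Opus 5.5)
There is a genuine gap, and it sits exactly where you flag the ``delicate point.''

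\emph{Step 3 is circular as written, though easily repaired.} You invoke Theorem~\ref{generator of quantum cohomology} to express each $\hat T_\nu J$ through operators in $z\partial_{t_\rho}$. That theorem is deduced from this very lemma: its proof starts from the operators $P_\mu$. The appeal is also unnecessary. Suppose that for every $\nu$ you had $P^{(0)}_\nu\in\mathcal R$ with $P^{(0)}_\nu I^{S_o}=\sum_\kappa C_{\nu\kappa}(z)\,(\hat T_\kappa J)(\tau,-z)$ and $C\equiv\operatorname{id}$ modulo $(Q,x)$. Then you could simply take $P_\mu=\sum_\nu (C^{-1})_{\mu\nu}P^{(0)}_\nu$, inverting $C$ $(Q,x)$-adically. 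This is what the paper's iterated subtraction of $\widetilde Q^{\lambda_0}z^k\sum_\nu C_{k,\lambda_0,\nu}\check T_\nu$ along a monomial order on $\Lambda^S\cap\operatorname{NE}^S(\mathcal X)$ accomplishes. So this part of your plan is fine once the citation is removed.

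\emph{The real gap is Steps 1--2.} Nothing you wrote shows $z^{-1}P^{(0)}_\mu I^{S_o}\equiv T_\mu$ modulo $(Q,x)$.

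First, the shifts are off. Since $z\partial_{t_\rho}$ acts on a degree-$d$ term as $D_\rho+(D_\rho\cdot d)z$, the factors must be $z\partial_{t_\rho}-bz$ with $0\le b<\lceil\lambda_\rho\rceil$, not $z\partial_{t_\rho}+az$.

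Second, and more seriously, your operator cancels the denominator of only the single term indexed by $\lambda$. Every other term of $I^{S_o}$ gets multiplied by $Q^{-d_s}$. For instance, the $\lambda=0$ term $ze^{\sum_\rho t_\rho D_\rho/z}$ becomes $Q^{-d_s}$ times a polynomial in the $D_\rho$ and $z$. You give no reason why these negative $Q$-powers, which come with positive $z$-powers, cancel.

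The idea you are missing is the paper's. Pass to the fully extended $I^S$ with $S=\bbox(\fSigma)$. There $y^s$ appears at leading order as $z\partial_{x_s}$ of the term $x_sy^s$. Then use the Picard--Fuchs operators $\square_s$, with $\square_sI^S=0$. At $x=0$ this identity says precisely that your corrected operator applied to $I$ equals $z\partial_{x_s}I^S|_{x=0}=ze^{\sum_\rho t_\rho D_\rho/z}y^s+O(Q)$. That is the cancellation you need.

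When $S_o\neq\emptyset$ there is a further issue. The variables $x_{s'}$, $s'\in S_o$, are live, and $\square_s$ contains the Euler terms $\sum_{s'}s'_\rho x_{s'}z\partial_{x_{s'}}$. Your truncated operator then differs from $z\partial_{x_s}I$ by terms of the form $Q^{-d_s}\cdot O(x)$. The paper disposes of these by substituting $\square_{s'}$ repeatedly and passing to the $x$-adic limit. This produces $R_s\in\mathcal R$ with $R_sI^S=z\partial_{x_s}I^S$ exactly.

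Without this step, the twisted classes are never actually produced by an element of $\mathcal R$, and that is the heart of the lemma.
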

\begin{proof}
    Let $S=\bbox(\fSigma)$. It suffices to show the statement for $I^S=I^S(t,(x_s)_{s\in S})$. First, we observe that $I^S$ satisfies the Picard-Fuchs equations $\{\square_s=0\}_{s\in S}$. Indeed,
    for $s\in S$, we write $s=\sum_{\rho\in\sigma(\bar s)} s_\rho v_\rho$ and pick $d_s\in\operatorname{NE}(\mathcal X)$ with $\lambda_\rho\coloneqq D_\rho.d_s\geq0$ such that $s=\sum_\rho \lceil\lambda_\rho\rceil v_\rho$. If we define
    \[\square_s=z\frac{\partial}{\partial x_s}-Q^{-d_s}\prod_{\rho}\prod_{a=1}^{\lceil\lambda_\rho\rceil}\left(z\frac{\partial}{\partial t_\rho}-\sum_{s'\in\bbox(\fSigma)} s'_\rho x_{s'}z\frac{\partial}{\partial x_{s'}}-(a-1)z\right),\]
    then it is easy to see that \[\square_s I^S=0.\]
    Substituting the relation $\square_{s'}$ for $z\partial_{x_{s'}}$ in $\square_s$, we can express the action of $z\partial_{x_s}$ on $I^{S_0}$ by an element of $\mathcal{R}[z\partial_{x_{s'}}:s'\in S]$ such that the coefficient of every term involving $z\partial_{x_{s'}}$ has greater than $2$. Iterating this procedure in the $x$-adic topology, we can construct
    \[R_s\in\{Q^{d_s}:s\in \bbox(\fSigma)\}^{-1}\C[z][\![\operatorname{NE}(\mathcal X),x]\!][z\partial/\partial t_\rho:\rho\in\Sigma(1)]\] 
    such that $R_s I^S=z\frac{\partial }{\partial x_s}I^S$. Thus it suffices to show that $P_\mu$ exists in 
    \[\{Q^{d_s}:s\in \bbox(\fSigma)\}^{-1}\C[z][\![\operatorname{NE}(\mathcal X),x]\!][z\partial/\partial t_\rho,z\partial/\partial x_s:\rho\in\Sigma(1),s\in S].\]
    For $T_\mu=D_{\rho_1}\cdots D_{\rho_k} y^s\in A^*_{\operatorname{orb}}(\mathcal X)$, 
    \[\left(z\frac{\partial}{\partial t_{\rho_1}}\right)\cdots \left(z\frac{\partial}{\partial t_{\rho_k}}\right)\left(z\frac{\partial}{\partial x_s}\right)z^{-1}I^S\equiv T_\mu~\operatorname{mod}Q,x.\]
    
    We denote the operator on the left hand side by $\check T_\mu$. 
    If $\check T_\mu z^{-1}I^S$ does not have terms with positive $z$-powers, $\check T_\mu z^{-1}I^S=(\hat T_\mu z^{-1}J)(\tau)$. 
    Otherwise, 
    since $I^S$ is a sum over $\lambda=\lambda(d,\vec n)\in\Lambda^S\cap \operatorname{NE}^S(\mathcal{X})\subset (\frac{1}{m}\Z_{\geq0})^\Sigma\times\Z_{\geq0}^S\subset \Q^\Sigma\times \Q^S$ for some large $m$,
    we may choose a monomial order ``$>$" on $\Lambda^S\cap\operatorname{NE}^S(\mathcal{X})$ and find $\lambda_0>0$ such that \[\check T_\mu z^{-1}I^S=T_\mu+\sum_{0<\lambda<\lambda_0}\widetilde Q^{\lambda}C_\lambda+\widetilde Q^{\lambda_0}C_{\lambda_0}+\sum_{\lambda>\lambda_0}\widetilde Q^{\lambda}C_\lambda,\]
    where $C_\lambda \in A^*_{\operatorname{orb}}(\mathcal X)\otimes z^{-1}\C[\![z^{-1}]\!]$ for $\lambda<\lambda_0$,  
    $C_\lambda \in A^*_{\operatorname{orb}}(\mathcal X)\otimes (\C[z]\oplus\C[\![z^{-1}]\!])$ for $\lambda\geq\lambda_0$, and $C_{\lambda_0}$ has valuation $-k\leq0$ in the variable $1/z$.
    
    If $C_{\lambda_0}=\sum_\nu z^kC_{k,\lambda_0,\nu} T_\nu+ \sum_{k'<k}\sum_\nu z^{k'}C_{k',\lambda_0,\nu} T_\nu$,
    then \[(\check T_\mu-\widetilde Q^{\lambda_0}z^k(\sum_\nu C_{k,\nu} \check T_\nu))(z^{-1} I^S)=T_\mu+\sum_{0<\lambda<\lambda_0}\widetilde Q^{\lambda}C_\lambda+\widetilde Q^{\lambda_0}C'_{\lambda_0}+\sum_{\lambda>\lambda_0}\widetilde Q^{\lambda}C'_\lambda\]
    with new coefficients $C_\lambda'$ in $A^*_{\operatorname{orb}}(\mathcal X)\otimes (\C[z]\oplus\C[\![z^{-1}]\!])$ for $\lambda>\lambda_0$ 
    and $$C'_{\lambda_0}=\sum_{k'<k}\sum_\nu z^{k'}C_{k',\lambda_0,\nu} T_\nu.$$ We iterate the 
    process until the $\widetilde Q^{\lambda_0}$-term has coefficients in
    $A^*_{\operatorname{orb}}(\mathcal X)\otimes z^{-1}\C[\![z^{-1}]\!]$. By induction on 
    $\Lambda^S\cap\operatorname{NE}^S(\mathcal X)$ with respect to ``$>$", an operator $P_\mu$ is constructed formally in $\widetilde Q$ such that $z^{-1}P_\mu I^S\mathcal \in z^{-1}L_\mathcal X$ has no positive $z$-powers, and $z^{-1}P_\mu I^S= T_\mu+O(1/z)$. Hence it is $\hat T_\mu J$ up to a mirror transformation $\tau$, which can be determined by $P_{1}I^S=\hat 1J(\tau,z)=J(\tau,z)$.
\end{proof}

\begin{theorem}\label{generator of quantum cohomology}
    The orbifold quantum cohomology of $\mathcal X$ with quantum product $\star_\tau$ restricted to $\tau\in A^1(\mathcal X)+\langle y^s:s\in S_o\rangle$  is generated by $A^1(\mathcal X)+\langle y^s:s\in S_o\rangle$.
\end{theorem}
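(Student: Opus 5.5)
We will deduce Theorem~\ref{generator of quantum cohomology} from Lemma~\ref{birkhoff factorization} together with the quantum differential equation \eqref{qde}. We work with the extended $I$-function $I^{S_o}(t,x,Q,z)$, whose variables are $t_\rho$ for $\rho\in\Sigma(1)$ and $x_s$ for $s\in S_o$. Differentiating along these variables corresponds, via the mirror transformation $\tau=t_1h+t_2\xi+\sum_{s\in S_o}\tau_s y^s$, to differentiating $J$ along directions in $A^1(\mathcal X)+\langle y^s:s\in S_o\rangle$. As noted before the lemma, $\tau\equiv t_1h+t_2\xi+\sum x_sy^s$ modulo $Q$ and $|x|^2$. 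Hence the change of variables $(t,x)\mapsto\tau$ is invertible as a formal series. It follows that the operators $z\partial/\partial t_\rho$ and $z\partial/\partial x_s$ are $\C[\![Q,x]\!]$-linear combinations of the operators $z\partial/\partial\tau^\mu$ with $T_\mu\in A^1(\mathcal X)+\langle y^s:s\in S_o\rangle$, and conversely.

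The first step is to apply Lemma~\ref{birkhoff factorization} with $S=S_o$ to each basis element $T_\mu$. This gives an operator $P_\mu$ that is a polynomial in $z\partial_{t_\rho}$, with coefficients in $\{Q^{d_s}\}^{-1}\C[z][\![\operatorname{NE}(\mathcal X),x]\!]$, such that $P_\mu I^{S_o}=(\hat T_\mu J)(\tau,-z)$. In that proof the derivatives $z\partial_{x_s}$ for $s\in S_o$ are first allowed and then eliminated through the Picard--Fuchs operators $\square_s$. For our purpose we may keep either form, since both only involve generator directions.

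The second step is to iterate \eqref{qde}. Write $I^{S_o}=J(\tau,-z)$ and use the chain rule to convert $z\partial_{t_\rho}$ into a combination of $z\partial_{\tau^\nu}$ along generator directions $T_\nu$. A monomial $z\partial_{\tau^{\nu_1}}\cdots z\partial_{\tau^{\nu_k}}J$ equals $(T_{\nu_1}\star_\tau\cdots\star_\tau T_{\nu_k})\cdot\hat J+O(z)$-corrections. More precisely, by \eqref{qde} the operator $z\partial_{\nu_1}\cdots z\partial_{\nu_k}$ applied to $J$ equals $\sum_\gamma c_\gamma(\tau,z)\,z\partial_\gamma J$, where $c_\gamma(\tau,z)$ is a polynomial in $z$ whose $z^0$-part gives the coefficients of the product $T_{\nu_1}\star_\tau\cdots\star_\tau T_{\nu_k}$. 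The $z$-dependent parts come from derivatives of the structure constants.

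The third step is to compare leading terms. The fundamental solution $S(\tau,z)$, where $z\partial_\gamma J=S^{-1}T_\gamma$ up to convention, is invertible. Applying $P_\mu$ produces $\hat T_\mu J$. Setting $z=0$ in the coefficients of $P_\mu$ then shows that $T_\mu$ equals a $\C[\![Q,x]\!]$-linear, $Q$-localized combination of $\star_\tau$-products of the generators. So every $T_\mu$ lies in the $\star_\tau$-subalgebra generated by $A^1(\mathcal X)+\langle y^s:s\in S_o\rangle$, which is the statement.

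The main obstacle is making this $z\to 0$ extraction rigorous. The coefficients of $P_\mu$ carry positive powers of $z$, and one must check that these terms do not contribute non-generated classes. The first thing I would try is the standard argument: write $P_\mu J=\sum_\gamma a_{\mu\gamma}(\tau,z)\,z\partial_\gamma J$ with $a_{\mu\gamma}$ polynomial in $z$. Since the $z\partial_\gamma J$ are linearly independent over $\C[z][\![Q,x]\!]$, the identity $P_\mu J=z\partial_\mu J$ forces $a_{\mu\gamma}=\delta_{\mu\gamma}$. In particular the $z^0$-part of $a_{\mu\gamma}$ is $\delta_{\mu\gamma}$, and that $z^0$-part is exactly the product expression obtained from $P_\mu|_{z=0}$. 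A second point to check is the localization by $Q^{-d_s}$. Here one has to make sure that the restriction of $\tau$ to $A^1(\mathcal X)+\langle y^s\rangle$ keeps everything within formal series, which follows from the invertibility of the mirror map.
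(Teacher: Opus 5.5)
Your proposal is correct and follows the paper's proof. The paper applies Lemma~\ref{birkhoff factorization}, rewrites $P_\mu$ in the generator directions via the inverse mirror map, and then dequantizes the identity $P_\mu(z,\hat D_\rho,\hat y^s)J=\hat T_\mu J$ to get $P_\mu(0,D_\rho\star_\tau,y^s\star_\tau)1=T_\mu$ by the argument of Cox--Katz, Theorem 10.3.1. Your linear-independence argument for $\{z\partial_\gamma J\}$ via \eqref{qde} is the same step as the paper's flat-section argument, since the components of $\hat T_\gamma J$ are the pairings $(s_\nu,T_\gamma)^{\operatorname{orb}}_{\mathcal X}$, so both rest on the invertibility of the fundamental solution matrix.
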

\begin{proof}
    By the previous lemma, there exists $P_\mu$ such that $P_\mu I^{S_o}(t,x)=\hat T_\mu J(\tau,z)$. After the coordinate change by the inverse mirror transform $t=t(\tau)=t_1h+t_2\xi+\sum_{s\in S_o} t_sy^s$, the operator on the left hand side becomes an operator in $\{q^{d_s}:s\in \bbox(\fSigma)\}^{-1}\C[z][\![\operatorname{NE}(\mathcal X),x]\!][z\frac{\partial}{\partial \tau_\rho},z\frac{\partial}{\partial \tau_s}:\rho\in\Sigma(1),s\in S_o]$. Hence a relation between the quantization of $T_\mu$ and that of divisors, $\{y^s,s\in S_o\}$ is obtained by $P_\mu(z,t,Q,\hat D_\rho,\hat{y}^s)J=\hat T_\mu J$, which implies a relation in the quantum cohomology ring
    by $P_\mu(0,t,Q,D_\rho\star_\tau,y^s\star_\tau)1=T_\mu$ (cf. the proof of \cite[Theorem 10.3.1]{cox1999mirror}).
    
    Let $\tau_{big}=\sum \tau^\mu T_\mu$. For $T_\mu$, if we define $s_\mu(\tau)=zT_\mu+z\sum_{n,d}\frac{q^d}{n!}\langle \frac{T_\mu}{z-\psi},T^\nu,\tau^{\otimes n}\rangle_{0,n+2,d} T_\nu$, then by TRR, we still have that $\hat T_\mu s_\nu=T_\mu\star_{\tau_{\tau_{big}}} s_\nu$, i.e. $\{s_\nu\}$ forms a basis of flat sections of the Givental connection $\nabla^g_{T_\mu}=\hat T_\mu-T_\mu\star_{\tau_{big}}$. Also, by the string equation, $J(\tau,z)=\sum(s_\mu,1)^{\operatorname{orb}}_\mathcal XT^\mu$ and the remaining arguments in \cite[Theorem 10.3.1]{cox1999mirror} still work here.
\end{proof}

\begin{rmk}\label{rmk_of_generator_of_quantum_cohomology}
Let $F$ be a field containing special 3-point invariants $\langle T_0,T_1,T_2 \rangle^{\mathcal{X}}(\tau)$ for any $T_0\in A^1(\mathcal{X})+\gen{y^s:s\in S_0}$ and arbitrary orbifold cohomology classes $T_1$ and $T_2$. It is clear that $F[D_\rho\star_\tau,y^s\star_\tau]1\subseteq A^*_{\operatorname{orb}}(\mathcal{X})_\C\otimes F$. On the other hand, Theorem \ref{generator of quantum cohomology} insures that there exists a field extension $\widetilde{F}$ such that $A_{\operatorname{orb}}^*(\mathcal{X})_\C\otimes \widetilde{F}\subseteq \widetilde{F}[D_\rho\star_\tau,y^s\star_\tau]1$. Hence we can conclude that
\[F[D_\rho\star_\tau,y^s\star_\tau:\rho\in A^1(\mathcal{X}),s\in S_0]1=A_{\operatorname{orb}}^*(\mathcal{X})_\C\otimes F.\]
\end{rmk}

% \begin{remark}
%     \begin{enumerate}
%         \item If $\tau(t)=t,$ then the quantum cohomology is generatde by $A^1(\mathcal X)$.
%         \item The statement also holds for semi-Fano $\mathcal X$ and $\tau\in A^{\leq1}_{\operatorname{orb}}(\mathcal X)$.
%     \end{enumerate}
    
% \end{remark}

% Recall that $\operatorname{Pic}[U/G]\simeq\Hom(G,\C^\times)$, so each ray $\rho\in \Sigma(1)$ defines a line bundle $D_\rho$ on $\mathcal{X}$. Then we have $D_{v_i}=a_ih$ for $0\leq i\leq r$, $D_{w_i}=b_i(\xi-h)$ for $0\leq i\leq r'$, and $D_{w_{r'+1}}=\xi$ on $\mathcal{X}$, where $h$ and $\xi$ correspond to the characters $(s,u)\mapsto s$ and $(s,u)\mapsto u$ respectively. The Chow ring of $\mathcal{X}$ is $A^*(\mathcal{X})\simeq \C[h,\xi]/\langle h^{r+1},\xi(\xi-h)^{r'+1}\rangle$, and the subspace of curve classes is generated by the exceptional class $h^{r-1}(\xi-h)^{r'+1}$ and the fiber class $h^r(\xi-h)^{r'}$ with the following pairing
% \[\begin{tabular}{c|c|c}
% & $h$ & $\xi-h$ \\
% \hline $abh^{r-1}(\xi-h)^{r'+1}$ & $1$ & $0$ \\
% \hline $abh^{r}(\xi-h)^{r'}$ & $0$ & $1$ \\
% \end{tabular}\]
% where $a=\prod a_i$ and $b=\prod b_i$.

% \begin{thm}
% $A_{\operatorname{orb}}^*(\mathcal{X})\otimes V$ is generated by divisor and exceptional age $1$ twisted sector under the small quantum product, where $V$ is the field contain all three point function of $\mathcal{X}$.
% \end{thm} 

% \section{The case of $\protect\vec{a} =(1)^{\times 2p+q}$, $\vec b=(2)^{\times p}\times (1)^{\times q}$}
\section{\texorpdfstring{Non-smooth toric flops of the simplest type ($\vec{a} =(1)^{\times (2p+q)}$, $\vec b=(2)^{\times p}\times (1)^{\times q}$}{a})}
%{a} is necessary
We are devoted to the study of the case of $\vec a =(1)^{\times (2p+q)}$, $\vec b=(2)^{\times p}\times (1)^{\times q}$ with $p\geq 1$, $q\geq 0$. By Section 3.2, the associated simplicial toric flop $\mathcal X^+\leftarrow\mathcal Y\rightarrow \mathcal X^-$ induces morphisms on inertia components 
$\mathcal X^+_{(d_1,d_3)}\leftarrow \mathcal Y_{(d_1,d_2,d_3)}\rightarrow \mathcal X^-_{(d_2,d_3)}$. In this case, there are three morphisms on inertia components:

% In this case, $\mathcal{Y}$ has three twisted sectors 

\begin{enumerate}[leftmargin=*, align=left]
    \item[Type I : $\boxed{(d_1,d_2,d_3)=(0,0,0)}$]  \[\begin{tikzcd}
        &\arrow[ld,"\text{blow up}",swap]\mathcal Y_{(0,0,0)}=\mathcal Y\arrow[rd,"\text{blow up}"]&\\
        \mathcal X^+_{(0,0)}=\mathcal X^+&&\mathcal X^-_{(0,0)}=\mathcal X^-.
    \end{tikzcd}\]

    \item[Type II : $\boxed{(d_1,d_2,d_3)=(0,1/2,0)}$] \[\begin{tikzcd}
        \arrow[d,"\text{zero section}",swap]\P^{2p+q-1}&\arrow[l,"\text{projection}",swap]\arrow[ld]\P^{2p+q-1}\times \P((2)^{\times p})\arrow[r,"\text{projection}"]\arrow[rd]&\P((2)^{\times p})\arrow[d,"id"]\\
        \P(\vec a;\vec b)=\mathcal X^+&&\P((2)^{\times p};\emptyset)=\mathcal X^-_{(1/2,0).}
    \end{tikzcd}\]
    
    \item[Type III : $\boxed{(d_1,d_2,d_3)=(0,1/2,1/2)}$]  \[\begin{tikzcd}
        &\arrow[ld,"id"]\P^{2p+q-1}\times\P((2)^{\times p})\arrow[rd,"id"]&\\
        \P^{2p+q-1}\times \P((2)^{\times p})=\mathcal X^+_{(0,1/2)}&&\P^{2p+q-1}\times\P((2)^{\times p})=\mathcal X^-_{(1/2,1/2).}
    \end{tikzcd}\]
\end{enumerate}

Let $y_{\infty}^\pm$ be the fundamental class $[\mathcal X^\pm_{(0,1/2)}]\in A^*_{\operatorname{orb}}(\mathcal X^\pm)$ and $y_{1/2}^-=[\mathcal X^-_{(1/2,0)}]\in A^*_{\operatorname{orb}}(\mathcal X^-)$.
The orbifold cohomology $A^*_{\operatorname{orb}}(\mathcal X^+)$ is generated by $h^+,\xi^+,y^+_\infty$ with relations
\[(h^+)^{2p+q}=0,\ \xi^+(\xi^+-h^+)^{p+q}=0,\ (\xi^+-h^+)^py^+_\infty=0,\ (y_\infty^+)^2=\xi^+(\xi^+-h^+)^q,\]
and $A^*_{\operatorname{orb}}(\mathcal X^-)$ is generated by 
by $h^-,\xi^-,y^-_\infty,y_{1/2}^-$ with relations
\[(h^-)^{p+q}=0,\ \xi^-(\xi^--h^-)^{2p+q}=0,\ (h^-)^py_\infty^-=0,\ (\xi^--h^-)^{2p+q}y_\infty^-,\ (y_\infty^-)^2=\xi^-(h^-)^q,\]
\[(y_{1/2}^-)^2=(h^-)^q(\xi^--h^-)^{2p+q},\ (h^-)^py_{1/2}^-=0,\ \xi^-y_{1/2}^-=0.\]

\subsection{Correspondence}
Any orbifold cohomology class $[\mathscr{F}]\in A^*_{\operatorname{orb}}(\mathcal{X}^+\times\mathcal{X}^-)_{\mathbb{C}}$ induces an additive group homomorphism
\[\begin{tikzcd}[row sep=0pt]
A^*_{\operatorname{orb}}(\mathcal{X}^+) \arrow[rr] && A^*_{\operatorname{orb}}(\mathcal{X}^-)  \\
T \arrow[rr,mapsto] && (p_-)_*\left([\mathscr{F}]\cup p_+^*I^*T\right),
\end{tikzcd}\]
where $p_\pm$ is the projection from $\mathcal{X}^+\times\mathcal{X}^-$ to $\mathcal{X}^\pm$. By abuse of notation, we still denote this homomorphism by $[\mathscr{F}]$. By routine counting of dimension, $[\mathscr{F}]$ preserves degree of classes if and only if $[\mathscr{F}]\in A_{\operatorname{orb}}^{\dim\mathcal{X}^+}(\mathcal{X}^+\times\mathcal{X}^-)$. 

%In the case of ordinary flops, \cite{LLW1} used the fiber product to give the correspondence, and identify the big quantum product after analytic continuation. 
Our goal is to find a ``quantum correspondence" arising from the fiber product of the flop and it was shown in \cite{MR5082801} that for toric flops, the fiber product shares the same support as the graph closure, whose normalization coincides with the common blow-up. Therefore, we introduce an ansatz correspondence from  the common blow-up $\mathcal{Y}$ of $\mathcal{X}^+$ and $\mathcal{X}^-$  with corrected orbifold degree 
\[[\mathscr{F}_{c,c_\infty}]=[\mathcal{Y}_{(0,0,0)}]+(-1)^{p+q}\sum_{i=0}^{p-1}c_ih_+^{p-1-i}h_-^i[\mathcal{Y}_{(0,\frac{1}{2},0)}]+c_\infty[\mathcal{Y}_{(0,\frac{1}{2},\frac{1}{2})}],\]
where $h_\pm$ is the pullback of $h^\pm$ from $\mathcal{X}^\pm$ to $\mathcal{Y}$. We can show that the correspondence $[\mathscr{F}_{c,c_\infty}]$ sends
\begin{align*}
h^a(\xi-h)^b & \longmapsto\begin{cases}
h^b(\xi-h)^a & \!\!\!\text{\scalebox{0.8}{if $\phantom{2p+q\leq\ } a+b\leq p+q-1$}}  \\
h^b(\xi-h)^a+c_{a+b-p-q}(-1)^{p+q+b}h^{a+b-p-q}y^-_{1/2} & \!\!\!\text{\scalebox{0.8}{if $\phantom{2}p+q\leq a+b\leq 2p+q-1$}} \\
h^b(\xi-h)^a+(-1)^{2p+q-1-a}h^{a+b-2p-q}(\xi-h)^{2p+q
} & \!\!\!\text{\scalebox{0.8}{if $2p+q\leq a+b$}}
\end{cases} \\
h^a(\xi-h)^by^+_\infty & \longmapsto c_\infty h^b(\xi-h)^ay^-_\infty.
\end{align*}
For simplicity, we often omit the superscript $\pm$ on $h$ and $\xi$  unless it is necessary.

\begin{lm}\label{lm_preserve_orbifold_prod}
The correspondence $[\mathscr{F}_{c,c_\infty}]$ preserves the orbifold Poincar\'e pairing if and only if $c_\infty^2=1$ and $c_ic_{p-1-i}=(-1)^{p+q}$ for all $0\leq i\leq p-1$.
\end{lm}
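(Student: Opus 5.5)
The plan is to compute both orbifold Poincaré pairings explicitly in monomial bases and compare them under the explicit formula for $[\mathscr F_{c,c_\infty}]$ stated just above the lemma.

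First I would split the pairing by sectors. Every nontrivial box element here has order $2$, so the involution $\iota$ fixes each inertia component. The orbifold pairing is therefore block-diagonal: the untwisted sector pairs only with itself, and each twisted sector pairs only with itself. On $\mathcal X^+$ the blocks are $A^*(\mathcal X^+)$ and $y^+_\infty A^*(\mathcal X^+_{(0,1/2)})$. On $\mathcal X^-$ they are $A^*(\mathcal X^-)$, $y^-_{1/2}A^*(\mathcal X^-_{(1/2,0)})$ and $y^-_\infty A^*(\mathcal X^-_{(1/2,1/2)})$. The correspondence sends the $y_\infty^+$-block to the $y^-_\infty$-block, and the untwisted block into untwisted $\oplus$ $y^-_{1/2}$-block. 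So the lemma reduces to two independent checks.

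\emph{Twisted block.} Both $\mathcal X^\pm_{(0,1/2)}$ and $\mathcal X^-_{(1/2,1/2)}$ are $\P^{2p+q-1}\times\P((2)^{\times p})$, the identity in Type III. The map $h^a(\xi-h)^by^+_\infty\mapsto c_\infty h^b(\xi-h)^ay^-_\infty$ is just the roles of $h$ and $\xi-h$ on the two factors being interchanged when passing from $\mathcal X^+$ to $\mathcal X^-$. I would verify that $\int h^a(\xi-h)^b$ over the sector on the $+$ side equals $\int h^b(\xi-h)^a$ on the $-$ side (both equal $2^{-p}$ or $0$ from the product structure). Then the pairing on this block is multiplied exactly by $c_\infty^2$, which forces $c_\infty^2=1$.

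\emph{Untwisted block.} I would compute all top intersection numbers. On $\mathcal X^+$ the relations $h^{2p+q}=0$ and $\xi(\xi-h)^{p+q}=0$ give $(\xi-h)^{p+q+1}=-h(\xi-h)^{p+q}$. Hence $\int_{\mathcal X^+}h^a(\xi-h)^b$ with $a+b=3p+2q$ is $(-1)^{b-p-q}\cdot\int h^{2p+q-1}(\xi-h)^{p+q+1}$, up to normalization $2^{-p}$, for $a\le 2p+q-1$, and $0$ otherwise. On $\mathcal X^-$ I would do the same with $(h^-)^{p+q}=0$ and $\xi(\xi-h)^{2p+q}=0$. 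For the $y_{1/2}^-$-block, $(y^-_{1/2})^2=h^q(\xi-h)^{2p+q}$ and $h^py^-_{1/2}=0$ give $(h^iy^-_{1/2},h^jy^-_{1/2})=\delta_{i+j,p-1}\cdot\kappa$ for an explicit constant $\kappa$.

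Then, for $T_1=h^a(\xi-h)^b$ and $T_2=h^{a'}(\xi-h)^{b'}$ with total degree $3p+2q$, I compare $(T_1,T_2)_{\mathcal X^+}$ with $(\mathscr F T_1,\mathscr F T_2)_{\mathcal X^-}$. I would split into cases according to which of the three ranges $a+b$ and $a'+b'$ fall in. When one of them is $\le p+q-1$, the other is $\ge 2p+q+1$. In that case only the untwisted swap and the $(\xi-h)^{2p+q}$ correction contribute, and the signs should match automatically. When both lie in $[p+q,2p+q-1]$, their indices $i=a+b-p-q$ and $j=a'+b'-p-q$ satisfy $i+j=p-1$. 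The swapped untwisted pairing on $\mathcal X^-$ then differs from the pairing on $\mathcal X^+$ by a discrepancy, which vanishes on $\mathcal X^-$ in that range. This must be cancelled by the twisted contribution $c_ic_{p-1-i}(-1)^{b+b'}\kappa$. Matching gives exactly $c_ic_{p-1-i}=(-1)^{p+q}$, and conversely these conditions make all cases agree, giving the ``if and only if''.

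The main obstacle is the sign and normalization bookkeeping in this middle-range case. The factors $(-1)^{p+q+b}$, the powers of $2$ from $\P((2)^{\times p})$, and the sign from reducing $(\xi-h)^{b}$ via the relations must conspire so that the needed product is exactly $(-1)^{p+q}$ independently of $a,b,a',b'$. I would first check this on $p=1,q=0$ by hand and then do the general computation with the reduction formula for $(\xi-h)^{k}$.
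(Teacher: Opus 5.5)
Your plan is correct and is essentially the paper's own argument. The paper also computes the orbifold pairing sector by sector. It reads off $c_\infty^2=1$ from $\mathcal X^+_{(0,\frac12)}$, and $c_ic_{p-1-i}=(-1)^{p+q}$ from the single pair $h^i(\xi-h)^{p+q}$, $h^{2p+q-1-i}$; for that pair $[\mathscr F_{c,c_\infty}]h^i(\xi-h)^{p+q}=c_ih^iy^-_{1/2}$, because $(h^-)^{p+q}=0$. The paper then calls the remaining checks ``clear''; your low/high and middle/middle case split is what actually supplies them, including the ``if'' direction.

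Two corrections are needed in your untwisted block.

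\begin{enumerate}
\item \emph{Dimension.} You have $\dim\mathcal X^\pm$ off by one: it is $3p+2q-1$, not $3p+2q$. So top monomials have $a+b=3p+2q-1$. The reference integral is $\int_{\mathcal X^+}h^{2p+q-1}(\xi-h)^{p+q}=2^{-p}$; your $h^{2p+q-1}(\xi-h)^{p+q+1}$ is zero. Likewise, the low range pairs with $a'+b'\geq 2p+q$, not $2p+q+1$. Your later claim $i+j=p-1$ already uses the correct dimension.

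\item \emph{Middle/middle case.} It is not always the $\mathcal X^-$ untwisted term that vanishes. There are two subcases.
\begin{itemize}
\item If $b+b'\geq p+q$, the $\mathcal X^-$ untwisted term is $0$ and the $\mathcal X^+$ pairing is $(-1)^{b+b'-p-q}2^{-p}$.
\item If $b+b'\leq p+q-1$, the $\mathcal X^+$ pairing is $0$ and the $\mathcal X^-$ untwisted term is $(-1)^{a+a'+q}2^{-p}$.
\end{itemize}
In both subcases the twisted term $c_ic_{p-1-i}(-1)^{b+b'}2^{-p}$ must supply the difference. Either subcase yields the same condition $c_ic_{p-1-i}=(-1)^{p+q}$, so your conclusion stands.
\end{enumerate}
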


\begin{proof}
For the basis
\[\{h^i(\xi-h)^j\mid 0\leq i\leq 2p+q-1,\ 0\leq j\leq p+q\}\cup\{h^i(\xi-h)^jy_\infty^+\mid 0\leq i\leq 2p+q-1,\ 0\leq j\leq p-1\}\]
of $A_{\operatorname{orb}}^*(\mathcal{X}^+)$, we determine its dual basis with respect to the orbifold Poincar\'e pairing as follows:
\begin{align*}
&\left\{2^{-p}\xi h^{2p+q-1-i}(\xi-h)^{p+q-1-j}:0\leq i\leq 2p+q-1,\ 0\leq j\leq p+q-1\right\} \\
\cup& \left\{2^{-p}h^{2p+q-1-i}:0\leq i\leq 2p+q-1\right\} \\
\cup& \{2^{-p}h^{2p+q-1-i}(\xi-h)^{p-1-j}y_\infty^+\mid 0\leq i\leq 2p+q-1,\ 0\leq j\leq p-1\}.
\end{align*}
It is clear that the result follows from 
\[1=\left([\mathscr{F}_{c,c_\infty}]h^i(\xi-h)^{p+q},[\mathscr{F}_{c,c_\infty}]2^{-p}h^{2p+q-1-i}\right)_{\mathcal{X}^-}=(-1)^{p+q}c_ic_{p-1-i}\]
when $0\leq i\leq p-1$, and $c_\infty^2=1$ from the pairing on the cohomology class in $\mathcal{X}^+_{(0,\frac{1}{2})}$.
\end{proof}

For the correspondence $[\mathscr{F}_{c,c_\infty}]$ in Lemma \ref{lm_preserve_orbifold_prod}, since $h\longmapsto(\xi-h)$, $\xi\longmapsto \xi$, we have $\ell\longmapsto -\ell$, $\gamma\mapsto \ell+\gamma$ and thus induces the map of Novikov variables
\[Q_1^+\longmapsto (Q_1^-)^{-1},\quad Q_2^+\longmapsto Q_1^-Q_2^-.\]
What conditions on $c$ and $c_\infty$ are required to meet our purpose? We start with computing their small quantum cohomology rings. 

\subsection{Small quantum cohomology} \label{small-product}

To compute a small quantum product on $A^*_{\operatorname{orb}}(\mathcal X^\pm)$, we make use of the small $I$-function, namely, the extended $I$-function with $S=\emptyset$. To simplify the notation, we drop the superscript "$\pm$" unless necessary. Since the box elements $(d_1,d_3)$ with $d_3\in\Z$ have ages greater than 1, which implies that $S_o = \emptyset$ as mentioned in Theorem \ref{generator of quantum cohomology}, we conclude that $I(t_1h+t_2\xi)=J(t_1h+t_2\xi)$ and by Theorem \ref{generator of quantum cohomology}, the small quantum cohomology ring is generated by divisors. The $I$-function encodes only one-point invariants; due to the lack of divisor-generation property for orbifold cohomology, the reconstruction theorem \cite{lee2004reconstruction} fails to construct $n$-point invariants (cf. \cite{LLW1}). Instead, we compute two-point invariants, i.e. $h\star_{small}$ and $\xi{\star_{small}}$, by extracting the $z^0$-terms in $\hat h P_\mu(z^{-1}I)=h\star_{small}T_\mu+O(z^{-1})$ and $\hat \xi P_\mu(z^{-1}I)=\xi\star_{small}T_\mu+O(z^{-1})$ from (Lemma \ref{birkhoff factorization}+\eqref{qde}). Then $n$-point invariants will be reconstructed from the WDVV equation and the divisor-generation property of small quantum product Theorem \ref{generator of quantum cohomology}. Since no non-trivial  mirror transformation is needed, 
%the operator $P_\mu$ is the derivative of the big $J$-function along $T_\mu$ and restrict to the small $J$-function. Therefore, 
it is sufficient to find $P_\mu$ for each $T_\mu$. 
%In the paragraphs, we explicitly compute the corresponding $\widehat T_\mu$ for $\mathcal{X}^\pm$.

%Before proceeding to these explicit computations, we first establish the basic notation. 
Let $I^\pm$ be the non-extended $I$-function of $\mathcal{X}^\pm$. By the mirror theorem in \cite{coates2015mirror}, it can be written as
\begin{align*}
z^{-1}I^+(Q,z)& = e^{\frac{t}{z}}\sum_{(d_1,d_3)\in\Lambda_{\mathcal{X}^+}}(h)_{d_1}^{2p+q}\cdot (2(\xi-h))_{2(d_3-d_1)}^p\cdot (\xi-h)_{d_3-d_1}^q\cdot (\xi)_{d_3}\cdot Q_1^{d_1}Q_2^{d_3}y^{\nu^+(d_1,d_3)} \\
& \eqqcolon \sum_{(d_1,d_3)\in\Lambda_{\mathcal{X}^+}}I^+_{(d_1,d_3)} \\
z^{-1}I^-(Q,z)&=e^{\frac{t}{z}}\sum_{(d_2,d_3)\in\Lambda_{\mathcal{X}^-}}(2h)_{2d_2}^{p}\cdot (h)_{d_2}^q\cdot (\xi-h)_{d_3-d_2}^{2p+q}\cdot (\xi)_{d_3}\cdot Q_1^{d_1}Q_2^{d_3}y^{\nu^-(d_2,d_3)} \\
&\eqqcolon\sum_{(d_2,d_3)\in\Lambda_{\mathcal{X}^-}}I^-_{(d_2,d_3)},
\end{align*}
where $t=t_1h+t_2\xi$, $Q_1=Q^\ell$, $Q_2=Q^\gamma$. Here, we identify $t_i=\log Q_i$ and
\[(D)_r\coloneqq \dfrac{\prod_{\gen{m}=\gen{r},m\leq 0}(D+mz)}{\prod_{\gen{m}=\gen{r},m\leq r}(D+mz)}\]
is a factor of hypergeometric correction (Note that $\gen{r}$ is the fractional part of $r$). We introduce two hypergeometric differential operators
\begin{align*}
\mathcal{L}^+ & =\theta_{Q_1^+}^p-(-1)^q2^{p}Q_1^+\cdot(2\theta_{Q_1^+}+1)^p \\ 
\mathcal{L}^- & =(\theta_{Q_1^-}-\tfrac{1}{2})^p-(-1)^q2^{-2p}Q_1^-\cdot \theta_{Q_1^-}^p
\end{align*}
inspired from the Picard--Fuchs equation of $I^\pm$ corresponding to the extremal curve $\ell$
\begin{align*}
\square^+_\ell & =\widehat{h}^{2p+q}-Q_1^+\cdot 2^p(\widehat{\xi}-\widehat{h})^{p+q}(2(\widehat{\xi}-\widehat{h})-z)^{p} \\
\square^-_\ell & =2^p\widehat{h}^{p+q}(2\widehat{h}-z)^p-Q_1^-(\widehat{\xi}-\widehat{h})^{2p+q}
\end{align*}
respectively, where $\theta_Q\coloneqq Q\frac{d}{dQ}$. Note that $z^p\mathcal{L}^+\widehat{h}^{p+q} = \square^+_\ell|_{\widehat{\xi}=0}$ and $2^{2p}z^p\mathcal{L}^-\widehat{h}^{p+q} = \square^-_\ell|_{\widehat{\xi}=0}$.

For $d_1>d_3$, we can factor out the term
\[(\xi-h)^{|\vec{b}(d_3-d_1)|}y^{\nu^+(d_1,d_3)}\]
from $(2(\xi-h))_{2(d_3-d_1)}^p\cdot (\xi-h)_{d_3-d_1}^q\cdot y^{\nu^+(d_1,d_3)}$ (Recall: $\vec b(d)=\{b_j:{b_j d\in\Z}\}$). Hence, $I^+_{(d_1,d_3)}=0$ when $d_1>d_2,~d_3\notin\Z_{\geq 0}$, and $\xi I^+_{(d_1,d_3)}=0$ when $d_1>d_2,~d_3\in\Z_{\geq 0}$. Also, we have the similar result for $I^-_{(d_2,d_3)}$ when $d_2>d_3$.

Given $\alpha\in\left(\bigcup\frac{1}{a_i}\Z\right)\cap[0,1)$, $\beta\in\left(\bigcup\frac{1}{b_j}\Z\right)\cap[0,1)$ and $n\in\Z_{\geq 0}$, we define the formal power series
\[I_{\alpha,n}^+\coloneqq \sum_{\substack{\gen{d_1}= \alpha \\ d_1>n}}I^+_{(d_1,n)},\quad I_{\beta,n}^-\coloneqq \sum_{\substack{\gen{d_2}= \beta \\ d_2>n}}I^-_{(d_2,n)}.\]

Now, the recipe for computing $h\star_{small}T_\mu$ and $\xi\star_{small} T_\mu$ is summarized into three steps : 
\begin{enumerate}[I.]
    \item We compute $\deg_{z^{-1}}I_{(d_1,d_3)}^\pm$ to determine whether this term contributes,
    \item We make use of the idea of the proof for Lemma \ref{birkhoff factorization} to find $P_\mu$,
    \item We apply $\widehat{h}$ and $\widehat{\xi}$ to $P_\mu(z^{-1}I^\pm)$ and extract the $z^0$-term.
\end{enumerate}

\subsubsection{$h\star_{small}$ and $\xi\star_{small}$ on $\mathcal X^+$-side}
\label{subsection_quan_prod_X^+}
A simple degree calculation shows that
\[\deg_{z^{-1}}I_{(d_1,d_3)}^+=(2p+q+1)d_3+\operatorname{age}(\nu^+(d_1,d_3))=\begin{cases}
(2p+q+1)d_3 & \text{ if } d_3\in\Z \\
(2p+q+1)d_3+\frac{q+1}{2} & \text{ if } d_3\in\frac{1}{2}\Z\setminus\Z \\
\end{cases}\]
for $d_1\in\Z_{\geq 0}$ with $d_3\geq d_1$, and 
\[\deg_{z^{-1}}I^+_{0,n}=(2p+q+1)n+p+q\]
for all $n\in\N$. Consequently, we have the expansion
\begin{equation}\label{expansion_of_I^+}
    z^{-1}I^+=I^+_{(0,0)}+I^+_{0,0}+I^+_{(0,\frac{1}{2})}+I^+_{(0,1)}+I^+_{(1,1)}+O(z^{-(3p+2q+1)})=e^{\frac{t_1h+t_2\xi}{z}}+O(z^{-(p+q)}).
\end{equation}

If $U_1 = \{\xi h^i(\xi-h)^j\mid 0\leq i\leq 2p+q-1,\ 0\leq j\leq p+q-1\}$, $U_2=\{(\xi-h)^j\mid 0\leq j\leq 3p+2q-1\}$ and $U_\infty = \{h^i(\xi-h)^jy_\infty^+\mid 0\leq i\leq 2p+q-1,\ 0\leq j\leq p-1\}$, then $U_1\cup U_2\cup U_\infty$ forms a basis for $A_{\operatorname{orb}}^*(\mathcal{X}^+)$. For $T_\mu \in U_1$ with $i+j \leq p+q-3$ or $T_\mu \in U_2$ with $j\leq p+q -2$, by the fact that
\begin{equation}\label{easy_quanize_+}
(D_1\cdots D_k)^{\widehata}(J^+)|_{t_1h+t_2\xi}=\widehat{D}_1\cdots\widehat{D}_k(I^+)
\end{equation}
for any divisors $D_1,\ldots,D_k$ with $1\leq k\leq p+q-1$, we get that
\begin{equation}\label{easy_quan_prod_+}
D\star_{\operatorname{small}}T_\mu=DT_\mu
\end{equation}
for any divisor $D$. For $T_\mu$ in $U_1$ or $U_2$ having higher degree, to find $P_\mu$, we have to correct the contribution from
$$I^+_{0,0}+I^+_{(0,\tfrac{1}{2})}+I^+_{(0,1)}+I^+_{(1,1)}.$$
For $T_\mu \in U_\infty$, to find $P_\mu$, the contribution which has to be corrected may come from 
\[I^+_{(0,0)}+I^+_{0,0}+I^+_{(0,\tfrac{1}{2})}+I^+_{(0,1)}+I^+_{(1,1)}+I^+_{0,1}+I^+_{(0,\frac{3}{2})}+I^+_{(1,\frac{3}{2})}.\]

\begin{prop}\label{infinite_cohomo_quantize_+}
If $T_\mu = \xi h^a(\xi-h)^b\in U_1$, then $P_\mu = \widehat{\xi}\widehat{h}^a(\widehat{\xi}-\widehat{h})^b$
and 
\begin{align*}
h\star_{\operatorname{small}} T_\mu & =\xi h^{a+1}(\xi-h)^b+Q_1Q_2(\xi-h)^b\delta_{a,2p+q-1} \\
(\xi-h)\star_{\operatorname{small}} T_\mu & =\xi h^a(\xi-h)^{b+1}+\dfrac{Q_2^{1/2}}{2^p}h^ay^+_\infty\delta_{b,p+q-1},
\end{align*}
where $\delta_{m,n}$ is the Kronecker delta symbol. 
\end{prop}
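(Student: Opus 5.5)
The plan is to check that the naive quantization $P_\mu=\widehat{\xi}\,\widehat{h}^a(\widehat{\xi}-\widehat{h})^b$ already produces an element of the form $\widehat{T}_\mu J$, and then to read off the two products from \eqref{qde}. By Lemma \ref{birkhoff factorization}, and since no mirror transformation is needed ($S_o=\emptyset$), it suffices to show that
\[
\widehat{\xi}\,\widehat{h}^a(\widehat{\xi}-\widehat{h})^b(z^{-1}I^+)=T_\mu+O(z^{-1}),
\]
that is, that it has no positive $z$-powers. Here $\widehat{D}$ acts as $z\partial_{t_D}$ on $z^{-1}I^+$. On each summand $I^+_{(d_1,d_3)}$ this operator acts as multiplication by the polynomial $(\xi+d_3z)(h+d_1z)^a(\xi-h+(d_3-d_1)z)^b$.

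First, I would dispose of the summands that cannot contribute. The expansion \eqref{expansion_of_I^+} shows that only $I^+_{(0,0)}$, $I^+_{0,0}$, $I^+_{(0,\frac12)}$, $I^+_{(0,1)}$, $I^+_{(1,1)}$ (and, at one further step, $I^+_{0,1}$, $I^+_{(0,\frac32)}$, $I^+_{(1,\frac32)}$) can reach $z$-degree $\ge -1$ after we apply an operator of order at most $a+b+2\le 3p+2q$. These are exactly the summands that matter once we also apply the extra $\widehat h$ or $\widehat\xi-\widehat h$. The terms $I^+_{0,n}$ have $d_1>d_3\in\Z_{\ge0}$, so they are killed by the factor $\xi$: the operator $\widehat{\xi}$ acts on them through $(\xi+d_3z)$, and $\xi I^+_{(d_1,d_3)}=0$ there. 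For $n=0$, $d_3=0$, so the factor is exactly $\xi$. For $n\ge1$ the factor is $\xi+nz$, and I would use the degree count $(2p+q+1)n+p+q$ to show they stay in negative $z$-degree. The remaining summands are then handled degree by degree: with $\deg_{z^{-1}}$ equal to $(2p+q+1)d_3+\operatorname{age}$, each multiplication by a factor $(D+cz)$ raises the $z$-degree by at most one. For $T_\mu\in U_1$ one checks that the total order $a+b+1\le 3p+2q-1$ is not enough to bring $I^+_{(0,\frac12)}$ (degree $p+q+1$), $I^+_{(0,1)}$ or $I^+_{(1,1)}$ (degree $2p+q+1$) to nonnegative $z$-degree, once one uses that $\xi h^{2p+q}=0$ and $\xi(\xi-h)^{p+q}=0$ truncate the classical parts. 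This gives $P_\mu=\widehat\xi\widehat h^a(\widehat\xi-\widehat h)^b$.

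Second, I would apply one more $\widehat h$, respectively $\widehat\xi-\widehat h$, and extract the $z^0$ coefficient. The classical term $I^+_{(0,0)}=e^{t/z}$ gives $\xi h^{a+1}(\xi-h)^b$, respectively $\xi h^a(\xi-h)^{b+1}$. For the $\widehat h$ product, a quantum correction can come only from $I^+_{(1,1)}$, weighted by $Q_1Q_2$. On this summand the operator acts through $(\xi+z)(h+z)^{a+1}(\xi-h)^b$. Its $z$-degree bookkeeping, $-(2p+q+1)+1+(a+1)$, reaches $0$ exactly when $a=2p+q-1$. I would then compute the leading coefficient of the hypergeometric factors $(h)_1^{2p+q}(2(\xi-h))_0^p(\xi-h)_0^q(\xi)_1$, which is essentially $z^{-(2p+q+1)}$, and verify that it equals $(\xi-h)^b$. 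For the $(\widehat\xi-\widehat h)$ product, the correction comes from the twisted summand $I^+_{(0,\frac12)}$. It carries $y^+_\infty$ and $Q_2^{1/2}$, and has factors $(2(\xi-h))_{1}^p=(2(\xi-h)+z)^{-p}$, $(\xi-h)_{1/2}^q$, $(\xi)_{1/2}$. On the twisted sector $\P^{2p+q-1}\times\P((2)^{\times p})$ these leave $h^ay^+_\infty$ with coefficient $(z/2)^{b+2}\cdot z^{-p}2^{-p}\cdot(z/2)^{-q}\cdots$, and the powers of $z$ cancel to $z^0$ precisely when $b=p+q-1$. This produces the factor $2^{-p}$.

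The main obstacle is this last bookkeeping: getting the constants $1$ and $2^{-p}$ exactly right, including the signs and the powers of $2$ coming from the $\tfrac12$-shifted Pochhammer factors. Equally delicate is confirming that no other summand ($I^+_{(0,1)}$, $I^+_{0,1}$, $I^+_{(0,\frac32)}$, $I^+_{(1,\frac32)}$) sneaks a $z^0$ term in at the boundary cases $a=2p+q-1$ or $b=p+q-1$. I would handle both by writing each candidate term explicitly in the twisted-sector ring, using the relations $(\xi-h)^py^+_\infty=0$ and $\xi(\xi-h)^{p+q}=0$, and counting degrees.
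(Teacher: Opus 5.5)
Your proposal follows the paper's proof. The paper likewise evaluates $\widehat{\xi}\widehat{h}^a(\widehat{\xi}-\widehat{h})^b$ explicitly on $I^+_{0,0}$, $I^+_{(0,\frac12)}$, $I^+_{(0,1)}$, $I^+_{(1,1)}$, obtaining $0$, $\frac{Q_2^{1/2}}{2^{p-1}z}h^ay^+_\infty\delta_{b,p+q-1}$, $O(z^{-2})$ and $\frac{Q_1Q_2}{z}(\xi-h)^b\delta_{a,2p+q-1}$ up to $O(z^{-2})$; it discards the remaining summands because $(3p+2q+1)-(a+b+1)\geq 2$, and reads off the products via \eqref{qde}, which is the same as your extraction of the $z^0$-term after one more $\widehat D$.

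Two points to tighten. The absence of nonnegative $z$-powers for $I^+_{(0,\frac12)}$ and $I^+_{(1,1)}$ does not follow from the total order $a+b+1$. It follows from the factor-by-factor count you already wrote down: on $I^+_{(0,\frac12)}$ the factors $\widehat h$ carry no $z$, and on $I^+_{(1,1)}$ the factors $\widehat\xi-\widehat h$ carry none. For the constant, writing $(2(\xi-h)+z)^{-p}=2^{-p}(\xi-h+\tfrac z2)^{-p}$ collapses the twisted term to $e^{t/z}2^{-p}Q_2^{1/2}h^a(\xi-h+\tfrac z2)^{b+1-p-q}y^+_\infty$, which gives the factor $2^{-p}$ directly.
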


\begin{proof}
A direct computation shows that 
\begin{align*}
\widehat{\xi}\widehat{h}^a(\widehat{\xi}-\widehat{h})^b(I^+_{0,0}) & =0 \\
\widehat{\xi}\widehat{h}^a(\widehat{\xi}-\widehat{h})^b(I^+_{(0,\frac{1}{2})}) & =e^{t/z}\cdot\dfrac{2^{q-b}Q_2^{1/2}h^ay^+_\infty}{(2(\xi-h)+z)^{p+q-b}}=\dfrac{Q_2^{1/2}h^ay^+_\infty}{2^{p-1}z}\cdot\delta_{b,p+q-1}+O(z^{-2}) \\
\widehat{\xi}\widehat{h}^a(\widehat{\xi}-\widehat{h})^b(I^+_{(0,1)}) & =e^{t/z}\cdot \dfrac{Q_2 h^a}{2^p(2(\xi-h)+z)^p(\xi-h+z)^{p+q-b}} =O(z^{-2}) \\
\widehat{\xi}\widehat{h}^a(\widehat{\xi}-\widehat{h})^b(I^+_{(1,1)}) & =e^{t/z}\cdot \dfrac{Q_1Q_2(\xi-h)^b}{(h+z)^{2p+q-a}}=\dfrac{Q_1Q_2(\xi-h)^b}{z}\cdot\delta_{a,2p+q-1}+O(z^{-2}).
\end{align*}
Since $(3p+2q+1)-(a+b+1)\geq 2$, we have
\[\widehat{\xi}\widehat{h}^a(\widehat{\xi}-\widehat{h})^b(z^{-1}I^+)=e^{t/z}\cdot \xi h^a(\xi-h)^b+\dfrac{Q_2^{1/2}h^ay^+_\infty}{2^{p-1}z}\cdot\delta_{b,p+q-1}+\dfrac{Q_1Q_2(\xi-h)^b}{z}\cdot\delta_{a,2p+q-1}+O(z^{-2}),\]
which allows us to conclude that $(\xi h^a(\xi-h)^b)^{\widehata}(J^+)|_{t_1h+t_2\xi}=\widehat{\xi}\widehat{h}^a(\widehat{\xi}-\widehat{h})^b(I^+)$. Finally, the quantum product is obtained by applying $\widehat{D}$ to the above equation and extracting the $z^0$-coefficient.
\end{proof}

\begin{prop}\label{Frob_basis_+}
If we write the following expansion
\[(\widehat{\xi}-\widehat{h})^{p+q}\left(I_{(0,0)}^++I_{0,0}^+\right)=\sum_{n=0}^{2p+q-1}g^+_n(Q_1^+)\dfrac{h^n}{z^n}\cdot(\xi-h)^{p+q},\]
then we have
\[\mathcal{L}^+(g_n^+)=\begin{cases}
0 & \text{if } n\leq p-1 \\
\frac{t_1^{n-p}}{(n-p)!} & \text{if } n\geq p
\end{cases}\]
and $g^+_0(0)=1$. In particular, $\{g_i^+\}_{i=0}^{p-1}$ forms the Frobenius basis of $\mathcal{L}^+$ and for $T_\mu = (\xi-h)^{p+q-1} \in U_2$, we get that
\begin{align*}
(\xi-h)\star_{\operatorname{small}}T_\mu&=g_0^+\cdot (\xi-h)^{p+q},\\
h\qsstar T_\mu &= \xi(\xi-h)^{p+q-1}-g_0^+\cdot(\xi-h)^{p+q}.
\end{align*}
\end{prop}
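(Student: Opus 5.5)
We will compute directly with the hypergeometric terms of $I^+$ lying on the line $d_3=0$. For $d_3=0$ the admissible $d_1$ are nonnegative integers. The term $I^+_{(0,0)}=e^{t/z}$, and $I^+_{0,0}=\sum_{d_1\ge1}I^+_{(d_1,0)}$. For $d_1=m\ge1$ we have $d_3-d_1=-m$, and the factors $(2(\xi-h))^p_{-2m}(\xi-h)^q_{-m}$ become \emph{numerators}:
\[
\prod_{k=0}^{2m-1}(2(\xi-h)-kz)^p\prod_{k=0}^{m-1}(\xi-h-kz)^q .
\]
The factor $(\xi)_0=1$ and $(h)^{2p+q}_m=\prod_{k=1}^m(h+kz)^{-(2p+q)}$. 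The numerator contains $(\xi-h)^{p+q}\cdot 2^p$ from the $k=0$ factors. Hence $I^+_{0,0}$ is already divisible by $(\xi-h)^{p+q}$.

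The plan is to multiply by $(\widehat\xi-\widehat h)^{p+q}$ and then use the relation $\xi(\xi-h)^{p+q}=0$. After this multiplication, every occurrence of $\xi-h$ multiplying $(\xi-h)^{p+q}$ can be replaced by $-h$ modulo $\xi(\xi-h)^{p+q}$. More precisely, $(\xi-h)^{p+q+1}=-h(\xi-h)^{p+q}$. Then everything becomes a series in $h/z$ times $(\xi-h)^{p+q}$, truncated at $h^{2p+q}=0$. The operator $\widehat\xi-\widehat h=z\partial_{t_2}-z\partial_{t_1}$ acting on $Q_1^{m}$ (with $d_3=0$) produces $\xi-h-mz$. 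This gives explicit coefficients $g_n^+(Q_1)$, with $g_0^+(0)=1$ from the $e^{t/z}$ term, and gives the stated expansion.

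To verify $\mathcal L^+ g_n^+$, we will apply the Picard--Fuchs operator $\square^+_\ell$ to $(\widehat\xi-\widehat h)^{p+q}(I^+_{(0,0)}+I^+_{0,0})$. The operator $\square^+_\ell$ annihilates $I^+$ termwise along $d_3$ fixed, through the recursion $d_1\mapsto d_1+1$. Restricted to the $d_3=0$ strand, it annihilates the sum except for the boundary term coming from $d_1=0$. The second summand of $\square^+_\ell$ contains $(\widehat\xi-\widehat h)^{p+q}$, and since $(\xi-h)^{2p+2q}\ldots$ vanishes appropriately, one checks that the strand is closed.

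We then use $z^p\mathcal L^+\widehat h^{p+q}=\square^+_\ell|_{\widehat\xi=0}$. Because $\xi(\xi-h)^{p+q}=0$, we may set $\widehat\xi\to$ its action, which is trivial on the coefficient ring after the replacement. Writing $\widehat h=z\theta_{Q_1}+h$ on $e^{t_1h/z}g(Q_1)$, we translate $\square^+_\ell$ acting on $\sum g_n h^n z^{-n}$ into $\mathcal L^+$ acting on the $g_n$, up to the shift coming from $e^{t_1 h/z}$. Comparing coefficients of $h^n$ then gives $\mathcal L^+g_n^+=0$ for $n<p$ and $t_1^{n-p}/(n-p)!$ for $n\ge p$. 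The inhomogeneity arises from the $\widehat h^{2p+q}$ term hitting $h^{2p+q}=0$ truncation, equivalently from $e^{t_1h/z}$. This is the main obstacle: the bookkeeping of the $\widehat\xi$-terms and the truncation. We will handle it by working modulo the ideal $(\xi(\xi-h)^{p+q},h^{2p+q})$ and tracking only $h$-degrees.

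For the quantum products, $T_\mu=(\xi-h)^{p+q-1}$ has $P_\mu=(\widehat\xi-\widehat h)^{p+q-1}$ by \eqref{easy_quanize_+}, since the degree is $\le p+q-1$. Applying $\widehat\xi-\widehat h$ gives the displayed series, whose $z^0$-term is $g_0^+(\xi-h)^{p+q}$. The other pieces, $I^+_{(0,1/2)}$, $I^+_{(0,1)}$ and $I^+_{(1,1)}$, contribute only $O(z^{-1})$ by the degree count in \eqref{expansion_of_I^+}. This yields $(\xi-h)\star T_\mu=g_0^+(\xi-h)^{p+q}$. For $\xi\star T_\mu$, the factor $\widehat\xi$ kills the $d_3=0$ strand beyond classical terms, so $\xi\star T_\mu=\xi(\xi-h)^{p+q-1}$. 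Subtracting, we obtain $h\star T_\mu=\xi(\xi-h)^{p+q-1}-g_0^+(\xi-h)^{p+q}$.
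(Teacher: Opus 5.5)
Your setup is sound: the explicit form of $I^+_{(m,0)}$, its divisibility by $(\xi-h)^{p+q}$, the reduction $(\xi-h)^{p+q+1}=-h(\xi-h)^{p+q}$, $g_0^+(0)=1$, and the quantum-product step all work. Your observation that $\widehat\xi$ kills $I^+_{0,0}$, giving $\xi\qsstar(\xi-h)^{p+q-1}=\xi(\xi-h)^{p+q-1}$ directly, is a valid shortcut for the paper's associativity argument.

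The gap is the central claim $\mathcal{L}^+(g_n^+)=t_1^{n-p}/(n-p)!$. You defer it, and the mechanism you sketch fails for three reasons.
\begin{enumerate}
\item Applying $\square^+_\ell$ to $G\coloneqq(\widehat\xi-\widehat h)^{p+q}(I^+_{(0,0)}+I^+_{0,0})$ does not annihilate the strand up to a boundary term. $\square^+_\ell$ does not commute with $(\widehat\xi-\widehat h)^{p+q}$, because $\widehat h\,Q_1=Q_1(\widehat h+z)$. For $G_m=(\widehat\xi-\widehat h)^{p+q}I^+_{(m,0)}$, the recursion leaves a nonzero residue proportional to $(\xi-h-mz)^{p+q}-(\xi-h-(m-1)z)^{p+q}$ at every $m\geq 1$.
\item Since $\widehat\xi G=0$, $\square^+_\ell$ acts on $G$ as $z^p\mathcal{L}^+\widehat h^{p+q}$. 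So even a correct evaluation would constrain $\mathcal{L}^+\theta^{p+q}$ of the generating function, not $\mathcal{L}^+(g_n^+)$.
\item The $d_1=0$ boundary term is $\widehat h^{2p+q}e^{t/z}=h^{2p+q}e^{t/z}=0$, so it cannot be the source of the inhomogeneity.
\end{enumerate}

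The missing idea is to compare $\mathcal{L}^+$ on $G$ with $\square^+_\ell$ on $F\coloneqq I^+_{(0,0)}+I^+_{0,0}$, not on $G$. Using $\widehat\xi G=0$, $\widehat\xi I^+_{0,0}=0$ and $\square^+_\ell F=0$, one gets
\[
z^p\mathcal{L}^+ G=(-1)^{p+q}\square^+_\ell F+\widehat h^{p}\bigl[(\widehat\xi-\widehat h)^{p+q}-(-\widehat h)^{p+q}\bigr]I^+_{(0,0)}=h^p(\xi-h)^{p+q}e^{t/z}.
\]
The inhomogeneity thus comes from $\widehat\xi$ not annihilating $I^+_{(0,0)}=e^{t/z}$. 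Expanding $e^{t_1h/z}$ and comparing coefficients of $h^n(\xi-h)^{p+q}$ gives the claim.

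Alternatively, you can finish your own explicit route. After the reduction,
\[
(\widehat\xi-\widehat h)^{p+q}I^+_{(m,0)}=(\xi-h)^{p+q}c_m(x)Q_1^{m}e^{t_1h/z},
\]
where $x=h/z$ and $c_m(x)=((-1)^q2^p)^m\prod_{j=1}^m\bigl(\frac{2x+2j-1}{x+j}\bigr)^p$. The recursion $(x+m)^pc_m=(-1)^q2^p(2x+2m-1)^pc_{m-1}$ gives
\[
\mathcal{L}^+\Bigl(\sum_{m\geq 0}c_m(x)Q_1^m e^{t_1 x}\Bigr)=x^pe^{t_1x}.
\]
Reading off the coefficient of $x^n$ yields $\mathcal{L}^+(g_n^+)$. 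The same expansion exhibits the logarithmic structure showing that $\{g_i^+\}_{i<p}$ is the Frobenius basis, which you also left unaddressed.
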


\begin{proof}
Notice that $\widehat{\xi}\cdot(\widehat{\xi}-\widehat{h})^{p+q}(I_{(0,0)}^++I_{0,0}^+)=0$, so we have
\begin{align*}
&\ z^p\mathcal{L}^+\left((\widehat{\xi}-\widehat{h})^{p+q}(I_{(0,0)}^++I_{0,0}^+)\right)\\ 
= &\ (-1)^{p+q}\cdot \square_\ell^+(I_{(0,0)}^++I_{0,0}^+)+(\widehat{h}^p(\widehat{\xi}-\widehat{h})^{p+q}-(-1)^{p+q}\widehat{h}^{2p+q})(I_{(0,0)}^+) \\
= &\ h^p(\xi-h)^{p+q}e^{t/z}.
\end{align*}
Since $\xi(\xi-h)^{p+q}=0$ in $A_{\operatorname{orb}}^*(\mathcal{X}^+)$, the first assertion follows from comparing the coefficient of $h^n(\xi-h)^{p+q}$. 

Notice that we have the expansion
\[(\widehat{\xi}-\widehat{h})^{p+q}\left(I_{(0,0)}^++I_{0,0}^+\right)=e^{t/z}\left(1+\sum_{n=0}^{2p+q-1}f_n^+(Q_1^+)\dfrac{h^n}{z^n}\right)\cdot(\xi-h)^{p+q}\]
for some formal power series $f_n^+(Q_1^+)\in Q_1^+\C[\![Q_1^+]\!]$, and the Frobenius-type relation
\[g_n^+(Q_1^+)=(1+f_0^+(Q_1^+))\cdot\dfrac{\log^n Q_1^+}{n!}+\sum_{k=0}^{n-1} f_{n-k}^+(Q_1^+)\cdot\dfrac{\log^k Q_1^+}{k!}.\]
In particular, $g_0^+(0)=1+f_0^+(0)=1$.

Finally, by \eqref{easy_quanize_+}, $(\xi-h)\star_{\operatorname{small}}(\xi-h)^{p+q-1}$ equals the $z^0$-term in $(\widehat{\xi}-\widehat{h})\left((\widehat{\xi}-\widehat{h})^{p+q-1}(z^{-1}I^+)\right)$, which is equal to $ g_0^+\cdot (\xi-h)^{p+q}$.

From the equation \eqref{easy_quan_prod_+} and Proposition \ref{infinite_cohomo_quantize_+}, we conclude that
\begin{align*}
h\qsstar(\xi-h)^{p+q-1} &= \xi\qsstar(\xi-h)\qsstar(\xi-h)^{p+q-2}-(\xi-h)\qsstar(\xi-h)^{p+q-1} \\
& =(\xi-h)\qsstar\xi\qsstar(\xi-h)^{p+q-2}-g_0^+\cdot(\xi-h)^{p+q} \\
&= (\xi-h)\qsstar\xi(\xi-h)^{p+q-2}-g_0^+\cdot(\xi-h)^{p+q} \\
&= \xi(\xi-h)^{p+q-1}-g_0^+\cdot(\xi-h)^{p+q}.
\end{align*}
\end{proof}

To take care of other elements in $U_2$, it is sufficient to consider $T_\mu$ of the form $h^k(\xi-h)^{p+q}$ with $0\leq k\leq 2p+q-1$. 

% Before proceeding to this case, we first establish several auxiliary lemmas.

% \begin{lm}
% For $0\leq n< k\leq p-1$, we have
% \[e^{t/z}\dfrac{h^ny^{s_1}z^{k-n}}{2\xi+z}=2^pQ_2^{-1/2}\left(\sum_{m=0}^{k-n-1}z^m(z-2\widehat{\xi})^{k-n-1-m}\widehat{\xi}\widehat{h}^n(\widehat{\xi}-\widehat{h})^{p+q}\right)(I^+)+\dfrac{(-2)^{k-n}h^n\xi^{k-n}y^{s_1}}{z}+O(z^{-2})\]
% \end{lm}

% \begin{proof}
% By Lemma,
% \begin{align*}
% &\ e^{t/z}\dfrac{h^ny^{s_1}z^{k-n}}{2\xi+z} \\
% =&\  e^{t/z}\sum_{m=-1}^{k-n-1}(-2)^{k-n-1-m}h^n\xi^{k-n-1-m}y^{s_1} z^m+O(z^{-2}) \\
% =&\ \left(2^pQ_2^{-1/2}\sum_{m=0}^{k-n-1}z^m(z-2\widehat{\xi})^{k-n-1-m}\widehat{h}^n\widehat{\xi}(\widehat{\xi}-\widehat{h})^{p+q}\right)(I)+\dfrac{(-2)^{k-n}h^n\xi^{k-n}y^{s_1}}{z}+O(z^{-2}) \\
% =&\ 2^{p-1}Q_2^{-1/2}\left(z^{k-n}-(z-2\widehat{\xi})^{k-n}\right)\widehat{h}^n(\widehat{\xi}-\widehat{h})^{p+q}(I)+\dfrac{(-2)^{k-n}h^n\xi^{k-n}y^{s_1}}{z}+O(z^{-2})
% \end{align*}
% \end{proof}

\begin{definition}\label{def_Wron^+}
Given two ordered multisets $I$ and $J$ of the same length with elements in $\{0, 1, \ldots,2p+q-1\}$, we define the associated generalized Wronskian by
\[(W^+)^J_I=\det(\theta^j_{Q_1^+}(g_i^+))_{i\in I,j\in J}.\]
For the set $[k]=\{0,1,\ldots,k\}$, we define $W_k^+=(W^+)_{[k]}^{[k]}$. For convenience, we set $W_{-1}^+=1$.
\end{definition}

\begin{prop}\label{quan_prod_+_iden}
If $T_\mu = h^k(\xi-h)^{p+q}$ with $0\leq k\leq 2p+q-1$,  then
\begin{align*}
h \star_{\rm small} T_\mu=&\ \dfrac{W^+_{k-1}\cdot W^+_{k+1}}{(W^+_k)^2}h^{k+1}(\xi-h)^{p+q}+\dfrac{Q_2^{1/2}}{2^{p-1}}\sum_{n=0}^k\theta_{Q_1^+}\left(\dfrac{(W^+)_{[k-1]}^{[k]\setminus\{n\}}}{W_k^+}\right)\cdot 2^{k-n}h^n\xi^{k-n}y_\infty^+ \\
&+\dfrac{Q_2}{2^p}\sum_{n=0}^{k-p}(-1)^{n+k}\theta_{Q_1^+}\left(\dfrac{(W^+)_{[k-1]}^{[k]\setminus\{n\}}}{W_k^+}\right)S_{n,k}^+ \\
\xi \star_{\rm small} T_\mu=&\ \dfrac{Q_2^{1/2}}{2^p}\sum_{n=0}^k\dfrac{(W^+)_{[k-1]}^{[k]\setminus\{n\}}}{W_k^+}\cdot 2^{k-n}h^n\xi^{k-n}y_\infty^++\dfrac{Q_2}{2^p}\sum_{n=0}^{k-p}(-1)^{n+k}\dfrac{(W^+)_{[k-1]}^{[k]\setminus\{n\}}}{W_k^+}S_{n,k}^+
\end{align*} 
where 
\begin{equation}\label{S_n,k,-1^+}
S_{n,k}^+=(-1)^{k-n}\sum_{a+b+c=k-n-p}\binom{p+a-1}{a}\binom{b+c}{c}2^ah^{n+a+c}(\xi-h)^{b}.
\end{equation}

\end{prop}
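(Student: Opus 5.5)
Following the recipe of Section~\ref{small-product}, I would construct the operator $P_\mu$ of Lemma~\ref{birkhoff factorization} for $T_\mu=h^k(\xi-h)^{p+q}$ explicitly, then apply $\widehat h$ and $\widehat\xi$ and read off the $z^0$-coefficient. The natural first guess $(\widehat\xi-\widehat h)^{p+q}\widehat h^k$ fails. Its action on $I^+_{(0,0)}+I^+_{0,0}$ is, by Proposition~\ref{Frob_basis_+}, $\sum_n g_n^+\,h^n z^{-n}(\xi-h)^{p+q}$, and applying $\widehat h=z\theta_{Q_1^+}+h$ produces positive powers of $z$ weighted by derivatives $\theta^j g_n^+$.

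The plan is therefore to take
\[
P_\mu=\sum_{j=0}^{k}A_j(Q_1^+)\,z^{k-j}\,\theta_{Q_1^+}^{\,j}\,(\widehat\xi-\widehat h)^{p+q},
\]
with $\theta_{Q_1^+}$ realised as $z^{-1}(\widehat h-h)$ acting on $I$. This is the standard Birkhoff/Gram--Schmidt normalisation. We require that the $h^n(\xi-h)^{p+q}$ part of $P_\mu(z^{-1}I^+)$ equal $e^{t/z}h^k(\xi-h)^{p+q}+O(z^{-1})$. This is a linear system $\sum_j A_j\,\theta^j g_n^+ = \delta_{nk}$ for $n\le k$ (after stripping $e^{t/z}$ and Frobenius logarithms), with coefficient matrix $(\theta^j g_n^+)_{n,j\le k}$. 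Cramer's rule then gives $A_j=\pm (W^+)^{[k]\setminus\{j\}}_{[k-1]}/W_k^+$. Applying $\widehat h$ once more and projecting onto $h^{k+1}(\xi-h)^{p+q}$, a standard Wronskian (Desnanot--Jacobi/Sylvester) identity yields the coefficient $W^+_{k-1}W^+_{k+1}/(W^+_k)^2$.

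Next I would track the twisted and higher contributions from $I^+_{(0,\frac12)}$, $I^+_{(0,1)}$, $I^+_{(1,1)}$ and $I^+_{0,1}$, whose $z$-degrees were tabulated before Proposition~\ref{infinite_cohomo_quantize_+}. Since $\widehat\xi$ kills the $(0,0)$ part modulo $\xi(\xi-h)^{p+q}=0$, the whole of $\xi\star T_\mu$ comes from these sectors. The factor $(2(\xi-h)+z)^{-(p+q-b)}$ appearing in $I^+_{(0,\frac12)}$ with $b=p+q$ becomes $(2\xi+z)^{-1}\cdot\ldots$ after using $y^+_\infty$-relations. Expanding $z^{k-j}/(2\xi+z)$ geometrically produces the terms $2^{k-n}h^n\xi^{k-n}y_\infty^+$ with weight $Q_2^{1/2}2^{-p}A_n$.

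The $Q_2$-sector $I^+_{(0,1)}$ contributes $(2(\xi-h)+z)^{-p}(\xi+z)^{-q}$-type denominators. Expanding them and collecting the $z^{-1}$ term after multiplication by $z^{k-n}h^n$ gives the binomial sums $S^+_{n,k}$. They are nonzero only when $k-n\ge p$, because the first $p$ orders are absorbed by the $(2(\xi-h)+z)^{-p}$ factor.

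If any of these terms carries a positive power of $z$, I would subtract further correction operators as in the proof of Lemma~\ref{birkhoff factorization}. I expect these to involve only classes in $U_1$ and $U_\infty$, whose quantizations are already simple by Proposition~\ref{infinite_cohomo_quantize_+}. Finally, $h\star T_\mu$ is obtained as $z^0$ of $\widehat h P_\mu(z^{-1}I)$. The operator $\widehat h$ acts on the coefficients $A_n$ as $z\theta$, which explains why the $y_\infty$ and $S$ terms in $h\star$ carry $\theta_{Q_1^+}(A_n)$. The apparent factor $2^{p-1}$ instead of $2^p$ comes from combining $\theta(Q_2^{1/2}\cdots)$ with the $h$-multiplication and the relation $\xi\,y^+_\infty$ versus $(\xi-h)y^+_\infty$.

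The main obstacle is the bookkeeping in this twisted-sector step. The cancellation of the positive-$z$ terms must be verified, and the combinatorial coefficients $\binom{p+a-1}{a}\binom{b+c}{c}2^a$ together with the signs $(-1)^{n+k}$ must be matched exactly. I would first test the cases $k=0$ and $k=p$ against Proposition~\ref{Frob_basis_+} before doing general $k$.
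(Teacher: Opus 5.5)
Your outline matches the paper's. You build $P_\mu$ as a Cramer-type combination with coefficients $(-1)^{n+k}(W^+)^{[k]\setminus\{n\}}_{[k-1]}/W^+_k$, remove the nonnegative $z$-powers coming from $I^+_{(0,\frac12)}$ and $I^+_{(0,1)}$, and read off $z^0$ after applying $\widehat h$ and $\widehat\xi$. Using Desnanot--Jacobi on the $(k+2)\times(k+2)$ matrix $(\theta^j g_i^+)$ is a valid, arguably cleaner, substitute for the paper's comparison of $\mathcal{L}^+_{k+1}$ with $\theta\bigl(\mathcal{L}^+_k(\cdot)/\mathcal{L}^+_k(g^+_k)\bigr)$.

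A small notational fix: the operator should be $z^{k-n}\widehat h^{\,n}(\widehat\xi-\widehat h)^{p+q}$. Your $z^{k-j}\theta^j$ with $\theta=z^{-1}(\widehat h-h)$ is not an element of $\mathcal R$. Since the $g^+_n$ already carry the logarithms, $z^{k-n}\widehat h^{\,n}$ acts as $z^k\theta^n$ on the $(0,0)$ part, and that is what gives your linear system.

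The genuine gap is in the $Q_2$-bookkeeping, where your plan asserts something false.
\begin{itemize}
\item With plain $z^{k-n}$, the twisted term is $e^{t/z}z^{k-n}h^nQ_2^{1/2}y^+_\infty/\bigl(2^{p-1}(2\xi+z)\bigr)$, which has nonnegative $z$-powers once $k>n$.
\item The operators that remove them contain $\widehat\xi$ (for example those of Proposition~\ref{quan_isom^+_easy}). But $\widehat\xi$ acts on $I^+_{(0,1)}$ as $\xi+z$, so these corrections change the $z^{-1}$-coefficient of the $Q_2$-sector.
\item Hence the $z^{-1}$-term of $z^{k-n}h^n/\bigl((\xi+z)(2(\xi-h)+z)^p\bigr)$ is \emph{not} $S^+_{n,k}$. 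For $p=1$, $q\ge 1$, $k=2$ it equals $2h-3\xi$ for $n=0$ and $h$ for $n=1$, whereas $S^+_{0,2}=\xi+2h$ and $S^+_{1,2}=-h$.
\item Your $y^+_\infty$ readout does survive, because on $I^+_{(0,\frac12)}$ these corrections are polynomial in $z$.
\item Minor: after $(\widehat\xi-\widehat h)^{p+q}$ acts, the denominator is $(\xi+z)(2(\xi-h)+z)^p$, not one involving $(\xi+z)^{q}$.
\end{itemize}

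The missing device is the paper's operator $\mathcal{G}^+_{n,k}=(z-2\widehat\xi)^{k-n}\widehat h^{\,n}(\widehat\xi-\widehat h)^{p+q}$. It fixes everything at once:
\begin{itemize}
\item Since $\widehat\xi$ kills $(\widehat\xi-\widehat h)^{p+q}(I^+_{(0,0)}+I^+_{0,0})$, the Wronskian computation is untouched.
\item On $I^+_{(0,\frac12)}$, $\widehat\xi$ acts as $\xi+\tfrac z2$, so the prefactor becomes $(-2\xi)^{k-n}$. The contribution $(-2)^{k-n}Q_2^{1/2}h^n\xi^{k-n}y^+_\infty/\bigl(2^{p-1}(2\xi+z)\bigr)$ is then already $O(z^{-1})$ and needs no correction.
\item On $I^+_{(0,1)}$, $\widehat\xi$ acts as $\xi+z$, so the prefactor becomes $(-2\xi-z)^{k-n}$. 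The $z^{-1}$-coefficient of $(-1)^{k-n}(z+2\xi)^{k-n}h^n/\bigl((z+\xi)(z+2\xi-2h)^p\bigr)$ is then exactly $S^+_{n,k}$.
\end{itemize}
To see the last point:
\begin{itemize}
\item Expand $(z+2\xi-2h)^{-p}$ in powers of $2h/(z+2\xi)$; this produces the $\binom{p+a-1}{a}2^a$.
\item $(z+2\xi)^M/(z+\xi)$ contributes exactly $\xi^M$ at $z^{-1}$ when $M\ge 0$, and nothing when $M<0$.
\item Write $\xi=h+(\xi-h)$ to get the $\binom{b+c}{c}$.
\end{itemize}
The leftover nonnegative powers lie in degree $\le p+q-2$ and are removed by classical quantizations as in \eqref{easy_quanize_+}.

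Finally, the ratio $2^{p-1}$ versus $2^p$ has a simple cause. $\widehat\xi=z\partial_{t_2}$ hits $Q_2^{1/2}$ and contributes a factor $\tfrac12$. $\widehat h=z\partial_{t_1}$ leaves $Q_2^{1/2}$ alone and produces $\theta_{Q_1^+}$ of the coefficient. It has nothing to do with $\xi y^+_\infty$ versus $(\xi-h)y^+_\infty$.
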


\begin{proof}
For $0\leq n\leq k$, if we define the operator
\begin{equation}\label{def_G_n,k^+}
\mathcal{G}_{n,k}^+\coloneqq (z-2\widehat{\xi})^{k-n}\widehat{h}^n(\widehat{\xi}-\widehat{h})^{p+q},
\end{equation}
then
\begin{align*}
&\ \dfrac{1}{W^+_k}\sum_{n=0}^k(-1)^{n+k}(W^+)_{[k-1]}^{[k]\setminus\{n\}}\mathcal{G}_{n,k}^+\left(I_{(0,0)}^++I_{0,0}^+\right) \\
= &\ \dfrac{1}{W^+_k}\sum_{n=0}^k\sum_{m=0}^{2p+q-1}(-1)^{n+k}(W^+)_{[k-1]}^{[k]\setminus\{n\}}\theta_{Q_1^+}^n(g_m^+)\dfrac{h^m}{z^{m-k}}\cdot(\xi-h)^{p+q} \\
= &\ \dfrac{1}{W_k^+}\sum_{m=0}^{2p+q-1}(W^+)_{[k-1]\cup\{m\}}^{[k]}\dfrac{h^m}{z^{m-k}}\cdot (\xi-h)^{p+q} \\
= &\ h^k(\xi-h)^{p+q}+\dfrac{1}{z}\cdot\dfrac{(W^+)_{[k+1]\setminus\{k\}}^{[k]}}{W_k^+}\cdot h^{k+1}(\xi-h)^{p+q}+O(z^{-2}).
\end{align*}
For other terms, we have
\begin{align*}
\mathcal{G}_{n,k}^+(I_{(0,\frac{1}{2})}^+) & =e^{t/z}\cdot \dfrac{(-2)^{k-n}Q_2^{1/2}h^{n}\xi^{k-n}y^+_\infty}{2^{p-1}(2\xi+z)}=\dfrac{1}{z}\dfrac{Q_2^{1/2}}{2^{p-1}}\cdot (-2)^{k-n}h^n\xi^{k-n}y^+_\infty+O(z^{-2}) \\
\mathcal{G}_{n,k}^+(I_{(0,1)}^+) & = \dfrac{e^{t/z}Q_2}{2^p}\cdot\dfrac{(-2\xi-z)^{k-n}h^n}{(\xi+z)(2(\xi-h)+z)^{p}}=\dfrac{e^{t/z}Q_2}{2^p}\left(\sum_{m=-1}^{k-n-p-1} S_{n,k,m}^+z^m+O(z^{-2})\right) \\
\mathcal{G}^+_{k,n}(I^+_{(1,1)}) & =O(z^{2p+q+1-k}),
\end{align*}
for some $S_{n,k,m}^+\in A^{k-p-1-m}(\mathcal{X}^+)$. The explicit form of $S_{n,k,-1}^+$ is given in \eqref{S_n,k,-1^+} and is denoted simply by $S^+_{n, k}$. 

For $m\geq 0$, we have
\[k-p-1-m+m\leq p+q-2.\]
By \eqref{expansion_of_I^+} and the degree calculation, we have
\[z^m(S^+_{n,k,m})^{\widehata}(I^+)=e^{t/z}S_{n,k,m}^+z^m+O(z^{-2}).\]
Consequently, we obtain the relation
\begin{align*}
&\ \dfrac{1}{W_k^+}\left(\sum_{n=0}^k(-1)^{n+k}(W^+)_{[k-1]}^{[k]\setminus\{n\}}\left(\mathcal{G}_{n,k}-\dfrac{Q_2}{2^p}\sum_{m=0}^{k-n-p-1}z^m(S_{n,k,m}^+)^{\widehata}\right)\right)(I^+) \\
=&\ h^k(\xi-h)^{p+q}+\dfrac{1}{z}\cdot\dfrac{(W^+)_{[k+1]\setminus\{k\}}^{[k]}}{W_k^+}\cdot h^{k+1}(\xi-h)^{p+q} \\
& +\dfrac{1}{z}\cdot\dfrac{Q_2^{1/2}}{2^{p-1}}\sum_{n=0}^k\dfrac{(W^+)_{[k-1]}^{[k]\setminus\{n\}}}{W_k^+}\cdot 2^{k-n}h^n\xi^{k-n}y^+_\infty+\dfrac{1}{z}\cdot\dfrac{Q_2}{2^p}\sum_{n=0}^{k-p}(-1)^{n+k}\dfrac{(W^+)_{[k-1]}^{[k]\setminus\{n\}}}{W_k^+}S_{n,k}^++O(z^{-2}),
\end{align*}
and thus
$$P_\mu = \dfrac{1}{W_k^+}\sum_{n=0}^k(-1)^{n+k}(W^+)_{[k-1]}^{[k]\setminus\{n\}}\left(\mathcal{G}_{n,k}-\dfrac{Q_2}{2^p}\sum_{m=0}^{k-n-p-1}z^m(S_{n,k,m}^+)^{\widehata}\right).$$
To complete the proof of the quantum product formula stated in the proposition, it remains to prove that
\begin{equation}\label{Wron_der_relation_+}
\theta_{Q_1^+}\left(\dfrac{(W^+)_{[k+1]\setminus\{k\}}^{[k]}}{W_k^+}\right)=\dfrac{W_{k-1}^+\cdot W_{k+1}^+}{(W_k^+)^2}.
\end{equation}
Indeed, we define the operator $\mathcal{L}^+_k$ by
\begin{equation}\label{def_L_k^+}
\mathcal{L}^+_k(f)\coloneqq\det\left(\begin{matrix}
g_0^+ & \theta_{Q_1^+}(g_0^+) & \cdots & \theta^{k}_{Q_1^+}(g_0^+) \\
g_1^+ & \theta_{Q_1^+}(g_1^+) & \cdots & \theta^{k}_{Q_1^+}(g_1^+) \\
\vdots && \ddots & \\
g_{k-1}^+ & \theta_{Q_1^+}(g_{k-1}^+) & \cdots & \theta^{k}_{Q_1^+}(g_{k-1}^+) \\
f & \theta_{Q_1^+}(f) & \cdots & \theta^{k}_{Q_1^+}(f)
\end{matrix}\right).
\end{equation}
Observe that the following operators
\[\theta_{Q_1^+}\left(\dfrac{\mathcal{L}^+_k(\cdot)}{\mathcal{L}^+_k(g_k^+)}\right)=\dfrac{W_{k-1}^+}{W_k^+}\cdot\theta_{Q_1^+}^{k+1}+\operatorname{l.o.t.}\]
and
\[\mathcal{L}^+_{k+1}(\cdot)=W_k^+\cdot \theta_{Q_1^+}^{k+1}+\operatorname{l.o.t.},\]
which vanish on the $(k+1)$-dimensional $\mathbb{C}$-vector space spanned by $\{g_0^+,\ldots,g_k^+\}$. By the theory of linear ODEs and the comparison of their leading coefficients, we conclude that
\begin{equation}\label{L_k^+_relation}
\dfrac{W_{k-1}^+}{(W_k^+)^2}\mathcal{L}^+_{k+1}(\cdot)=\theta_{Q_1^+}\left(\dfrac{\mathcal{L}^+_k(\cdot)}{\mathcal{L}^+_k(g_k^+)}\right).
\end{equation}
Substituting $g_{k+1}^+$ into this identity yields \eqref{Wron_der_relation_+}.
\end{proof}

We now turn to the case of $T_\mu\in U_\infty$.
\begin{prop}\label{quan_isom^+_easy}
If $T_\mu = h^a(\xi-h)^by_\infty^+\in U_\infty$ with $0\leq a+b\leq 2p+q-2, 0\leq b\leq p-1$, then 
\begin{align*}
h\qsstar T_\mu & = h^{a+1}(\xi-h)^{b}y_\infty^+, \\
% \label{qp:h\qsstar h^a(\xi-h)^by_\infty^+}\\
(\xi-h)\qsstar T_\mu & = h^a(\xi-h)^{b+1}y_\infty^++\dfrac{Q_2^{1/2}}{2^{p}}\cdot h^a\cdot\delta_{b,p-1}.
% \label{qp:(\xi-h)\qsstar h^a(\xi-h)^by_\infty^+}
\end{align*}
\end{prop}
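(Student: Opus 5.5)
Following Proposition \ref{infinite_cohomo_quantize_+}, we will show that no Birkhoff correction is needed: the quantizing operator is $P_\mu=\widehat{h}^a(\widehat{\xi}-\widehat{h})^b\,Q_2^{-1/2}\mathcal{D}_\infty$, where $\mathcal{D}_\infty$ is the operator with $\mathcal{D}_\infty(z^{-1}I^+)=e^{t/z}y_\infty^++O(z^{-1})$. To build $\mathcal{D}_\infty$, note that the twisted term $I^+_{(0,\frac12)}$ has leading part $e^{t/z}\,2^{q}Q_2^{1/2}y^+_\infty/(2(\xi-h)+z)^{p+q}(2\xi+z)$. We take $\mathcal{D}_\infty$ proportional to $z^{-(p+q)}(2\widehat{\xi}-z)\ldots$, or more simply to $2^p Q_2^{-1/2}(\widehat{\xi}-\widehat{h})^{p+q}$ composed with the appropriate $z$-shift. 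On the untwisted part, $(\xi-h)^{p+q}$ times a power series in $h$ is killed by $\xi$, so only the $y_\infty^+$-component survives at order $z^0$. Concretely, we normalize
\[
y_\infty^+\text{-component of }(\widehat{\xi}-\widehat{h})^{p+q}\,I^+_{(0,\frac12)}\cdot z^{p+q}\cdots
\]
so that its $z^0$-coefficient is exactly $y_\infty^+$.

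The main step is a degree count. Using $\deg_{z^{-1}}I^+_{(d_1,d_3)}=(2p+q+1)d_3+\operatorname{age}$, together with the list of contributing terms recorded just before Proposition \ref{infinite_cohomo_quantize_+}, we apply $\widehat{h}^a(\widehat{\xi}-\widehat{h})^b$ to each of these terms in turn:
\[
I^+_{(0,0)},\ I^+_{0,0},\ I^+_{(0,\frac12)},\ I^+_{(0,1)},\ I^+_{(1,1)},\ I^+_{0,1},\ I^+_{(0,\frac32)},\ I^+_{(1,\frac32)}.
\]
For each term we check that, after multiplication by the $Q_2^{-1/2}$-normalization, no nonnegative powers of $z$ appear other than the leading $e^{t/z}h^a(\xi-h)^by_\infty^+$. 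The hypotheses $a+b\le 2p+q-2$ and $b\le p-1$ are what make this work.

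\begin{itemize}
\item The constraint $b\le p-1$ kills the terms where $(\xi-h)^p y_\infty^+=0$.
\item The constraint $a+b\le 2p+q-2$ guarantees the margin: the $I^+_{(1,\frac32)}$ and $I^+_{(0,\frac32)}$ contributions start at order $z^{-2}$, exactly as the inequality $(3p+2q+1)-(a+b+1)\ge2$ was used before.
\end{itemize}

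Once this is done we get $T_\mu^{\widehata}J^+=P_\mu I^+$. Applying $\widehat{h}$ and $\widehat{\xi}-\widehat{h}$ and reading off the $z^{-1}$-coefficient then gives the products.

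For $\widehat{h}$, there is no $z^{-1}$ correction, because the $(0,\frac12)$ sector carries no $Q_1$-dependence in $h$ beyond the classical one. This gives $h\qsstar T_\mu=h^{a+1}(\xi-h)^by_\infty^+$.

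For $\widehat{\xi}-\widehat{h}$, the only $z^{-1}$ correction arises when $b=p-1$. In that case $(\xi-h)^{p}y_\infty^+=0$, so the next term in the expansion of $I^+_{(0,1)}$, paired with the $Q_2^{-1/2}$-normalization, produces $Q_2^{1/2}2^{-p}h^a$ at order $z^{-1}$.

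As a consistency check, this coefficient should match by the Frobenius/pairing symmetry with the term $\frac{Q_2^{1/2}}{2^p}h^ay_\infty^+\delta_{b,p+q-1}$ in Proposition \ref{infinite_cohomo_quantize_+}. That term gives the corresponding three-point invariant $\langle \xi-h,\ \xi h^{a'}(\xi-h)^{p+q-1},\ h^{a}(\xi-h)^{p-1}y^+_\infty\rangle$, and we will use this symmetry to double-check the constant $2^{-p}$ through the dual basis of Lemma \ref{lm_preserve_orbifold_prod}.

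The main obstacle is constructing $\mathcal{D}_\infty$ cleanly and bounding the $z$-degrees of the $I^+_{(0,1)}$ and $I^+_{0,1}$ contributions after normalization. If the direct operator fails, we will instead derive the formulas for $U_\infty$ purely from Proposition \ref{infinite_cohomo_quantize_+} via WDVV/pairing symmetry. That is, we will compute $(D\qsstar T_\mu,T^\nu)=(T_\mu,D\qsstar T^\nu)$ with the dual basis, and let the known products on $U_1$ determine those on $U_\infty$.
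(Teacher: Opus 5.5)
Your plan is the paper's plan: exhibit a $P_\mu$ that needs no Birkhoff correction, apply it to the leading terms of $z^{-1}I^+$, and differentiate the $z^{-1}$-coefficient. However, the operator is the entire content of the proof, you never determine it, and the candidates you suggest do not work.

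\textbf{The correct operator.} The one that works is $\mathcal{D}_\infty=2^p\,\widehat{\xi}(\widehat{\xi}-\widehat{h})^{p+q}$, with no $z$-shift:
\[
P_\mu=\widehat{h}^a(\widehat{\xi}-\widehat{h})^b\,Q_2^{-1/2}\,2^p\widehat{\xi}(\widehat{\xi}-\widehat{h})^{p+q}=2^pQ_2^{-1/2}\,\widehat{h}^a(\widehat{\xi}-\widehat{h}-\tfrac{z}{2})^b\,\widehat{\xi}(\widehat{\xi}-\widehat{h})^{p+q}.
\]
The second form is the one in the paper.
\begin{itemize}
\item The factor $\widehat{\xi}$ is indispensable. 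Because $\xi(\xi-h)^{p+q}=0$, it annihilates $I^+_{(0,0)}+I^+_{0,0}$. Without it, $(\widehat{\xi}-\widehat{h})^{p+q}$ leaves $g_0^+(\xi-h)^{p+q}$ at order $z^0$.
\item On the twisted term, $I^+_{(0,\frac12)}=e^{t/z}\,2^{q+1}Q_2^{1/2}y^+_\infty\big/\bigl((2(\xi-h)+z)^{p+q}(2\xi+z)\bigr)$; the constant is $2^{q+1}$, not $2^q$. Here $\widehat{\xi}$ and $\widehat{\xi}-\widehat{h}$ act as $\xi+\tfrac z2$ and $\xi-h+\tfrac z2$. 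They cancel the denominator exactly, leaving $2^{-p}Q_2^{1/2}e^{t/z}y^+_\infty$.
\item Your factor $(2\widehat{\xi}-z)$ inside $\mathcal{D}_\infty$ acts as $2\xi$ on this sector and as $2\xi-z$ on the untwisted one. So it neither cancels $2\xi+z$ nor kills the untwisted part. A prefactor $z^{-(p+q)}$ is also not allowed in $\mathcal{R}$.
\item The property $\mathcal{D}_\infty(z^{-1}I^+)=e^{t/z}y^+_\infty+O(z^{-1})$ is not enough. The operator $\widehat{h}^a(\widehat{\xi}-\widehat{h})^b$ raises the $z$-order by up to $a+b$, so you need the exact lower-order terms.
\end{itemize}

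\textbf{The missing computations.} With the right operator, the computations you leave undone are
\[
P_\mu I^+_{(0,1)}=\frac{e^{t/z}Q_2^{1/2}h^a}{2^b(2(\xi-h)+z)^{p-b}},\qquad P_\mu I^+_{(1,1)}=\frac{2^p e^{t/z}Q_1Q_2^{1/2}(\xi-h)^{p+q}(\xi-h-\tfrac z2)^b}{(h+z)^{2p+q-a}}.
\]
They show that your bookkeeping is off in several places.
\begin{itemize}
\item The $\delta_{b,p-1}$ term comes from the first of these, whose leading order is $z^{-(p-b)}$. It lies in the untwisted sector, so $(\xi-h)^py^+_\infty=0$ plays no role.
\item Its $z^{-1}$-coefficient is $2^{1-p}Q_2^{1/2}h^a$. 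The factor $2^{-p}$ appears only when $\widehat{\xi}-\widehat{h}$ hits $Q_2^{1/2}$. The product is the $z^0$-coefficient of $\widehat{D}P_\mu(z^{-1}I^+)$, not the $z^{-1}$-coefficient.
\item $\widehat{h}$ contributes nothing here because that coefficient does not depend on $t_1$.
\item The hypothesis $a+b\le 2p+q-2$ is what makes the second term harmless; it has order exactly $z^{a+b-2p-q}$. It also controls the remainder $O(z^{-(3p+2q+1)})$. Since $P_\mu$ has $z$-order $a+b+p+q+1$, the relevant inequality is $(3p+2q+1)-(a+b+p+q+1)\ge 2$, not the one from Proposition \ref{infinite_cohomo_quantize_+}.
\item The $(0,\frac32)$ and $(1,\frac32)$ terms are not the binding ones.
\end{itemize}

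\textbf{The fallback is circular.} Pairing symmetry with known untwisted products can at best give the untwisted components of $D\qsstar T_\mu$. The $y^+_\infty$-components are invariants $\langle D,T_\mu,T^\nu\rangle$ with two twisted insertions, which Proposition \ref{infinite_cohomo_quantize_+} never sees. Since $c_1(\mathcal{X}^+)\cdot\ell=0$, degree counting cannot exclude $Q_1^n$-corrections there. You would need a separate geometric argument: curves of class $n\ell$ lie in the zero section, which is disjoint from $\mathcal{X}^+_{(0,\frac12)}$.
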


\begin{proof}
We claim that $P_\mu = 2^pQ_2^{-1/2}\widehat{h}^a(\widehat{\xi}-\widehat{h}-z/2)^{b}\widehat{\xi}(\widehat{\xi}-\widehat{h})^{p+q}$. Indeed, the action of $P_\mu$ on $z^{-1}I^+$ is given by 
\[I_{(0,0)}^++I_{0,0}^+\longmapsto 0,\quad I_{(0,\frac{1}{2})}^+\longmapsto e^{t/z}\cdot h^a(\xi-h)^{b}y^+_\infty,\]
\[I_{(0,1)}^+\longmapsto\dfrac{e^{t/z}Q_2^{1/2}h^a}{2^b(2(\xi-h)+z)^{p-b}}=\dfrac{1}{z}\cdot\dfrac{Q_2^{1/2}}{2^{p-1}}\cdot h^a\cdot\delta_{b,p-1}+O(z^{-2}),\]
\[I_{(1,1)}^+\longmapsto \dfrac{2^pe^{t/z}Q_1Q_2^{1/2}(\xi-h)^{p+q}(\xi-h-z/2)^b}{(h+z)^{2p+q-a}}=O(z^{-2}).\]
Thus, the quantum product is obtained by applying $\widehat{D}$ to the equation
\[P_\mu(z^{-1}I^+)=h^a(\xi-h)^by^+_\infty+\dfrac{1}{z}\left(t\cdot h^a(\xi-h)^by^+_\infty+\dfrac{Q_2^{1/2}}{2^{p-1}}\cdot h^a\cdot\delta_{b,p-1}\right)+O(z^{-2}).\]
\end{proof}
For $T_\mu = h^{p+q+a}(\xi-h)^by_\infty^+\in U_\infty$ with $0\leq a,b\leq p-1$, we have to expand the $I$-function to higher-order terms:
\[z^{-1}I^+=I^+_{(0,0)}+I^+_{0,0}+I^+_{(0,\tfrac{1}{2})}+I^+_{(0,1)}+I^+_{(1,1)}+I^+_{0,1}+I^+_{(0,\frac{3}{2})}+I^+_{(1,\frac{3}{2})}+O(z^{-(4p+2q+2)}).\]
Before proceeding to the main calculation, we first investigate the properties of the power series $I^+_{0,1}$. From now on, we simply denote $\theta_{Q_1^+}$ by $\theta$.

\begin{lm}\label{g_a,b^+}
If we carry out the expansion as follows:
\[\widehat{h}^{p+q}(\widehat{\xi}-\widehat{h})^{p+q}\left(I^+_{(1,1)}+I^+_{0,1}\right)=\left(\sum_{n=0}^{2p+q-1}g^+_{n,1}(Q_1^+)\dfrac{h^n}{z^n}\right)\cdot\dfrac{Q_2\cdot (\xi-h)^{p+q}}{z^{p+1}}.\]
and, for $0\leq a,b\leq p-1$, define the hypergeometric operator
\[\mathcal{L}^+_{a,b}\coloneqq \theta^{p-a}(\theta-1)^a-(-1)^q4^pQ_1^+\cdot\left(\theta-\tfrac{1}{2}\right)^{p-b}\left(\theta+\tfrac{1}{2}\right)^b,\]
then $g^+_{a,b,n}\coloneqq \theta^a(\theta-\frac{1}{2})^bg^+_{n,1}$ is the solution of $\mathcal{L}_{a,b}^+$ when $n\leq a-1$ and satisfies the relation
\[\theta^a\left((-1)^q\cdot 2^{-2p}g_n^+\right)=(\theta-\tfrac{1}{2})^{p-b}\left(g^+_{a,b,n}\right).\]
Furthermore, $g_{a,b,a}^++(-1)^{p+q-b
}\cdot 2^{p-b}$ is the solution of $\mathcal{L}_{a,b}^+$ and satisfies the relation
\[\theta^a((-1)^q\cdot 2^{-2p}g_a^+)=(\theta-\tfrac{1}{2})^{p-b}(g_{a,b,a}^++(-1)^{p+q-b}\cdot 2^{-p-b}).\]
\end{lm}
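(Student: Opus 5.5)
Our approach is to work directly with the explicit hypergeometric coefficients, as in the proof of Proposition \ref{Frob_basis_+}. First write out $I^+_{(1,1)}+I^+_{0,1}=\sum_{d_1}I^+_{(d_1,1)}$ (over $d_1\geq 1$ integral, together with $d_1\in\frac12+\Z$, $d_1>1$), using the formula for $z^{-1}I^+$. The factor $(\xi)_{1}=1/(\xi+z)$ contributes no $Q_1$-dependence. The terms $(2(\xi-h))^p_{2(1-d_1)}(\xi-h)^q_{1-d_1}$ with $1-d_1<0$ put $(\xi-h)$-factors in the numerator. After multiplying by $\widehat h^{p+q}(\widehat\xi-\widehat h)^{p+q}$ and using $\xi(\xi-h)^{p+q}=0$, every surviving term becomes a multiple of $(\xi-h)^{p+q}$, so $\xi$ may be set to $0$. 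The remaining $z$-dependence then reduces to $Q_2z^{-(p+1)}$ times a hypergeometric series in $Q_1^+$ and $h/z$.

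Next, reindex $d_1\mapsto d_1+1$ so that the new summation variable starts at $0$ on the same lattice as in $I^+_{(0,0)}+I^+_{0,0}$. Comparing the ratio of consecutive coefficients, as was done for $\square^+_\ell$ and $\mathcal L^+$, shows that the generating function $\sum_n g^+_{n,1}h^n/z^n$, in the variable $\theta=\theta_{Q_1^+}$ acting as $h/z$-shifts, satisfies an inhomogeneous hypergeometric equation. Its operator is obtained from $\mathcal L^+$ by the shift $\theta\mapsto\theta-1$ coming from the extra $Q_1^{1}$ in $(1,1)$, together with the half-integer shift from $\theta-\frac12$. Applying $\theta^a(\theta-\frac12)^b$ then converts this operator into $\mathcal L^+_{a,b}$. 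Here one uses that $\theta^{p-a}(\theta-1)^a\theta^a=\theta^a\cdot\theta^{p-a}(\theta-1)^a$ up to the commutation $Q_1\theta=(\theta-1)Q_1$, which is exactly what is needed to move the factor $\theta^a(\theta-\frac12)^b$ past $Q_1^+$. Coefficients of $h^n$ with $n\leq a-1$ are killed by the inhomogeneous right-hand side, since $\theta^a$ annihilates $t_1^{m}/m!$ for $m<a$. This gives $\mathcal L^+_{a,b}g^+_{a,b,n}=0$ for $n\leq a-1$.

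For the relation with $g_n^+$, compare the two series termwise. The coefficient of $Q_1^{d}$ in $g^+_{n,1}$ and that of $Q_1^{d+1}$ in $g^+_n$ (the latter coming from the $(d_1,0)$ terms with $d_1>0$) differ by the ratio of Pochhammer factors for the $\xi$-slot. This ratio is essentially $(-1)^q2^{-2p}$ times $(\theta-\frac12)^{p}$ evaluated at the shifted index. Redistributing $(\theta-\frac12)^b$ onto $g^+_{n,1}$ gives $\theta^a((-1)^q2^{-2p}g_n^+)=(\theta-\frac12)^{p-b}g^+_{a,b,n}$.

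The main obstacle is the boundary term at $n=a$. For $n\leq a-1$ the constant term of $g_n^+$ (the $(0,0)$ contribution, equal to $\log^nQ_1/n!$ plus corrections) is killed by $\theta^a$. For $n=a$, however, $\theta^a(\log^aQ_1/a!)=1$ survives and produces a constant discrepancy. We intend to compute it by matching constant terms: $(\theta-\frac12)^{p-b}$ acting on a constant $c$ gives $(-\frac12)^{p-b}c$, and setting this equal to $(-1)^q2^{-2p}$ gives $c=(-1)^{p+q-b}2^{-p-b}$. One then checks that $\mathcal L^+_{a,b}$ annihilates the corrected function $g^+_{a,b,a}+c$. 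For this we write $\mathcal L^+_{a,b}$ applied to the constant, which is $-(-1)^q4^pQ_1(-\frac12)^{p-b}(\frac12)^b c$, and confirm that it cancels the leftover inhomogeneity from the $h^a$ coefficient. The stated normalization $2^{p-b}$ in the solution claim versus $2^{-p-b}$ in the relation should be reconciled during this bookkeeping.
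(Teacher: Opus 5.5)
Your proposal is correct and is essentially the paper's proof. Your consecutive-coefficient comparisons in the $\ell$- and $\gamma$-directions are exactly what the paper packages as applying $\square^+_\ell$ (which telescopes to the boundary term $\widehat{h}^{2p+q}I^+_{(1,1)}$, giving $\mathcal{L}^+_{0,0}\sum_n g^+_{n,1}(h/z)^n=(Q_1^+)^{1+h/z}$) and $\square_\gamma$, followed by the intertwining $(\theta-1)^a(\theta-\tfrac12)^b\mathcal{L}^+_{0,0}=\mathcal{L}^+_{a,b}\,\theta^a(\theta-\tfrac12)^b$ and the same $n=a$ boundary analysis; you are also right that the constant must be $(-1)^{p+q-b}2^{-p-b}$, as in the paper's own proof, so the $2^{p-b}$ in the statement is a typo. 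Two small slips to fix when writing it up: on $\mathcal{X}^+$ all $a_i=1$, so $d_1\in\mathbb{Z}$ and there are no half-integer $d_1$ terms; and $\mathcal{L}^+_{0,0}$ arises from $\mathcal{L}^+$ not by a global shift $\theta\mapsto\theta-1$ but by replacing $(\theta+\tfrac12)^p$ with $(\theta-\tfrac12)^p$ in the $Q_1$-term, because $\widehat{\xi}$ acts as $z$ on $(\widehat{\xi}-\widehat{h})^{p+q}(I^+_{(1,1)}+I^+_{0,1})$.
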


\begin{proof}
Notice that $(\widehat{\xi}-z)\cdot(\widehat{\xi}-\widehat{h})^{p+q}(I^+_{(1,1)}+I^+_{0,1})=0$. We have that
\begin{align*}
z^{p}\mathcal{L}^+_{0,0}\left(\widehat{h}^{p+q}(\widehat{\xi}-\widehat{h})^{p+q}\left(I^+_{(1,1)}+I^+_{0,1}\right)\right) & =(\widehat{\xi}-\widehat{h})^{p+q}\cdot\square_\ell^+\left(I^+_{(1,1)}+I^+_{0,1}\right) \\
& =(\widehat{\xi}-\widehat{h})^{p+q}\widehat{h}^{2p+q}\left(I_{(1,1)}^+\right) \\
& =e^{t/z}\cdot \dfrac{(\xi-h)^{p+q}}{z}\cdot Q_1^+Q_2^+.
\end{align*}
From the identity
\[(\theta-1)^a(\theta-\tfrac{1}{2})^b\mathcal{L}^+_{0,0}=\mathcal{L}^+_{a,b}\theta^a(\theta-\tfrac{1}{2})^b,\]
we get that
\[\mathcal{L}_{a,b}^+\left(\theta^a(\theta-\tfrac{1}{2})^b\widehat{h}^{p+q}(\widehat{\xi}-\widehat{h})^{p+q}\left(I^+_{(1,1)}+I^+_{0,1}\right)\right)=e^{t_1h/z}\cdot\left(\dfrac{h}{z}\right)^a\left(\dfrac{h}{z}+\dfrac{1}{2}\right)^b\cdot \dfrac{Q_1^+Q_2^+\cdot(\xi-h)^{p+q}}{z^{p+1}}.\]
By extracting the coefficient of $h^n(\xi-h)^{p+q}$, we obtain
\[\mathcal{L}_{a,b}^+\left(\theta_{Q_1^+}^a(\theta_{Q_1^+}-\tfrac{1}{2})^bg^+_{n,1}\right)=0\]
for $n\leq a-1$, and
\[\mathcal{L}_{a,b}^+\left(\theta_{Q_1^+}^a(\theta_{Q_1^+}-\tfrac{1}{2})^{p-1}g_{a,1}^++(-1)^{p+q-b}\cdot 2^{-p-b}\right)=2^{-b}Q_1^++\mathcal{L}_{a,b}^+((-1)^{p+q-b}\cdot 2^{-p-b})=0.\]

Recall that the Picard--Fuchs equation of $I^+$ corresponding to the fiber curve $\gamma$ is
\[\square_{\gamma}=2^p\widehat{\xi}(\widehat{\xi}-\widehat{h})^{p+q}(2(\widehat{\xi}-\widehat{h})-z)^p-Q_2^+.\]
The second assertion is deduced from the following computation:
\begin{align*}
&\ (\theta-\tfrac{1}{2})^{p-b}\left(\theta^a(\theta-\tfrac{1}{2})^b\widehat{h}^{p+q}(\widehat{\xi}-\widehat{h})^{p+q}\left(I^+_{(1,1)}+I^+_{0,1}\right)\right) \\
=&\ \dfrac{(-1)^p\cdot 2^{-2p}}{z^{a+p+1}}\cdot \widehat{h}^{p+q+a}\cdot 2^p\widehat{\xi}(\widehat{\xi}-\widehat{h})^{p+q}(2(\widehat{\xi}-\widehat{h})-z)^p\left(I^+_{(1,1)}+I^+_{0,1}\right) \\
=&\ \dfrac{(-1)^p\cdot 2^{-2p}}{z^{a+p+1}}\cdot \widehat{h}^{p+q+a}\left(Q_2^+\cdot I^+_{0,0}\right) \\
= &\ \dfrac{(-1)^q\cdot 2^{-2p}Q_2^+}{z^{p+1}}\cdot\left(\theta^a(\widehat{\xi}-\widehat{h})^{p+q}\left(I^+_{(0,0)}+I^+_{0,0}\right)-\theta^a(\widehat{\xi}-\widehat{h})^{p+q}(e^{t/z})\right),
\end{align*}
where we use the fact that
\[\theta^a(\widehat{\xi}-\widehat{h})^{p+q}(e^{t/z})=\dfrac{1}{z^a}\cdot h^a(\xi-h)^{p+q}\cdot e^{t/z}=O(z^{-a}).\]
\end{proof}

\begin{prop}\label{quan_isom^+_hard}
If, for any $0\leq a,b\leq p-1$ such that $k\coloneqq a+b-p\geq 0$, we define 
\[W_{a,b}^+=\det\left(\begin{matrix}
g_0^+ & \theta_{Q_1^+}(g_0^+) & \cdots & \theta^{k}_{Q_1^+}(g_0^+) & g_{a,b,0}^+ \\
g_1^+ & \theta_{Q_1^+}(g_1^+) & \cdots & \theta^{k}_{Q_1^+}(g_1^+) & g_{a,b,1}^+ \\
\vdots && \ddots & \vdots & \vdots \\
g_{k+1}^+ & \theta_{Q_1^+}(g_{k+1}^+) & \cdots & \theta^{k}_{Q_1^+}(g_{k+1}^+) & g_{a,b,k+1}^+
\end{matrix}\right)\]
and  let $T_\mu = h^{p+q+a}(\xi-h)^by^+_\infty$, then
\begin{align*}
h\qsstar T_\mu & =h^{p+q+a+1}(\xi-h)^by^+_\infty+(-1)^b\cdot (Q_2^+)^{1/2}\cdot 2^p\theta_{Q_1^+}\left(\dfrac{W^+_{a,b}}{W^+_k}\right)\cdot h^{k+1}(\xi-h)^{p+q} \\
& -(-1)^b\cdot 2Q_2\cdot\sum_{0\leq i,n\leq k}(-1)^{i+n}\theta_{Q_1^+}\left(g_{a,b,i}^+\dfrac{W^{[k]\setminus\{n\}}_{[k]\setminus\{i\}}}{W_k^+}\right)\cdot (-2)^{k-n}h^n\xi^{k-n}y^+_\infty \\
\xi\qsstar T_\mu & =\xi h^{p+q+a}(\xi-h)^by^+_\infty+\dfrac{(Q_2^+)^{1/2}}{2^p}\cdot h^{p+q+a}\cdot\delta_{b,p-1} \\
& +(-1)^b\cdot 2^{p-1}(Q_2^+)^{1/2}\cdot\dfrac{W^+_{a,b}}{W^+_k}\cdot h^{k+1}(\xi-h)^{p+q} \\
& -(-1)^b\cdot 2Q_2^+\sum_{0\leq i,n\leq k}(-1)^{i+n}g_{a,b,i}^+\dfrac{(W^+)^{[k]\setminus\{n\}}_{[k]\setminus\{i\}}}{W_k^+}\cdot (-2)^{k-n}h^n\xi^{k-n}y^+_\infty.
\end{align*}
\end{prop}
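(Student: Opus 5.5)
Following the recipe of Section \ref{small-product}, I will construct $P_\mu$ for $T_\mu=h^{p+q+a}(\xi-h)^by^+_\infty$ such that $P_\mu(z^{-1}I^+)=T_\mu+O(z^{-1})$. The $z^0$-terms of $\widehat h P_\mu(z^{-1}I^+)$ and $\widehat\xi P_\mu(z^{-1}I^+)$ then give the two products, using Lemma \ref{birkhoff factorization} and \eqref{qde}.

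\textbf{Starting operator.} By analogy with Proposition \ref{quan_isom^+_easy}, I start from
\[
P^{(0)}=2^pQ_2^{-1/2}\,\widehat h^{p+q+a}\bigl(\widehat\xi-\widehat h-z/2\bigr)^b\,\widehat\xi(\widehat\xi-\widehat h)^{p+q}
\]
and apply it to the expansion of $z^{-1}I^+$ up to $O(z^{-(4p+2q+2)})$. The factor $\widehat\xi(\widehat\xi-\widehat h)^{p+q}$ kills $I^+_{(0,0)}+I^+_{0,0}$. The term $I^+_{(0,\frac12)}$ yields $e^{t/z}T_\mu$. The term $I^+_{(0,1)}$ yields the $\delta_{b,p-1}$ correction exactly as in Proposition \ref{quan_isom^+_easy}, which accounts for the summand $\frac{(Q_2^+)^{1/2}}{2^p}h^{p+q+a}\delta_{b,p-1}$ in $\xi\star T_\mu$.

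The new feature is that $I^+_{(1,1)}+I^+_{0,1}$ now contributes with nonnegative $z$-powers, since $\widehat h^{p+q+a}$ no longer suffices to push it to $O(z^{-2})$. Using $\widehat\xi=z$ on this piece (because $(\widehat\xi-z)(\widehat\xi-\widehat h)^{p+q}(I^+_{(1,1)}+I^+_{0,1})=0$), the factor $(\widehat\xi-\widehat h-z/2)^b$ becomes, up to sign and a power of $2$ and $z$, the operator $(\theta-\tfrac12)^b$ in $Q_1$. Likewise $\widehat h^a$ becomes $z\theta$. Hence this piece equals a multiple of $Q_2^{1/2}z^{-(p+1)+\cdots}\sum_n g^+_{a,b,n}h^nz^{-n}(\xi-h)^{p+q}$, with the functions $g^+_{a,b,n}$ of Lemma \ref{g_a,b^+}. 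The terms with $n\le k+1=a+b-p+1$ are the ones of $z$-degree $\ge -1$ that must be corrected. The pieces $I^+_{(0,\frac32)}$ and $I^+_{(1,\frac32)}$ produce terms in the $y^+_\infty$ sector at order $Q_2$; I will check that these are $O(z^{-2})$ or absorbed by the same corrections.

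\textbf{Cancelling the bad terms.} I subtract a $\C(Q_1)$-linear combination of the operators $\mathcal G^+_{n,k}$ from \eqref{def_G_n,k^+}, times $Q_2^{1/2}$, exactly as in Proposition \ref{quan_prod_+_iden}. These operators produce the $g^+_m$ and their $\theta$-derivatives in the untwisted sector, so killing the coefficients of $h^0,\dots,h^k$ is a linear system with Wronskian matrix $(\theta^jg_i^+)$. By Cramer's rule the unique solution has coefficients $g^+_{a,b,i}(W^+)^{[k]\setminus\{n\}}_{[k]\setminus\{i\}}/W_k^+$. The surviving $z^{-1}$ coefficient of $h^{k+1}(\xi-h)^{p+q}$ is then the bordered determinant $W^+_{a,b}/W^+_k$, after a Laplace expansion along the last column. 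Lemma \ref{g_a,b^+} is needed here: the relation $\theta^a((-1)^q2^{-2p}g_n^+)=(\theta-\frac12)^{p-b}g^+_{a,b,n}$, together with the shifted constant for $n=a$, guarantees that the $g^+_{a,b,n}$ lie in the span matching the Frobenius basis, so the combination is consistent. I also expect that $h^{k+1}$ is the first uncancelled term, using $\mathcal L^+_{a,b}$.

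\textbf{Secondary effects and reading off the products.} The operators $\mathcal G^+_{n,k}$ also act on $I^+_{(0,\frac12)}$, and multiplied by $Q_2^{1/2}$ this gives the order-$Q_2$ terms $(-2)^{k-n}h^n\xi^{k-n}y^+_\infty$. Their action on $I^+_{(0,1)}$ produces $Q_2^{3/2}$ terms, which must be removed using $z^m(S^+)^{\widehata}$ corrections as before, or shown to be $O(z^{-2})$. Applying $\widehat\xi$ and extracting the $z^0$-term gives the $\xi\star$ formula. For $\widehat h$, one has $\widehat h=z\theta+h$ on the coefficient functions, so the coefficients acquire $\theta$, which produces the $\theta(W^+_{a,b}/W^+_k)$ and $\theta(g^+_{a,b,i}W/W)$ terms. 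The terms in which $h$ multiplies $h^{k}$ are handled by the identity \eqref{L_k^+_relation}, as in \eqref{Wron_der_relation_+}.

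\textbf{Main obstacle.} The hardest step is the bookkeeping of the $I^+_{(1,1)}+I^+_{0,1}$ contribution under $P^{(0)}$. One has to identify the precise normalization, including the signs $(-1)^b$ and the powers $2^p$, and show that Lemma \ref{g_a,b^+} makes the Cramer solution match $W^+_{a,b}$. I would first verify this in the case $k=0$.
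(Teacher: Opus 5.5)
Your proposal is correct and follows essentially the same route as the paper: the paper's $P_\mu$ is exactly your $P^{(0)}$ minus $(-1)^b2^pQ_2^{1/2}\mathcal{H}^+_{a,b}$. Here $\mathcal{H}^+_{a,b}$ is built from the $\mathcal{G}^+_{n,k}$ with your Cramer's-rule coefficients $g^+_{a,b,i}(-1)^{i+n}(W^+)^{[k]\setminus\{n\}}_{[k]\setminus\{i\}}/W_k^+$, and the Schur-complement identity produces $W^+_{a,b}/W^+_k$. Some steps in your plan are unnecessary: Lemma \ref{g_a,b^+}, $\mathcal{L}^+_{a,b}$ and \eqref{L_k^+_relation} are not needed, and since $k=a+b-p\leq p-2$, the contributions of $I^+_{(0,\frac32)}$, $I^+_{(1,\frac32)}$ and $\mathcal{H}^+_{a,b}(I^+_{(0,1)})$ are all $O(z^{-2})$ by degree counting, so no $S$-type corrections arise.
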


\begin{proof}
We claim that 
$$P_\mu = 2^pQ_2^{-1/2}\widehat{h}^{p+q+a}(\widehat{\xi}-\widehat{h}-z/2)^{b}\widehat{\xi}(\widehat{\xi}-\widehat{h})^{p+q}-(-1)^b\cdot 2^p\cdot Q_2^{1/2}\mathcal{H}^+_{a,b},$$
where 
\begin{equation}\label{def_H_a,b^-}
\mathcal{H}^+_{a,b}\coloneqq\dfrac{1}{W_k^+}\sum_{0\leq i,n\leq k}g_{a,b,i}^+(-1)^{i+n}W_{[k]\setminus\{i\}}^{[k]\setminus\{n\}}\mathcal{G}^+_{n,k},
\end{equation}
with $k=a+b-p$ (The definition of
$\mathcal{G}^+_{n,k}$ is in \eqref{def_G_n,k^+}). \\
Indeed,
 the action of $2^pQ_2^{-1/2}\widehat{h}^{p+q+a}(\widehat{\xi}-\widehat{h}-z/2)^{b}\widehat{\xi}(\widehat{\xi}-\widehat{h})^{p+q}$ on $z^{-1}I^+$ is given by
\begin{align*}
I_{(0,0)}^++I_{0,0}^+ & \longmapsto 0,\quad I_{(0,\frac{1}{2})}^+\longmapsto e^{t/z}\cdot h^{p+q+a}(\xi-h)^{b}y^+_\infty, \\
I^+_{(1,1)}+I^+_{0,1} & \longmapsto (-1)^b\cdot 2^{p}\left(\sum_{n=0}^{a+b-p+1} g^+_{a,b,n}(Q_1^+)\cdot z^{a+b-p-n}\cdot h^n\right)\cdot Q_2^{1/2}\cdot (\xi-h)^{p+q}+O(z^{-2}), \\
I_{(0,1)}^+ & \longmapsto\dfrac{e^{t/z}Q_2^{1/2}h^{p+q+a}}{2^b(2(\xi-h)+z)^{p-b}}=\dfrac{1}{z}\cdot\dfrac{Q_2^{1/2}}{2^{p-1}}\cdot h^{p+q+a}\cdot\delta_{b,p-1}+O(z^{-2}), \\
I^+_{(0,\frac{3}{2})} & \longmapsto\dfrac{e^{t/z}Q_2h^{p+q+a}}{2^{2p}(\xi-h+\tfrac{z}{2})^{p+q}(\xi-h+z)^{p-b}(\xi+\frac{z}{2})}=O(z^{-2}),\\
I^+_{(1,\frac{3}{2})} & \longmapsto\dfrac{e^{t/z}Q_1Q_2(\xi-h)^b}{(h+z)^{p-a}(\xi+\tfrac{z}{2})}=O(z^{-2}).
\end{align*}
The proof of the claim is completed by the following computation
\begin{align*}
\mathcal{H}^+_{a,b}\left(I_{(0,0)}^++I_{0,0}^+\right) & =\dfrac{1}{W_k^+}\sum_{0\leq i, n\leq k}\sum_{m=0}^{2p+q-1}g_{a,b,i}^+(-1)^{i+n}W_{[k]\setminus\{i\}}^{[k]\setminus\{n\}}\theta^n(g_m)\dfrac{h^m}{z^{m-k}}\cdot (\xi-h)^{p+q} \\
& =\dfrac{1}{W_k^+}\sum_{i=0}^k\sum_{m=0}^{2p+q-1}g_{a,b,i}^+W^{[k]}_{\{0,\ldots,i-1,m,i+1,\ldots,k\}}\dfrac{h^m}{z^{m-k}}\cdot(\xi-h)^{p+q} \\
& =\sum_{i=0}^k g_{a,b,i}^+\cdot z^{a+b-p-i}\cdot h^i(\xi-h)^{p+q} \\
& +\dfrac{1}{z}\cdot \dfrac{1}{W_k^+}\sum_{i=0}^{k}g^+_{a,b,i}(-1)^{k-i}W^{[k]}_{[k+1]\setminus\{i\}}\cdot h^{k+1}(\xi-h)^{p+q}+O(z^{-2}).
\end{align*}

Since $k\leq p-2$, $\mathcal{H}_{a,b}^+(I_{(0,1)}^+)=O(z^{-2})$. Together with the identity
\[g^+_{a,b,k+1}-\dfrac{1}{W_k^+}\sum_{i=0}^{k}g^+_{a,b,i}(-1)^{k-i}W^{[k]}_{[k+1]\setminus\{i\}}=\dfrac{W^+_{a,b}}{W^+_k},\]
we can determine the quantum product by
\begin{align*}
P_\mu(z^{-1}I^+) & =h^{p+q+a}(\xi-h)^by^+_\infty+\dfrac{t}{z}\cdot h^{p+q+a}(\xi-h)^by^+_\infty+\dfrac{1}{z}\cdot\dfrac{Q_2^{1/2}}{2^{p-1}}\cdot h^{p+q+a}\cdot\delta_{b,p-1}\\
& +\dfrac{(-1)^b\cdot 2^pQ_2^{1/2}}{z}\cdot\dfrac{W_{a,b}^+}{W_k^+}\cdot h^{k+1}(\xi-h)^{p+q} \\
& -\dfrac{(-1)^b\cdot 2Q_2}{z}\sum_{0\leq i,n\leq k}(-1)^{i+n}g_{a,b,i}^+\dfrac{(W^+)^{[k]\setminus\{n\}}_{[k]\setminus\{i\}}}{W_k^+}\cdot (-2)^{k-n}h^n\xi^{k-n}y^+_\infty +O(z^{-2}).
\end{align*}
\end{proof}

\subsubsection{$h\star_{small}$ and $\xi\star_{small}$ on $\mathcal X^-$-side}
\label{subsection_quan_prod_X^-}
A straightforward degree counting yields
\small
\begin{align*}
\deg_{z^{-1}}I_{(d_2,d_3)}^-=(2p+q+1)d_3+\operatorname{age}(\nu^-(d_2,d_3))=\begin{cases}
(2p+q+1)d_3 &\text{if } d_2\in\Z,\ d_3\in\Z \\
(2p+q+1)d_3+p+q &\text{if } d_2\in\frac{1}{2}\Z\setminus\Z,\ d_3\in\Z \\
(2p+q+1)d_3+\frac{q+1}{2} &\text{if } d_2,d_3\in\frac{1}{2}\Z\setminus\Z
\end{cases}
\end{align*}\normalsize
for  $d_3\geq d_2$, and 
\[\deg_{z^{-1}}I^-_{\alpha,n}=(2p+q+1)n+(2p+q)\delta_{\alpha,0}+\operatorname{age}(\nu^-(\alpha,n))=\begin{cases}
(2p+q+1)n+2p+q & \text{if } \alpha=0 \\
(2p+q+1)n+p+q & \text{if } \alpha=\frac{1}{2}
\end{cases}\]
for all $n\in\N$. Consequently, we have the expansion
\small
\begin{align*}
z^{-1}I^- & =I^-_{(0,0)}+I^-_{\frac{1}{2},0}+I^-_{(\frac{1}{2},\frac{1}{2})}+I^-_{0,0}+I^-_{(0,1)}+I^-_{(1,1)}+O(z^{-(3p+2q+1)})=e^{\frac{t_1h+t_2\xi}{z}}+O(z^{-(p+q)}) \\
& =I^-_{(0,0)}+I^-_{\frac{1}{2},0}+I^-_{(\frac{1}{2},\frac{1}{2})}+I^-_{0,0}+I^-_{(0,1)}+I^-_{(1,1)}+I^-_{(\frac{1}{2},1)}+I^-_{\frac{1}{2},1}+I^-_{(\frac{1}{2},\frac{3}{2})}+I^-_{(\frac{3}{2},\frac{3}{2})}+O(z^{-(4p+2q+1)}).
\end{align*}\normalsize
The proof for $\mathcal{X}^-$ is entirely analogous  to that for $\mathcal{X}^+$. We  therefore omit the repetitive arguments and routine computation, and leave the details to the reader. 

\begin{lm}\label{easy_quan_-}
For any divisor $D_1,\ldots, D_k$ with $1\leq k\leq p+q-1$,
\[(D_1\cdots D_k)^{\widehata}(J^-)|_{t_1h+t_2\xi}=\widehat{D}_1\cdots\widehat{D}_k(I^-)\]
and 
\[D\star_{\operatorname{small}}T=DT\]
for any divisor $D$ and $T\in A^{\leq p+q-2}(\mathcal{X}^-)$.
\end{lm}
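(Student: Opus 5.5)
The plan is to mirror the argument behind \eqref{easy_quanize_+} and \eqref{easy_quan_prod_+} on the $\mathcal X^+$-side, using the expansion of $z^{-1}I^-$ recorded just above. By Lemma \ref{birkhoff factorization} with $S_o=\emptyset$ (no mirror transformation), $I^-(t_1h+t_2\xi)=J^-(t_1h+t_2\xi)$. So the first claim reduces to a $z$-degree statement: for divisors $D_1,\ldots,D_k$ with $k\leq p+q-1$,
\[
\widehat D_1\cdots\widehat D_k(z^{-1}I^-)=e^{t/z}D_1\cdots D_k+O(z^{-1}).
\]
Granting this, the characterization of $\hat T J$ as the unique point of $z T_{J}\mathcal L_{\mathcal X^-}$ of the form $T+O(1/z)$ (the ruling property of the cone) identifies $\widehat D_1\cdots\widehat D_k(I^-)$ with $(D_1\cdots D_k)^{\widehata}J^-$ at $t_1h+t_2\xi$.

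For this degree estimate I would argue term by term.
\begin{enumerate}
\item The leading term $I^-_{(0,0)}=e^{t/z}$ gives $e^{t/z}D_1\cdots D_k$ exactly.
\item Every other summand has $\deg_{z^{-1}}\geq p+q$: from the table, $I^-_{\frac12,0}$ and $I^-_{(\frac12,\frac12)}$ have degrees $p+q$ and $(2p+q+1)/2+(q+1)/2=p+q+1$, $I^-_{0,0}$ has degree $2p+q$, and terms with $d_3\geq 1$ have degree at least $2p+q+1$. This is consistent with the stated expansion $z^{-1}I^-=e^{t/z}+O(z^{-(p+q)})$.
\item Each $\widehat D=D+zQ\partial_Q$ raises the $z$-power by at most one. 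Hence after $k\leq p+q-1$ applications each correction term is $O(z^{-(p+q)+k})=O(z^{-1})$.
\end{enumerate}
Here I must check that $Q\partial_Q$ acting on the coefficient series, including fractional powers such as $Q_1^{1/2}$ in $I^-_{\frac12,0}$, does not spoil the degree count. It does not, since it only produces $z\cdot(\text{same degree})$.

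For the second claim, write $T\in A^{\leq p+q-2}(\mathcal X^-)$. The orbifold classes of such degree are untwisted, because the twisted sectors $y^-_{1/2}$ and $y^-_\infty$ have ages $p+q$ and $(q+1)/2+\ldots$. I need to double-check the age of $y_\infty^-$: it is $(q+1)/2$, which may be $\leq p+q-2$, so twisted classes $h^a(\xi-h)^by_\infty^-$ are not excluded.
\begin{enumerate}
\item For untwisted $T$, write it as a polynomial in $h,\xi$ of degree $k\leq p+q-2$. Apply the first claim with $k+1\leq p+q-1$ divisors, then apply $\widehat D$ via \eqref{qde}. Extracting the $z^0$-coefficient of $\widehat D\,\widehat T(z^{-1}I^-)=e^{t/z}DT+O(z^{-1})$ gives $D\star_{\rm small}T=DT$.
\item For twisted classes, handle them like Proposition \ref{quan_isom^+_easy}. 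I would try $P_\mu=2^pQ_2^{-1/2}\,\widehat{\cdot}\,\widehat\xi\,\widehat h^{p+q}\cdots$ applied to $I^-_{(\frac12,\frac12)}$, and check that the nearest competing terms ($I^-_{(0,1)}$, $I^-_{(\frac12,1)}$, and so on) are $O(z^{-2})$ when the total degree is $\leq p+q-2$.
\end{enumerate}

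The main obstacle I expect is exactly this twisted-sector bookkeeping: verifying that no $Q_2^{1/2}$ or $Q_1^{1/2}$ correction survives at order $z^{-1}$ in low degree. This needs the age-dependent degree table above, applied to each of the finitely many terms in the truncated expansion of $z^{-1}I^-$.
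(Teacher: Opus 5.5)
Your proof of the first identity and of the case of untwisted $T$ is correct, and it is the argument the paper intends; the paper omits the proof as analogous to \eqref{easy_quanize_+}--\eqref{easy_quan_prod_+}. Since $z^{-1}I^-=e^{t/z}+O(z^{-(p+q)})$, applying at most $p+q-1$ operators $\widehat D$ leaves every correction in $O(z^{-1})$. The result is therefore the unique element of the form $D_1\cdots D_k+O(z^{-1})$, namely $\widehat T z^{-1}J^-$. One more $\widehat D$, together with \eqref{qde} and extraction of the $z^0$-term, gives $D\qsstar T=DT$.

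The twisted-sector step should be removed: it is not needed, and as you set it up it is false. In the statement, $A^{\leq p+q-2}(\mathcal X^-)$ is the ordinary Chow ring, i.e.\ the untwisted sector. The paper consistently separates $A^*$ from $A^*_{\operatorname{orb}}$, and on the $\mathcal X^+$-side \eqref{easy_quan_prod_+} is claimed only for $U_1\cup U_2$.

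The extension you planned to verify fails for $q\geq 3$. Take $T=h^{p-1}y^-_\infty$, whose orbifold degree is $p-1+\frac{q+1}{2}\leq p+q-2$. By Proposition \ref{quan_isom^-_easy} and the relation $(h^-)^py^-_\infty=0$,
\[
h\qsstar T=h^{p}y^-_\infty+2^{-p}(Q_1^-Q_2^-)^{1/2}=2^{-p}(Q_1^-Q_2^-)^{1/2}\neq 0=hT .
\]
Your check that ``competing terms are $O(z^{-2})$'' breaks exactly at $I^-_{(1,1)}$. A class $h^a(\xi-h)^by^-_\infty$ is realized only by an operator with prefactor $(Q_1^-Q_2^-)^{-1/2}$ and $p+q+1+a+b$ divisor operators. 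That is outside the range $k\leq p+q-1$ where the degree argument works. Such an operator can lift $I^-_{(1,1)}$, which has $z^{-1}$-order $2p+q+1$, to order $z^{-(p-a-b)}$. For $a=p-1$, $b=0$ it does produce the surviving term $\frac{(Q_1^-Q_2^-)^{1/2}}{2^{p-1}z}$ computed in the proof of that proposition.

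So your argument is complete once it is restricted to untwisted $T$, which is all the lemma asserts. The ``main obstacle'' you anticipated is a genuine quantum correction, not something to be shown away.
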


\begin{prop}\label{infinite_cohomo_quantize_-}
If $T_\mu = \xi h^a(\xi-h)^b$ with $0\leq a\leq p+q-1, 0\leq b\leq 2p+q-1$, then
$P_\mu=\widehat{\xi}\widehat{h}^a(\widehat{\xi}-\widehat{h})^b$
and 
\begin{align*}
h\star_{\operatorname{small}} T_\mu & =\xi h^{a+1}(\xi-h)^b+\dfrac{(Q_1^-Q_2^-)^{1/2}(\xi-h)^by^-_\infty}{2^p}\cdot\delta_{a,p+q-1}, \\
(\xi-h)\star_{\operatorname{small}} T_\mu & =\xi h^a(\xi-h)^{b+1}+Q_2^-h^a\cdot\delta_{b,2p+q-1},
\end{align*}
where $\delta_{m,n}$ is the Kronecker delta symbol. In particular, we get that
\[\xi\qsstar h^{p+q-1}=\xi\qsstar h\qsstar h^{p+q-2}=h\qsstar\xi\qsstar h^{p+q-2}=\xi h^{p+q-1}.\]
\end{prop}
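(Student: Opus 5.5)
The plan follows the proof of Proposition \ref{infinite_cohomo_quantize_+} on the $\mathcal X^-$ side. The aim is to show that $P_\mu=\widehat{\xi}\widehat{h}^a(\widehat{\xi}-\widehat{h})^b$ already gives $\widehat{T}_\mu J^-$ at $t_1h+t_2\xi$ up to $O(z^{-2})$. The products then come from applying $\widehat h$ and $\widehat\xi-\widehat h$ and reading off the $z^0$-coefficient, via \eqref{qde} and Lemma \ref{birkhoff factorization}. Since $\deg T_\mu=a+b+1\le 3p+2q-1$, I will use the expansion
\[
z^{-1}I^-=I^-_{(0,0)}+I^-_{\frac12,0}+I^-_{(\frac12,\frac12)}+I^-_{0,0}+I^-_{(0,1)}+I^-_{(1,1)}+O(z^{-(3p+2q+1)}).
\]
The remainder, after $P_\mu$ lowers its $z^{-1}$-degree by $a+b+1$, is already $O(z^{-2})$.

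The main step is to compute $P_\mu$ on each listed term separately.
\begin{enumerate}
\item On $I^-_{(0,0)}=e^{t/z}$, $P_\mu$ gives $e^{t/z}\xi h^a(\xi-h)^b$.
\item On $I^-_{\frac12,0}$ and $I^-_{0,0}$ the result is $0$. These summands have $d_3=0$ and are supported on $y^-_{1/2}$, or carry a factor killed by $\xi$ (using $\xi y^-_{1/2}=0$ and the vanishing of $\xi I^-_{(d_2,d_3)}$ for $d_2>d_3$, $d_3\in\Z$, as in the $+$ case).
\item On $I^-_{(\frac12,\frac12)}$, the hypergeometric factor contains $(2h)^p_{1}$, $(h)^q_{1/2}$, $(\xi-h)^{2p+q}_{0}$, $(\xi)_{1/2}$ together with $(Q_1Q_2)^{1/2}y^-_\infty$. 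Applying $\widehat\xi\widehat h^a$ cancels the denominator factors $(h+z/2)$ in $(h)_{1/2}$, and only the $a=p+q-1$ case leaves a leading term. I expect this to be
\[
e^{t/z}\frac{(Q_1Q_2)^{1/2}(\xi-h)^b y_\infty^-}{2^{p-1}z}\,\delta_{a,p+q-1}+O(z^{-2}),
\]
with the normalization matching the coefficient $2^{-p}$ in the statement, by analogy with the $I^+_{(0,\frac12)}$ computation.
\item On $I^-_{(0,1)}=e^{t/z}Q_2\,(\xi-h)_1^{2p+q}(\xi)_1$, applying $\widehat\xi(\widehat\xi-\widehat h)^b$ cancels the $\xi+z$ factor and $b$ of the $(\xi-h+z)$ factors. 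This leaves $Q_2 h^a/(\xi-h+z)^{2p+q-b}$, which contributes $Q_2h^a/z$ exactly when $b=2p+q-1$.
\item $I^-_{(1,1)}$ is handled by the same degree count as the $I^+_{(0,1)}$ term in Proposition \ref{infinite_cohomo_quantize_+}, and gives $O(z^{-2})$.
\end{enumerate}
Summing these, $P_\mu(z^{-1}I^-)$ has no nonnegative powers of $z$ beyond $T_\mu$. Hence it equals $\widehat{T}_\mu J^-$, and the $z^{-1}$-coefficient yields both formulas.

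The last identity needs no new computation. Since $h^{p+q-2}$ has degree $p+q-2$, Lemma \ref{easy_quan_-} gives $h\star h^{p+q-2}=h^{p+q-1}$ and $\xi\star h^{p+q-2}=\xi h^{p+q-2}$. Commutativity and associativity then give
\[
\xi\star h^{p+q-1}=h\star(\xi h^{p+q-2}).
\]
The first formula with $a=p+q-2$, $b=0$ has no correction term, because $a\ne p+q-1$, so the right-hand side equals $\xi h^{p+q-1}$.

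The main obstacle is step (iii). The precise numerical constant depends on the orbifold pairing of $y^-_\infty$ and on the expansion of $(2h)^p_{1}(h)^q_{1/2}$, so I would verify the factor $2^{-p}$ carefully against the $+$ side using the correspondence $h\mapsto \xi-h$.
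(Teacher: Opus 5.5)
Your proposal is correct and is essentially the paper's proof. The paper likewise applies $\widehat{\xi}\widehat{h}^a(\widehat{\xi}-\widehat{h})^b$ termwise to $I^-_{\frac12,0}+I^-_{0,0}$, $I^-_{(\frac12,\frac12)}$, $I^-_{(0,1)}$ and $I^-_{(1,1)}$, obtains exactly the leading terms you predict, and treats the final identity as the immediate consequence you describe.

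The constant you flag in step (iii) involves no orbifold pairing. Since $(2h+z)^{-p}(h+z/2)^{-q}=2^{-p}(h+z/2)^{-(p+q)}$, the $z^{-1}$-coefficient is $2^{1-p}(Q_1^-Q_2^-)^{1/2}(\xi-h)^b y^-_\infty$. Applying $\widehat{h}$ then acts on this coefficient by $\theta_{Q_1^-}$, which contributes the factor $\tfrac12$ and produces $2^{-p}$, while $\widehat{\xi}-\widehat{h}$ annihilates it.
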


\begin{proof}
By a direct computation, we get that
\small
\begin{align*}
&\widehat{\xi}\widehat{h}^a(\widehat{\xi}-\widehat{h})^b(I^-_{\frac{1}{2},0}+I^-_{0,0})  =0, \\
&\widehat{\xi}\widehat{h}^a(\widehat{\xi}-\widehat{h})^b(I^-_{(\frac{1}{2},\frac{1}{2})})  =e^{t/z}\cdot\dfrac{(Q_1^-Q_2^-)^{1/2}(\xi-h)^by^-_\infty}{2^p(h+\tfrac{z}{2})^{p+q-a}}=\dfrac{(Q_1^-Q_2^-)^{1/2}(\xi-h)^by^-_\infty}{2^{p-1}z}\cdot\delta_{a,p+q-1}+O(z^{-2}), \\
&\widehat{\xi}\widehat{h}^a(\widehat{\xi}-\widehat{h})^b(I^-_{(0,1)}) =e^{t/z}\cdot\dfrac{Q_2^-h^a}{(\xi-h+z)^{2p+q-b}}=\dfrac{Q_2^-h^a}{z}\cdot\delta_{b,2p+q-1}+O(z^{-2}), \\
&\widehat{\xi}\widehat{h}^a(\widehat{\xi}-\widehat{h})^b(I^-_{(1,1)})  =e^{t/z}\cdot\dfrac{Q_1^-Q_2^-(\xi-h)^b}{2^p(h+z)^{p+q-a}(2h+z)^p}=O(z^{-2}). \\
\end{align*}\normalsize
\end{proof}

\begin{prop}\label{Frob_basis_-}
If we carry out the expansion as follows:
\begin{align*}
\widehat{h}^{p+q}\left(I^-_{\frac{1}{2},0}\right) & =2^{-p}\cdot\sum_{n=0}^{p-1}g^-_n(Q_1^-)\dfrac{h^n}{z^n}\cdot y^-_{1/2} \\
\widehat{h}^{p+q}\left(I^-_{0,0}\right) & =\sum_{n=0}^{p-1}g^-_{n,2}(Q_1^-)\dfrac{h^n}{z^n}\cdot \dfrac{(\xi-h)^{2p+q}}{z^p},
\end{align*}
then we have
\[\mathcal{L}^-(g_n^-)=0,\quad \mathcal{L}^-(g_{n,2}^-)=2^{-2p}Q_1^-\cdot\frac{t_1^n}{n!}\]
for all $0\leq n\leq p-1$, and $(Q_1^-)^{-1/2}g^-_0(Q_1^-)|_{Q_1^-=0}=1$. In particular, $\{g_i^-\}_{i=0}^{p-1}$ forms the Frobenius basis of $\mathcal{L}^-$ and 
\[h\star_{\operatorname{small}}h^{p+q-1}=2^{-p} g_0^-\cdot y_{1/2}^-.\]
\end{prop}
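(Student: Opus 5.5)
The plan mirrors Proposition~\ref{Frob_basis_+}: apply the Picard--Fuchs operator $\square^-_\ell$ to the relevant pieces of $z^{-1}I^-$ and compare coefficients. The identity $2^{2p}z^p\mathcal{L}^-\widehat{h}^{p+q}=\square^-_\ell|_{\widehat{\xi}=0}$ converts this into ODEs for the coefficient functions.

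\textbf{Step 1: the series $g_n^-$.} I first note that $\widehat{\xi}$ annihilates both $\widehat{h}^{p+q}(I^-_{\frac12,0})$ and $\widehat{h}^{p+q}(I^-_{0,0})$, since $\xi y^-_{1/2}=0$ and $\xi(\xi-h)^{2p+q}=0$. So on these pieces $\square^-_\ell$ acts as $2^{2p}z^p\mathcal{L}^-\widehat{h}^{p+q}$. Because $\square^-_\ell I^-=0$ and $\square^-_\ell$ only shifts $d_2$ by one while preserving $\gen{d_2}$ and $d_3$, it preserves each of the families $\{d_2\equiv\tfrac12,\ d_3=0\}$ and $\{d_2\in\Z,\ d_3=0\}$. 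On the half-integer family, the factor $(2h)^p_{2d_2}$ has no term at $d_2=-1/2$, so nothing leaks out. Hence $\square^-_\ell(I^-_{\frac12,0})=0$, and expanding in $h^n/z^n$ gives $\mathcal{L}^-(g_n^-)=0$.

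The normalization follows from the lowest term $d_2=1/2$. There $(2h)^p_{1}(h)^q_{1/2}\cdots$ gives $Q_1^{1/2}$ times a coefficient that reduces, after multiplying by $h^{p+q}$ and using $(\xi-h)_{-1/2}^{2p+q}$ together with $y^-_{1/2}$, to $2^{-p}y^-_{1/2}$ at $z$-leading order. This yields $(Q_1^-)^{-1/2}g_0^-|_{Q_1^-=0}=1$. Since the exponents of $\mathcal{L}^-$ at $0$ are $\tfrac12$ with multiplicity $p$, the Frobenius form $g_n^-=Q^{1/2}\sum_k(\log Q)^k/k!\cdot(\cdots)$ gives the Frobenius basis, exactly as in the $+$ case.

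\textbf{Step 2: the series $g_{n,2}^-$.} For the integral family, $\square^-_\ell(I^-_{(0,0)}+I^-_{0,0})=0$, and $I^-_{(0,0)}=e^{t/z}$. This gives
\[
\square^-_\ell(I^-_{0,0})=-\square^-_\ell(e^{t/z})=Q_1^-(\xi-h)^{2p+q}e^{t/z},
\]
because $h^{p+q}(2h-z)^p e^{t_1h/z}$ vanishes modulo $h^{p+q}=0$. Dividing by $2^{2p}z^p$ and extracting the coefficient of $h^n(\xi-h)^{2p+q}/z^{n+p}$ from $e^{t_1h/z}$ gives $\mathcal{L}^-(g_{n,2}^-)=2^{-2p}Q_1^-t_1^n/n!$.

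\textbf{Step 3: the quantum product.} By Lemma~\ref{easy_quan_-}, $(h^{p+q-1})^{\widehata}J^-=\widehat{h}^{p+q-1}I^-$. So $h\qsstar h^{p+q-1}$ is the $z^0$-coefficient of $\widehat{h}^{p+q}(z^{-1}I^-)$, using the expansion of $z^{-1}I^-$ up to $O(z^{-(3p+2q+1)})$. The contributions are as follows.
\begin{itemize}
\item $I^-_{(0,0)}$ gives $h^{p+q}e^{t/z}=0$.
\item $I^-_{\frac12,0}$ gives $2^{-p}g_0^-y^-_{1/2}$ at $z^0$.
\item $I^-_{0,0}$ is $O(z^{-p})$ with $p\ge1$, so it contributes nothing.
\item $I^-_{(\frac12,\frac12)}$, $I^-_{(0,1)}$ and $I^-_{(1,1)}$ are all $O(z^{-1})$ after applying $\widehat{h}^{p+q}$, by the same degree estimates used in Proposition~\ref{infinite_cohomo_quantize_-}.
\end{itemize}

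\textbf{Main obstacle.} I expect the hardest part to be the bookkeeping in Step 1: making sure $\square^-_\ell$ really closes on the half-integer family without boundary terms, and getting the constant $2^{-p}$ exactly right. I would settle this by writing the $d_2=1/2$ term explicitly.
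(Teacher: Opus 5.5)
Your proposal is correct and follows the paper's argument. You apply $\square^-_\ell$ to the $d_3=0$ pieces, use $\widehat{\xi}I^-_{\frac12,0}=\widehat{\xi}I^-_{0,0}=0$ to rewrite it as $2^{2p}z^p\mathcal{L}^-\widehat{h}^{p+q}$, read off the normalization from the $d_2=\tfrac12$ term, and extract the $z^0$-coefficient of $\widehat{h}^{p+q}(z^{-1}I^-)$ via Lemma~\ref{easy_quan_-}. Your computation $\square^-_\ell(I^-_{0,0})=Q_1^-(\xi-h)^{2p+q}e^{t/z}$ and the final constant $2^{-p}g_0^-$ are stated more cleanly than in the paper's proof, which has typos at both points.
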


\begin{proof}
The first assertion follows from the following relations
\[z^p\mathcal{L}^-\left(\widehat{h}^{p+q}(I^-_{\frac{1}{2},0})\right)=2^{-2p}\square_\ell^-(I^-_{\frac{1}{2},0})=0,\]
\[z^p\mathcal{L}^-\left(\widehat{h}(I^-_{0,0})\right)=2^{-2p}\square_\ell^-(I^-_{0,0})=\widehat{h}^{p+q}(\widehat{h}-\tfrac{z}{2})^pI^-_{(1,0)}=e^{t/z}\cdot 2^{-2p}Q_1^-\cdot(\xi-h)^{2p+q}\]
which relies on the fact that $\widehat{\xi}I_{\frac{1}{2},0}=\widehat{\xi}I_{0,0}=0$. By direct expansion, we will have that
\[g_0^-(Q_1^-)\in (Q_1^-)^{1/2}+(Q_1^-)^{1/2}\C[\![Q_1^-]\!].\]
Using the same argument as in Proposition \ref{Frob_basis_+}, we deduce that $\{g_i^-\}$ forms the Frobenius basis and
$h\star_{\operatorname{small}}h^{p+q-1}$ equals the $z^0$-term in $\widehat{h}\left((h^{p+q-1})^{\widehata}(z^{-1}I^-)\right)$, which turns out to be $2^q g_n^-(Q_1^-)\cdot y_{1/2}^-$.
\end{proof}

Combining the preceding propositions, we complete the case of $T_\mu\in A^*(\mathcal{X}^-)$. For $T_\mu$ in the exceptional twisted sector $\mathcal{X}^-_{(\frac{1}{2},0)}$, with the same setting in Definition \ref{def_Wron^+}, we define the generalized Wronskian as
\[(W^-)^J_I=\det(\theta^j_{Q_1^-}(g_i^-))_{i\in I,j\in J},\]
and $W_k^-:=(W^-)_{[k]}^{[k]}$ for simplicity.

\begin{prop}\label{quan_prod_-_exceptional}
If $T_\mu = h^ky^-_{1/2}$ with $0\leq k\leq p-1$, then
\begin{align*}
(\xi-h)\qsstar T_\mu & = -\dfrac{W_{k-1}^-\cdot W_{k+1}^-}{(W_k^-)^2}h^{k+1}y_{1/2}^--2^p\cdot\dfrac{W_{p-2}^-}{W_{p-1}^-}\cdot\dfrac{2^{-2p}Q_1^-}{1-(-1)^q2^{-2p}Q_1^-}(\xi-h)^{2p+q}\cdot\delta_{k,p-1}\notag\\
& -2(Q_1^-Q_2^-)^{1/2}\sum_{n=0}^k (-1)^n\theta_{Q_1^-}\left(\dfrac{(W^-)^{[k]\setminus\{n\}}_{[k-1]}}{W_k^-}\right)\cdot 2^{k-n}(\xi-h)^n\xi^{k-n}y_\infty^- ,\\
% \label{qp:(\xi-h)\qsstar h^ky^-_{1/2}}\\
\xi\qsstar T_\mu & = (Q_1^-Q_2^-)^{1/2}\sum_{n=0}^k (-1)^n\dfrac{(W^-)^{[k]\setminus\{n\}}_{[k-1]}}{W_k^-}\cdot 2^{k-n}(\xi-h)^n\xi^{k-n}y_\infty^-.
% \label{qp:\xi\qsstar h^ky^-_{1/2}}
\end{align*}                                                                                                                                                                                                                                                                                                                                                                                                      
\end{prop}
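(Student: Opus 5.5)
We mirror the proof of Proposition \ref{quan_prod_+_iden} on the $\mathcal{X}^-$ side. The plan is to build $P_\mu$ for $T_\mu=h^ky^-_{1/2}$ as a Wronskian-weighted combination of operators acting on the exceptional sector series $I^-_{\frac12,0}$. For $0\leq n\leq k$ we set
\[
\mathcal{G}^-_{n,k}\coloneqq 2^p(z-2\widehat{\xi})^{k-n}\widehat{\theta}^{\,n}\widehat{h}^{p+q},
\]
where $\widehat{\theta}=z\theta_{Q_1^-}$ is realized through $\widehat h$ acting on the $Q_1^-$-dependence. The factor $(z-2\widehat\xi)$ plays the same role as on the $+$ side: it leaves $I^-_{\frac12,0}$ unchanged, since $\widehat\xi I^-_{\frac12,0}=0$. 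We then take
\[
P_\mu^{(0)}=\frac{1}{W_k^-}\sum_{n=0}^k(-1)^{n+k}(W^-)^{[k]\setminus\{n\}}_{[k-1]}\,\mathcal{G}^-_{n,k}.
\]

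By Proposition \ref{Frob_basis_-}, $\widehat h^{p+q}I^-_{\frac12,0}=2^{-p}\sum_m g_m^- (h/z)^m y^-_{1/2}$. The same cofactor expansion as in the $+$ case (Laplace expansion of $(W^-)^{[k]}_{[k-1]\cup\{m\}}$) then gives
\[
P^{(0)}_\mu I^-_{\frac12,0}=h^ky^-_{1/2}+z^{-1}\frac{(W^-)^{[k]}_{[k+1]\setminus\{k\}}}{W_k^-}h^{k+1}y^-_{1/2}+O(z^{-2}).
\]
The Wronskian identity \eqref{L_k^+_relation}, whose proof uses only linear ODE theory and therefore transfers verbatim to $\{g^-_i\}$, turns $\theta$ of this ratio into $W^-_{k-1}W^-_{k+1}/(W^-_k)^2$. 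The sign $-1$ in front of it comes from $(\xi-h)=\xi-h$ acting as $-\widehat h$ on this sector, because $\xi y^-_{1/2}=0$.

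Next we compute the other contributions of $z^{-1}I^-$ up to order $z^{-1}$, using the expansion recorded before Lemma \ref{easy_quan_-}.
\begin{itemize}
\item $I^-_{(0,0)}$: killed, since it is $e^{t/z}$ and $\widehat h^{p+q}$ produces only lower-order terms which the Wronskian combination cancels.
\item $I^-_{(\frac12,\frac12)}$: here $\mathcal{G}^-_{n,k}$ produces $(Q_1^-Q_2^-)^{1/2}(-2)^{k-n}(\xi-h)^n\xi^{k-n}y^-_\infty/z$. This gives exactly the stated $y^-_\infty$ terms in both $\xi\qsstar T_\mu$ and, after differentiating the coefficient, in $(\xi-h)\qsstar T_\mu$.
\item $I^-_{0,0}$: this enters only through $g^-_{n,2}$. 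Since $\mathcal L^-(g^-_{n,2})=2^{-2p}Q_1^-t_1^n/n!$, its contribution at $z^{-1}$ survives only when $k=p-1$, where the degree count $\deg_{z^{-1}}I^-_{0,0}=2p+q$ is just reached.
\end{itemize}
In the case $k=p-1$ we solve the inhomogeneous equation $\mathcal L^-(f)=2^{-2p}Q_1^-$ explicitly. The constant-coefficient-at-$\theta=0$ ansatz gives
\[
f=\frac{2^{-2p}Q_1^-}{1-(-1)^q2^{-2p}Q_1^-}
\]
up to normalization. Projecting away the Frobenius part with the Wronskian cofactors yields the factor $W^-_{p-2}/W^-_{p-1}$ times this rational function, multiplied by $(\xi-h)^{2p+q}$.

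As in Proposition \ref{quan_prod_+_iden}, any positive-$z$-power leftovers (from $I^-_{(0,1)}$, $I^-_{(1,1)}$, $I^-_{\frac12,1}$, $\ldots$) are removed by subtracting $z^m\widehat{S}$ corrections. These are justified by Lemma \ref{easy_quan_-}, because such $S$ have degree $\leq p+q-2$. The higher terms are $O(z^{-2})$ by the degree counts. Finally, applying $\widehat\xi$ and $\widehat\xi-\widehat h$ and extracting the $z^0$ term, via \eqref{qde} and Lemma \ref{birkhoff factorization}, gives the formulas.

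The main obstacle is the $\delta_{k,p-1}$ term. It requires carefully tracking how $I^-_{0,0}$ (and $I^-_{(1,0)}$-type inhomogeneities) feed into the $z^{-1}$ coefficient, and identifying the resulting particular solution of $\mathcal L^-$ in closed rational form. We would handle it by first computing $\widehat h^{p+q}I^-_{0,0}$ modulo $z^{-p-1}$ directly from the hypergeometric coefficients, which are $Q_1^{-m}$ with $d_2=m$ integral and $d_3=0$.
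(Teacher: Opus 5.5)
Your overall architecture matches the paper's: a Wronskian-cofactor combination of operators applied to $z^{-1}I^-$, the expansion of $\widehat h^{p+q}I^-_{\frac12,0}$ in the Frobenius basis, and \eqref{L_k^+_relation} for the $h^{k+1}y^-_{1/2}$ coefficient. There are, however, two genuine problems.

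\textbf{The $\delta_{k,p-1}$ term (the step you flagged as the main obstacle) fails as proposed.} The function $f=\frac{2^{-2p}Q_1^-}{1-(-1)^q2^{-2p}Q_1^-}$ does not solve $\mathcal{L}^-(f)=2^{-2p}Q_1^-$. Already for $p=1$ one has $(1-(-1)^q2^{-2}Q_1^-)\theta f=f$, hence $\mathcal{L}^-(f)=\tfrac12 f\neq 2^{-2}Q_1^-$.

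The $z^{-1}$ coefficient that $I^-_{0,0}$ actually contributes at $k=p-1$ is $2^p\mathcal{L}^-_{p-1}(g^-_{0,2})/W^-_{p-1}\cdot(\xi-h)^{2p+q}$. This comes from the identity $\frac{1}{W_k^-}\sum_n(-1)^{n+k}(W^-)^{[k]\setminus\{n\}}_{[k-1]}\mathcal{G}^-_{n,k}(f)=\frac{z^k}{W_k^-}\mathcal{L}^-_k(\widehat h^{p+q}f)$ for $\widehat\xi f=0$. That coefficient is a Wronskian-type expression, not a rational function.

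What enters $(\xi-h)\qsstar T_\mu$ is $-\theta_{Q_1^-}$ of this coefficient. The $\widehat\xi$-part vanishes because the coefficient is $Q_2^-$-free and $\xi(\xi-h)^{2p+q}=0$; this is also why no such term appears in $\xi\qsstar T_\mu$.

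The missing idea has two parts. First, apply \eqref{L_k^+_relation} at $k=p-1$ to $g^-_{0,2}$ rather than to $g^-_{k+1}$. Second, compare $\mathcal{L}^-_p$ with $\mathcal{L}^-$: both are order-$p$ operators annihilating $V_{\mathcal{L}^-}$, with leading coefficients $W^-_{p-1}$ and $1-(-1)^q2^{-2p}Q_1^-$ respectively. Using Proposition \ref{Frob_basis_-}, this gives
\[
\theta_{Q_1^-}\Bigl(\frac{\mathcal{L}^-_{p-1}(g^-_{0,2})}{W^-_{p-1}}\Bigr)=\frac{W^-_{p-2}}{(W^-_{p-1})^2}\mathcal{L}^-_p(g^-_{0,2})=\frac{W^-_{p-2}}{W^-_{p-1}}\cdot\frac{\mathcal{L}^-(g^-_{0,2})}{1-(-1)^q2^{-2p}Q_1^-}=\frac{W^-_{p-2}}{W^-_{p-1}}\cdot\frac{2^{-2p}Q_1^-}{1-(-1)^q2^{-2p}Q_1^-}.
\]
So the rational function is $\mathcal{L}^-(g^-_{0,2})$ divided by the leading coefficient of $\mathcal{L}^-$. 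The factor $W^-_{p-2}/W^-_{p-1}$ comes from the Wronskian derivative identity, not from a projection.

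\textbf{Your operator does not produce the stated $y^-_\infty$ terms.} With $t_1=\log Q_1^-$, your $\widehat\theta$ is simply $\widehat h$. On $I^-_{(\frac12,\frac12)}$, which carries $(Q_1^-)^{1/2}$, each $\widehat h$ contributes a factor $h+\tfrac z2$ rather than $h-\xi$. You therefore get $e^{t/z}(Q_1^-Q_2^-)^{1/2}(h+\tfrac z2)^n(-2\xi)^{k-n}y^-_\infty/(2^p(\xi+\tfrac z2))$. For $n\geq 1$ this has nonnegative powers of $z$ in the $y^-_\infty$ sector. Lemma \ref{easy_quan_-} cannot remove these, since it only concerns products of divisors in the untwisted sector.

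The paper instead uses $\mathcal{G}^-_{n,k}=\widehat h^{p+q}(\widehat h-\widehat\xi)^n(z-2\widehat\xi)^{k-n}$. This agrees with your operator on $I^-_{\frac12,0}$ and $I^-_{0,0}$, both of which are killed by $\widehat\xi$. On $I^-_{(\frac12,\frac12)}$ the $z/2$ shifts cancel, giving $(-1)^k2^{k+1-n}(Q_1^-Q_2^-)^{1/2}(\xi-h)^n\xi^{k-n}y^-_\infty/(2^pz)+O(z^{-2})$.

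Note the sign is $(-1)^k2^{k-n}$, not your $(-2)^{k-n}$. Combined with the cofactor sign $(-1)^{n+k}$, it is exactly what produces the factor $(-1)^n$ in the statement; your version would lose it.

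With the paper's operator, no positive-$z$ corrections are needed at all. $\widehat h^{p+q}$ kills $I^-_{(0,0)}$ and $I^-_{(0,1)}$ outright because $(h^-)^{p+q}=0$. Also $\mathcal{G}^-_{n,k}(I^-_{(1,1)})=O(z^{k-p-1})=O(z^{-2})$, since $k\leq p-1$.
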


\begin{proof}
For $0\leq n\leq k$, we define the operator
\[\mathcal{G}_{n,k}^-\coloneqq \widehat{h}^{p+q}(\widehat{h}-\widehat{\xi})^n(z-2\widehat{\xi})^{k-n}.\]
By the same calculation as in Proposition \ref{quan_prod_+_iden}, assuming that $\widehat{\xi}f=0$, we obtain the formula
\[\dfrac{1}{W_k^-}\sum_{n=0}^k(-1)^{n+k}(W^-)_{[k-1]}^{[k]\setminus\{n\}}\mathcal{G}^-_{n,k}(f)=\dfrac{z^k}{W_k^-}\mathcal{L}_k^-(\widehat{h}^{p+q}f),\]
where $\mathcal{L}_k^-$ is defined analogously to \eqref{def_L_k^+} by replacing the superscript $+$ with $-$. In particular, 
\[\dfrac{2^p}{W^-_k}\sum_{n=0}^k(-1)^{n+k}(W^-)_{[k-1]}^{[k]\setminus\{n\}}\mathcal{G}^-_{n,k}(I_{\frac{1}{2},0}^-)=h^ky^-_{1/2}+\dfrac{1}{z}\cdot\dfrac{(W^-)_{[k+1]\setminus\{k\}}^{[k]}}{W_k^-}\cdot h^{k+1}y^-_{1/2}+O(z^{-2})\]
and
\[\dfrac{2^p}{W^-_k}\sum_{n=0}^k(-1)^{n+k}(W^-)_{[k-1]}^{[k]\setminus\{n\}}\mathcal{G}^-_{n,k}(I_{0,0}^-)=\dfrac{2^p}{z}\cdot \dfrac{\mathcal{L}_k^-(g_{0,2}^-)}{W_k^-}\cdot(\xi-h)^{2p+q}\cdot\delta_{k,p-1}+O(z^{-2}).\]
For other terms, we have
\begin{align*}
\mathcal{G}^-_{n,k}(I_{(0,0)}^-) & =0, \\
\mathcal{G}^-_{n,k}(I_{(\frac{1}{2},\frac{1}{2})}^-) & =e^{t/z}\cdot \dfrac{(-1)^k\cdot 2^{k-n}\cdot (Q_1^-Q_2^-)^{1/2}(\xi-h)^n\xi^{k-n}y_\infty^-}{2^p(\xi+\frac{z}{2})} \\
& =\dfrac{(-1)^k\cdot 2^{k+1-n}\cdot (Q_1^-Q_2^-)^{1/2}}{2^pz}\cdot (\xi-h)^n\xi^{k-n}y_\infty^-+O(z^{-2}), \\
\mathcal{G}^-_{k,n}(I_{(0,1)}^-+I^-_{(1,1)}) & =O(z^{k-p-1}).
\end{align*}
Therefore, we can obtain the expansion
\begin{align*}
P_\mu(z^{-1}I^-)=h^ky_{1/2}^- & +\dfrac{1}{z}\left(\dfrac{(W^-)^{[k]}_{[k+1]\setminus\{k\}}}{W_k^-}\cdot h^{k+1}y_{1/2}^-+2^p\cdot \dfrac{\mathcal{L}_{p-1}^-(g_{0,2}^-)}{W_{p-1}^-}\cdot(\xi-h)^{2p+q}\cdot\delta_{k,p-1}\right) \\
& +2\cdot \dfrac{(Q_1^-Q_2^-)^{1/2}}{z}\sum_{n=0}^k (-1)^n\dfrac{(W^-)^{[k]\setminus\{n\}}_{[k-1]}}{W_k^-}\cdot 2^{k-n}(\xi-h)^n\xi^{k-n}y_\infty^-+O(z^{-2}).
\end{align*}
To get the quantum product formula, we invoke the equation \eqref{L_k^+_relation} once again. By comparing the coefficients of $\mathcal{L}_p^-$ and $\mathcal{L}^-$ and applying Proposition \ref{Frob_basis_-}, we show that
\begin{align*}
\theta\left(\dfrac{\mathcal{L}_{p-1}^-(g_{0,1}^-)}{W_{p-1}^-}\right)=\dfrac{W_{p-2}^-}{(W_{p-1}^-)^2}\mathcal{L}^-_p(g_{0,2}^-) & =\dfrac{W_{p-2}^-}{(W_{p-1}^-)^2}\cdot\dfrac{W^-_{p-1}}{1-(-1)^q2^{-2p}Q_1^-} \mathcal{L}^-(g_{0,2}^-) \\
& =\dfrac{W_{p-2}^-}{W_{p-1}^-}\cdot\dfrac{2^{-2p}Q_1^-}{1-(-1)^q2^{-2p}Q_1^-}.
\end{align*}
\end{proof}

Finally, we have to treat the case of $T_\mu \in A^*(\mathcal{X}^-_{(\frac{1}{2},\frac{1}{2})})$.

\begin{lm}\label{g_a,b^-}
If we expand the expression as follows:
\[(\widehat{h}-z)^{p+q}\widehat{h}^{p+q}\left(I^-_{(\frac{1}{2},1)}+I^-_{\frac{1}{2},1}\right)=\left(\sum_{n=0}^{p-1}g^-_{n,1}(Q_1^-)\dfrac{h^n}{z^n}\right)\cdot\dfrac{Q_1^-Q^-_2\cdot y^-_{1/2}}{z^{p+1}}\]
and, for $0\leq a,b\leq p-1$, we define the hypergeometric operator
\[\mathcal{L}^-_{a,b}\coloneqq (\theta+\tfrac{1}{2})^{p-a}(\theta-\tfrac{1}{2})^a-(-1)^q2^{-2p}Q_1^-\cdot\theta^{p-b}(\theta+1)^b,\]
then $g^-_{a,b,n}\coloneqq (\theta+\tfrac{1}{2})^a\theta^b g^-_{n,1}$ is the solution of $\mathcal{L}_{a,b}^-$ for all $0\leq n\leq p-1$. Moreover,
\[\theta^b\left(2^{-3p}g^-_n\right)=(\theta+\tfrac{1}{2})^{p-a}(g_{a,b,n}^-).\]
\end{lm}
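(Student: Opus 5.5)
The argument mirrors Lemma \ref{g_a,b^+}, with the roles of the operators adapted to the $\mathcal{X}^-$ side. Write $F\coloneqq(\widehat{h}-z)^{p+q}\widehat{h}^{p+q}\bigl(I^-_{(\frac{1}{2},1)}+I^-_{\frac{1}{2},1}\bigr)$.

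\textbf{Step 1: apply the base operator.} First record that $\widehat{\xi}$ acts on $I^-_{(\frac12,1)}+I^-_{\frac12,1}$ in a way that makes $\square_\ell^-$ reduce to a one-variable operator. On the sector $y^-_{1/2}$ we have $\xi y^-_{1/2}=0$, so after extracting the factor $e^{t_2/z}$, $\widehat{\xi}$ acts as $z\cdot d_3=z$ on the $d_3=1$ terms. The key computation is then
\[
z^p\mathcal{L}^-_{0,0}F=\text{const}\cdot(\widehat{h}-z)^{p+q}\,\square^-_\ell\Bigl(I^-_{(\frac12,1)}+I^-_{\frac12,1}\Bigr)+(\text{boundary term}),
\]
where $\theta=z^{-1}\widehat{h}$ when acting on coefficients. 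The operator $\mathcal{L}^-_{0,0}=(\theta+\frac12)^p-(-1)^q2^{-2p}Q_1^-\theta^p$ should be exactly $\square^-_\ell$ with $\widehat{\xi}-\widehat{h}$ replaced by $z-\widehat{h}$, after conjugating by the factor $(\widehat{h}-z)^{p+q}\widehat{h}^{p+q}$.

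\textbf{Step 2: handle the boundary term.} The recursion $\square^-_\ell I^-=0$ pairs $I^-_{(d_2,1)}$ with $I^-_{(d_2+1,1)}$. The terms with $d_2>1$ (these make up $I^-_{\frac12,1}$) together with $I^-_{(\frac12,1)}$ form a closed family under this recursion, except at the lowest end $d_2=\frac12$. There the factor $(\widehat h-z)^{p+q}\widehat h^{p+q}$ should kill the boundary contribution, because $(h)_{d_2}^{q}(2h)^p_{2d_2}$ has the appropriate poles. So I expect $\mathcal{L}^-_{0,0}$ to annihilate every coefficient $g^-_{n,1}$. This differs from the $+$ side, where an inhomogeneous $Q_1Q_2$ term appeared; that difference is consistent with the statement, which asserts a solution for all $n\le p-1$ with no exceptional $n$. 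I would verify this vanishing by an explicit check on the $d_2=\frac12$ summand.

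\textbf{Step 3: shift to $(a,b)$.} Use the intertwining identity
\[
(\theta-\tfrac12)^a(\theta+1)^b\,\mathcal{L}^-_{0,0}=\mathcal{L}^-_{a,b}\,(\theta+\tfrac12)^a\theta^b ,
\]
which is checked directly from $\theta Q_1=Q_1(\theta+1)$. Applying it to $g^-_{n,1}$ shows that $g^-_{a,b,n}$ solves $\mathcal{L}^-_{a,b}$.

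\textbf{Step 4: the relation with $g^-_n$.} This uses the Picard--Fuchs equation along $\gamma$, namely
\[
\square^-_\gamma=\widehat{\xi}\,(\widehat{\xi}-\widehat{h})^{2p+q}-Q_2^-
\]
up to the twisting factor relating $y^-_\infty$ and $y^-_{1/2}$; more precisely, it is the operator sending $I^-_{(\cdot,1)}$ back to $I^-_{(\cdot,0)}$. Applying $(\theta+\frac12)^{p-a}(\theta+\frac12)^a\theta^b$ turns the $\mathcal{L}^-_{a,b}$ leading factor into $\widehat{\xi}$ times the remaining factors, acting on the $d_3=1$ part. The $\gamma$-equation then converts this into $Q_2^-$ times $\widehat{h}^{p+q}I^-_{\frac12,0}$, whose coefficients are $2^{-p}g^-_n$ by Proposition \ref{Frob_basis_-}. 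Unlike Lemma \ref{g_a,b^+}, there is no subtraction of a leading $e^{t/z}$ term here, because $I^-_{\frac12,0}$ has no classical part. This explains why the relation holds for every $n$ with no constant correction.

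\textbf{Main obstacle.} The hard part is tracking the constants, $2^{-3p}$ and the sign $(-1)^q$, through the half-integer shifts $(2h)_{2d_2}$. I would fix them by comparing the leading $Q_1^-$-exponents, which start at $(Q_1^-)^{1/2}$ as in Proposition \ref{Frob_basis_-}.
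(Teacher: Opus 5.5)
Your overall plan is the paper's: conjugate $\square^-_\ell$ by the prefactor, shift with an intertwining identity, then use a second Picard--Fuchs equation to reach $I^-_{\frac12,0}$. As written, however, two steps fail.

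\textbf{Step 3 and Step 1.} The intertwining identity in Step~3 is wrong for $b>0$. The correct one is $(\theta-\tfrac12)^a\theta^b\,\mathcal{L}^-_{0,0}=\mathcal{L}^-_{a,b}\,(\theta+\tfrac12)^a\theta^b$. For $p=1$, $a=0$, $b=1$, your left side contains $(\theta+1)(\theta+\tfrac12)$ while the right side contains $(\theta+\tfrac12)\theta$.

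In Step~1, the coefficients $g^-_{n,1}$ are taken \emph{after} removing the factor $Q_1^-$. So $\theta$ on $g^-_{n,1}$ corresponds to $z^{-1}\widehat h-1$ on $F$, not to $z^{-1}\widehat h$. This shift is exactly what turns $(\widehat h-\tfrac z2)^p$ into $(\theta+\tfrac12)^p$. The precise statement is $z^p\mathcal{L}^-_{0,0}(Q_1^-)^{-1}F=2^{-2p}(Q_1^-)^{-1}(\widehat h-z)^{p+q}\square^-_\ell\bigl(I^-_{(\frac12,1)}+I^-_{\frac12,1}\bigr)$. This is an exact identity with no boundary term.

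The right side vanishes for a different reason than you give. The only unmatched summand is $2^p\widehat h^{p+q}(2\widehat h-z)^pI^-_{(\frac12,1)}$, and it is a multiple of $(h^-)^py^-_{1/2}=0$. The prefactor is not cancelling poles.

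\textbf{Step 4 is the genuine gap.} On the $d_3=1$ part, $\widehat\xi=z$. So $(\theta+\tfrac12)^p$ acting on the $g^-_{n,1}$ corresponds to $z^{-p-1}(\widehat h-z)^{p+q}\,\widehat\xi\,\widehat h^{p+q}(\widehat h-\tfrac z2)^p$ acting on $I^-_{(\frac12,1)}+I^-_{\frac12,1}$. This is the leading term of $\square^-_{\gamma+\ell}=2^{2p}\widehat\xi\widehat h^{p+q}(\widehat h-\tfrac z2)^p-Q_1^-Q_2^-$, not of $\square^-_\gamma=\widehat\xi(\widehat\xi-\widehat h)^{2p+q}-Q_2^-$.

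The paper uses $\square^-_{\gamma+\ell}$, which sends the sum to $Q_1^-Q_2^-I^-_{\frac12,0}$. The factor $Q_1^-Q_2^-$ cancels the prefactor in the definition of $g^-_{n,1}$, giving $(\theta+\tfrac12)^pg^-_{n,1}=2^{-3p}g^-_n$ directly. Applying $\theta^b$ then yields the claim.

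Your $\square^-_\gamma$ is valid here: it maps $I^-_{(d_2,1)}$ to $Q_2^-I^-_{(d_2,0)}$ and stays in the $y^-_{1/2}$ sector, with no $y^-_\infty$ twist involved. But it only gives $\theta^pg^-_{n,1}=(-1)^q2^{-p}(Q_1^-)^{-1}g^-_n$, with a stray $(Q_1^-)^{-1}$. To finish along your route, you must combine it with $\mathcal{L}^-_{0,0}g^-_{n,1}=0$:
$(\theta+\tfrac12)^pg^-_{n,1}=(-1)^q2^{-2p}Q_1^-\theta^pg^-_{n,1}=2^{-3p}g^-_n$.

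This is also where the constant $2^{-3p}$ and the cancellation of $(-1)^q$ come from. Fixing them by comparing leading $Q_1^-$-exponents would not be a proof, since that presupposes the two sides are proportional.
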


\begin{proof}
Notice that $(\widehat{\xi}-z)(I^-_{(\frac{1}{2},1)}+I^-_{\frac{1}{2},1})=0$. We get that 
\small
\[z^p\mathcal{L}^-_{0,0}(Q_1^-)^{-1}\left((\widehat{h}-z)^{p+q}\widehat{h}^{p+q}(I^-_{(\frac{1}{2},1)}+I^-_{\frac{1}{2},1})\right)=2^{-2p}(Q_1^-)^{-1}(\widehat{h}-z)^{p+q}\square_\ell^-(I^-_{(\frac{1}{2},1)}+I^-_{\frac{1}{2},1})=0.\]\normalsize
Thus, $\mathcal{L}_{a,b}^-(g_{a,b,n}^-)=0$ follows from
\[(\theta-\tfrac{1}{2})^a\theta^b\mathcal{L}_{0,0}^-=\mathcal{L}_{a,b}^-(\theta+\tfrac{1}{2})^a\theta^b.\]

Recall that the Picard--Fuchs equation of $I^-$ corresponding to $\gamma+\ell$ is
\[\square^-_{\gamma+\ell}=2^{2p}\widehat{\xi}\widehat{h}^{p+q}(\widehat{h}-\tfrac{z}{2})^p-Q_1^-Q_2^-.\]
The second assertion follows from the following result:
\begin{align*}
&\ (\theta-\tfrac{1}{2})^{p-a}\left((\theta-\tfrac{1}{2})^a(\theta-1)^b(\widehat{h}-z)^{p+q}\widehat{h}^{p+q}(I^-_{(\frac{1}{2},1)}+I^-_{\frac{1}{2},1})\right) \\
=&\ \dfrac{1}{z^{b+p+1}}(\widehat{h}-z)^{p+q+b}\widehat{\xi}\widehat{h}^{p+q}(\widehat{h}-\tfrac{z}{2})^p(I^-_{(\frac{1}{2},1)}+I^-_{\frac{1}{2},1}) \\
=&\ Q_1^-Q_2^-\cdot \dfrac{1}{2^{2p}z^{p+1}}\theta^b\widehat{h}^{p+q}(I_{\frac{1}{2},0}^-).
\end{align*}
\end{proof}

\begin{prop}\label{quan_isom^-_easy}
If $T_\mu = h^a(\xi-h)^by^-_\infty$ with $0\leq a\leq p-1, 0\leq b\leq p+q-1$, then
\begin{align*}
h\qsstar T_\mu & = h^{a+1}(\xi-h)^by_\infty^-+\dfrac{(Q_1^-Q_2^-)^{1/2}}{2^p}(\xi-h)^b\cdot\delta_{a,p-1}, \\
(\xi-h)\qsstar T_\mu & = h^a(\xi-h)^{b+1}y_\infty^-.
\end{align*}
\end{prop}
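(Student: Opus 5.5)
Following Proposition \ref{quan_isom^+_easy}, we produce an explicit operator $P_\mu$ with $P_\mu(z^{-1}I^-)=T_\mu+O(z^{-1})$ and no positive $z$-powers. We then read off $h\qsstar T_\mu$ and $(\xi-h)\qsstar T_\mu$ as the $z^{-1}$-coefficient after applying $\widehat h$ and $\widehat\xi-\widehat h$. The twisted sector $\mathcal X^-_{(\frac12,\frac12)}$ is reached from the untwisted sector through the fiber Picard--Fuchs relation $\square^-_{\gamma+\ell}=2^{2p}\widehat\xi\widehat h^{p+q}(\widehat h-\tfrac z2)^p-Q_1^-Q_2^-$. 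So we take the ansatz
\[P_\mu=2^p(Q_1^-Q_2^-)^{-1/2}\,(\widehat\xi-\widehat h)^b(\widehat h-\tfrac z2)^a\,\widehat\xi\,\widehat h^{p+q}.\]
This mirrors the $+$ case with the roles of $h$ and $\xi-h$ exchanged. The shift $-z/2$ is placed on $\widehat h$ because on $\mathcal X^-$ the $y^-_\infty$ sector carries the half-integer twisting in the $h$-direction.

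We then evaluate $P_\mu$ term by term on the expansion of $z^{-1}I^-$ given before Lemma \ref{easy_quan_-}.
\begin{enumerate}[(i)]
\item On $I^-_{(0,0)}$, $I^-_{\frac12,0}$ and $I^-_{0,0}$ the result vanishes: $\widehat\xi$ kills these terms (as used in Proposition \ref{Frob_basis_-}), or the cup product with $\xi y^-_{1/2}=0$ vanishes.
\item On $I^-_{(\frac12,\frac12)}=e^{t/z}(Q_1^-Q_2^-)^{1/2}y^-_\infty/\bigl(2^p(h+\frac z2)^{p+q}(\xi+\frac z2)\bigr)$, the factors $\widehat h^{p+q}$ and $\widehat\xi$ cancel the denominators up to the shift. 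What remains is $e^{t/z}h^a(\xi-h)^b y^-_\infty$, so we get $T_\mu+\frac tz T_\mu$.
\item On $I^-_{(\frac12,1)}$, which carries $Q_1^-Q_2^-\,y^-_{1/2}$ and $\widehat h$-denominators of the form $(h+\frac z2)^p(h+z)^{p+q}$, the prefactor $(Q_1^-Q_2^-)^{-1/2}$ leaves $(Q_1^-Q_2^-)^{1/2}$. After cancellation, a single factor $\bigl(2(h+\tfrac z2)\bigr)^{-(p-a)}$ survives. It contributes at order $z^{-1}$ exactly when $a=p-1$. Its leading coefficient is $2^{-(p-1)}(Q_1^-Q_2^-)^{1/2}(\xi-h)^b$ times the $y^-_{1/2}$ sector class; using $h^p y^-_{1/2}=0$ and the relations, this should collapse to the stated untwisted term $(Q_1^-Q_2^-)^{1/2}(\xi-h)^b/2^{p}$ after the final division by $2$ in the quantization step.
\item The remaining terms $I^-_{(0,1)}$, $I^-_{(1,1)}$, $I^-_{\frac12,1}$, $I^-_{(\frac32,\frac32)}$ and the $O(z^{-(4p+2q+1)})$ tail are handled by the degree count from the start of \S\ref{subsection_quan_prod_X^-}. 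With $\deg T_\mu=a+b+\frac{q+1}{2}+\dots$ bounded by the range $a\le p-1$, $b\le p+q-1$, each such contribution is $O(z^{-2})$.
\end{enumerate}

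Collecting these gives
\[P_\mu(z^{-1}I^-)=T_\mu+\frac1z\Bigl(tT_\mu+\frac{(Q_1^-Q_2^-)^{1/2}}{2^{p-1}}(\xi-h)^b\delta_{a,p-1}\Bigr)+O(z^{-2}).\]
Applying $\widehat h$ and $\widehat\xi-\widehat h$ and taking the $z^0$-coefficient, exactly as in Propositions \ref{quan_isom^+_easy} and \ref{infinite_cohomo_quantize_-}, yields both formulas. Only the $h$-derivative of $t=t_1h+t_2\xi$ picks up the $\delta_{a,p-1}$ term, which has $Q_1^{-1/2}$-weight: $\theta_{Q_1}$ acting on $(Q_1Q_2)^{1/2}$ gives a factor $\tfrac12$, and $\xi$ contributes nothing since the classical identity handles it. This explains why $(\xi-h)\qsstar T_\mu$ has no correction.

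The main obstacle is step (iii) together with the claim in (iv) that no term involving $I^-_{\frac12,1}$ (the series governed by $g^-_{n,1}$ of Lemma \ref{g_a,b^-}) survives at order $z^{-1}$. Unlike in Proposition \ref{quan_isom^+_hard}, the range $a\le p-1$, $b\le p+q-1$ should keep us below the threshold where Wronskian corrections appear. I would verify this first by an explicit $z$-degree count of $P_\mu(I^-_{\frac12,1})$, checking that its order is at most $-2$ whenever $a\le p-1$. If this fails for $b$ near $p+q-1$, I would add a correction operator built from $\mathcal G^-_{n,k}$ as in \eqref{def_H_a,b^-}.
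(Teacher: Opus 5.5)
Your operator $P_\mu=2^p(Q_1^-Q_2^-)^{-1/2}(\widehat h-\tfrac z2)^a(\widehat\xi-\widehat h)^b\widehat\xi\,\widehat h^{p+q}$ is exactly the one the paper uses. The expansion of $P_\mu(z^{-1}I^-)$ you finally write down is also correct. However, your derivation of its only nontrivial $z^{-1}$-term is wrong.

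The $\delta_{a,p-1}$ term comes from the \emph{untwisted} summand $I^-_{(1,1)}=e^{t/z}Q_1^-Q_2^-/\bigl(2^p(2h+z)^p(h+z)^{p+q}(\xi+z)\bigr)$, which you discard in (iv). On this summand $\widehat h$ and $\widehat\xi$ act as $h+z$ and $\xi+z$. Hence $P_\mu$ sends it to $e^{t/z}(Q_1^-Q_2^-)^{1/2}(\xi-h)^b/\bigl(2^p(h+\tfrac z2)^{p-a}\bigr)$, which equals $\frac{(Q_1^-Q_2^-)^{1/2}}{2^{p-1}z}(\xi-h)^b\delta_{a,p-1}+O(z^{-2})$. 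A degree count only bounds this term by $O(z^{a+b-p})$, so it cannot be discarded that way. The denominators $(h+\tfrac z2)^p(h+z)^{p+q}$ you describe in (iii) are in fact those of $I^-_{(1,1)}$.

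By contrast, $I^-_{(\frac12,1)}$ carries $(Q_1^-)^{1/2}Q_2^-y^-_{1/2}$, and $\widehat h$ acts on it as $h+\tfrac z2$. So $P_\mu$ sends it to $e^{t/z}(Q_2^-)^{1/2}h^ay^-_{1/2}/(\xi-h+\tfrac z2)^{2p+q-b}=O(z^{-2})$. In any case your ``collapse'' step cannot be repaired. $A^*_{\operatorname{orb}}(\mathcal X^-)$ is a direct sum over inertia components, and no relation turns a class in the $y^-_{1/2}$-sector into the untwisted class $(\xi-h)^b$.

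Your explanation of the last step is also incorrect. Since $Q_i^-=e^{t_i}$, both $\widehat h=z\partial_{t_1}$ and $\widehat\xi=z\partial_{t_2}$ act on the term $z^{-1}(Q_1^-Q_2^-)^{1/2}$ with the same factor $\tfrac12$ at order $z^0$. So $\xi\qsstar T_\mu$ picks up the same correction as $h\qsstar T_\mu$, and it is the difference $\widehat\xi-\widehat h$ that kills it (equivalently, $(\xi-h)\cdot(\ell+\gamma)=0$). If, as you say, only the $h$-derivative saw this term, you would obtain a spurious $-2^{-p}(Q_1^-Q_2^-)^{1/2}(\xi-h)^b\delta_{a,p-1}$ in $(\xi-h)\qsstar T_\mu$.

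Finally, the Wronskian corrections you anticipate do not occur. $P_\mu\bigl(I^-_{(\frac12,1)}+I^-_{\frac12,1}\bigr)=O(z^{a+b-2p-q})$, and $a+b-2p-q\le -2$ on the whole stated range.
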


\begin{proof}
Since the action of $2^p(Q_1^-Q_2^-)^{-1/2}(\widehat{h}-\tfrac{z}{2})^a(\widehat{\xi}-\widehat{h})^b\widehat{\xi}\widehat{h}^{p+q}$ on $z^{-1}I^-$ is given by 
\[I^-_{(0,0)}+I^-_{\frac{1}{2},0}+I^-_{0,0}+I^-_{(0,1)}\longmapsto 0,\quad I^-_{(\frac{1}{2},\frac{1}{2})}\longmapsto h^a(\xi-h)^b y^-_\infty,\]
\begin{align*}
I^-_{(1,1)} & \longmapsto e^{t/z}\cdot \dfrac{(Q_1^-Q_2^-)^{1/2}\cdot (\xi-h)^b}{2^p(h+\tfrac{z}{2})^{p-a}}=\dfrac{(Q_1^-Q_2^-)^{1/2}}{2^{p-1}z}(\xi-h)^b\cdot\delta_{a,p-1}+O(z^{-2}), \\
I^-_{(\frac{1}{2},1)}+I^-_{\frac{1}{2},1} & \longmapsto O(z^{a+b-2p-q}),
\end{align*}
we get that
$P_\mu = 2^p(Q_1^-Q_2^-)^{-1/2}(\widehat{h}-\tfrac{z}{2})^a(\widehat{\xi}-\widehat{h})^b\widehat{\xi}\widehat{h}^{p+q}$.
\end{proof}

\begin{prop}\label{quan_isom^-_hard}
If, for any $0\leq a,b\leq p-1$ such that $k\coloneqq a+b-p\geq -1$, we define 
\[W_{a,b}^-=\det\left(\begin{matrix}
g_0^- & \theta_{Q_1^-}(g_0^-) & \cdots & \theta^{k}_{Q_1^-}(g_0^-) & g_{a,b,0}^- \\
g_1^- & \theta_{Q_1^-}(g_1^-) & \cdots & \theta^{k}_{Q_1^-}(g_1^-) & g_{a,b,1}^- \\
\vdots && \ddots & \vdots & \vdots \\
g_{k+1}^- & \theta_{Q_1^+}(g_{k+1}^-) & \cdots & \theta^{k}_{Q_1^-}(g_{k+1}^-) & g_{a,b,k+1}^-
\end{matrix}\right)\]
and let $T_\mu = h^a(\xi-h)^{p+q+b}y^-_\infty $,
then
\small
\begin{align*}
&\ (\xi-h)\qsstar T_\mu \\
=&\ h^a(\xi-h)^{p+q+b+1}y_\infty^--(-1)^{p+q+b}2^p(Q_1^-Q_2^-)^{1/2}\theta_{Q_1^-}\left(\dfrac{W_{a,b}^-}{W_k^-}\right)h^{k+1}y_{1/2}^- \\
&+(-1)^{a+q}2^pQ_1^-Q_2^-\sum_{0\leq i,n\leq k}(-1)^{i+n}\theta_{Q_1^-}\left(g_{a,b,i}^-\dfrac{(W^-)_{[k]\setminus\{i\}}^{[k]\setminus\{n\}}}{W_k^-}\right)2^{k+1-n}(\xi-h)^n\xi^{k-n}y_\infty^- ,\\
&\ \xi\qsstar T_\mu \\
=&\ \xi h^a(\xi-h)^{p+q+b}y_\infty^-+\dfrac{(Q_1^-Q_2^-)^{1/2}}{2^p}(\xi-h)^{p+q+b}\delta_{a,p-1}+(-1)^{p+q+b}2^{p-1}(Q_1^-Q_2^-)^{1/2}\dfrac{W_{a,b}^-}{W_k^-}h^{k+1}y_{1/2}^- \\
&-(-1)^{a+q} 2^pQ_1^-Q_2^-\sum_{0\leq i,n\leq k}(-1)^{i+n}g_{a,b,i}^-\dfrac{(W^-)_{[k]\setminus\{i\}}^{[k]\setminus\{n\}}}{W_k^-}2^{k+1-n}(\xi-h)^n\xi^{k-n}y_\infty^-.
\end{align*}\normalsize
\end{prop}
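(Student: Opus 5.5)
The plan mirrors the proof of Proposition \ref{quan_isom^+_hard}, with the roles of $h$ and $\xi-h$ exchanged and Lemma \ref{g_a,b^-} used in place of Lemma \ref{g_a,b^+}. I will take as ansatz
\[P_\mu=2^p(Q_1^-Q_2^-)^{-1/2}(\widehat{h}-\tfrac{z}{2})^a(\widehat{\xi}-\widehat{h})^{p+q+b}\widehat{\xi}\widehat{h}^{p+q}-c\,(Q_1^-Q_2^-)^{1/2}\mathcal{H}^-_{a,b},\]
with a sign constant $c=\pm 2^p$ to be fixed at the end. Here
\[\mathcal{H}^-_{a,b}=\frac{1}{W_k^-}\sum_{0\le i,n\le k}g^-_{a,b,i}(-1)^{i+n}(W^-)^{[k]\setminus\{n\}}_{[k]\setminus\{i\}}\mathcal{G}^-_{n,k},\]
and $\mathcal{G}^-_{n,k}$ is the operator from the proof of Proposition \ref{quan_prod_-_exceptional}. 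The case $k=-1$ is degenerate: there $\mathcal{H}^-_{a,b}$ is empty and only the term $g^-_{a,b,0}=W^-_{a,b}$ survives.

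\textbf{Step 1: the leading operator on each piece of the expansion.} I will apply the first operator term by term to the expansion of $z^{-1}I^-$ through order $O(z^{-(4p+2q+1)})$.
\begin{itemize}
\item It kills $I^-_{(0,0)}$, $I^-_{\frac12,0}$, $I^-_{0,0}$ and $I^-_{(0,1)}$, since each of these is annihilated by $\widehat\xi$ or by $\widehat h^{p+q}(\widehat\xi-\widehat h)^{2p+q}$.
\item It sends $I^-_{(\frac12,\frac12)}$ to $h^a(\xi-h)^{p+q+b}y^-_\infty+\frac{t}{z}(\cdots)$.
\item It sends $I^-_{(1,1)}$ to $\frac{(Q_1^-Q_2^-)^{1/2}}{2^{p-1}z}(\xi-h)^{p+q+b}\delta_{a,p-1}+O(z^{-2})$, exactly as in Proposition \ref{quan_isom^-_easy}.
\item The remaining terms $I^-_{(\frac12,\frac32)}$ and $I^-_{(\frac32,\frac32)}$ contribute $O(z^{-2})$ by degree counting.
\end{itemize}

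\textbf{Step 2: the piece carrying $I^-_{(\frac12,1)}+I^-_{\frac12,1}$.} This is the key step. Using $\widehat\xi=z$ on this piece together with $\square^-_{\gamma+\ell}$, I will rewrite the operator as a polynomial in $\theta$ applied to $(\widehat h-z)^{p+q}\widehat h^{p+q}(\cdots)$. Lemma \ref{g_a,b^-} then shows that its output is
\[\pm2^p(Q_1^-Q_2^-)^{1/2}\sum_{n=0}^{k+1}g^-_{a,b,n}z^{k-n}h^ny^-_{1/2}+O(z^{-2}).\]
This contains non-negative powers of $z$, which have to be cancelled. The correction $\mathcal{H}^-_{a,b}$ does this. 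Applied to $I^-_{\frac12,0}$, the Wronskian identity from Proposition \ref{quan_prod_-_exceptional} gives
\[\sum_{i\le k}g^-_{a,b,i}z^{k-i}h^iy^-_{1/2}+\frac{1}{z}\cdot\frac{1}{W_k^-}\sum_i g^-_{a,b,i}(-1)^{k-i}(W^-)^{[k]}_{[k+1]\setminus\{i\}}h^{k+1}y^-_{1/2}.\]
Combined with the cofactor expansion
\[g^-_{a,b,k+1}-\frac{1}{W_k^-}\sum_i g^-_{a,b,i}(-1)^{k-i}(W^-)^{[k]}_{[k+1]\setminus\{i\}}=\frac{W^-_{a,b}}{W^-_k},\]
this yields the $h^{k+1}y^-_{1/2}$ term. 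I also need $\mathcal{H}^-_{a,b}$ applied to $I^-_{0,0}$ to be $O(z^{-2})$. This holds because $k\le p-2$, so the $\delta_{k,p-1}$ contribution from Proposition \ref{quan_prod_-_exceptional} never occurs.

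\textbf{Step 3: the remaining contribution of the correction.} $\mathcal{H}^-_{a,b}$ applied to $I^-_{(\frac12,\frac12)}$ gives, via the computation of $\mathcal{G}^-_{n,k}(I^-_{(\frac12,\frac12)})$ in Proposition \ref{quan_prod_-_exceptional}, the $Q_1^-Q_2^-$ sum over $(\xi-h)^n\xi^{k-n}y^-_\infty$ with factor $2^{k+1-n}$. Its action on $I^-_{(0,1)}$ and $I^-_{(1,1)}$ is $O(z^{-2})$.

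\textbf{Step 4: reading off the products.} This gives $P_\mu(z^{-1}I^-)=T_\mu+z^{-1}(\cdots)+O(z^{-2})$, so $P_\mu I^-$ lies in $\hat T_\mu J$ by Lemma \ref{birkhoff factorization}. I then apply $\widehat\xi$ and $\widehat h$ and extract the $z^0$ term.
\begin{itemize}
\item $\widehat\xi$ acts as multiplication by $\xi$ on the $z^{-1}$ coefficient and has no $\theta$-action, since $\xi$ pairs trivially with $\ell$. This produces the undifferentiated coefficients in $\xi\star T_\mu$.
\item $\widehat h-\widehat\xi$ acts through $-z\theta_{Q_1^-}$. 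This produces the $\theta_{Q_1^-}$-derivatives in $(\xi-h)\star T_\mu$.
\end{itemize}
In the $(\xi-h)$ product, the $\delta_{a,p-1}$ term is absorbed or cancels, because its coefficient does not depend on $Q_1^-$ beyond the prefactor $(Q_1^-Q_2^-)^{1/2}$, and the resulting factor $\theta(Q_1^{1/2})$ cancels against the $\xi$ part.

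\textbf{Main obstacle.} I expect Step 2 to be the hardest part: pinning down the exact operator identity that converts $(\widehat h-\tfrac z2)^a(\widehat\xi-\widehat h)^{p+q+b}$ into the $\theta$-polynomial of Lemma \ref{g_a,b^-}. Getting the sign constants $(-1)^{p+q+b}$ and $(-1)^{a+q}$ and the powers of $2$ right also depends on this step. I would do it by direct comparison of the hypergeometric coefficients in $I^-_{(\frac12,1)}$, as in the displayed computation at the end of the proof of Lemma \ref{g_a,b^-}.
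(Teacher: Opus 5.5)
Your plan is the paper's proof. It uses the same ansatz: the leading operator $2^p(Q_1^-Q_2^-)^{-1/2}(\widehat h-\tfrac z2)^a(\widehat\xi-\widehat h)^{p+q+b}\widehat\xi\,\widehat h^{p+q}$ minus a multiple of $\mathcal H^-_{a,b}$. It uses the same term-by-term evaluation on the expansion of $z^{-1}I^-$, where $\widehat\xi=z$ on the $I^-_{(\frac12,1)}+I^-_{\frac12,1}$ piece reduces it to Lemma \ref{g_a,b^-}. And it uses the same cofactor identity to produce $W^-_{a,b}/W^-_k$.

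Three bookkeeping points need repair.

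\textbf{The correction coefficient.} Since $\widehat h^{p+q}(I^-_{\frac12,0})$ carries a factor $2^{-p}$, you have
\[\mathcal H^-_{a,b}(I^-_{\frac12,0})=2^{-p}\sum_i g^-_{a,b,i}z^{k-i}h^iy^-_{1/2}+\cdots.\]
Cancellation therefore forces the coefficient of $\mathcal H^-_{a,b}$ to be $(-1)^{p+q+b}2^{2p}(Q_1^-Q_2^-)^{1/2}$, not $\pm 2^p(Q_1^-Q_2^-)^{1/2}$. This is what produces the factor $2^pQ_1^-Q_2^-$ in front of the $y^-_\infty$-sums.

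\textbf{The action of $\widehat\xi$ in Step 4.} On the $z^{-1}$-coefficient, $\widehat\xi$ acts as $\theta_{Q_2^-}$, not as multiplication by $\xi$. This gives a factor $\tfrac12$ on the $(Q_1^-Q_2^-)^{1/2}$-terms and a factor $1$ on the $Q_1^-Q_2^-$-terms. Those factors are exactly the $2^{p-1}$ and $2^{-p}$ that appear in $\xi\star T_\mu$.

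\textbf{The sign of the $\theta$-action.} It is $\widehat\xi-\widehat h$ that acts as $-\theta_{Q_1^-}$, not $\widehat h-\widehat\xi$. Here $\theta_{Q_1^-}$ acts on the coefficient after the prefactor $(Q_1^-Q_2^-)^{1/2}$ or $Q_1^-Q_2^-$ is stripped off.
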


\begin{proof}
The action of $2^p(Q_1^-Q_2^-)^{-1/2}(\widehat{h}-\tfrac{z}{2})^a(\widehat{\xi}-\widehat{h})^{p+q+b}\widehat{\xi}\widehat{h}^{p+q}$ on $z^{-1}I^-$ is given by 
\small
\[I^-_{(0,0)}+I^-_{\frac{1}{2},0}+I^-_{0,0}+I^-_{(0,1)}+I^-_{(\frac{1}{2},\frac{1}{2})}+I^-_{(1,1)}\longmapsto h^a(\xi-h)^{p+q+b} y^-_\infty+\dfrac{(Q_1^-Q_2^-)^{1/2}}{2^{p-1}z}(\xi-h)^{p+q+b}\cdot\delta_{a,p-1}+O(z^{-2})\]
\begin{align*}
I^-_{(\frac{1}{2},1)}+I^-_{\frac{1}{2},1} & \longmapsto (-1)^{p+q+b}\cdot 2^p\left(\sum_{n=0}^{p-1}g^-_{a,b,n}\cdot z^{a+b-p-n}\cdot h^n\right)\cdot(Q_1^-Q_2^-)^{1/2}\cdot y^-_{1/2} \\
I_{(\frac{1}{2},\frac{3}{2})}^-+I_{(\frac{3}{2},\frac{3}{2})}^- & \longmapsto e^{t/z}\cdot\dfrac{Q_2^-\cdot h^a}{2^{2p}(\xi-h+z)^{p-b}(\xi+\tfrac{z}{2})}+e^{t/z}\cdot\dfrac{Q_1^-Q_2^-\cdot(\xi-h)^{p+q+b}}{2^{4p}(h+\tfrac{z}{2})^{p+q}(h+z)^{p-a}(\xi+\tfrac{z}{2})}=O(z^{-2}).
\end{align*}\normalsize
By the same argument as in Proposition \ref{quan_isom^+_hard}, we conclude that 
\[P_\mu=2^p(Q_1^-Q_2^-)^{-1/2}(\widehat{h}-\tfrac{z}{2})^a(\widehat{\xi}-\widehat{h})^{p+q+b}\widehat{\xi}\widehat{h}^{p+q}-(-1)^{p+q+b}\cdot 2^{2p}(Q_1^-Q_2^-)^{1/2}\mathcal{H}_{a,b}^-,\]
where $\mathcal{H}_{a,b}^-$ is defined analogously to \eqref{def_H_a,b^-} by replacing the superscript $+$ with $-$. 

For $k\leq p-2$, we have the expansion
\begin{align*}
&P_\mu(z^{-1}I^-) \\
=&\ h^a(\xi-h)^{p+q+b}y_\infty^-+\dfrac{t}{z}\cdot h^a(\xi-h)^{p+q+b}y_\infty^-+\dfrac{(Q_1^-Q_2^-)^{1/2}}{2^{p-1}z}(\xi-h)^{p+q+b}\cdot\delta_{a,p-1} \\
+&\ (-1)^{p+q+b}\cdot\dfrac{2^p(Q_1^-Q_2^-)^{1/2}}{z}\cdot\dfrac{W^-_{a,b}}{W_k^-}h^{k+1}y_{1/2}^- \\
-&\ (-1)^{a+q}\cdot\dfrac{2^pQ_1^-Q_2^-}{z}\sum_{0\leq i,n\leq k}(-1)^{i+n}g_{a,b,i}^-\dfrac{(W^-)_{[k]\setminus\{i\}}^{[k]\setminus\{n\}}}{W_k^-}2^{k+1-n}\cdot (\xi-h)^n\xi^{k-n}y_\infty^-+O(z^{-2}).
\end{align*}
\end{proof}

\subsection{Regularization map and Quantum invariance}
We observe that the functions appearing in the small quantum products on the two sides can not be matched by classical analytic continuation alone. Our strategy for resolving this issue is to construct a {\it regularization map}. 
\subsubsection{Picard-Vessiot extensions of $\mathcal{L}^+$ and $\mathcal{L}^-$}
Let $\mathcal L$ be a differential operator over $\C(Q)$ with the solution space $V_\mathcal{L}\coloneqq \operatorname{Sol}(\mathcal L)$. The \textit{Picard-Vessiot extension} $F_\mathcal{L}$ is the differential field extension of $(\C(Q),\theta:=Q\frac{d}{dQ})$ defined by adjoining solutions in $V_\mathcal L$ and their derivatives to the field $\C(Q)$. The \textit{differential Galois group} $G_\mathcal{L}$ of $F_\mathcal{L}$ over $\C(Q)$ is defined as the group of field automorphisms $F_\mathcal{L}\to F_\mathcal{L}$ that preserve $\C(Q)$ and the action of $\theta$. Since $G_\mathcal L$ acts faithfully on $V_\mathcal{L}$,  it can be naturally viewed as an (algebraic) subgroup of $\operatorname{GL}(V_\mathcal{L})$. That is,
\[G_\mathcal{L} = \left\{\sigma:F_\mathcal{L}\to F_\mathcal{L}\middle|\ \sigma|_{C(Q)}=\operatorname{id}_{C(Q)},\ \sigma\theta=\theta\sigma\right\}\subseteq\operatorname{GL}(V_\mathcal{L}).\]
Fundamental theorems of classical Galois theory carried over seamlessly to the differential setting (cf. \cite{van2003galois}). In particular, the fundamental Galois correspondence holds.

%This subsection is devoted to proving the following theorem, which provides the base case for Theorem {\color{red}???}

\begin{thm}\label{div_3pt_in_F}
Let $F^+$ denote the Picard--Vessiot extension of $\mathcal{L}^+$ over $\C(Q_1^+)$. For any divisor $D$, and for two orbifold cohomology classes $T_1$, $T_2$, the three-point function $\gen{D,T_1,T_2}^{\mathcal{X}^+}$ is contained in $F^+((Q_2^+)^{1/2})$.
Similarly, let $F^-$ denote the Picard--Vessiot extension of $\mathcal{L}^-$ over $\C(Q_1^-)$. For any divisor $D$, and for two orbifold cohomology classes $T_1$, $T_2$, the three-point function $\gen{D,T_1,T_2}^{\mathcal{X}^-}$ is contained in $F^-((Q_1^+Q_2^+)^{1/2})$.
\end{thm}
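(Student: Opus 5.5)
By the divisor axiom and linearity in $D$, it suffices to treat $D=h$ and $D=\xi$. Then $\gen{D,T_1,T_2}^{\mathcal{X}^\pm}$ is, up to the orbifold Poincar\'e pairing (whose constants are rational, e.g.\ powers of $2^{-p}$ as in the proof of Lemma~\ref{lm_preserve_orbifold_prod}), the coefficient of $T_2^\vee$ in $D\qsstar T_1$. So the plan is to show that, for every $T_1$ in the bases $U_1\cup U_2\cup U_\infty$ (for $\mathcal{X}^+$) and the analogous basis for $\mathcal{X}^-$, all coefficients of $h\qsstar T_1$ and $\xi\qsstar T_1$ lie in $F^+((Q_2^+)^{1/2})$, resp.\ in $F^-((Q_1^-Q_2^-)^{1/2})$. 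These are exactly the formulas already computed in Subsections~\ref{subsection_quan_prod_X^+} and \ref{subsection_quan_prod_X^-}.

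The proof is then a case-by-case inspection of those formulas. For the classical range (\eqref{easy_quan_prod_+}, Lemma~\ref{easy_quan_-}) and for Propositions~\ref{infinite_cohomo_quantize_+}, \ref{quan_isom^+_easy}, \ref{infinite_cohomo_quantize_-} and \ref{quan_isom^-_easy}, the coefficients are monomials in $Q_1^\pm$, $Q_2^\pm$ and $(Q_2^+)^{1/2}$ or $(Q_1^-Q_2^-)^{1/2}$. Note that $Q_1^+Q_2^+$ and $(Q_1^-)^{1/2}$ times $(Q_2^-)^{1/2}$ are harmless, since the field is generated over $\C(Q_1)$. Propositions~\ref{Frob_basis_+} and \ref{Frob_basis_-}, together with Propositions~\ref{quan_prod_+_iden} and \ref{quan_prod_-_exceptional}, involve only:
\begin{itemize}
\item the Frobenius solutions $g_i^\pm$ with $i\le p-1$, which lie in $V_{\mathcal{L}^\pm}\subset F^\pm$;
\item generalized Wronskians of these and their $\theta$-derivatives;
\item rational functions of $Q_1^\pm$.
\end{itemize}
All of these lie in $F^\pm$, since a Picard--Vessiot field is closed under $\theta$ and field operations.

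Two points need care. The first is that $g_n^+$ with $n\ge p$ appear in Proposition~\ref{quan_prod_+_iden}, and by Proposition~\ref{Frob_basis_+} these satisfy only the inhomogeneous equation $\mathcal{L}^+g_n^+=t_1^{n-p}/(n-p)!$. I would argue that these terms enter only through expressions (Wronskians built on $[k]$ with $k\le 2p+q-1$) that can be rewritten using the homogeneous solutions. Concretely, I would use the Frobenius relation in the proof of Proposition~\ref{Frob_basis_+}, $g_n^+=\sum_k f^+_{n-k}\log^kQ_1/k!$, together with $h^{2p+q}=0$ and $\mathcal{L}^+$ acting on $\theta$-powers. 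The aim is to show that the quantities actually appearing, namely $W_{k-1}W_{k+1}/W_k^2$ and the quotients $(W^+)^{[k]\setminus\{n\}}_{[k-1]}/W_k^+$, are log-free and are generated by $g_0^+,\dots,g_{p-1}^+$ and their derivatives. Alternatively, since $\mathcal{L}^+$ applied to them gives polynomials in $t_1=\log Q_1$, I would apply $\theta^{n-p+1}$ to reduce to a homogeneous equation of order $n+1$ whose solution space is generated over $\C(Q_1)$ by $V_{\mathcal{L}^+}$ together with $\log Q_1$. I expect this to be the \emph{main obstacle}. The first thing I would try is the relation \eqref{L_k^+_relation}, which expresses the needed ratios as $\theta$-derivatives of $\mathcal{L}^+_k(\cdot)/\mathcal{L}^+_k(g_k^+)$; this lets one use induction on $k$ and show each coefficient is a differential rational expression in the $g_i^+$, $i<p$, with logarithms cancelling by monodromy invariance.

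The second point is the functions $g^\pm_{a,b,n}$ in Propositions~\ref{quan_isom^+_hard} and \ref{quan_isom^-_hard}. By Lemmas~\ref{g_a,b^+} and \ref{g_a,b^-} they solve $\mathcal{L}^\pm_{a,b}$ and satisfy $(\theta-\tfrac12)^{p-b}g^+_{a,b,n}=\theta^a((-1)^q2^{-2p}g_n^+)$, up to an additive constant. The operator $\theta-\tfrac12$ is injective on the relevant formal series. I would show directly that $g^+_{a,b,n}$ equals $Q_1^{1/2}$ times an explicit $\C(Q_1)$-linear combination of $\theta^j g_m^+$. This uses the identity $(\theta-1)^a(\theta-\tfrac12)^b\mathcal{L}^+_{0,0}=\mathcal{L}^+_{a,b}\theta^a(\theta-\tfrac12)^b$ together with the fact that $\mathcal{L}^+_{0,0}$ is $\mathcal{L}^+$ conjugated by a $Q_1^{1/2}$-twist.

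The resulting $Q_1^{1/2}$ then combines with the prefactor $Q_2^{1/2}$ to give the $Q_2$-monomials appearing in the products. On the $-$ side the analogous twist is absorbed into $(Q_1^-Q_2^-)^{1/2}$. With this, every coefficient lies in the stated field, and the theorem follows.
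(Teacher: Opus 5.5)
Your reduction matches the paper's: read off the coefficients of $h\qsstar T$ and $\xi\qsstar T$ from the formulas of Subsections~\ref{subsection_quan_prod_X^+} and \ref{subsection_quan_prod_X^-}, and check that each lies in the field. However, the two steps that carry the real content are not established.

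\textbf{The Wronskian terms.} For these you only list candidate strategies, and the concrete ones fail.
\begin{itemize}
\item The solutions of $\theta^m\mathcal{L}^+$ are \emph{not} generated over $\C(Q_1^+)$ by $V_{\mathcal{L}^+}$ and $\log Q_1^+$. For $p=1$, put $s=\sqrt{1-(-1)^q4Q_1^+}$. Then $g_0^+=s^{-1}$ and $g_1^+=g_0^+\bigl(\log Q_1^+-2\log\tfrac{1+s}{2}\bigr)$, and $\log(1+s)\notin\C(Q_1^+,s,\log Q_1^+)$.
\item Invariance under local monodromy at $Q_1^+=0$ only shows that an expression is log-free. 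It does not show that it lies in $F^+$.
\end{itemize}
The missing idea is that $g_0^+,\dots,g_k^+$ form a fundamental system of the operator $\theta^{k+1-p}\mathcal{L}^+$. This operator has coefficients in $\C[Q_1^+]$ and leading coefficient $1-(-1)^q2^{2p}Q_1^+$.
\begin{itemize}
\item Abel's formula then gives $W_k^+=W_{p-1}^+(1-(-1)^q2^{2p}Q_1^+)^{-(k+1-p)}\in F^+$, even though $g_k^+\notin F^+$ (Lemma~\ref{Wron_k^+}). Hence $W_{k-1}^+W_{k+1}^+/(W_k^+)^2\in F^+$, and it equals $1$ for $k\ge p$.
\item $\mathcal{L}_k^+$ and $\theta^{k-p}\mathcal{L}^+$ both have order $k$ and both annihilate $g_0^+,\dots,g_{k-1}^+$, so they are proportional. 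Therefore the cofactor quotients $(W^+)^{[k]\setminus\{n\}}_{[k-1]}/W_k^+$ are, up to sign, the coefficients of $\theta^{k-p}\mathcal{L}^+$, i.e.\ polynomials in $Q_1^+$ (Lemma~\ref{gen_Wron^+_in_F^+}).
\end{itemize}
If you want to keep your ``cancellation'' heuristic, it must be made Galois-theoretic. An automorphism of the Picard--Vessiot field of $\theta^{p+q}\mathcal{L}^+$ that fixes $F^+$ sends $\log Q_1^+\mapsto\log Q_1^++C$. It therefore acts on $(g_0^+,\dots,g_k^+)$ by a unipotent block-triangular matrix that is the identity on $V_{\mathcal{L}^+}$, so these minors are fixed.

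\textbf{The functions $g^+_{a,b,n}$.} Your second point contains an error and again omits the key mechanism.
\begin{itemize}
\item $\mathcal{L}^+_{0,0}=\theta^p-(-1)^q4^pQ_1^+(\theta-\tfrac12)^p$ is not a $Q_1^{1/2}$-twist of $\mathcal{L}^+=\theta^p-(-1)^q4^pQ_1^+(\theta+\tfrac12)^p$, because conjugating by $Q_1^\alpha$ shifts $\theta$ by $\alpha$ in both factors at once. The actual relation is the intertwining $(\theta-\tfrac12)^p\mathcal{L}^+_{0,0}=\mathcal{L}^+(\theta-\tfrac12)^p$.
\item The $g^+_{a,b,n}$ are series in integral powers of $Q_1^+$, since they come from $I^+_{(1,1)}+I^+_{0,1}$. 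So no $Q_1^{1/2}$ arises on the $+$ side.
\item Injectivity of $\theta-\tfrac12$ does not put the preimage $g^+_{a,b,n}$ of $\theta^a((-1)^q2^{-2p}g^+_n)$ in $F^+$. You need an inverse of $(\theta-\tfrac12)^{p-b}$ on $V_{\mathcal{L}^+_{a,b}}$ with coefficients in $\C[Q_1^+]$.
\end{itemize}
The paper obtains this inverse from the coprimality of $x^{p-a}(x-1)^a$ and $(x-\tfrac12)^{p-b}$. Write $M^1_{a,b}(x)x^{p-a}(x-1)^a=M^2_{a,b}(x)(x-\tfrac12)^{p-b}-1$ and expand $M^1_{a,b}(\theta)\mathcal{L}^+_{a,b}=0$ on $V_{\mathcal{L}^+_{a,b}}$. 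This gives
$g^+_{a,b,n}+\mathrm{const}\cdot\delta_{n,a}=\bigl(M^2_{a,b}(\theta)-(-1)^q2^{2p}Q_1^+M^1_{a,b}(\theta+1)(\theta+\tfrac12)^b\bigr)\theta^a\bigl((-1)^q2^{-2p}g^+_n\bigr)\in F^+$.
This identity is only available for $n\le a$, where Lemma~\ref{g_a,b^+} applies. So you must also check that only $g^+_{a,b,i}$ with $i\le a+b+1-p\le a$ occur in Proposition~\ref{quan_isom^+_hard}; your proposal does not address this. The $\mathcal{X}^-$ side is handled the same way.
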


It suffices to show that for any divisor $D$ and orbifold cohomology class $T$, all coefficients of the quantum product $D \star_{\rm small} T$ lie in the differential fields specified in the Theorem \ref{div_3pt_in_F}.

\begin{lm}\label{Wron_k^+}
For $p-1\leq k\leq 2p+q-1$, 
\[W_k^+=(1-(-1)^q2^{2p}Q_1^+)^{-k-1+p/2}.\]
In particular,
\[W_k^+=\dfrac{W^+_{p-1}}{(1-(-1)^q2^{2p}Q_1^+)^{k+1-p}}\in F^+.\]
\end{lm}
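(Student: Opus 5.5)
The plan is to compute $W_k^+$ with Abel's formula: realize $g_0^+,\ldots,g_k^+$ as a full solution basis of a suitable order-$(k+1)$ operator, solve the resulting first-order equation for the Wronskian, and fix the integration constant from the behaviour of the Frobenius basis at $Q_1^+=0$. Write $\theta=\theta_{Q_1^+}$, $Q=Q_1^+$ and $c=(-1)^q2^{2p}$.

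\emph{Step 1: an operator killing $g_0^+,\ldots,g_k^+$.} By Proposition \ref{Frob_basis_+}, $\mathcal{L}^+g_n^+=0$ for $n\le p-1$ and $\mathcal{L}^+g_n^+=t_1^{n-p}/(n-p)!$ for $n\ge p$, where $t_1=\log Q$. Since $\theta=d/dt_1$, every $g_n^+$ with $n\le k$ is annihilated by
\[\mathcal{M}_k:=\theta^{m}\mathcal{L}^+,\qquad m:=k-p+1\ge 0,\]
an operator of order $k+1$. The Frobenius-type expansion in the proof of Proposition \ref{Frob_basis_+}, $g_n^+=\log^nQ/n!+(\text{terms vanishing at }Q=0)$, shows that $g_0^+,\ldots,g_k^+$ are linearly independent over $\C$. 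Hence they form a basis of the solution space of $\mathcal{M}_k$, and $W_k^+$ is exactly the Wronskian of $\mathcal{M}_k$.

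\emph{Step 2: the two leading coefficients of $\mathcal{M}_k$.} Expanding $(2\theta+1)^p=2^p\theta^p+p2^{p-1}\theta^{p-1}+\cdots$ gives
\[\mathcal{L}^+=(1-cQ)\theta^p-\tfrac{p}{2}cQ\,\theta^{p-1}+\cdots.\]
Commuting $\theta^m$ past the coefficients uses $\theta(Q\cdot)=Q(\theta+1)$, so $\theta^m\circ(1-cQ)=(1-cQ)\theta^m-mcQ\,\theta^{m-1}+\cdots$. This yields
\[\mathcal{M}_k=(1-cQ)\theta^{k+1}-\left(m+\tfrac p2\right)cQ\,\theta^{k}+\cdots.\]

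\emph{Step 3: Abel's formula.} For the derivation $\theta$, Abel's identity gives
\[\theta W_k^+=\frac{(m+p/2)\,cQ}{1-cQ}\,W_k^+ .\]
Its general solution is $W_k^+=C\,(1-cQ)^{-(m+p/2)}$, and $m+p/2=k+1-p/2$, which is the claimed exponent.

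\emph{Step 4: the constant $C$ (the delicate point).} The functions $g_n^+$ involve logarithms, so I would expand $W_k^+$ as a polynomial in $\log Q$ with coefficients in $\C[\![Q]\!]$. Abel's formula already shows this polynomial is a genuine power series in $Q$, so it suffices to compute its constant term. Modulo terms vanishing at $Q=0$, the Wronskian matrix is $\bigl(\theta^j(\log^iQ/i!)\bigr)_{i,j}=\bigl(\log^{i-j}Q/(i-j)!\bigr)_{i,j}$, which is lower unitriangular and so has determinant $1$. The terms involving the $f_n^+$ carry a factor of $Q$ and vanish at $Q=0$ (possibly multiplied by powers of $\log Q$, but a power series cannot depend on $\log Q$). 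Hence $C=1$. The convention $W_{-1}^+=1$ is not needed, since $k\ge p-1$.

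The ``in particular'' part is then immediate. Taking the ratio of the formulas for $k$ and $p-1$ gives $W_k^+=W_{p-1}^+(1-cQ)^{-(k+1-p)}$. Here $W_{p-1}^+$ is the Wronskian of a basis of $V_{\mathcal{L}^+}$, hence lies in $F^+$, and $(1-cQ)^{-(k+1-p)}\in\C(Q)$. Therefore $W_k^+\in F^+$.
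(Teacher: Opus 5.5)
Your proof is correct and follows the same route as the paper, which likewise notes that $g_0^+,\ldots,g_k^+$ form a Frobenius basis for $\theta^{k+1-p}\mathcal{L}^+$, applies Abel's identity $\theta W_k^+=\frac{(-1)^q2^{2p}(k+1-p/2)Q_1^+}{1-(-1)^q2^{2p}Q_1^+}W_k^+$, and fixes the constant by comparing coefficients. Your version just makes explicit the subleading-coefficient computation and the normalization $C=1$ at $Q_1^+=0$, which the paper leaves implicit.
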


\begin{proof}
By Proposition \ref{Frob_basis_+}, $\{g_0,\ldots,g_k\}$ is a Frobenius basis of the ordinary differential equation $\theta^{k+1-p}\mathcal{L}^+=0$. Therefore,
\[\theta W^+_k=\dfrac{(-1)^q2^{2p}(k+1-p/2)Q_1^+}{1-(-1)^q2^{2p}Q_1^+}\cdot W_k^+\]
implies the result by comparing coefficients.
\end{proof}

\begin{lm}\label{gen_Wron^+_in_F^+}
For $p\leq k\leq 2p+q-1$, we have the algebraic relation
\begin{equation}\label{relation_of_gen_Wron^+}
\sum_{n=0}^k(-1)^{n+k}\cdot\dfrac{(W^+)_{[k-1]}^{[k]\setminus\{n\}}}{W_k^+}\cdot x_1^nx_2^{k-n}=x_1^k-(-1)^q2^{2p}Q_1^+(x_1+x_2)^{k-p}(x_1+\tfrac{x_2}{2})^p
\end{equation}
as polynomials in $x_1$, $x_2$.
\end{lm}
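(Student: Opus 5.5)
The plan is to read the left-hand side of \eqref{relation_of_gen_Wron^+} as the symbol of a differential operator, and to identify that operator with $\theta^{k-p}\mathcal{L}^+$. Expand the determinant \eqref{def_L_k^+} along its last row. This gives
\[\mathcal{L}^+_k(f)=\sum_{n=0}^k(-1)^{n+k}(W^+)^{[k]\setminus\{n\}}_{[k-1]}\,\theta^n f,\]
so $\mathcal{L}_k^+/W_k^+$ is a linear operator of order $k$ in $\theta=\theta_{Q_1^+}$. Its coefficients are exactly those appearing in the statement. It annihilates the $k$-dimensional space spanned by $g_0^+,\ldots,g_{k-1}^+$, and its leading coefficient is $W^+_{k-1}/W_k^+$. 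The left-hand side of \eqref{relation_of_gen_Wron^+} is obtained from this operator by the substitution $\theta^n\mapsto x_1^nx_2^{k-n}$.

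For the comparison operator I take $\mathcal{M}_k\coloneqq\theta^{k-p}\mathcal{L}^+$, which has order $k$ because $k\geq p$. By Proposition \ref{Frob_basis_+}, we have $\mathcal{L}^+(g_n^+)=t_1^{n-p}/(n-p)!$ for $n\geq p$ and $\mathcal{L}^+(g_n^+)=0$ for $n\leq p-1$. Since $\theta t_1^m=m\,t_1^{m-1}$, it follows that $\theta^{k-p}$ kills $\mathcal{L}^+(g_n^+)$ whenever $n\leq k-1$. Hence $\mathcal{M}_k$ also annihilates $g_0^+,\ldots,g_{k-1}^+$. These functions are linearly independent, being a Frobenius-type basis with distinct log-orders, so $W^+_{k-1}\neq0$.

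By the uniqueness theory of linear ODEs already used to prove \eqref{L_k^+_relation}, two order-$k$ operators with the same $k$-dimensional solution space differ by a left scalar factor. Therefore $\mathcal{L}_k^+/W_k^+=\lambda\,\mathcal{M}_k$, where $\lambda$ is the ratio of the leading coefficients.

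Next I compute $\mathcal{M}_k$ explicitly using $\theta^{j}Q_1^+=Q_1^+(\theta+1)^{j}$:
\[\mathcal{M}_k=\theta^k-(-1)^q2^pQ_1^+(\theta+1)^{k-p}(2\theta+3)^p\ ?\]
This needs care: $\theta^{k-p}$ passes through $Q_1^+$ and shifts \emph{every} $\theta$ to its right by $1$. So the correct form is $\theta^k-(-1)^q2^{p}Q_1^+(\theta+1)^{k-p}(2\theta+1)^p$, where the shift affects only the $\theta^{k-p}$ factor, because $(2\theta+1)^p$ already stands to the right of $Q_1^+$ in $\mathcal{L}^+$. Rewriting $2^p(2\theta+1)^p=2^{2p}(\theta+\tfrac12)^p$ gives
\[\mathcal{M}_k=\theta^k-(-1)^q2^{2p}Q_1^+(\theta+1)^{k-p}(\theta+\tfrac12)^p.\]
The leading coefficient of $\mathcal{M}_k$ is $1-(-1)^q2^{2p}Q_1^+$. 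By Lemma \ref{Wron_k^+}, applied with $k-1\geq p-1$, the leading coefficient of $\mathcal{L}_k^+/W_k^+$ is
\[\frac{W^+_{k-1}}{W^+_k}=1-(-1)^q2^{2p}Q_1^+.\]
The two leading coefficients agree, so $\lambda=1$ and the two operators are equal.

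Finally, substitute $\theta^n\mapsto x_1^nx_2^{k-n}$, which is the homogenization in $x_2$ of degree $k$. Under this substitution $\mathcal{M}_k$ becomes $x_1^k-(-1)^q2^{2p}Q_1^+(x_1+x_2)^{k-p}(x_1+\tfrac{x_2}{2})^p$, and this gives \eqref{relation_of_gen_Wron^+}.

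The main obstacle is bookkeeping rather than a conceptual step. First, the operator identity must be taken with coefficients written to the left of the powers of $\theta$, so that the symbol substitution is legitimate. Second, the commutation $\theta^{k-p}Q_1^+=Q_1^+(\theta+1)^{k-p}$ must be done correctly, as the self-correction above shows. Third, the dependence on Lemma \ref{Wron_k^+} is what forces the restriction $k\geq p$.
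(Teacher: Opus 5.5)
Your proof is correct and follows the paper's argument: $\mathcal{L}_k^+/W_k^+$ and $\theta^{k-p}\mathcal{L}^+$ are both order-$k$ operators killing $g_0^+,\ldots,g_{k-1}^+$, Lemma \ref{Wron_k^+} makes their leading coefficients agree, and the identity is then read off coefficientwise from the homogenized symbol. Your explicit computation $\theta^{k-p}\mathcal{L}^+=\theta^k-(-1)^q2^{2p}Q_1^+(\theta+1)^{k-p}(\theta+\tfrac12)^p$ spells out the step the paper compresses into ``replacing $\theta$ with $x_1/x_2$''; in a write-up, delete the erroneous first display with $(2\theta+3)^p$.
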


\begin{proof}
Observe that $\mathcal{L}_k^+$ and $\theta^{k-p}\mathcal{L}^+$ are two operators of degree $k$ that vanish on the $k$-dimensional $\C$-vector space spanned by $\{g_0^+,\ldots,g_{k-1}^+\}$, and thus
\[\theta^{k-p}\mathcal{L}^+(\cdot)=\dfrac{1-(-1)^q2^{2p}Q_1^+}{W_{k-1}^+}\mathcal{L}_k^+(\cdot)=\dfrac{1}{W_k^+}\mathcal{L}_k^+(\cdot)=\sum_{n=0}^k(-1)^{n+k}\cdot\dfrac{(W^+)^{[k]\setminus\{n\}}_{[k-1]}}{W_k^+}\theta^n\]
by comparing their leading coefficients and Lemma \ref{Wron_k^+}. Therefore, the result follows from replacing $\theta$ with $x_1/x_2$.
\end{proof}

Now, we can simplify the formula for the quantum product in Proposition \ref{quan_prod_+_iden} as follows.
\begin{cl}\label{quan_prod_+_iden_simplify}
For $p\leq k\leq 2p+q-1$, we have
\begin{align*}
h\qsstar h^k(\xi-h)^{p+q}=&\ h^{k+1}(\xi-h)^{p+q}+(-1)^{k+q+1}Q_1^+Q_2^+(\xi-h)^{k-p} \\
\xi\qsstar h^k(\xi-h)^{p+q}=&\ \dfrac{(Q_2^+)^{1/2}}{2^p}\cdot h^ky^+_\infty+(-1)^{k+q+1}Q_1^+Q_2^+(\xi-h)^{k-p}
\end{align*} 
on $\mathcal{X}^+$.
\end{cl}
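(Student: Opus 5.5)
Plan: specialize Proposition \ref{quan_prod_+_iden} to $p\le k\le 2p+q-1$ and simplify its Wronskian coefficients with Lemma \ref{Wron_k^+} and Lemma \ref{gen_Wron^+_in_F^+}.

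\emph{Leading term.} Lemma \ref{Wron_k^+} gives, for $k\ge p$, $W_{k\pm1}^+=W_k^+\cdot u^{\mp1}$ with $u=1-(-1)^q2^{2p}Q_1^+$. This needs $k-1\ge p-1$, which holds. Hence $W^+_{k-1}W^+_{k+1}/(W_k^+)^2=1$. For $k=2p+q-1$ the coefficient of $h^{2p+q}(\xi-h)^{p+q}=0$ is irrelevant anyway.

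\emph{Coefficients $r_n\coloneqq(W^+)^{[k]\setminus\{n\}}_{[k-1]}/W_k^+$.} By \eqref{relation_of_gen_Wron^+}, $(-1)^{n+k}r_n$ is the coefficient of $x_1^nx_2^{k-n}$ in $x_1^k-(-1)^q2^{2p}Q_1^+(x_1+x_2)^{k-p}(x_1+\tfrac{x_2}{2})^p$. So each $r_n$ is a polynomial in $Q_1^+$ of degree $\le1$: $r_k$ is a constant, and $r_n$ is a constant multiple of $Q_1^+$ for $n<k$. Consequently $\theta_{Q_1^+}(r_n)=r_n-r_n|_{Q_1^+=0}$, which kills the $x_1^k$ part.

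\emph{The $y_\infty^+$-terms.} Substituting $x_1=h$ and $x_2=2\xi$ into \eqref{relation_of_gen_Wron^+}, with the sign $(-1)^{n+k}$ absorbed appropriately, should give
\[\sum_n r_n2^{k-n}h^n\xi^{k-n}y_\infty^+=\pm\bigl(h^k-(-1)^q2^{2p}Q_1^+(h+2\xi)^{k-p}(h+\xi)^p\bigr)y_\infty^+ .\]
The relations $(\xi-h)^py^+_\infty=0$ and $\xi(\xi-h)^{p+q}=0$ then let us kill the $Q_1^+$-part, expanding $(h+\xi)^p=(2h+(\xi-h))^p$ if needed. After that, only $h^ky_\infty^+$ survives in $\xi\star$. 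The $\theta$-version in $h\star$ keeps only the $Q_1^+$-part, which vanishes, so $h\star$ has no $y_\infty^+$ term. I must check that sign bookkeeping produces $+h^k y_\infty^+$ with coefficient $Q_2^{1/2}/2^p$.

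\emph{The $S^+_{n,k}$-terms — main obstacle.} Here $\sum_{n\le k-p}(-1)^{n+k}r_nS^+_{n,k}$ and its $\theta$-version must both collapse to $(-1)^{k+q+1}2^pQ_1^+(\xi-h)^{k-p}$. Since $n\le k-p<k$, every $r_n$ here is proportional to $Q_1^+$, so $\theta r_n=r_n$. This explains why the $h\star$ and $\xi\star$ terms agree.

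To evaluate the sum, I will recognize $S^+_{n,k}$ as the $z^{-1}$-coefficient of $(-2\xi-z)^{k-n}h^n/((\xi+z)(2(\xi-h)+z)^p)$. I will then sum against the coefficients of $Q_1^+(x_1+x_2)^{k-p}(x_1+x_2/2)^p$ with $x_1=h$ and $x_2=-2\xi-z$. The factor $(x_1+x_2/2)^p=(h-\xi-z/2)^p$ cancels the denominator $(2(\xi-h)+z)^p$ up to $(-2)^{-p}$. The factor $(x_1+x_2)^{k-p}=(h-2\xi-z)^{k-p}$ over $(\xi+z)$ then has $z^{-1}$-residue computable modulo $\xi$-relations.

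The subtlety is that the truncation $n\le k-p$ must be justified: $S_{n,k}$ vanishes otherwise by degree. One must also use the restriction to $\xi$-free terms in the $I_{(0,1)}$-sector, where $\xi$ acts as $-z$. Doing this carefully should yield $(\xi-h)^{k-p}$ times the claimed sign and power of $2$, and I expect the main difficulty is tracking these signs.
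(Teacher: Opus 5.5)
Your route is the paper's: $W^+_{k-1}W^+_{k+1}/(W^+_k)^2=1$ by Lemma \ref{Wron_k^+}, then the identity \eqref{relation_of_gen_Wron^+} applied to the $y^+_\infty$-terms and, with $x_1=h$, $x_2=-2\xi-z$, to the $I^+_{(0,1)}$-contribution, where $(h-\xi-\tfrac{z}{2})^p$ cancels $(2(\xi-h)+z)^p$ up to $(-2)^{-p}$. The $S^+_{n,k}$-part is fine and easier than you expect. By division with remainder, the $z^{-1}$-coefficient of $(h-2\xi-z)^{k-p}/(\xi+z)$ is its value at $z=-\xi$, namely $(h-\xi)^{k-p}$; no cohomological relations are needed. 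This gives $(-1)^{k+q+1}Q_1^+Q_2^+(\xi-h)^{k-p}$ in both products, since $\theta_{Q_1^+}r_n=r_n$ for $n\le k-p$.

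Two slips need fixing, and the second one breaks a step as written.

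\emph{First slip.} $r_k=W^+_{k-1}/W^+_k=1-(-1)^q2^{2p}Q_1^+$ is not constant. Your identity $\theta_{Q_1^+}(r_n)=r_n-r_n|_{Q_1^+=0}$ still holds, because every $r_n$ is affine in $Q_1^+$ with $r_n|_{Q_1^+=0}=\delta_{n,k}$.

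\emph{Second slip.} For the $y^+_\infty$-terms the substitution must be $x_2=-2\xi$, not $2\xi$. Then $(-1)^{n+k}x_1^nx_2^{k-n}=2^{k-n}h^n\xi^{k-n}$ holds termwise, no sign is left to absorb, and
\[
\sum_{n=0}^k r_n2^{k-n}h^n\xi^{k-n}y^+_\infty=\bigl(h^k-(-1)^q2^{2p}Q_1^+(h-2\xi)^{k-p}(h-\xi)^p\bigr)y^+_\infty=h^ky^+_\infty,
\]
because $(h-\xi)^py^+_\infty=(-1)^p(\xi-h)^py^+_\infty=0$.

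With $x_2=2\xi$ you are evaluating a different sum. The signs $(-1)^{n+k}$ vary with $n$ and cannot be absorbed into a global $\pm$. Moreover, the term you then try to kill, $(h+2\xi)^{k-p}(h+\xi)^py^+_\infty$, has coefficient $3^{k-p}2^p$ on the basis element $h^ky^+_\infty$, so it does not vanish and that step fails. The relation $\xi(\xi-h)^{p+q}=0$ is irrelevant there.

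The correct choice is forced by $\mathcal{G}^+_{n,k}$ itself. On $I^+_{(0,\frac12)}$ the factor $z-2\widehat{\xi}$ acts as $-2\xi$, just as on $I^+_{(0,1)}$ it acts as $-2\xi-z$, which you used correctly. With these corrections your argument coincides with the paper's proof.
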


\begin{proof}
By Lemma \ref{Wron_k^+}, we have
\[\frac{W_{k-1}^+\cdot W_{k+1}^+}{(W_k^+)^2}=1.\]
By Lemma \ref{gen_Wron^+_in_F^+}, we have
\[\sum_{n=0}^k\dfrac{(W^+)_{[k-1]}^{[k]\setminus\{n\}}}{W_k^+}\cdot 2^{k-n}h^n\xi^{k-n}y_\infty^+=h^ky_\infty^+-(-1)^q2^{2p}Q_1^+(h-2\xi)(h-\xi)^py_\infty^+=h^ky_\infty^+\]
and
\begin{align*}
&\ \sum_{n=0}^k(-1)^{n+k}\dfrac{(W^+)_{[k-1]}^{[k]\setminus\{n\}}}{W_k^+}\mathcal{G}^+_{n,k}(I^+_{(0,1)}) \\
=&\ \dfrac{e^{t/z}\cdot Q_2^+}{2^p(2(\xi-h)+z)^p(\xi+z)}\left(h^k-(-1)^q2^{2p}Q_1^+(h-2\xi-z)^{k-p}(h-\xi-\tfrac{z}{2})^p\right) \\
=&\ e^{t/z}\cdot (-1)^{k+q+1}Q_1^+Q_2^+\cdot\dfrac{(\xi-h+\xi+z)^{k-p}}{\xi+z}+O(z^{-2}) \\
=&\ e^{t/z}\cdot (-1)^{k+q+1}Q_1^+Q_2^+\left(\sum_{m=1}^{k-p}\binom{k-p}{m}(\xi-h)^{k-p-m}(\xi+z)^{m-1}+\dfrac{(\xi-h)^{k-p}}{\xi+z}\right)+O(z^{-2}).
\end{align*}
By the same proof in Proposition \ref{quan_prod_+_iden}, we conclude that for $T_\mu = h^k(\xi-h)^{p+q}$,
\begin{align*}
P_\mu(z^{-1}I^+)=h^k(\xi-h)^{p+q} & +\dfrac{1}{z}\cdot\dfrac{(W^+)_{[k+1]\setminus\{k\}}^{[k]}}{W_k^+}\cdot h^{k+1}(\xi-h)^{p+q}+\dfrac{1}{z}\cdot\dfrac{Q_2^{1/2}}{2^{p-1}}h^ky^+_\infty \\
& +\dfrac{1}{z}\cdot (-1)^{k+q+1}Q_1^+Q_2^+(\xi-h)^{k-p}+O(z^{-2}).
\end{align*}
\end{proof}

% \begin{proof}
% We only need to consider $k \geq p+1$. Given any $\sigma\in\operatorname{Gal}_{\operatorname{diff}}(F_{\theta^{p+q}\mathcal{L}^+}/F_{\mathcal{L}^+})$, we have $\sigma(\log Q_1^+)=\log Q_1^++C$ for some constant $C\in\C$. By Lemma \ref{Frob_basis_+}, 
% \[\mathcal{L}^+\sigma(g_{p-1+i}^+)=\sigma\left(\dfrac{1}{i!}\log^iQ_1^+\right)=\sum_{j=0}^i\dfrac{C^{i-j}}{(i-j)!}\mathcal{L}^+(g_{p-1+j}^+)\]
% for all $0\leq i\leq p+q$. Under the Frobenius basis $\{g_0,\ldots,g_{k-1}\}$ of $\theta^{k-p}\mathcal{L}^+$, the matrix representation of $\sigma|_{V_{\theta^{k-p}\mathcal{L}^+}}$ has the form
% \[\begin{pmatrix}
% \sigma|_{V_{\mathcal{L}^+}} & * \\
% O & \sigma|_{V_{\theta^{k-p}}}
% \end{pmatrix}.\]
% Taking determinants gives
% \[\dfrac{\sigma(W^{[k]\setminus\{n\}}_{[k-1]})}{W^{[k]\setminus\{n\}}_{[k-1]}}=\det(\sigma|_{V_{\mathcal{L}^+}})\cdot \det(\sigma|_{V_{\theta^k}})=\det(\operatorname{id}_{V_{\mathcal{L}^+}})\cdot 1=1,\]
% since $\sigma|_{V_{\theta^{k-p}}}$ is an upper triangular matrix with diagonal are all $1$. By the Galois correspondence, $W^{[k]\setminus\{n\}}_{[k-1]}\in F^+$.
% \end{proof}

From Section \ref{subsection_quan_prod_X^+} and combining with Lemmas \ref{Wron_k^+} and Corollary \ref{quan_prod_+_iden_simplify}, we conclude that
\begin{equation}\label{3pt_in_F^+_div_coho_orbcoho}
\gen{D,T_1,T_2}\in F^+((Q_2^+)^{1/2})
\end{equation}
for any $D\in A^1(\mathcal{X}^+)$, $T_1\in A^*(\mathcal{X}^+)$, and $T_2\in A^*_{\operatorname{orb}}(\mathcal{X}^+)$.

Next, we have to consider the case where $T_1\in A^*(\mathcal{X}_{(0,\frac{1}{2})})$. Recall from Lemma \ref{g_a,b^+} that $\mathcal{L}_{a,b}^+(g_{a,b,n})=0$ for $n\leq a-1$, and we have the mapping
\[\begin{tikzcd}[row sep=0pt, column sep=45pt]
V_{\mathcal{L}^+} \arrow[r,"\theta^a"] & V_{\mathcal{L}^+_{a,0}} & V_{\mathcal{L}_{a,b}^+} \arrow[l,"(\theta-\frac{1}{2})^{p-b}"']  \\
(-1)^q\cdot 2^{-2p}g_n^+ \arrow[r,mapsto] & \theta^a((-1)^q\cdot 2^{-2p}g_n^+)=(\theta-\tfrac{1}{2})^{p-b}(g_{a,b,n}^+) & g^+_{a,b,n}. \arrow[l,mapsto]
\end{tikzcd}\]
The similar result also holds for $g_{a,b,a}+(-1)^{p+q-b}\cdot 2^{p-b}$. Since there exist polynomials $M^1_{a,b}(x)$, $M^2_{a,b}(x)$ satisfying
\[M^1_{a,b}(x)x^{p-a}(x-1)^a=M^2_{a,b}(x)(x-\tfrac{1}{2})^{p-b}-1,\]
it follows that on $V_{\mathcal{L}_{a,b}^+}$,
\[0=M_{a,b}^1(\theta)\mathcal{L}_{a,b}^+=\left(M_{a,b}^2(\theta)-(-1)^q2^{2p}Q_1^+M_{a,b}^1(\theta+1)\cdot(\theta+\tfrac{1}{2})^b\right)(\theta-\tfrac{1}{2})^{p-b}-1\]
which implies that
\begin{equation}\label{g_a,b,n^+_in_F^+}
g_{a,b,n}^++(-1)^{p+q-b}\cdot 2^{p-b}\cdot\delta_{n,a}=\left(M_{a,b}^2(\theta)-(-1)^q2^{2p}Q_1^+M_{a,b}^1(\theta+1)\cdot(\theta+\tfrac{1}{2})^b\right)\theta^a((-1)^q2^{-2p}\cdot g_n^+)
\end{equation}
belongs to $F^+$ for $n\leq a$. Consequently, the set $\{g_{a,b,i}^+\}_{i=0}^{a+b+1-p}$ of functions appearing in the quantum product $D\qsstar h^{p+q+a}(\xi-h)^by^+_\infty$ is contained in $F^+$, since
\[i\leq a+b+1-p\leq a\]
when $0\leq a,b\leq p-1$. This completes the proof of Theorem \ref{div_3pt_in_F} for $\mathcal{X}^+$.

Using a similar argument for $\mathcal{X}^-$, one can easily verify that $(W^-)^{[k]\setminus\{n\}}_{[k-1]}, g_{a,b}^- \in F^-$, and deduce Theorem \ref{div_3pt_in_F} for $\mathcal{X}^-$.

\subsubsection{The Calabi--Yau operator and the associated bilinear form}
For $\mathcal L=\sum_{i=0}^n c_i\theta^i,$ the formal adjoint is defined by 
\[\mathcal{L}^\vee\coloneqq\sum_{i=0}^n(-\theta)^ic_i.\]
A differential operator $\mathcal{L}$ of order $n$ with maximal unipotent monodromy at $Q=0$ whose local exponents are $\alpha$'s is called a \textbf{Calabi--Yau} operator if 
\[Q^{-\alpha}\mathcal{L}Q^\alpha=(-1)^n\left(Q^{-\alpha}\mathcal{L}Q^\alpha\right)^\vee.\]
We restrict ourselves to the case where we have the expansion $\mathcal L=Q^\alpha(\theta^n+\sum_{k=1}^{m}Q^kc_k(\theta))Q^{-\alpha}$ for some polynomials $c_k$'s. In this case, the Calabi--Yau condition can be rewritten as
\begin{equation}\label{cy_condition}
    c_k(-k-\theta)=(-1)^nc_k(\theta).
\end{equation}

\begin{proposition}\label{bilinear_pairing}
Let $\mathcal L$ be a Calabi--Yau operator having the expansion as above. There exists a differential operator $B_{\mathcal{L}}$ which is bilinear in $f$, $g$ such that
\[Q^{2\alpha}\theta B_{\mathcal{L}}(f,g)=g\mathcal L(f)-(-1)^nf\mathcal L(g).\]
In particular, it induces a $G_\mathcal{L}$-invariant nondegenerate bilinear form $B_{\mathcal{L}}:V_{\mathcal L}\times V_{\mathcal L}\to\C$ when  $f$, $g\in V_{\mathcal{L}}$. Moreover, if $\{g_i\}_{i=0}^{n-1}$ is the normalized Frobenius basis of $\mathcal{L}$ at $Q=0$, then we have 
\[B_{\mathcal L}(g_i,g_j)=(-1)^j\delta_{i+j,n-1}.\]

%If $\{g_i\}_{i=0}^{n-1}$ is the normalized Frobenius basis at $Q=0$, then there exists a $G_\mathcal{L}$-invariant nondegenerate bilinear form $B_{\mathcal{L}}:\operatorname{Sol}(\mathcal L)\times\operatorname{Sol}(\mathcal L)\to\C$ satisfying
%\[Q^{2\alpha}\theta B_{\mathcal{L}}(f,g)=g\mathcal L(f)-(-1)^nf\mathcal L(g)\]
%and  
%\[B_{\mathcal L}(g_i,g_j)=(-1)^j\delta_{i+j,n-1}.\]
\end{proposition}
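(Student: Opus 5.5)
We reduce to the case $\alpha=0$ by conjugation: $\widetilde{\mathcal L}:=Q^{-\alpha}\mathcal L Q^{\alpha}=\theta^n+\sum_{k\ge1}Q^kc_k(\theta)$ acts on $\tilde f=Q^{-\alpha}f$, and $\mathcal L(f)=Q^{\alpha}\widetilde{\mathcal L}(\tilde f)$. Then $g\mathcal L(f)-(-1)^nf\mathcal L(g)=Q^{2\alpha}\bigl(\tilde g\widetilde{\mathcal L}\tilde f-(-1)^n\tilde f\widetilde{\mathcal L}\tilde g\bigr)$. The Calabi--Yau condition says $\widetilde{\mathcal L}^\vee=(-1)^n\widetilde{\mathcal L}$. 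Indeed, $(Q^kc_k(\theta))^\vee=c_k(-\theta)Q^k=Q^kc_k(-\theta-k)$, and \eqref{cy_condition} gives exactly $(-1)^n Q^kc_k(\theta)$.

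\emph{Bilinear concomitant.} The Lagrange identity for $\theta=Q\frac{d}{dQ}$ states that for any operator $M$ there is a bilinear differential expression $P_M(u,v)$ with $v\,M(u)-u\,M^\vee(v)=\theta P_M(u,v)$. We prove this termwise. For $M=a(Q)\theta^i$, repeated integration by parts gives $v a\theta^i u-u(-\theta)^i(av)=\theta\sum_{j=0}^{i-1}(-1)^j\theta^{i-1-j}u\,\theta^j(av)$. Applying this with $M=\widetilde{\mathcal L}$ and $M^\vee=(-1)^n\widetilde{\mathcal L}$, we set $B_{\mathcal L}(f,g):=P_{\widetilde{\mathcal L}}(\tilde f,\tilde g)$, which yields the displayed identity. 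For $f,g\in V_{\mathcal L}$ the right-hand side vanishes, so $\theta B_{\mathcal L}(f,g)=0$ and $B_{\mathcal L}(f,g)$ is a constant. Since $B_{\mathcal L}$ is a polynomial differential expression with coefficients in $\C(Q)$, any $\sigma\in G_{\mathcal L}$ commutes with $\theta$ and fixes $\C(Q)$, so $\sigma B(f,g)=B(\sigma f,\sigma g)$. The left side is a constant, hence fixed by $\sigma$, which gives $G_{\mathcal L}$-invariance.

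\emph{Computation on the Frobenius basis.} Because $B(g_i,g_j)$ is constant, it can be evaluated from the leading behaviour at $Q=0$. Write $\tilde g_j=\frac{\log^jQ}{j!}+O(Q\log^{j}Q)$, which is the normalized Frobenius basis. The coefficients of $P_{\widetilde{\mathcal L}}$ are polynomials in $Q$ whose $Q^0$-part is the concomitant of $\theta^n$, namely $P_0(u,v)=\sum_{j=0}^{n-1}(-1)^j\theta^{n-1-j}u\,\theta^jv$. All remaining terms carry a factor $Q^k$, $k\ge1$, times polylogarithmic factors, so they tend to $0$ as $Q\to0$. Since the whole expression is constant, it equals the constant term of $P_0(L_i,L_j)$ with $L_m=\log^mQ/m!$. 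Here $\theta L_m=L_{m-1}$, so $P_0(L_i,L_j)=\sum_s(-1)^sL_{i-n+1+s}L_{j-s}$, with terms vanishing when an index is negative. This is a polynomial in $\log Q$, and it must itself be constant; its constant term comes only from $i-n+1+s=0$ and $j-s=0$, which forces $i+j=n-1$, $s=j$ and gives the value $(-1)^j$. Hence $B(g_i,g_j)=(-1)^j\delta_{i+j,n-1}$. This Gram matrix is antidiagonal with entries $\pm1$, so the form is nondegenerate.

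The main obstacle is the rigorous passage from ``constant'' to ``equals the constant term of the $Q^0$-part''. It has to be checked that the higher-order terms have no $Q^0\log^0$ contribution; this holds because each such term has an explicit $Q^{k}$ factor together with only polylogarithmic growth. A related point is that the possibly higher-degree polynomials $c_k$ do not alter the leading $\theta^{n-1}$ structure of the concomitant at $Q=0$. Writing the concomitant as an explicit polynomial in $Q$ with the $c_k$ contributions visibly divisible by $Q$ handles both points.
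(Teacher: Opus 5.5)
Your proposal is correct and follows the paper's proof essentially step for step: you conjugate by $Q^{\alpha}$ to reduce to $\alpha=0$, take $B_{\mathcal L}$ to be the Lagrange bilinear concomitant of $\widetilde{\mathcal L}$ using $\widetilde{\mathcal L}^\vee=(-1)^n\widetilde{\mathcal L}$, get constancy and $G_{\mathcal L}$-invariance on solutions, and read off $B_{\mathcal L}(g_i,g_j)$ from the $Q^0$, $\log^0 Q$ part (your polynomial-in-$\log Q$ argument handles this last step more carefully than the paper does). One caveat, which the paper shares: $B_{\mathcal L}(f,g)=P_{\widetilde{\mathcal L}}(Q^{-\alpha}f,Q^{-\alpha}g)$ carries an overall factor $Q^{-2\alpha}$, so your claim that it has coefficients in $\C(Q)$, and hence strict $G_{\mathcal L}$-invariance, needs $2\alpha\in\Z$---otherwise $G_{\mathcal L}$ may rescale $Q^{2\alpha}$ and the form is only invariant up to a character---and this condition does hold for $\mathcal L^{\pm}$, where $\alpha=0,\tfrac12$.
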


\begin{proof}
We start with the case $\alpha=0$. From the identity
\[\theta\left(\sum_{i=0}^{\ell-1}(-1)^i\theta^{\ell-1-i}(Q^kf)\theta^{i}g\right)=\theta^\ell(Q^kf)g-Q^kf\cdot(-\theta)^\ell g,\]
we deduce that
\[Q^k(c_k(\theta)f)\cdot g=\left(c_k(\theta-k)(Q^kf)\right)\cdot g=f\cdot Q^kc_k(-\theta-k)(g)+\theta B_k(f,g)\]
for some $B_k(f,g)\in Q^k\gen{\theta^{n-1-i}(f)\theta^i(g):0\leq i\leq n-1}$. By \eqref{cy_condition}, 
\[\mathcal{L}(f)\cdot g-(-1)^nf\cdot \mathcal{L}(g)=\theta\left(\sum_{i=0}^{n-1}(-1)^i\theta^{n-1-i}f\cdot \theta^ig+\sum_{k=1}^m B_k(f,g)\right),\]
and thus we can define $$B_{\mathcal L}(f, g) = \sum_{i=0}^{n-1}(-1)^i\theta^{n-1-i}f\cdot \theta^ig+\sum_{k=1}^m B_k(f,g).$$
In particular, for $f$, $g\in V_{\mathcal{L}}$, we have $B_{\mathcal L}(f,g)\in \C$ and 
\[B_{\mathcal{L}}(\sigma f,\sigma g)=\sigma(B_\mathcal{L}(f,g))=B_{\mathcal{L}}(f,g)\]
for any $\sigma\in G_{\mathcal{L}}$. Let $\{g_i\}_{i=0}^{n-1}$ be the Frobenius basis at $Q=0$ such that $g_0(0)=1$. Since
\[\theta^j(g_i)\in \delta_{i,j}+\C[\log Q][\![Q]\!]\setminus\C,\]
we can get the value of the pairing 
\[B_\mathcal{L}(g_i,g_j)=\delta_{i+j, n-1}\cdot (-1)^j.\]

For arbitrary $\alpha$, we set $\widetilde{\mathcal{L}}=Q^{-\alpha}\mathcal{L}Q^\alpha$. The desired bilinear form is then constructed by defining
\[B_\mathcal{L}(f,g)\coloneqq B_{\widetilde{\mathcal{L}}}(Q^{-\alpha}f,Q^{-\alpha}g).\]
\end{proof}

In general, for an irreducible hypergeometric operator of the form
\[\prod_{j=1}^n(\theta+\beta_j-1)-Q\prod_{j=1}^n(\theta+\alpha_j)\]
where
\[\{\operatorname{exp}(2\pi i\alpha_j)\}_{j=1}^n=\{\operatorname{exp}(-2\pi i\alpha_j)\}_{j=1}^n,\quad \{\operatorname{exp}(2\pi i\beta_j)\}_{j=1}^n=\{\operatorname{exp}(-2\pi i\beta_j)\}_{j=1}^n,\]
the differential Galois group is classified in \cite{beukers1989monodromy} as either $\operatorname{Sp}(n,\C)$ or $\mathrm{O}(n,\C)$. In particular, for $\mathcal{L}^\pm$, since the elements in $G_{\mathcal{L}^\pm}$ preserve $B_{\mathcal{L}^\pm}$, we conclude that $G_\mathcal{L^\pm}=\mathrm{O}(V_\mathcal{L^\pm},B_\mathcal{L^\pm})$.

\subsubsection{Regularization map}
It is ready to construct a differential field isomorphism from $F^+$ to $F^-$.
\begin{thm}\label{well_def_regularization}
For any $c=(c_0,\ldots,c_{p-1})$ such that $c_ic_{p-1-i}=(-1)^{p+q}$, the $\C$-linear map
\[\begin{tikzcd}[row sep=1pt]
V_{\mathcal L^+} \arrow[rr,"\phi_c"] & & V_{\mathcal L^-} \\
g_i^+ \arrow[rr, maps to] && c_i^{-1}2^{-p}g_i^-
\end{tikzcd}\]
extends to a differential isomorphism $\phi_c:F^+\to F^-$ under $Q_1^+ \rightarrow (Q_1^-)^{-1}$.
\end{thm}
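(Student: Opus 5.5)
The plan follows the strategy sketched in the introduction. First I will relate the two operators under the substitution $\gamma\colon Q_1^+\mapsto (Q_1^-)^{-1}$. Since $\theta_{Q_1^+}=-\theta_{Q_1^-}$ under $\gamma$, a direct computation gives
\[\gamma(\mathcal L^+)=(-1)^p\theta^p-(-1)^q2^p(Q_1^-)^{-1}(1-2\theta)^p ,\]
with $\theta=\theta_{Q_1^-}$. Multiplying on the left by a suitable constant times $Q_1^-$ and commuting $Q_1^-$ past $\theta$ (using $Q\theta Q^{-1}=\theta-1$), this should become a nonzero constant multiple of $\mathcal L^-=(\theta-\tfrac12)^p-(-1)^q2^{-2p}Q_1^-\theta^p$.

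Consequently, $\gamma$ carries the differential field $\C(Q_1^+)$ isomorphically to $\C(Q_1^-)$, compatibly with $\theta\mapsto-\theta$. It also carries solutions of $\mathcal L^+$, analytically continued along a fixed path from $0$ to $\infty$ (for instance via the Mellin--Barnes integral), to solutions of $\mathcal L^-$. By uniqueness of Picard--Vessiot extensions, $\gamma$ therefore induces a differential isomorphism $\gamma\colon F^+\to F^-$, which is $\C$-linear $V_{\mathcal L^+}\to V_{\mathcal L^-}$ on solution spaces. The remaining task is to correct $\gamma$ by an element of the Galois group so that it sends $g_i^+$ to $c_i^{-1}2^{-p}g_i^-$.

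Next I will compare bilinear forms. Proposition \ref{bilinear_pairing} applies to both $\mathcal L^+$ (with $\alpha=0$) and $\mathcal L^-$ (with $\alpha=\tfrac12$), after checking the Calabi--Yau condition \eqref{cy_condition} on the explicit coefficients.

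1. \emph{$\gamma$ transports the form.} Since $\gamma(\mathcal L^+)=\kappa\,\mathcal L^-$ up to the factor $Q_1^-$, the defining identity $Q^{2\alpha}\theta B(f,g)=g\mathcal L(f)-(-1)^nf\mathcal L(g)$ transforms into the corresponding identity for $\mathcal L^-$. Hence $B_{\mathcal L^+}(f,g)=\lambda\, B_{\mathcal L^-}(\gamma f,\gamma g)$ for a nonzero constant $\lambda$. Both sides are constant bilinear forms, and by uniqueness of the primitive of $\theta$ up to constants their ratio is a constant, which can be read off from leading terms.

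2. \emph{$\phi_c$ transports the form.} I will check that $\phi_c$ satisfies
\[
B_{\mathcal L^-}(\phi_c g_i^+,\phi_c g_j^+)=c_i^{-1}c_j^{-1}2^{-2p}(-1)^j\delta_{i+j,p-1}.
\]
Under the hypothesis $c_ic_{p-1-i}=(-1)^{p+q}$, this equals $\lambda^{-1}B_{\mathcal L^+}(g_i^+,g_j^+)=\lambda^{-1}(-1)^j\delta_{i+j,p-1}$ exactly when $\lambda^{-1}=(-1)^{p+q}2^{-2p}$. Determining $\lambda$ is the main obstacle. I would compute it by evaluating $B$ on a pair of solutions whose continuation is known, most simply from the leading-coefficient comparison of $\gamma(\mathcal L^+)$ with $\mathcal L^-$, which fixes the constant $\kappa$. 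The factors $2^p$ and $(-1)^q$ in the operators are presumably arranged so that this produces precisely $(-1)^{p+q}2^{2p}$. If the constant instead came out off by a sign, that sign would have to be absorbed into the admissible choices of $c$.

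Granting step 2, $\sigma:=\phi_c\circ\gamma^{-1}|_{V_{\mathcal L^-}}$ preserves $B_{\mathcal L^-}$, so $\sigma\in \mathrm O(V_{\mathcal L^-},B_{\mathcal L^-})=G_{\mathcal L^-}$ by the Beukers--Heckman classification quoted above. As an element of the differential Galois group, $\sigma$ extends to a differential automorphism of $F^-$ fixing $\C(Q_1^-)$. Then $\phi_c:=\sigma\circ\gamma\colon F^+\to F^-$ is a differential field isomorphism that restricts to the prescribed linear map on $V_{\mathcal L^+}$ and acts as $Q_1^+\mapsto(Q_1^-)^{-1}$ on the base field. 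This is the extension claimed in Theorem \ref{well_def_regularization}.
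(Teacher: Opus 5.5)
Your architecture is the paper's. You relate $\gamma(\mathcal L^+)$ to $\mathcal L^-$, show that $\gamma$ rescales the bilinear forms by $(-1)^{p+q}2^{2p}$, compare this with the values of $\phi_c$ on the Frobenius basis, and correct $\gamma$ by some $\sigma\in\mathrm O(V_{\mathcal L^-},B_{\mathcal L^-})=G_{\mathcal L^-}$. The gap is in your Step 1, which is where the real content lies.

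Transporting the Lagrange identity only gives $\theta_{Q_1^-}\bigl(\gamma B_{\mathcal L^+}(f,g)-\lambda B_{\mathcal L^-}(\gamma f,\gamma g)\bigr)=0$. When $f,g$ are solutions, both terms are already constants, so this statement is empty. In particular, ``uniqueness of the primitive up to constants'' does not show that the ratio of the two forms is constant, and still less that it is a number you can read off from the operators. (Schur's lemma for the irreducible $G_{\mathcal L^+}$-action would give proportionality, but then the constant is connection data, not something visible in the coefficients.)

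The missing idea is to use the Lagrange identities for \emph{arbitrary} $f,g$, not just solutions. Then
\[
C(f,g):=\gamma B_{\mathcal L^+}(f,g)-\lambda B_{\mathcal L^-}(\gamma f,\gamma g)=\sum_{a,b}C_{a,b}\,\theta^a(\gamma f)\,\theta^b(\gamma g)
\]
is a bilinear differential expression with Laurent-polynomial coefficients, and $\theta C\equiv 0$ identically. Such an expression must vanish. The paper proves this by testing on $f=Q^i$, $g=Q^j$; alternatively, you can look at the coefficient of the top-order term $\theta^{a+1}(\gamma f)\,\theta^b(\gamma g)$ and induct downward.

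With that in hand, $\lambda$ is computed rather than presumed. One has exactly
\[
\gamma(\mathcal L^+)=(-1)^{p+q+1}2^{2p}(Q_1^-)^{-1}\mathcal L^-.
\]
No commuting of $Q_1^-$ past $\theta$ is needed: $(Q_1^-)^{-1}$ is a left factor, and it is precisely what cancels the $Q^{2\alpha}=Q_1^-$ in the Lagrange identity for $\mathcal L^-$. Combined with $\theta_{Q_1^+}\mapsto-\theta_{Q_1^-}$, this gives $\lambda=(-1)^{p+q}2^{2p}$, which is exactly what the hypothesis $c_ic_{p-1-i}=(-1)^{p+q}$ requires in your Step 2. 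Your fallback of absorbing a stray sign into the admissible choices of $c$ would change the statement rather than prove it.
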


\begin{proof}
First, we recall the classical analytic continuation that one can pick a path $\gamma$ from $0$ to $\infty$ in $\P^1$ (w.r.t. affine variables $Q_1^+$ and $Q_1^-$), and let $\gamma(g_i^+)\in\operatorname{Sol}(\mathcal{L}^-)$ denote the analytic continuation of $g_i^+$ along the path $\gamma$. Abusing notation, we let $\gamma$ also represent the differential field isomorphism $F^+\xrightarrow{\ \sim\ } F^-$ induced by the analytic continuation along the path $\gamma$. We find that
\[\gamma\circ \mathcal{L}^+=(-1)^{p+q+1}2^{2p}(Q^-)^{-1}\left((\theta_{Q^-}-\tfrac{1}{2})^p-Q^-(-1)^q2^{-2p}\theta_{Q^-}^p\right)\circ\gamma=(-1)^{p+q+1}2^{2p}(Q^-)^{-1}\mathcal{L}^-\circ\gamma\]
and
\begin{align*}
\gamma\theta_{Q^+}B_{\mathcal{L}^+}(f,g) & = (-1)^{p+q+1}2^{2p}(Q^-)^{-1}\left(\mathcal{L}^-(\gamma f)\cdot \gamma g-\gamma f\cdot \mathcal{L}^-(\gamma g)\right) \\
& =(-1)^{p+q+1}2^{2p}\theta_{Q^-}B_{\mathcal{L}^-}(\gamma f,\gamma g)
\end{align*}

as the differential operator. This implies that
\[C(f,g)\coloneqq \gamma B_{\mathcal{L}^+}(f,g)-(-1)^{p+q}2^{2p}B_{\mathcal{L}^-}(\gamma f,\gamma g)\]
is constant for all $f$, $g\in F^+$. By the construction of $B_{\mathcal{L}^\pm}$, we may write
\[C(f,g)=\sum C_{a,b}(Q_1^-)\theta_{Q^-}^a(\gamma f) \theta^b_{Q^-}(\gamma g)\]
for some polynomial $C_{a,b}$. Substituting $(f,g)=(Q^i,Q^j)$ yields
\[\sum_{0\leq a+b\leq n-1} C_{a,b}(Q_1^-)i^aj^b=0\]
for all $i$, $j\in\N$ and thus $C_{a,b}(Q_1^-)=0$ for all $a,b$. Consequently, we obtain 
\[B_{\mathcal{L}^+}(f,g)=(-1)^{p+q}2^{2p}B_{\mathcal{L}^-}(\gamma f,\gamma g)\]
for all $f$, $g\in V_{\mathcal{L}^+}$. By evaluating on the Frobenius basis as in Proposition \ref{bilinear_pairing}, 
\begin{align*}
B_{\mathcal{L}^-}(\gamma(g_i^+),\gamma(g_j^+))&=(-1)^{p+q}2^{-2p} B_{\mathcal{L}^+}(g_i^+,g_j^+) = (-1)^{p+q}2^{-2p}(-1)^j\delta_{i+j, p-1}\\
&= (-1)^{p+q}2^{-2p} B_{\mathcal{L}^-}(g_i^+,g_j^+) 
= B_{\mathcal L^-}(c_i^{-1}2^{-p}g_i^-,c_j^{-1}2^{-p}g_j^-).
\end{align*}
Therefore, one can find $\sigma\in\mathrm{O}(V_{\mathcal{L}^-},B_{\mathcal{L}^-})=G_{\mathcal{L}^-}$ such that $\sigma(\gamma(g_i^+))=c_i^{-1}2^{-p}g_i^-$. This shows that $\phi_c=\sigma\circ\gamma$ is a well-defined differential field isomorphism.

\end{proof}

Using the same notation as that in the proof of Theorem \ref{well_def_regularization}, we have the commutative diagram
\[\begin{tikzcd}[row sep=20pt, column sep=80pt]
V_{\mathcal{L}^+} \arrow[r,"\theta_{Q^+}^a"] \arrow[d, "\gamma"] \arrow[dd, "\phi_c"', bend right=60] & V_{\mathcal{L}^+_{a,p}} \arrow[d, "\gamma"] & V_{\mathcal{L}_{a,b}^+} \arrow[l,"(\theta_{Q^+}-\frac{1}{2})^{p-b}"'] \arrow[d, "\gamma"]  \\
V_{\mathcal{L}^-} \arrow[r,"(-1)^a\theta_{Q^-}^a"] \arrow[d, "\sigma"] & V_{\mathcal{L}^-_{p,a}} \arrow[d, "\sigma"] & V_{\mathcal{L}_{b,a}^-} \arrow[l,"(-1)^{p-b}(\theta_{Q^-}+\frac{1}{2})^{p-b}"'] \arrow[d, "\sigma"] \\
V_{\mathcal{L}^-} \arrow[r,"(-1)^a\theta_{Q^-}^a"] & V_{\mathcal{L}^-_{p,a}} & V_{\mathcal{L}_{b,a}^-} \arrow[l,"(-1)^{p-b}(\theta_{Q^-}+\frac{1}{2})^{p-b}"']
\end{tikzcd}\]

Note that each arrow represents a $\C$-linear isomorphism. By the relation in Lemma \ref{g_a,b^+} and \ref{g_a,b^-}, we get that
\[(-1)^{p-b}(\theta_{Q^-}+\tfrac{1}{2})^{p-b}\phi_c\left(g^+_{a,b,n}+(-1)^{p+q-b}\cdot 2^{-p-b}\cdot\delta_{n,a}\right)=c_n^{-1}(-1)^a(\theta_{Q^-}+\tfrac{1}{2})^{p-b}(g_{b,a,n}^-)\]
for $n\leq a$, and thus
\[\phi_cg_{a,b,n}^++(-1)^{p+q-b}\cdot 2^{-p-b}\cdot\delta_{n,a}=c_n^{-1}(-1)^{p+q-a-b}g_{b,a,n}^-\]
for $n\leq a$. Finally, we conclude that
\[\phi_c\left(\dfrac{W^+_{a,b}}{W_k^+}\right)=(-1)^{p+q-a-b}c_{k+1}^{-1}\cdot\dfrac{W^-_{b,a}}{W_k^-}-(-1)^{q+1}\cdot 2^{-2p+1}\cdot\delta_{b,p-1}\]
for $b\leq p-1$. 

\begin{prop}\label{preserve_quan_prod_implies_id}
Let $\sigma\in G_{\mathcal{L}^-}$ be extended to an automorphism of $F^-((Q_1^-Q_2^-)^{1/2})$ by fixing $(Q_1^-Q_2^-)^{1/2}$. If $\sigma(D\qsstar T)=D\qsstar T$ for any divisor $D$ and $T\in A_{\operatorname{orb}}^*(\mathcal{X}^-)$, then $\sigma=\operatorname{id}$.
\end{prop}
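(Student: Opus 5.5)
Since $G_{\mathcal{L}^-}$ acts faithfully on $V_{\mathcal{L}^-}$, it suffices to show that $\sigma(g_i^-)=g_i^-$ for every element $g_i^-$ of the Frobenius basis $\{g_i^-\}_{i=0}^{p-1}$. The invariance of the bilinear form is not enough for this. We want linear functionals of the $g_i^-$ that appear as structure constants of $D\qsstar T$. The natural source is Proposition \ref{Frob_basis_-}, which gives
\[h\qsstar h^{p+q-1}=2^{-p}g_0^-\cdot y_{1/2}^-,\]
and here the coefficient of $y^-_{1/2}$ is exactly $2^{-p}g_0^-$. The hypothesis $\sigma(D\qsstar T)=D\qsstar T$ with $D=h$, $T=h^{p+q-1}$ therefore gives $\sigma(g_0^-)=g_0^-$. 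Since $\sigma$ commutes with $\theta=\theta_{Q_1^-}$, it also fixes $\theta^j g_0^-$ for all $j$.

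To reach the remaining basis elements, I would use the coefficients in Proposition \ref{quan_prod_-_exceptional}, for $T=h^ky_{1/2}^-$ with $0\le k\le p-1$. The coefficient of $h^{k+1}y^-_{1/2}$ in $(\xi-h)\qsstar h^ky^-_{1/2}$ is $-W_{k-1}^-W_{k+1}^-/(W_k^-)^2$, so each such ratio is $\sigma$-fixed. The coefficient of $\xi^k y_\infty^-$ (the $n=0$ term) in $\xi\qsstar h^ky^-_{1/2}$ is $(Q_1^-Q_2^-)^{1/2}2^k(W^-)^{[k]\setminus\{0\}}_{[k-1]}/W_k^-$. The $(Q_1^-Q_2^-)^{1/2}$ factor is fixed by assumption, so each ratio $(W^-)^{[k]\setminus\{n\}}_{[k-1]}/W_k^-$ is fixed as well.

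The key observation is that these ratios are exactly the coefficients of the monic operator annihilating $\operatorname{span}\{g_0^-,\dots,g_{k-1}^-\}$, namely $\mathcal{L}_k^-(\cdot)/W_k^-$, in the same way as in Lemma \ref{gen_Wron^+_in_F^+}. So $\sigma$ fixes this operator, and hence maps its solution space $V_k:=\operatorname{span}\{g_0^-,\dots,g_{k-1}^-\}$ to itself for each $k$. We therefore get a $\sigma$-stable flag $V_1\subset V_2\subset\cdots\subset V_p=V_{\mathcal{L}^-}$, so $\sigma$ is upper triangular in the Frobenius basis.

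Next, $\sigma\in \mathrm{O}(V_{\mathcal L^-},B_{\mathcal L^-})$. By Proposition \ref{bilinear_pairing}, $B(g_i,g_j)=(-1)^j\delta_{i+j,p-1}$, so the flag is self-dual: $V_k^\perp=V_{p-k}$. The diagonal entries $\lambda_i$ of $\sigma$ then satisfy $\lambda_i\lambda_{p-1-i}=1$, and $\lambda_0=1$ because $g_0^-$ is fixed.

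The main obstacle is killing the remaining diagonal and off-diagonal freedom, which is a unipotent-type ambiguity. For this I would use the Wronskians. The quantity $W_k^-$ changes under $\sigma$ by $\det(\sigma|_{V_{k+1}})=\lambda_0\cdots\lambda_k$. The fixed ratios $W_{k-1}^-W_{k+1}^-/(W_k^-)^2$ force $\lambda_{k+1}=\lambda_k$, so all $\lambda_i=1$. For the off-diagonal part, write $\sigma(g_k^-)=g_k^-+\sum_{i<k}u_{ki}g_i^-$. Take the coefficient $(W^-)^{[k]}_{[k+1]\setminus\{k\}}/W_k^-$ that appears in the $P_\mu$ expansion; it is the $\theta$-antiderivative of the fixed ratio via \eqref{L_k^+_relation}, and it is fixed up to a constant. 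Evaluating it on $\sigma(g_{k+1}^-)$ shows this coefficient changes by $u_{k+1,k}$, so $u_{k+1,k}=0$.

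The remaining entries $u_{ki}$ with $i<k-1$ should be handled by the analogous determinantal coefficients, using the functions $g^-_{a,b,n}$ in Proposition \ref{quan_isom^-_hard}. Through $W^-_{a,b}/W_k^-$ and the relation $\theta^b(2^{-3p}g_n^-)=(\theta+\frac12)^{p-a}g^-_{a,b,n}$ of Lemma \ref{g_a,b^-}, these detect the individual $g_n^-$ modulo $V_n$. Choosing $a,b$ so that $k=a+b-p$ ranges over $-1,\dots,p-2$, the $\sigma$-invariance of the coefficient of $h^{k+1}y^-_{1/2}$ in $\xi\qsstar h^a(\xi-h)^{p+q+b}y^-_\infty$ should force every strictly upper-triangular entry to vanish inductively. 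This would give $\sigma=\operatorname{id}$.
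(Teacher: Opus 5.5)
\textbf{Where it breaks.} Your steps up to the diagonal are sound, and they actually sharpen the paper's first step:
\begin{itemize}
\item $g_0^-$ is fixed because of $h\qsstar h^{p+q-1}$.
\item The ratios $(W^-)^{[k]\setminus\{n\}}_{[k-1]}/W_k^-$ can be read off, since the classes $(\xi-h)^n\xi^{k-n}y^-_\infty$, $0\le n\le k\le p-1$, are linearly independent. They are the coefficients of $\mathcal{L}^-_k/W^-_k$. This operator is not monic (its leading coefficient is $W^-_{k-1}/W^-_k$), but that coefficient is itself one of the fixed ratios. Hence $\sigma$ stabilizes the flag $V_1\subset\cdots\subset V_p$.
\item The fixed ratios $W^-_{k-1}W^-_{k+1}/(W^-_k)^2$ then make $\sigma$ unipotent.
\end{itemize}
The argument fails at the off-diagonal step. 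The quantity $X_k=(W^-)^{[k]}_{[k+1]\setminus\{k\}}/W^-_k$ appears in $P_\mu(z^{-1}I^-)$, but not in any $D\qsstar T$. Only $\theta X_k=W^-_{k-1}W^-_{k+1}/(W^-_k)^2$ is a structure constant. So all you know is $\sigma(X_k)-X_k\in\C$. Your own computation $\sigma(X_k)=X_k+u_{k+1,k}$ says this constant \emph{is} $u_{k+1,k}$, so nothing is forced and the step is circular.

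\textbf{Why the last step cannot repair it.} Consider the $y^-_\infty$-coefficients in Proposition \ref{quan_isom^-_hard},
\[C_n=\sum_i(-1)^{i+n}g^-_{a,b,i}\,(W^-)^{[k]\setminus\{n\}}_{[k]\setminus\{i\}}/W^-_k .\]
They are exactly the coefficients of the unique operator $\sum_{n\le k}C_n\theta^n$ that sends $g^-_i\mapsto g^-_{a,b,i}$ for $i\le k$. The map $g^-_i\mapsto g^-_{a,b,i}$ is given by a differential operator over $\C(Q_1^-)$ (invert $(\theta+\frac12)^{p-a}$ in Lemma \ref{g_a,b^-}). Therefore the $C_n$ depend only on $V_{k+1}$ and are fixed by every flag-preserving $\sigma$. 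Likewise $W^-_{a,b}$ and $W^-_k$ are unchanged by unitriangular row operations. None of the structure constants sees the unipotent part.

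\textbf{The statement fails for $p\ge2$.} Every coefficient of $D\qsstar T$ is a Gromov--Witten series. Written as a series in $(Q_1^-Q_2^-)^{1/2}$, its coefficients are log-free Laurent series in $(Q_1^-)^{1/2}$. So all of them are fixed by analytic continuation twice around $Q_1^-=0$. That continuation is an element of $G_{\mathcal{L}^-}$ that fixes $(Q_1^-Q_2^-)^{1/2}$, yet it sends $g_1^-\mapsto g_1^-+4\pi i\,g_0^-$. What can be proved is exactly what your first half gives: $\sigma$ is unipotent upper triangular in the Frobenius basis, and $\sigma=\operatorname{id}$ only when $p=1$.

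\textbf{The paper's proof.} It gets past this point only by asserting that $\sigma$ fixes the minors $(W^-)^{[k]\setminus\{n\}}_{[k]\setminus\{i\}}$. That is false for $k\ge1$; for example $(W^-)^{\{1\}}_{\{1\}}=\theta g_1^-\mapsto\theta g_1^-+u\,\theta g_0^-$. So the paper's argument has the same gap.
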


\begin{proof}
By Section \ref{subsection_quan_prod_X^-}, the assumption implies that $\sigma$ also preserves
\[g_0^- \text{ and } \frac{W_{k-1}^-\cdot W_{k+1}^-}{(W_k^-)^2}\]
for $0\leq n\leq k\leq p-1$. In particular, it preserves $W_k^-$ for $0\leq k\leq p-1$. Under the Frobenius basis $\{g_i^-\}_{i=0}^{p-1}$, the property $\sigma(W_k^-)=W_k^-$ implies that $\sigma$ is represented by a unipotent upper triangular matrix. 

Moreover, $\sigma$ also preserves
\[\dfrac{W_{a,b}^-}{W_k^-},\quad \sum_{i=0}^k(-1)^{i+n}g_{a,b,i}^-\dfrac{(W^-)_{[k]\setminus\{i\}}^{[k]\setminus\{n\}}}{W_k^-}\]
for $0\leq a,b\leq p-1$, $0\leq n\leq k$ with $k=a+b-p\geq 0$. Since $\sigma$ also preserves $(W^-)_{[k]\setminus\{i\}}^{[k]\setminus\{n\}}$, we have
\[\dfrac{1}{W_k^-}\sum_{i=0}^k(-1)^{i+n}(W^-)_{[k]\setminus\{i\}}^{[k]\setminus\{n\}}(\sigma(g^-_{a,b,i})-g^-_{a,b,i})=0\]
for all $0\leq n\leq k$. Since the matrix $((-1)^{i+j}(W^-)_{[k]\setminus\{j\}}^{[k]\setminus\{i\}}/W_k^-)_{0\leq i,j\leq k}=(W_k^-)^{-1}$ is invertible, we deduce that $\sigma(g^-_{a,b,i})=g^-_{a,b,i}$. Finally, since $g_i^-\in\mathfrak{D}_{Q_1^-}\cdot g_{a,b,i}^-$, $\sigma$ also fixes $g_i^-$ for all $i$, which implies that $\sigma=\operatorname{id}$.
\end{proof}

By the Galois correspondence,  we get the following result.
\begin{cl}\label{proper-field}
The differential fields $F^+((Q_2^+)^{1/2})$ and $F^-((Q_1^-Q_2^-)^{1/2})$ are the smallest differential field extensions of $\mathbb{C}(Q^+_1,(Q^+_2)^{1/2})$ and $\mathbb{C}(Q_1^-,(Q_1^-Q_2^-)^{1/2})$ respectively that contain all three-point invariants $\langle D,T_1,T_2 \rangle^{\mathcal{X}^\pm}$ for any divisor $D$ and arbitrary orbifold cohomology classes $T_1$ and $T_2$.
\end{cl}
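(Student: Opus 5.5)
The statement is a minimality claim: the field is contained in any differential extension containing all the three-point invariants. I will argue on the $\mathcal{X}^-$ side; the $\mathcal{X}^+$ side is identical after swapping the roles of the propositions. Set $K^-\coloneqq\C(Q_1^-,(Q_1^-Q_2^-)^{1/2})$, $E^-\coloneqq F^-((Q_1^-Q_2^-)^{1/2})$, and let $M\subseteq E^-$ be the differential subfield generated over $K^-$ by all coefficients of $D\qsstar T$. By Theorem \ref{div_3pt_in_F}, every invariant $\langle D,T_1,T_2\rangle^{\mathcal{X}^-}$ lies in $E^-$, so $K^-\subseteq M\subseteq E^-$. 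Since any differential extension of $K^-$ containing these invariants contains a copy of $M$, it suffices to show $M=E^-$.

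The tool is the differential Galois correspondence for the Picard--Vessiot extension. First, $E^-$ is Picard--Vessiot over $K^-$, since it is the compositum of $F^-$ with $K^-$. Its Galois group over $K^-$ embeds into $G_{\mathcal{L}^-}$ by restriction to $F^-$, and any element of it fixes $(Q_1^-Q_2^-)^{1/2}$. Every $\sigma\in\operatorname{Gal}(E^-/M)$ fixes all coefficients of $D\qsstar T$, so Proposition \ref{preserve_quan_prod_implies_id} gives $\sigma|_{F^-}=\operatorname{id}$, hence $\sigma=\operatorname{id}$. By the Galois correspondence (the fixed field of $\operatorname{Gal}(E^-/M)$ is $M$), we get $M=E^-$.

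For $\mathcal{X}^+$ I will run the same argument with $K^+=\C(Q_1^+,(Q_2^+)^{1/2})$. This needs an analogue of Proposition \ref{preserve_quan_prod_implies_id} for $G_{\mathcal{L}^+}$.
\begin{enumerate}
\item Fixing $g_0^+$, which appears in Proposition \ref{Frob_basis_+}, and the ratios $W_{k-1}^+W_{k+1}^+/(W_k^+)^2$ for $k\le p-1$ from Proposition \ref{quan_prod_+_iden} forces $\sigma$ to fix every $W_k^+$, so $\sigma$ is unipotent upper triangular in the Frobenius basis.
\item Fixing the coefficients in Proposition \ref{quan_isom^+_hard}, and inverting the cofactor matrix, fixes each $g^+_{a,b,i}$.
\item Relation \eqref{g_a,b,n^+_in_F^+} and Lemma \ref{g_a,b^+} give $\theta^a g_n^+$ as a differential expression in $g^+_{a,b,n}$. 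Hence $\theta^a g_n^+$ is fixed, and so is $g_n^+$, because $\theta^a$ is injective on $V_{\mathcal{L}^+}$.
\end{enumerate}

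The main obstacle is this last step: checking that enough pairs $(a,b)$ with $k=a+b-p\ge 0$ and $n\le a$ reach every index $n\le p-1$. Taking $a=p-1$ and $b$ large gives $k$ up to $p-2$ and $n\le p-1$, so this should work. If it does not, I would instead use the unipotence together with invariance of $B_{\mathcal{L}^+}$ (Proposition \ref{bilinear_pairing}) to cut down the remaining freedom.
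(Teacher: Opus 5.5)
Your route is the paper's own: containment from Theorem \ref{div_3pt_in_F}, then minimality from Proposition \ref{preserve_quan_prod_implies_id} and the Galois correspondence. However, the decisive step fails.

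\textbf{Your step 2.} Inverting the cofactor matrix requires $\sigma$ to fix each minor $(W^+)^{[k]\setminus\{n\}}_{[k]\setminus\{i\}}$. A $\sigma$ that is only unipotent on the Frobenius flag does not do this: for $i'>i$, $\sigma(g^+_{i'})$ picks up a $g^+_i$-component, and the minor with row $i$ deleted sees it. Worse, the coefficients in Proposition \ref{quan_isom^+_hard} carry no information beyond your step 1.
\begin{itemize}
\item $\sum_i(-1)^{i+n}g^+_{a,b,i}(W^+)^{[k]\setminus\{n\}}_{[k]\setminus\{i\}}$ is, up to sign, the determinant with rows $\bigl((\theta^jg^+_i)_{j\neq n},\,g^+_{a,b,i}\bigr)_{i\le k}$.
\item By \eqref{g_a,b,n^+_in_F^+}, $g^+_{a,b,i}=\Theta(g^+_i)$ for a single operator $\Theta$ with coefficients in $\C(Q_1^+)$ whenever $i<a$, and here $k\le a-1$.
\item So each row is a $\sigma$-equivariant image of $g^+_i$, and the determinant is multiplied by $\det\sigma|_{\langle g^+_0,\dots,g^+_k\rangle}=1$.
\item The same holds for $W^+_{a,b}/W^+_k$; the constant shift at $i=a=k+1$ only adds a multiple of $W^+_k$.
\end{itemize}

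\textbf{Your step 3 and the fallback.} Step 3 could only ever reach indices $i\le k\le p-2$, never $g^+_{p-1}$. The fallback through $B_{\mathcal{L}^+}$ cannot help either, because the element $\sigma_c$ below is flag-unipotent and preserves $B_{\mathcal{L}^\pm}$.

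\textbf{The claim fails for $p\ge 2$.} No repair is possible.
\begin{itemize}
\item The substitution $\log Q_1\mapsto\log Q_1+c$ is a differential automorphism of $\C((Q_1^{1/2}))[\log Q_1]$ over $\C(Q_1)$.
\item It therefore restricts to some $\sigma_c\in G_{\mathcal{L}^\pm}$ with $\sigma_c(g_1)=g_1+c\,g_0\neq g_1$.
\item Extended trivially to $(Q_2^+)^{1/2}$, respectively $(Q_1^-Q_2^-)^{1/2}$, it fixes every three-point invariant. These are logarithm-free series in the Novikov variables, as the paper itself stresses.
\end{itemize}
Hence your $M$ lies in a proper subfield of $E^\pm$, and Proposition \ref{preserve_quan_prod_implies_id} fails as stated. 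Its proof uses exactly the unjustified claim that $\sigma$ preserves $(W^-)^{[k]\setminus\{n\}}_{[k]\setminus\{i\}}$, so citing it for $\mathcal X^-$ inherits the same gap.

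For example, when $p=2$ every three-point invariant of $\mathcal X^+$ lies in $\C(Q_1^+)(g^+_0,\theta g^+_0)\bigl((Q_2^+)^{1/2}\bigr)$, which does not contain $g_1^+$.

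What the Galois argument legitimately gives is that the minimal field is the fixed field of the stabilizer of all three-point invariants, a group containing every such flag-unipotent element. Only for $p=1$, where $V_{\mathcal{L}^\pm}$ is spanned by the three-point coefficient $g_0^\pm$, does this recover all of $E^\pm$.
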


\subsubsection{Quantum correspondence}
\begin{definition}\label{quantum correspond}
Let $\mathcal X^+\dashrightarrow\mathcal X^-$ be a local model for simplicial toric flops described as above. A \textbf{quantum correspondence} for $\mathcal X^+\dashrightarrow\mathcal X^-$ consists of the following data:
\begin{enumerate}
    \item\label{quantum_correspondence_1} a correspondence $[\mathscr F]\in A^{\dim \mathcal X}_{\operatorname{orb}}(\mathcal X^+\times \mathcal X^-)_\C$ preserving the orbifold Poincar\'e pairing,

    \item\label{quantum_correspondence_2} differential field extensions $L^\pm$ of $\C(Q^\pm_1,Q_2^\pm)$ containing all $n$-point invariants with $n\geq 3$ on $\mathcal{X}^\pm$,
    
    \item\label{quantum_correspondence_3} a differential field isomorphism $\phi:L^+\to L^-$ under $Q^d\mapsto Q^{\mathscr F(d)}$ for $d=\ell,\gamma$,
\end{enumerate}
satisfying that the induced map
    \[\Phi\coloneqq [\mathscr{F}]\otimes\phi:A_{\operatorname{orb}}^*(\mathcal{X}^+)_\C\otimes L^+[\![\tau^\mu]\!]\longrightarrow A_{\operatorname{orb}}^*(\mathcal{X}^-)_\C\otimes L^-[\![\tau^\mu]\!]\]
    is a ring isomorphism with respect to the big quantum product structure, that is,
    \begin{equation}\label{Psi_preserve_big_quan_prod}
    \Phi(T_1\star_{\tau_{\operatorname{big}}}T_2)=\Phi(T_1)\star_{\Phi(\tau_{\operatorname{big}})}\Phi(T_2),
    \end{equation}
    where $\tau_{\operatorname{big}}=\sum\tau^\mu T_\mu$ and $\Phi$ acts trivially on $\tau^\mu$. 

We denote it by $(\phi,[\mathscr F])$ and call $\phi$ a \textbf{regularization map}.
\end{definition}

% \begin{definition}\label{quantum correspond}
% A pair $(\psi,[\mathscr{F}])$ is called a \textbf{quantum correspondence} if it satisfies
% \begin{enumerate}[\lb]
%     \item $\psi:F^+\xrightarrow{\ \sim\ }F^-$ is a differential field isomorphism such that $\psi(Q_1^+)=(Q_1^-)^{-1}$, $\psi((Q_2^+)^{1/2})=(Q_1^-Q_2^-)^{1/2}$, which we called a \textbf{regularization map}, and 

%     \item a correspondence $[\mathscr{F}]:A_{\operatorname{orb}}^*(\mathcal{X}^+)\longrightarrow A_{\operatorname{orb}}^*(\mathcal{X}^-)$ on orbifold cohomology groups that preserves the orbifold degree and the orbifold Poincar\'e pairing,
%     such that the induced map
%     \[\Psi\coloneqq [\mathscr{F}]\otimes\psi:A_{\operatorname{orb}}^*(\mathcal{X}^+)\otimes_\C F^+[\![\tau^\mu]\!]\longrightarrow A_{\operatorname{orb}}^*(\mathcal{X}^-)\otimes_\C F^-[\![\tau^\mu]\!]\]
%     is a ring isomorphism with respect to big quantum product structure, that is,
%     \begin{equation}\label{Psi_preserve_big_quan_prod}
%     \Psi(T_1\star_{\tau_{\operatorname{big}}}T_2)=\Psi(T_1)\star_{\Psi(\tau_{\operatorname{big}})}\Psi(T_2),
%     \end{equation}
%     where $\tau_{\operatorname{big}}=\sum\tau^\mu T_\mu$ and $\Psi$ acts trivially on $\tau^\mu$.
% \end{enumerate}
% \end{definition}

The special case of $\tau_{\operatorname{big}}=0$ in \eqref{Psi_preserve_big_quan_prod} guarantees that
\begin{equation}\label{Psi_preserve_small_quan_prod}
\Phi(D\qsstar T)=\Phi(D)\qsstar \Phi(T)
\end{equation}
for any divisor $D$ and $T\in A_{\operatorname{orb}}^*(\mathcal{X}^+)$.

\begin{thm}\label{preserve_small_implies_preserve_big}
Let $\mathcal X^+\dashrightarrow\mathcal X^-$ be a local model for simplicial toric flops, and $(\phi,[\mathscr{F}])$ be a pair satisfies the conditions \eqref{quantum_correspondence_1}, \eqref{quantum_correspondence_2} and \eqref{quantum_correspondence_3} in Definition \ref{quantum correspond}. If the orbifold cohomology of $\mathcal{X}^+$ is generated by divisors with respect to $\qsstar$ over the definition field $L^+$ of $\phi$, then the pair $(\phi,[\mathscr{F}])$ satisfying \eqref{Psi_preserve_small_quan_prod} turns out to be a quantum correspondence.
\end{thm}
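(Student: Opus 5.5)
The strategy is a reconstruction argument. The small quantum product, together with divisor generation, determines all genus-zero invariants through the WDVV equation and the divisor axiom. The same recursion holds on both sides, and $\Phi$ intertwines it. Throughout, write $\langle T_1,\ldots,T_n\rangle^{\pm}$ for the genus-zero generating functions with Novikov variables. These lie in $L^\pm$ by condition \eqref{quantum_correspondence_2}. The big product is then
\[
T_1\star_{\tau_{\operatorname{big}}}T_2=\sum_{n}\frac{1}{n!}\langle T_1,T_2,T^\gamma,\tau_{\operatorname{big}}^{\otimes n}\rangle\, T_\gamma .
\]
Condition \eqref{quantum_correspondence_1} says $[\mathscr F]$ preserves the orbifold pairing, so it sends dual bases to dual bases. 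Hence \eqref{Psi_preserve_big_quan_prod} is equivalent to the invariance of every correlator:
\[
\phi\bigl(\langle T_1,\ldots,T_n\rangle^{+}\bigr)=\langle [\mathscr F]T_1,\ldots,[\mathscr F]T_n\rangle^{-}\qquad\text{for all } n\ge 3 \text{ and all } T_i .
\]
I would prove this statement by induction on $n$ and on the quantum-product length of the insertions.

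The base case is $n=3$. First, pairing \eqref{Psi_preserve_small_quan_prod} with a third class gives $\phi\langle D,T_1,T_2\rangle^+=\langle\Phi D,\Phi T_1,\Phi T_2\rangle^-$ for every divisor $D$. By hypothesis, every $T\in A^*_{\operatorname{orb}}(\mathcal X^+)$ can be written as $T=\sum_k f_k\,D_{k,1}\qsstar\cdots\qsstar D_{k,m_k}$ with $f_k\in L^+$. Applying \eqref{Psi_preserve_small_quan_prod} repeatedly, and using that $\phi$ is a field homomorphism, we get
\[
\Phi(T)=\sum_k\phi(f_k)\,\Phi D_{k,1}\qsstar\cdots\qsstar\Phi D_{k,m_k}.
\]
This means $\Phi$ is a ring isomorphism for $\qsstar$, because $\Phi$ is an additive bijection, being $[\mathscr F]$ tensored with a field isomorphism. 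Since pairings are preserved, it follows that $\phi\langle T_1,T_2,T_3\rangle^+=\langle\Phi T_1,\Phi T_2,\Phi T_3\rangle^-$ for arbitrary $T_i$.

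For the inductive step I follow the standard reconstruction argument of Kontsevich--Manin and Lee--Pandharipande. Consider an $n$-point invariant with $n\ge4$ and an insertion of the form $D\qsstar T$, written in terms of products with divisors. By the WDVV equation together with the divisor equation, one can move the divisor $D$ onto another insertion. This expresses $\langle \ldots, D\qsstar T,\ldots\rangle$ through three things: the operator $\theta_D$ (that is, $D$-derivatives in the Novikov variables) applied to lower-length correlators, correlators with fewer points, and products of lower-point correlators contracted with the pairing. On the other side, $\Phi D$ is again a divisor, and by condition \eqref{quantum_correspondence_3} the map $\phi$ sends $Q^d\mapsto Q^{\mathscr F(d)}$, so it intertwines the derivation $\theta_D$ on $L^+$ with $\theta_{\Phi D}$ on $L^-$; this holds because $\phi$ is a differential isomorphism compatible with $D\cdot d=\Phi D\cdot\mathscr F(d)$. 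Every term of the recursion is therefore sent by $\phi$ to the corresponding term on $\mathcal X^-$. By induction, the $n$-point invariants match. Summing over $n$ with $\tau_{\operatorname{big}}$ inserted, and noting that $\Phi$ acts trivially on $\tau^\mu$, yields \eqref{Psi_preserve_big_quan_prod}.

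I expect the main obstacle to be the divisor equation for invariants in the classes $d$. On each side it gives the factor $D\cdot d$, which becomes $\theta_D$ acting on the generating function. The delicate point is that $\phi$ is not a substitution on power series but an abstract differential isomorphism. So I would check carefully that the derivations $\theta_{Q_1},\theta_{Q_2}$ on $L^+$ correspond under $\phi$ to the derivations dual to $\mathscr F(\ell),\mathscr F(\gamma)$; this is where the requirement $Q^d\mapsto Q^{\mathscr F(d)}$ in \eqref{quantum_correspondence_3} enters. I would also need a correction for the degree-zero classical part, which must be handled via the already-established ring isomorphism of $\qsstar$ at $Q$-order zero.
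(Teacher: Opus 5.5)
Your proposal is correct and follows essentially the same route as the paper. You reduce \eqref{Psi_preserve_big_quan_prod} to invariance of all correlators via pairing preservation, get the three-point case from \eqref{Psi_preserve_small_quan_prod} plus divisor generation, and then induct on the number of insertions and the $\qsstar$-length of one insertion using WDVV and the divisor axiom. Your explicit check that $\phi\circ\theta_D=\theta_{\Phi D}\circ\phi$ (from $D\cdot d=\Phi D\cdot\mathscr F(d)$) makes precise a step the paper uses silently in its divisor-axiom reduction, and the degree-zero correction you anticipate is not needed: degree-zero terms with a divisor insertion vanish once there are at least four points, and the three-point ones are already covered by the base case.
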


\begin{proof}
By the definition of $\star_{\operatorname{big}}$, it suffices to show that 
\begin{equation}\label{Psi_preserve_n+3_points}
\phi\gen{\alpha_1,\alpha_2,\alpha_3,\vec{\beta}}_{n+3}^{\mathcal{X}^+}=\gen{[\mathscr{F}]\alpha_1,[\mathscr{F}]\alpha_2,[\mathscr{F}]\alpha_3,[\mathscr{F}]\vec{\beta}}_{n+3}^{\mathcal{X}^-},
\end{equation}
for any $\alpha_1, \alpha_2, \alpha_3\in A_{\operatorname{orb}}^*(\mathcal{X}^+)$, $\vec{\beta} \in A_{\operatorname{orb}}^*(\mathcal{X}^+)^{\oplus n}$.
We prove it by induction on $n$. For the base case $n=0$, we have to show that it preserves 3-point invariants, under the fact that $[\mathscr{F}]$ preserves the orbifold Poincar'e pairing, which is equivalent to that it preserves the small quantum product. By assumption, we may assume that
\[\alpha_1=D_1 \star_{\rm small}\cdots \star_{\rm small}D_m,\]
so \eqref{Psi_preserve_small_quan_prod} implies that
\begin{align*}
\Phi(\alpha_1 \qsstar \alpha_2) &= \Phi(D_1 \star_{\rm small}\cdots \star_{\rm small}D_m \qsstar \alpha_2) \\
&= \Phi(D_1)\qsstar\cdots \qsstar \Phi(D_m)\qsstar \Phi(\alpha_2) \\
&= \Phi(D_1 \star_{\rm small}\cdots \star_{\rm small}D_m) \qsstar \Phi(\alpha_2) =\Phi(\alpha_1) \qsstar \Phi(\alpha_2).
\end{align*}

For general $n\geq 1$, we intend to show that \eqref{Psi_preserve_n+3_points} holds for 
$\alpha_1=D_1 \star_{\rm small}\cdots \star_{\rm small}D_m$ and 
by induction on $(n, m)$ according to lexicographic order. For the base case $(n,1)$, by the divisor axiom, it is reduced to the case consists of $(n-1, m)$ with $m\in\Z_{\geq 0}$. 

% By the divisor generating assumption, 
% where $D_i$'s are divisors, $\alpha_i$'s are classes in $A_{\operatorname{orb}}^{*}(\mathcal{X}^+)$ and $\vec{\beta}\in A_{\operatorname{orb}}^{*}(\mathcal{X}^-)^{\oplus n}$.
% We prove it by induction on $(n,m)$ according to the lexicographic order. 

For general $(n,m)$ with $n\geq 1$ and $m\geq 2$, the WDVV equation shows that
\[\gen{D_1 \star_{\rm small}\cdots \star_{\rm small}D_m,\alpha_2,\alpha_3,\vec{\beta}}+\sum_{I\neq\vn}\gen{D_1 \star_{\rm small}\cdots \star_{\rm small}D_{m-1},D_m,(\vec{\beta})_I,T_\mu}\gen{T^\mu,\alpha_2,\alpha_3,(\vec{\beta})_J}\]
is equal to 
\[\sum_{I}\gen{D_1 \star_{\rm small}\cdots \star_{\rm small}D_{m-1},\alpha_2,(\vec{\beta})_I,T_\mu}\gen{T^\mu,D_m,\alpha_3,(\vec{\beta})_J},\]
where $I = \{i_1,\ldots, i_l\} \subset \{1, \ldots, n\}$ and $J=\{1,\ldots, n\}\setminus I$. Since $I\neq\vn$, $\gen{T^\mu,\alpha_2,\alpha_3,(\vec{\beta})_J}$ has less insertion, and thus \eqref{Psi_preserve_n+3_points} holds by induction hypothesis. Since $1$ and $m-1<m$, \eqref{Psi_preserve_n+3_points} also holds for the second equation. Hence \eqref{Psi_preserve_n+3_points} holds for $\gen{D_1 \star_{\rm small} \cdots \star_{\rm small} D_m,\alpha_2,\alpha_3,\vec{\beta}}$.
\end{proof}

\begin{rmk}\mbox{}
\begin{enumerate}
    \item If $L^\pm$ contains only special $3$-point invariants $\gen{D,T_1,T_2}^{\mathcal{X}^\pm}$ with $D\in A^1(\mathcal{X}^\pm)$ and $T_1$, $T_2\in A^*_{\operatorname{orb}}(\mathcal{X}^\pm)$, then the proof of Theorem \ref{preserve_small_implies_preserve_big} implies that $L^\pm$ contains all $n$-point invariant for $n\geq 3$ under the condition of divisor-generation. In particular, $L^+=F^+((Q_2^+)^{1/2})$ and $L^-=F^-((Q_1^-Q_2^-)^{1/2})$ satisfy the condition \eqref{quantum_correspondence_2} in Definition \ref{quantum correspond} by Theorem \ref{div_3pt_in_F} and Remark \ref{rmk_of_generator_of_quantum_cohomology}.
    
    \item If $(\phi_1,[\mathscr{F}])$ and $(\phi_2,[\mathscr{F}])$ are two quantum correspondences for a toric flop of the simplest type with $L^+=F^+((Q_2^+)^{1/2})$ and $L^-=F^-((Q_1^-Q_2^-)^{1/2})$, then $\sigma\coloneqq \phi_2\circ\phi_1^{-1}$ must be identity by Proposition \ref{preserve_quan_prod_implies_id}. This shows that the correspondence $[\mathscr F]$ is uniquely determined by the regularization map $\phi$ for the simplest type.
\end{enumerate}

\end{rmk}

\subsubsection{$(\phi_c,[\mathscr{F}_c])$ is a quantum correspondence} The main result of this paper is summarized in the following theorem.
\begin{thm}\label{main result}
Let the correspondence $[\mathscr{F}_{c, c_\infty}]$ be defined by
\[[\mathscr{F}_{c,c_\infty}]=[\mathcal{Y}_{(0,0,0)}]+(-1)^{p+q}\sum_{i=0}^{p-1}c_ih_+^{p-1-i}h_-^i[\mathcal{Y}_{(0,\frac{1}{2},0)}]+c_\infty[\mathcal{Y}_{(1,\frac{1}{2},\frac{1}{2})}],\]
where $h_\pm$ is the pullback of $h$ from $\mathcal{X}^\pm$ to $\mathcal{Y}$. If $c_ic_{p-1-i}=(-1)^{p+q}$ for all $0\leq i\leq p-1$ and $c_\infty=1$, then $(\phi_c,[\mathscr{F}_{c,1}])$ is a quantum correspondence
where $\phi_c$ is constructed in Theorem \ref{well_def_regularization}. 
\end{thm}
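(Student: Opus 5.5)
We reduce the statement to Theorem \ref{preserve_small_implies_preserve_big}, taking $L^+=F^+((Q_2^+)^{1/2})$ and $L^-=F^-((Q_1^-Q_2^-)^{1/2})$.

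\textbf{Checking the hypotheses.} First we verify conditions \eqref{quantum_correspondence_1}--\eqref{quantum_correspondence_3} of Definition \ref{quantum correspond}.
\begin{enumerate}
\item Condition \eqref{quantum_correspondence_1} is Lemma \ref{lm_preserve_orbifold_prod}, since $c_ic_{p-1-i}=(-1)^{p+q}$ and $c_\infty^2=1$.
\item Condition \eqref{quantum_correspondence_2} follows from Theorem \ref{div_3pt_in_F} together with the remark after Theorem \ref{preserve_small_implies_preserve_big}.
\item For condition \eqref{quantum_correspondence_3}, extend $\phi_c$ of Theorem \ref{well_def_regularization} from $F^+$ to $L^+$ by setting $(Q_2^+)^{1/2}\mapsto (Q_1^-Q_2^-)^{1/2}$. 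This is compatible with $\theta$ because $Q_2^+\mapsto Q_1^-Q_2^-$ is the induced map on Novikov variables.
\end{enumerate}
Divisor generation of $A^*_{\operatorname{orb}}(\mathcal X^+)$ over $L^+$ holds by Theorem \ref{generator of quantum cohomology} and Remark \ref{rmk_of_generator_of_quantum_cohomology}, using $S_o=\emptyset$. It therefore remains to prove \eqref{Psi_preserve_small_quan_prod}:
\[\Phi(D\qsstar T)=\Phi(D)\qsstar\Phi(T)\]
for $D\in\{h,\xi-h\}$ (equivalently $h,\xi$) and $T$ running over the basis $U_1\cup U_2\cup U_\infty$. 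Here $\Phi(h)=\xi-h$ and $\Phi(\xi)=\xi$.

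\textbf{Case-by-case comparison.} We compare the explicit formulas of Sections \ref{subsection_quan_prod_X^+} and \ref{subsection_quan_prod_X^-} family by family.
\begin{enumerate}
\item For $T\in U_1$ and low-degree $T\in U_2$, Propositions \ref{infinite_cohomo_quantize_+} and \ref{infinite_cohomo_quantize_-} and Lemma \ref{easy_quan_-} give only classical terms plus the monomials $Q_1Q_2$ and $Q_2^{1/2}$. These match under $Q_1^+\mapsto (Q_1^-)^{-1}$ and $Q_2^+\mapsto Q_1^-Q_2^-$. The twisted terms $y_\infty^\pm$ match because $c_\infty=1$.
\item The key case is $T=h^k(\xi-h)^{p+q}$ with $k\le p-1$, which $[\mathscr F_{c,1}]$ sends to $(\xi-h)^kh^{p+q}+c_k(\pm)h^k y_{1/2}^-$-type classes.
 \begin{enumerate}
 \item On the $+$ side, Proposition \ref{Frob_basis_+} and Proposition \ref{quan_prod_+_iden} give coefficients in the Wronskian ratios $W^+_{k-1}W^+_{k+1}/(W^+_k)^2$, in $g_0^+$, and in $\theta\bigl((W^+)^{[k]\setminus\{n\}}_{[k-1]}/W^+_k\bigr)$.
 \item On the $-$ side, Propositions \ref{Frob_basis_-} and \ref{quan_prod_-_exceptional} give the same expressions in the $g_i^-$.
 \item Since $\phi_c(g_i^+)=c_i^{-1}2^{-p}g_i^-$ and $\theta_{Q^+}\mapsto-\theta_{Q^-}$, Wronskians transform as $\phi_c(W_k^+)=\pm\bigl(\prod_{i\le k}c_i^{-1}\bigr)2^{-p(k+1)}W_k^-$. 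The ratio $W_{k-1}W_{k+1}/W_k^2$ then picks up the factor $c_{k+1}^{-1}c_k$, which is exactly compensated by the $c_i$-coefficients in $[\mathscr F_{c,1}]$ on $h^ky^-_{1/2}$.
 \item At $k=p-1$ the extra term with $\dfrac{2^{-2p}Q_1^-}{1-(-1)^q2^{-2p}Q_1^-}$ must match the $k\ge p$ range on the $+$ side, given by Corollary \ref{quan_prod_+_iden_simplify} and Lemma \ref{Wron_k^+}. This is checked using $c_ic_{p-1-i}=(-1)^{p+q}$ and the relation $W_k^+=W_{p-1}^+(1-(-1)^q2^{2p}Q_1^+)^{p-1-k}$, whose rational prefactor becomes the geometric-type rational function under $Q_1^+\mapsto 1/Q_1^-$.
 \end{enumerate}
\item For $T\in U_\infty$ we use Propositions \ref{quan_isom^+_easy}, \ref{quan_isom^+_hard}, \ref{quan_isom^-_easy} and \ref{quan_isom^-_hard}, together with the identity displayed after Theorem \ref{well_def_regularization}:
\[\phi_c\bigl(W^+_{a,b}/W_k^+\bigr)=(-1)^{p+q-a-b}c_{k+1}^{-1}\,W^-_{b,a}/W_k^- - (-1)^{q+1}2^{-2p+1}\delta_{b,p-1}.\]
The constant $\delta_{b,p-1}$ correction should cancel against the $\delta_{b,p-1}$ term $\tfrac{(Q_2^+)^{1/2}}{2^p}h^{p+q+a}$, transported via $[\mathscr F]$, which contains the $(\xi-h)^{2p+q}$ and $y^-_{1/2}$ pieces. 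Likewise $\phi_c(g^+_{a,b,n})$ matches $g^-_{b,a,n}$ up to the same constant shift, so the sums over $i,n$ correspond term by term after swapping $a\leftrightarrow b$.
\end{enumerate}

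\textbf{Main obstacle.} The main obstacle is the sign and power-of-$2$ bookkeeping in the last two families. This includes the classes $S^+_{n,k}$ and the $Q_2$-terms, which must map onto the $Q_1^-Q_2^-$-terms. It also includes verifying that $c_\infty=1$, rather than $-1$, is forced; this should come from the $Q_2^{1/2}$ term in Proposition \ref{quan_isom^+_easy} matching the term in Proposition \ref{quan_isom^-_easy}. For the $S^+_{n,k}$ terms we would first rewrite them with Lemma \ref{gen_Wron^+_in_F^+} as polynomial identities in $x_1,x_2$, so that the matching reduces to comparing generating polynomials. Once \eqref{Psi_preserve_small_quan_prod} holds on all basis elements, Theorem \ref{preserve_small_implies_preserve_big} yields the big quantum product identity and finishes the proof.
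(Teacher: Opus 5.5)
Your proposal is correct and follows the paper's proof. You reduce via Theorem \ref{preserve_small_implies_preserve_big} (using divisor generation over $F^+((Q_2^+)^{1/2})$) to the identity $\Phi(D\qsstar T)=\Phi(D)\qsstar\Phi(T)$ on a basis, and then verify it case by case from the explicit formulas, using how Wronskians transform under $\phi_c$, Lemma \ref{Wron_k^+}, Corollary \ref{quan_prod_+_iden_simplify}, and the identity for $\phi_c(W^+_{a,b}/W^+_k)$.

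There are two small bookkeeping slips. First, the $k=p-1$ boundary term is matched inside that case, via $[\mathscr F_{c,1}]\,h^p(\xi-h)^{p+q}=(-1)^{p+q-1}(\xi-h)^{2p+q}$ and $W_p^+=W_{p-1}^+/(1-(-1)^q2^{2p}Q_1^+)$, with only $c_{p-1}$ appearing on both sides; the range $p\le k\le 2p+q-1$ is a separate check of Corollary \ref{quan_prod_+_iden_simplify} against Proposition \ref{infinite_cohomo_quantize_-}. Second, the $\delta_{b,p-1}$ term transports to $(\xi-h)^{p+q+a}+(-1)^{p+q}c_a h^a y^-_{1/2}$ rather than to $(\xi-h)^{2p+q}$, and its $y^-_{1/2}$ part is what cancels the constant shift.
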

\begin{proof}
If $c_ic_{p-1-i}=(-1)^{p+q}$ for all $0\leq i\leq p-1$ and $c_\infty=\pm 1$, then by Lemma \ref{lm_preserve_orbifold_prod}, the correspondence $[\mathscr{F}_{c,c_\infty}]$
preserves the orbifold Poincar\'e pairing. If it is a quantum correspondence, then $c_\infty=1$ by Proposition \ref{quan_isom^+_easy} and \ref{quan_isom^-_easy} and the following computation
\begin{align*}
\dfrac{(Q_1^-Q_2^-)^{1/2}}{2^{p}}\cdot(\xi-h) & =\Phi\left((\xi-h)\qsstar h(\xi-h)^{p-1}y_\infty^+\right) \\
& =h\qsstar c_\infty h^{p-1}(\xi-h)y_\infty^-=c_\infty\cdot \dfrac{(Q_1^-Q_2^-)^{1/2}}{2^{p}}\cdot(\xi-h).
\end{align*}

For convenience, set $[\mathscr{F}_{c}]\coloneqq[\mathscr{F}_{c,1}]$ and $\Phi_c\coloneqq [\mathscr{F}_c]\otimes\phi_c$. By Theorem \ref{generator of quantum cohomology}, Theorem \ref{div_3pt_in_F}, and Remark \ref{rmk_of_generator_of_quantum_cohomology}, the orbifold cohomology of $\mathcal{X}^+$ is generated by divisors with respect to $\qsstar$ over $F^+$, so the remaining job is to check that $(\phi_c,[\mathscr{F}_c])$ satisfies \eqref{Psi_preserve_small_quan_prod} for any $c_ic_{p-1-i}=(-1)^{p+q}$.

For the case where $\deg_{\operatorname{orb}} T\leq p+q-1$ or $T\in \xi A_{\operatorname{orb}}^*(\mathcal{X}^+)$, it is easy to check. 

For $T=h^k(\xi-h)^{p+q}$ with $0\leq k\leq p-1$, we have
\begin{align*}
&\ \Phi_c(h\qsstar h^k(\xi-h)^{p+q}) \\
=&-c_k\cdot \dfrac{W^-_{k-1}\cdot W^-_{k+1}}{(W^-_k)^2}h^{k+1}y_{1/2}^-+\phi_c\left(\dfrac{W_{p-2}^+\cdot W_p^+}{(W_{p-1}^+)^2}\right)\cdot (-1)^{p+q-1}(\xi-h)^{2p+q}\cdot\delta_{k,p-1}\\
&-2(Q_1^-Q_2^-)^{1/2}\sum_{n=0}^k(-1)^nc_k\cdot\theta_{Q_1^-}\left(\dfrac{(W^-)_{[k-1]}^{[k]\setminus\{n\}}}{W_k^-}\right)\cdot 2^{k-n}(\xi-h)^n\xi^{k-n}y_\infty^-.
\end{align*} 
By Lemma \ref{Wron_k^+}, 
\[\phi_c\left(\dfrac{W_{p-2}^+\cdot W_p^+}{(W_{p-1}^+)^2}\right)=(-1)^{p-1}\cdot 2^pc_{p-1}\cdot \dfrac{W_{p-2}^-}{W_{p-1}^-}\cdot\dfrac{1}{1-(-1)^q2^{2p}(Q_1^-)^{-1}}.\]
On the other hand, $\Phi_c(h)\qsstar\Phi_c(h^k(\xi-h)^{p+q})$ equals 
\begin{align*}
&\ (\xi-h)\qsstar c_kh^ky^-_{1/2} \\
\overset{\mathclap{(\ref{quan_prod_-_exceptional})}}{=}&-c_k\cdot\dfrac{W_{k-1}^-\cdot W_{k+1}^-}{(W_k^-)^2}h^{k+1}y_{1/2}^--2^pc_{p-1}\cdot\dfrac{W_{p-2}^-}{W_{p-1}^-}\cdot\dfrac{2^{-2p}Q_1^-}{1-(-1)^q2^{-2p}Q_1^-}(\xi-h)^{2p+q}\cdot\delta_{k,p-1} \\
& -2c_k(Q_1^-Q_2^-)^{1/2}\sum_{n=0}^k (-1)^n\theta_{Q_1^-}\left(\dfrac{(W^-)^{[k]\setminus\{n\}}_{[k-1]}}{W_k^-}\right)\cdot 2^{k-n}(\xi-h)^n\xi^{k-n}y_\infty^-\\
\overset{\mathclap{(\ref{quan_prod_+_iden})}}{=}&\ \Phi_c(h\qsstar h^k(\xi-h)^{p+q})
\end{align*}

It is clear that $\Phi_c(\xi\qsstar h^k(\xi-h)^{p+q})=\Phi_c(\xi)\qsstar \Phi_c(h^k(\xi-h)^{p+q})$ when $0\leq k\leq p-1$.

For $T=h^k(\xi-h)^{p+q}$ with $p\leq k\leq 2p+q-1$, $\Phi_c(h)\qsstar\Phi_c(h^k(\xi-h)^{p+q})$ equals
\begin{align*}
&\ {(\xi-h)\qsstar (-1)^{q-1-k}h^{k-p}(\xi-h)^{2p+q}}\\
=&\ (-1)^{q-1-k}(\xi-h)\qsstar \sum_{m=0}^{2p+q-1-k} (-1)^m\xi h^{k-p+m}(\xi-h)^{2p+q-1-m} \\
\overset{\mathclap{(\ref{infinite_cohomo_quantize_-})}}{=}&\ (-1)^{q-1-k}\left(-h^{k+1-p}(\xi-h)^{2p+q+1}+Q_2^-h^{k-p}\right) \\
\overset{\mathclap{(\ref{quan_prod_+_iden})}}{=}&\ \Phi_c(h\qsstar h^k(\xi-h)^{p+q}),
\end{align*}
and $\Phi_c(\xi)\qsstar\Phi_c(h^k(\xi-h)^{p+q})$ equals
\begin{align*}
&\ \xi\qsstar (-1)^{q-1-k}h^{k-p}(\xi-h)^{2p+q}\\
=&\ (-1)^{q-1-k}\xi\qsstar \sum_{m=0}^{2p+q-1-k} (-1)^m\xi h^{k-p+m}(\xi-h)^{2p+q-1-m} \\
\overset{\mathclap{(\ref{infinite_cohomo_quantize_-})}}{=}&\ (-1)^{q-1-k}\left((-1)^{2p+q-1-k}\cdot\frac{(Q_1^-Q_2^-)^{1/2}}{2^p}\cdot (\xi-h)^ky_\infty^++Q_2^-h^{k-p}\right) \\
\overset{\mathclap{(\ref{quan_prod_+_iden})}}{=}&\ \Phi_c(h\qsstar h^k(\xi-h)^{p+q}).
\end{align*}

For the case of $T=h^a(\xi-h)^by_\infty^+$ when $a\leq p+q-1$, it is clear. 

For the case of $T=h^{p+q+a}(\xi-h)^b$ with $b\leq p-1$ such that $k\coloneqq a+b-p\geq 0$, we have
\begin{align*}
&\ \Phi_c(h\qsstar h^{p+q+a}(\xi-h)^by^+_\infty) \\
\overset{\mathclap{(\ref{quan_isom^+_hard})}}{=}&\ h^b(\xi-h)^{p+q+a+1}y^+_\infty-(-1)^{p+q-a}\cdot 2^p(Q_1^-Q_2^-)^{1/2}\cdot \theta_{Q_1^-}\left(\dfrac{W^-_{b,a}}{W^-_k}\right)\cdot h^{k+1}y^-_{1/2} \\
&+(-1)^{b+q}\cdot 2^pQ_1^-Q_2^-\cdot\sum_{0\leq i,n\leq k}(-1)^{i+n}\theta_{Q_1^-}\left(g_{b,a,i}^-\dfrac{(W^-)^{[k]\setminus\{n\}}_{[k]\setminus\{i\}}}{W_k^-}\right)\cdot 2^{k+1-n}(\xi-h)^n\xi^{k-n}y^-_\infty \\
\overset{\mathclap{(\ref{quan_isom^-_hard})}}{=}&\ \Phi_c(h)\qsstar \Phi_c(h^{p+q+a}(\xi-h)^by^+_\infty).
\end{align*}
and 
\begin{align*}
&\ \Phi_c(\xi\qsstar h^{p+q+a}(\xi-h)^by^+_\infty) \\
\overset{\mathclap{(\ref{quan_isom^+_hard})}}{=}&\ \xi h^b(\xi-h)^{p+q+a}y^+_\infty+\dfrac{(Q_1^-Q_2^-)^{1/2}}{2^p}\cdot ((\xi-h)^{p+q+a}+(-1)^{p+q}c_ah^ay^-_{1/2})\cdot\delta_{b,p-1} \\
&\ +2^{p-1}(Q_1^-Q_2^-)^{1/2}\cdot\left((-1)^{p+q-a}c_{k+1}^{-1}\dfrac{W^-_{b,a}}{W^-_k}-(-1)^{b+q+1}2^{-2p+1}\cdot\delta_{b,p-1}\right)\cdot c_{k+1}h^{k+1}y_{1/2}^- \\
&\ -(-1)^{q+b}\cdot 2^pQ_1^-Q_2^-\sum_{0\leq i,n\leq k}(-1)^{i+n}g_{b,a,i}^-\dfrac{(W^-)^{[k]\setminus\{n\}}_{[k]\setminus\{i\}}}{W_k^-}\cdot 2^{k+1-n}h^n\xi^{k-n}y^+_\infty \\
\overset{\mathclap{(\ref{quan_isom^-_hard})}}{=}&\ \Phi_c(\xi)\qsstar \Phi_c(h^{p+q+a}(\xi-h)^by^+_\infty).
\end{align*}
\end{proof}

% \begin{align*}
% \xi\qsstar h^ky^-_{1/2} & = (Q_1^-Q_2^-)^{1/2}\sum_{n=0}^k (-1)^n\dfrac{(W^-)^{[k]\setminus\{n\}}_{[k-1]}}{W_k^-}\cdot 2^{k-n}(\xi-h)^n\xi^{k-n}y_\infty^- \\
% \xi*h^k(\xi-h)^{p+q}=&\ \dfrac{Q_2^{1/2}}{2^p}\sum_{n=0}^k\dfrac{(W^+)_{[k-1]}^{[k]\setminus\{n\}}}{W_k^+}\cdot 2^{k-n}h^n\xi^{k-n}y_\infty^++\dfrac{Q_2}{2^p}\sum_{n=0}^{k-p}(-1)^{n+k}\dfrac{(W^+)_{[k-1]}^{[k]\setminus\{n\}}}{W_k^+}S_{n,k,-1}^+
% \end{align*}

% \newpage
% \section{Quantum product in case $a=(1,1,1),b=(1,2)$}
% \input{Example/Quantum_product_for_(1,1,1;1,2)}
% \newpage
% \section{Big $I$-function}
% \input{NEW/extended I function}

% \section{Quantum product in case $a=(2,2,2),b=(3,1,2)$}
% \input{Example/222 312}

\section{Higher-genus invariance}
In this section, we show that the generating functions of Gromov-Witten invariants with ancestors are invariant under a non-smooth toric flop of the simplest type, for all genera, after the regularization map $\phi_c$. Our proof generalizes \cite{ILLW12} to orbifolds.
\subsection{Givental--Teleman's formula of ancestor potentials}
Let $\mathcal X$ be a proper smooth Deligne-Mumford stack with the projective coarse moduli space $X$. Let $\{T_\mu\}$ be a homogeneous basis of $A^*_{\operatorname{orb}}(\mathcal X)_\C$ and $\{\tau^{\mu}\}$ be the dual coordinates. The tangent vector $\partial/\partial\tau^\mu$ is identified with $T_\mu$. Let $\mathcal S=\operatorname{NE}(\mathcal X)\cap H_2(X,\Z).$ The orbifold quantum cohomology of $\mathcal X$:  
\[QH_{\operatorname{orb}}(\mathcal X)=A^*_{\operatorname{orb}}(\mathcal X)_\C\otimes\C[\![\mathcal S]\!],\]
is equipped with 
\begin{enumerate}
    \item quantum multiplication
    \[\star:QH_{\operatorname{orb}}(\mathcal X)\times QH_{\operatorname{orb}}(\mathcal X)\to QH_{\operatorname{orb}}(\mathcal X),~(T_1,T_2)\mapsto T_1\star_{\tau}T_2\]with $\tau$ varies in $A^*_{\operatorname{orb}}(\mathcal X)_\C$, (the commutativity and the associativity follow from its construction and the WDVV equation).
    \item Orbifold Poincar\'e metric $(\ ,\ )_{\operatorname{orb}}$,
    \item  Euler vector field:
    \begin{equation}\label{Euler_vector_field}
    E=\sum_\mu(1-\deg_{\operatorname{orb}}T_\mu)\tau^\mu\frac{\partial}{\partial\tau_\mu}+c_1(\mathcal X),
    \end{equation}
    \item Dubrovin connection:
    \[\nabla^z=d-\frac{1}{z}\sum_\mu d\tau^\mu(T_\mu\star\ ).\]
\end{enumerate}
By identifying $(QH_{\operatorname{orb}}(\mathcal X),\star_\tau)$ with the tangent space of $A^*_{\operatorname{orb}}(\mathcal X)_\C$ at $\tau$ with coefficients in $\C[\![\mathcal S]\!]$, the quintuple
$(A^*_{\operatorname{orb}}(\mathcal X)_\C,(~,~)_{\operatorname{orb}},\star_{\tau},1,E)$ defines a \textit{conformal Frobenius manifold} (See \cite{lee2004frobenius}). We denote it by $H$ and a point $\tau\in H$ means $\tau\in A^*_{\operatorname{orb}}(\mathcal X)_\C$. 

\subsubsection{Semisimplicity}
\label{subsubsec_semisimplicity}
A point $\tau\in H$ is called a semisimple point if $(QH_{\operatorname{orb}}(\mathcal X),\star_\tau)$ is isomorphic to the product algebra $\C[\![\mathcal S]\!]^{\dim A^*_{\operatorname{orb}}(\mathcal X)}$. A Frobenius manifold is said to be semisimple if the set of semisimple points is Zariski dense in $H$. In this case, there exist idempotent elements (generically defined) $\epsilon_i$, for $i=1,\ldots,\dim A^*_{\operatorname{orb}}(\mathcal X),$ i.e.
\[\epsilon_i\star\epsilon_j=\delta_{ij}\epsilon_i,\]
which are uniquely well-defined up to permutations. 
Since we have that $[\epsilon_i,\epsilon_j]=0$, there is a canonical coordinate system $\{u^i\}$ near a semisimple point such that $\partial/\partial u^i=\epsilon_i$. 
The canonical coordinate system is unique up to translation, and is normalized by the condition 
\begin{equation}\label{Euler_vf_in_canonical_coordinate}
E=\sum_i u^i\partial/\partial u^i
\end{equation}
(see \cite[Chapter 1, Section 3.6]{lee2004frobenius}). By the metric-compactibility of $\star$, $\{\epsilon_i\}$ defines an orthogonal frame. We consider their normalization $\tilde\epsilon_i=\epsilon_i/\sqrt{(\epsilon_i,\epsilon_i})_{\operatorname{orb}}$ for all $i$. 
With respect to the orthonormal frame $\{\tilde \epsilon_i\}$, the quantum differential equation $\nabla^zS=0$ has the fundamental solution of the form $R(\tau,z)e^{\mathbf u/z}$ where $\mathbf u$ is the diagonal matrix with $\mathbf u^{i}_j=u^i\delta_{ij}$ and $R(\tau,z)=\operatorname{id}+\sum_{n\geq 1}R_nz^n$ is a formal power series in $z$. The matrix $R(\tau,z)$ is uniquely determined by the homogeneity condition $(z\partial_z+\mathcal L_E)(R)=0$. If $\Psi(\tau)^{-1}$ is the transition matrix from the orthonormal frame $\{\tilde\epsilon_i\}$ to the flat frame $\{T_\mu\}$, then the fundemental solution with respect to the frame $\{T_\mu\}$ is of the form $\Psi(\tau)^{-1}R(\tau,z)e^{\mathbf u/z}$. Here, the matrix $\Psi$ is given by $\Psi_{\mu i}=(T_\mu,\tilde\epsilon_i)_{\operatorname{orb}}$.

\subsubsection{Givental's quantization formalism}
Let $\mathcal H=H[z,z^{-1}]\!]$ and the symplectic form $\Omega$ be as defined in Section 2.2. The decomposition $\mathcal H=\mathcal H_+\oplus\mathcal H_-$ with $\mathcal H_+=A^*_{\operatorname{orb}}(\mathcal X)_\C\otimes\C[\![\operatorname{NE}(\mathcal X)\cap H_2(X,\Z)]\!]$ and $\mathcal H_-=z^{-1}A^*_{\operatorname{orb}}(\mathcal X)_\C\otimes\C[\![\operatorname{NE}(\mathcal X)\cap H_2(X,\Z)]\!][\![z^{-1}]\!]$ defines a polarization which identifies $\mathcal H$ with the cotangent bundle $T^*\mathcal H_+$. Let $\{T_\mu z^k\}$ be a basis of $\mathcal H_+$ and $\{\mathbf q^\mu_k\}$ be the dual coordinates. Also, let
$\{T^\mu(-z)^{-k-1}\}$ be a basis of $\mathcal H_-$ and $\{\mathbf p_k^\mu\}$ be the dual coordinates.
We get that $\Omega=\sum_{\mu,k}d\mathbf p_\mu^k\wedge d\mathbf q^\mu_k.$

The quantization is defined by \begin{align}
    \hat 1&=1,\\\hat {\mathbf q}_\alpha=\frac{1}{\sqrt{\hbar}}\mathbf q_\alpha&,~\hat {\mathbf p}_\alpha=\sqrt{\hbar}\frac{\6}{\6\mathbf q_\alpha}\\
    (\mathbf q_\alpha \mathbf q_\beta)^\wedge=\frac{\mathbf q_\alpha\mathbf  q_\beta}{\hbar},~(\mathbf q_\alpha \mathbf p_\beta)^\wedge&=\mathbf q_\alpha\frac{\6}{\6\mathbf  q_\beta},~(\mathbf p_\alpha \mathbf p_\beta)^\wedge=\hbar\frac{\6^2}{\6 \mathbf q_\alpha\6 \mathbf q_\beta}.
\end{align}

For an infinitesimal symplectic transformation $T:\mathcal H\to \mathcal H$, the quantization is defined as the quantization of the quadratic hamiltonian $f\mapsto \Omega(Tf,f)/2$ in terms of the Darboux coordinate $(\mathbf q,\mathbf p)$. For a symplectic automorphism $e^A$, the quantization is defined by $\widehat{e^A}=e^{\hat A}.$

\subsubsection{Ancestor potentials}
For $2g+n-2>0$, let \[\sigma:\overline{\mathcal M}_{g,n+l}(\mathcal X,d)\to\overline{\mathcal M}_{g,n},~(\mathcal C,\mathfrak p_1,\ldots,\mathfrak p_{n+l},\mu:\mathcal C\to \mathcal X)\mapsto (C^{st},p_1,\ldots,p_n)\]be the composition of the stablization morphism on $\overline{\mathcal M}_{g,n+l}(\mathcal X,d)$ and the forgetful morphism on $\overline{\mathcal M}_{g,n+l}$. The ancestors  are defined to be 
 \[\bar\psi_{i}=\sigma^*\psi_i\]for $i=1,\ldots,n$. Denote  
$t=\sum_{\mu,k} t^\mu_k\bar\psi^k T_\mu$ and $\tau=\sum_\mu\tau^\mu T_\mu$. Let
% \begin{align}
%     {F}^{\mathcal X}_g(t,\tau)&=\sum_{n,d}\frac{q^d}{n!}\langle t^{\otimes n}\rangle_{g,n+l,d }\\&=\sum_{n,d}\frac{q^d}{n!}\int_{[\overline{\mathcal M}_{g,n}(\mathcal X,d)]^w}\prod_{j=1}^n\sum_{k,\mu} t^\mu_k\psi^k_j\operatorname{ev}^*_jT_\mu
% \end{align} be the generating function of genus $g$ descendant invariants and let
\begin{align*}
    \bar{F}^{\mathcal X}_g( t,\tau)&=\sum_{n,l,d}\frac{q^d}{n!l!}\langle  t^{\otimes n},\tau^{\otimes l}\rangle_{g,n+l,d }\\&=\sum_{n,l,d}\frac{q^d}{n!l!}\int_{[\overline{\mathcal M}_{g,n+l}(\mathcal X,d)]^w}\prod_{j=1}^n\sum_{k,\mu} t^\mu_k\bar\psi^k_j\operatorname{ev}^*_jT_\mu\prod_{j=n+1}^{n+l}\operatorname{ev}_j^*\tau
\end{align*}be the generating function of genus $g$ ancestor invariants.
% The descendant potential is defined to be 
% \[\mathscr D_\mathcal X(t)=\exp\sum_{g=0}^\infty\hbar ^{g-1}F^\mathcal X_g(t)\]
The ancestor potential is defined to be 
\begin{equation}\label{def_ancestor_potential}
\mathscr A_\mathcal X( t,\tau)=\exp\left(\sum_{g=0}^\infty\hbar^{g-1}\bar F^\mathcal X_g(t,\tau)\right).
\end{equation}

For a Frobenius manifold $H$, let the orthonomal frame $\{\tilde\epsilon_i\}$ be induced from an idempotent basis $\{\epsilon_i\}$. Over a semisimple point $\tau\in H$,  $\C\tilde{\epsilon_i}$ is regarded as a 1-dimensional Frobenius manifold with Givental's symplectic formalism as above. Using the basis $\{\tilde \epsilon_iz^k\}$ and $\{\tilde\epsilon_iz^{-k-1}\}$ for $\mathcal H_+$ and $\mathcal H_-$, the Darboux coordinate is denoted by $(\mathbf q^i_k,\mathbf p^k_i)$.

Let $\mathscr A_{\dim A^*_{\operatorname{orb}}(\mathcal X)}(\mathbf t)=\prod_{i=1}^{\dim A^*_{\operatorname{orb}}(\mathcal X)}\mathscr A_{pt}(\mathbf t^i)$. For each factor $\mathscr A_{pt}(\mathbf t^i)$, we regard it as a formal function in $\mathbf t^i_k$ via dilaton shift 
\[\mathbf q_k^i=\mathbf t^i_k-\delta_{k1}.\]
By the Teleman's classification of semisimple cohomological field theories (See \cite{teleman}), we have the following formula:

\begin{theorem}\label{Givental_formula}
For $\mathcal X$ a proper smooth Deligne-Mumford stack with projective coarse moduli space and semisimple quantum cohomology,
\[\mathscr A_{\mathcal X}(t,\tau)=e^{\bar c(\tau)}\hat\Psi^{-1}(\tau)\widehat{\mathbf R}_{\mathcal X}(\tau,z)e^{\widehat{\mathbf u/z}}\mathscr A_{\dim A^*_{\operatorname{orb}}(\mathcal X)}(\mathbf t),\]where $\bar c(\tau)=\frac{1}{48}\log\det(\epsilon_i,\epsilon_j)_{\operatorname{orb}}$.
    
\end{theorem}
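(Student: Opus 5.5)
This is Givental's formula for orbifolds, which follows from Teleman's classification of semisimple cohomological field theories. I would not reprove the classification. Instead, I would check that the orbifold Gromov--Witten ancestor theory of $\mathcal X$ satisfies Teleman's hypotheses, and then pin down each normalization in the formula.

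\textbf{Step 1: the CohFT.} The classes
\[\Lambda_{g,n}(T_{\mu_1},\ldots,T_{\mu_n})=\sum_{l,d}\frac{q^d}{l!}\,\sigma_*\Bigl(\prod_{j\le n}\operatorname{ev}_j^*T_{\mu_j}\prod_{j>n}\operatorname{ev}_j^*\tau\cap[\overline{\mathcal M}_{g,n+l}(\mathcal X,d)]^w\Bigr)\in H^*(\overline{\mathcal M}_{g,n})\]
should form a cohomological field theory with flat unit on the state space $(A^*_{\operatorname{orb}}(\mathcal X)_\C,(\ ,\ )_{\operatorname{orb}})$, with coefficients in $\C[\![\mathcal S]\!][\![\tau]\!]$.

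\begin{enumerate}
\item $S_n$-equivariance is immediate from the definition.
\item The gluing axioms should follow from the splitting and genus-reduction properties of the weighted virtual class of Abramovich--Graber--Vistoli \cite{abramovich2008gromov}. The key point is that the weights are chosen so that gluing along a twisted node pairs the sectors through $\iota^*$. This is exactly why the orbifold Poincar\'e pairing, rather than the ordinary pairing on $\mathcal I\mathcal X$, appears in the splitting axiom.
\item The flat unit $1$ comes from the fundamental class axiom, applied after stabilization.
\end{enumerate}

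By construction, $\mathscr A_{\mathcal X}(t,\tau)$ is the total potential of $\Lambda$, with the descendants at the $n$ marked points taken to be the ancestors $\bar\psi_i=\sigma^*\psi_i$.

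\textbf{Step 2: homogeneity.} Teleman requires a semisimple CohFT that is \emph{homogeneous}, so that the $R$-matrix is forced to be the unique solution of $(z\partial_z+\mathcal L_E)R=0$. Homogeneity with respect to the Euler field \eqref{Euler_vector_field} should follow from the virtual dimension formula on $\overline{\mathcal M}_{g,n}(\mathcal X,d)$, with ages included. Along the $\tau$-directions, the divisor equation handles the $c_1(\mathcal X)$ term.

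\textbf{Step 3: apply Teleman.} Near a semisimple point, the reconstruction theorem \cite{teleman} gives
\[\Lambda=\Psi^{-1}R\,T\cdot\Lambda^{\text{triv}}.\]
Here:
\begin{enumerate}
\item $\Lambda^{\text{triv}}$ is the product of $\dim A^*_{\operatorname{orb}}$ copies of the point CohFT, each rescaled by the metric $(\epsilon_i,\epsilon_i)$ in the normalized frame;
\item $R$ is the symplectic matrix series above;
\item $T$ is the translation, or dilaton-type, action determined by $R$ and the unit.
\end{enumerate}
Translating this into Givental's quantized operators is then standard bookkeeping:
\begin{enumerate}
\item the $R$-action becomes $\widehat{\mathbf R}$;
\item the translation together with the unit gives $e^{\widehat{\mathbf u/z}}$ acting on $\prod\mathscr A_{pt}$ via the dilaton shift;
\item the frame change gives $\hat\Psi^{-1}$.
\end{enumerate}

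\textbf{Main obstacle: the scalar factor.} I expect the hardest step to be identifying the constant $e^{\bar c(\tau)}$. Teleman's theorem determines the genus-$\geq2$ data, but in genus one it only fixes $\bar F_1$ up to a function of $\tau$, and for ancestors only $d\bar F_1$ matters. My plan is to compute $d\bar F_1$ from both sides:

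\begin{enumerate}
\item On the Gromov--Witten side, use genus-one topological recursion (Getzler-type) for the orbifold ancestor theory, which holds because $\overline{\mathcal M}_{1,1}$ relations pull back.
\item Compare the result with the Dubrovin--Zhang formula
\[dF_1=\tfrac12\sum R_1^{ii}\,du^i+\tfrac1{48}\,d\log\det(\epsilon_i,\epsilon_j).\]
\item The $R_1$ part is absorbed by $\widehat{\mathbf R}$, so the residual term is $\bar c(\tau)=\tfrac1{48}\log\det(\epsilon_i,\epsilon_j)_{\operatorname{orb}}$, up to an additive constant.
\item Fix that constant by normalization at $\tau=0$, $Q=0$, where the orbifold pairing gives the classical value.
\end{enumerate}

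A similar genus-one check is where orbifold-specific issues, such as twisted sectors at the node, could enter. I would verify these against the orbifold Getzler relation.
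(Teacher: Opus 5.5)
Your Steps 1--3 are the reduction the paper uses: the paper gives no argument beyond citing Teleman's classification, and checking that the orbifold ancestor classes form a homogeneous CohFT with flat unit is the right justification for that citation. The gap is in the step you single out as the crux, the scalar $e^{\bar c(\tau)}$. Both your diagnosis of it and your proposed fix are wrong.

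\emph{There is no genus-one ambiguity.} For a homogeneous semisimple CohFT, Teleman's theorem determines every class $\Lambda_{g,n}$ with $2g-2+n>0$, including all $\Lambda_{1,n}$ with $n\ge 1$. The ancestor potential $\mathscr A_{\mathcal X}$ contains nothing else: $\sigma$ maps to $\overline{\mathcal M}_{g,n}$, so there is no $n=0$ term in genus one.

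\emph{Dubrovin--Zhang is the wrong tool here.} The formula $dF_1=\tfrac12\sum_i R_1^{ii}\,du^i+\tfrac1{48}\,d\log\det(\epsilon_i,\epsilon_j)$ concerns exactly that missing genus-one primary potential. It enters the \emph{descendant} formula $\mathscr D_{\mathcal X}=e^{F_1(\tau)}\hat S_\tau^{-1}\mathscr A_{\mathcal X}(\cdot,\tau)$, not the ancestor formula. In particular the $R_1$-term is not ``absorbed by $\widehat{\mathbf R}$''. The Hamiltonian of $\log R$ has only $pp$- and $pq$-terms, and one checks that $\widehat{\mathbf R}\,e^{\widehat{\mathbf u/z}}$ applied to a product of point potentials creates no $t$-independent genus-one term. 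So comparing $d\bar F_1$ with Dubrovin--Zhang cannot isolate $\bar c$.

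\emph{The normalization at $\tau=0$, $Q=0$ also fails.} That point is not semisimple. The classical orbifold ring has nilpotent positive part (e.g.\ $h^{2p+q}=0$), and in this paper the eigenvalues of $c_1\star$ are multiples of $Q_2^{1/(2p+q+1)}$, all collapsing to $0$. The idempotents degenerate there and $\log\det(\epsilon_i,\epsilon_j)$ is singular.

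The factor actually comes from bookkeeping in Teleman's trivial theory, via the genus-one dilaton equation for the point. Set $\Delta_i=(\epsilon_i,\epsilon_i)^{-1}$. The underlying TFT then satisfies $\omega_{g,n}(\tilde\epsilon_i,\ldots,\tilde\epsilon_i)=\Delta_i^{g-1+n/2}$, so the $i$-th factor of the trivial CohFT is $\exp\sum_g(\hbar\Delta_i)^{g-1}F^{pt}_g(\sqrt{\Delta_i}\,\mathbf q)$. In the dilaton-shifted variable, $F^{pt}_g$ is homogeneous of degree $2-2g$ for $g\neq 1$, while $F^{pt}_1(\lambda\mathbf q)=F^{pt}_1(\mathbf q)-\tfrac1{24}\log\lambda$. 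Hence this factor equals $\Delta_i^{-1/48}\exp\sum_g\hbar^{g-1}F^{pt}_g(\mathbf q)$, and $\prod_i\Delta_i^{-1/48}=\det(\epsilon_i,\epsilon_j)^{1/48}=e^{\bar c(\tau)}$.

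No genus-one Gromov--Witten input, Getzler relation, or integration constant is involved. The only ambiguity is the branch of the $48$-th root, which is why the paper passes to $\mathscr A^{48}$: there $e^{48\bar c}=\det(\epsilon_i,\epsilon_j)$ is an honest element of the coefficient ring.
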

\begin{remark}
    Since $\Psi^{-1}(\tau)$ is independent of $z$, the quantization $\hat\Psi^{-1}(\tau)$ should be understood as the coordinate change of Givental's spaces from the normalized canonical frame to the flat coordinate frame $\mathbf q^i_k=\sum_\mu \mathbf q_k^\mu(T_\mu,\tilde\epsilon_i)$. 
    % $e^{\widehat{\mathbf u/z}}\mathscr A_{\dim A^*_{orb}(\mathcal X)}=\mathscr A_{\dim A^*_{orb}(\mathcal X)}.$ \textcolor{red}{The right hand side of the formula should be read as the action of quantization of $\widehat{\mathbf{R}}$ on }\[\hat\Psi^{-1}\mathscr A_{\dim A^*_{orb}(\mathcal X)}=\prod_i\mathscr A_{pt}(\sum t^\mu_k(T_\mu,\tilde\epsilon_i)\psi^k)\]\textcolor{red}{which is element of Fork space of $\mathcal H$, i.e. formal function in $t_k^\mu$.}
\end{remark}

\subsection{Main result} Compared with \cite{ILLW12}, the key issue here is to deal with the effects induced by the regularization map $\phi_c$. 

First, we would like to give a proof for semisimplicity.
Let $\widetilde{F}^+_0$ be the splitting field of the polynomial 
\[\Xi^+_0(\lambda)\coloneqq\prod_{k_1=0}^{2p+q-1}\left(\lambda^{2p+q+1}-2^{-2p}Q_2^+\left(1+\zeta_{2p+q}^{k_1}2^{2p/(2p+q)}(Q_1^+)^{1/(2p+q)}\right)^{2p+q}\right)\]
over $F_0^+\coloneqq F^+((Q_2^+)^{1/2})$, and $\widetilde{F}^-_0$ be the splitting field of the polynomial 
\[\Xi^-_0(\lambda)\coloneqq \prod_{k_1=0}^{2p+q-1}\left(\lambda^{2p+q+1}-Q_2^-\left(1+\zeta_{2p+q}^{k_1}2^{-2p/(2p+q)}(Q_1^-)^{1/(2p+q)}\right)^{2p+q}\right)\]
over $F_0^-\coloneqq F^-((Q_1^-Q_2^-)^{1/2})$, where $\zeta_k=e^{2\pi i/k}$. It is clear that $\Xi_0^\pm(\lambda)\in\C(Q_1^\pm,Q_2^\pm)[\lambda]$ and $\phi_c(\Xi_0^+)=\Xi_0^-$.

\begin{lm}\label{eigenvalue_in_F_tilde}
If $\tau$ lies in the formal neighborhood of $0$, then the operator $c_1(\mathcal{X}^\pm)\star_\tau$ on $A_{\operatorname{orb}}^*(\mathcal{X}^\pm)\otimes \widetilde{F}_0^\pm[\![\tau]\!]$ has distinct eigenvalues in $\widetilde{F}_0^\pm[\![\tau]\!]$.
\end{lm}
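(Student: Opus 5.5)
We will first treat $\tau=0$, i.e.\ the small quantum product, and then propagate to the formal neighbourhood of $0$ by a perturbation argument. We have $c_1(\mathcal{X}^+)=(2p+q)h+2p(\xi-h)+q(\xi-h)+\xi$. Expressed in $h,\xi$ this is a linear combination of $h$ and $\xi$; on $\mathcal{X}^+$ it reduces to $(2p+q+1)\xi$, because the flop is crepant and $c_1$ pairs to zero with $\ell$. The operator is therefore $(2p+q+1)\,\xi\star_{\rm small}$. Its characteristic polynomial can be computed from the explicit formulas of Section \ref{subsection_quan_prod_X^+}. Concretely, we will use the quantum relations already extracted there, namely Propositions \ref{infinite_cohomo_quantize_+}, \ref{Frob_basis_+}, \ref{quan_isom^+_easy}, Corollary \ref{quan_prod_+_iden_simplify} and Proposition \ref{quan_isom^+_hard}. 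Together with divisor generation (Theorem \ref{generator of quantum cohomology}), they give a presentation of the small quantum ring over $F_0^+$ as a quotient of $F_0^+[h,\xi]$ (plus $y_\infty^+$).

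The quantum deformations of the classical relations should be the Batyrev-type relations read off from the Picard--Fuchs operators $\square^+_\ell$ and $\square^+_\gamma$:
\[
h^{2p+q}=2^pQ_1^+(\xi-h)^{p+q}(2(\xi-h))^p,\qquad 2^p\xi(\xi-h)^{p+q}(2(\xi-h))^{p}=Q_2^+,
\]
with $z=0$ and corrections by the mirror map and Wronskian ratios. The Wronskian ratios are rational in $Q_1^+$ by Lemma \ref{Wron_k^+}, so these relations hold up to those corrections. From the first relation, $x=h/(\xi-h)$ satisfies $x^{2p+q}=2^{2p}Q_1^+$, so $h=\zeta^{k_1}2^{2p/(2p+q)}(Q_1^+)^{1/(2p+q)}(\xi-h)$. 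Hence $\xi=(1+\zeta^{k_1}2^{2p/(2p+q)}(Q_1^+)^{1/(2p+q)})(\xi-h)$. Substituting into the second relation gives $(\xi-h)^{2p+q+1}\cdot(\text{const})=Q_2^+$, so the values $\lambda$ of $\xi-h$, or equivalently of $\xi$, are the roots of the factors of $\Xi_0^+$. In this way the eigenvalues of $\xi\star$ are exhibited as explicit elements of $\widetilde F_0^+$. Their number is $(2p+q)(2p+q+1)$. To match $\dim A^*_{\rm orb}$, we must also account for the twisted sector: $(\xi-h)\star$ on $y_\infty^+$ involves $Q_2^{1/2}$, which produces the square-root branches. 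We will check the count against $\dim A^*_{\rm orb}(\mathcal{X}^+)=(2p+q)(p+q+1)+(2p+q)p$, and fix the exponents in $\Xi_0^+$ accordingly. Distinctness then follows because the roots $\lambda$ within a factor differ by $(2p+q+1)$-th roots of unity. Across factors they differ by the distinct leading Puiseux terms in $Q_1^+$, and they are nonzero since $Q_2^+\ne0$. For $\mathcal{X}^-$ the same argument applies, or we simply apply $\phi_c$ together with $\phi_c(\Xi_0^+)=\Xi_0^-$ and Theorem \ref{main result}, since $\Phi_c$ intertwines $c_1\star$.

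For $\tau\neq0$ in the formal neighbourhood, $c_1\star_\tau$ is a formal deformation over $\widetilde F_0^\pm[\![\tau]\!]$ of an operator with distinct eigenvalues in $\widetilde F_0^\pm$. The characteristic polynomial at $\tau=0$ is separable, so by Hensel's lemma each simple root lifts uniquely to a root in $\widetilde F_0^\pm[\![\tau]\!]$, and these lifts remain distinct. Here we use that the $\tau$-coefficients of $\star_\tau$ lie in $F_0^\pm$, by Remark \ref{rmk_of_generator_of_quantum_cohomology} and Theorem \ref{preserve_small_implies_preserve_big}.

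The main obstacle is the second step: showing rigorously that the characteristic polynomial of $c_1\star_{\rm small}$ equals $\Xi_0^+$ (up to the twisted-sector factor), rather than a messier polynomial involving Wronskian ratios. We will first try to compute $\det(\lambda-c_1\star_{\rm small})$ directly in the basis $U_1\cup U_2\cup U_\infty$. We expect the matrix to be block triangular, with Wronskian data cancelling in the determinant via Lemma \ref{gen_Wron^+_in_F^+}. If this fails, we will fall back on specializing to a generic value of $Q$ and invoking the Galois invariance of the characteristic polynomial. That invariance forces its coefficients into $\C(Q_1^+,(Q_2^+)^{1/2})$, and they can then be identified with the Batyrev computation.
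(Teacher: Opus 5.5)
Your overall shape matches the paper's: quantum relations from the Picard--Fuchs operators, then the joint spectrum of $(h,\xi)$, then Hensel in $\tau$. Your spectral computation is also right: the system $h^{2p+q}=2^{2p}Q_1^+(\xi-h)^{2p+q}$, $2^{2p}\xi(\xi-h)^{2p+q}=Q_2^+$ has exactly the roots of $\Xi_0^+$ as its $\xi$-values.

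The gap is the step that makes these the eigenvalues of $\xi\star_{\rm small}$. You leave it open, and neither fallback closes it.
\begin{itemize}
\item The direct determinant route gives no reason for the $(2p+q)(2p+q+1)$-dimensional determinant to collapse.
\item The Galois route is circular. The entries of $\xi\star_{\rm small}$ involve $g_0^+$ and Wronskian ratios, which $G_{\mathcal L^+}$ moves. So you cannot know the characteristic polynomial is invariant until you already have a presentation of the ring with coefficients in $\C(Q_1^+,Q_2^+)$.
\end{itemize}

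The missing idea is that there is nothing to correct. Since $S_o=\emptyset$, you have $I^+=J^+$ with trivial mirror map. Hence $\square^+_\ell$ and $\square^+_{\ell+\gamma}=\widehat\xi\,\widehat h^{2p+q}-Q_1^+Q_2^+$ annihilate $J^+$. The dequantization step in the proof of Theorem \ref{generator of quantum cohomology} then yields the \emph{exact} relations $h^{\star(2p+q)}=2^{2p}Q_1^+(\xi-h)^{\star(2p+q)}$ and $\xi\star h^{\star(2p+q)}=Q_1^+Q_2^+$ in the small quantum ring. There are no mirror-map or Wronskian terms, and this pair is equivalent to yours since $Q_1^+$ is a unit. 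Together with divisor generation by $h,\xi$ alone (no $y_\infty^+$ needed), this gives a surjection from $B=\widetilde F_0^+[h,\xi]/(h^{2p+q}-2^{2p}Q_1^+(\xi-h)^{2p+q},\ \xi h^{2p+q}-Q_1^+Q_2^+)$ onto $A^*_{\rm orb}(\mathcal X^+)\otimes\widetilde F_0^+$.

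You then need a dimension comparison, and your worry about the twisted sector is misplaced. We have $(2p+q)(p+q+1)+(2p+q)p=(2p+q)(2p+q+1)$. So $y_\infty^+$ is already counted, there are no extra square-root branches, and nothing in $\Xi_0^+$ needs adjusting. In $B$ one has $P(h)=0$ for $P(h)=h^{(2p+q)(2p+q+1)}-2^{2p}Q_1^+(Q_1^+Q_2^+-h^{2p+q+1})^{2p+q}$. This polynomial has degree $(2p+q)(2p+q+1)$ and $P(0)\neq0$. Hence $h$ is a unit in $B$, $\xi=Q_1^+Q_2^+h^{-(2p+q)}$ lies in $\widetilde F_0^+[h]$, and $\dim B\le(2p+q)(2p+q+1)$. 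This forces the surjection to be an isomorphism. Only then does your point count identify the eigenvalues of $\xi\star_{\rm small}$ with the $(2p+q)(2p+q+1)$ distinct roots of $\Xi_0^+$.

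With that in place, your Hensel step and your shortcut for $\mathcal X^-$ via $\phi_c$ and Theorem \ref{main result} are fine. One minor point: roots with different $k_1$ share the same leading Puiseux term. They are separated because the values $(1+\zeta_{2p+q}^{k_1}x)^{2p+q}$ are distinct when $x$ is transcendental.
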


\begin{proof}
We give the proof for $\mathcal{X}^+$ and the proof for $\mathcal{X}^-$ is similar. Since $c_1(\mathcal{X}^+)=(2p+q+1)\xi^+$, it suffices to show that the assertion holds for the operator $\xi^+\star_\tau$.

We first consider the case of $\tau=0$. Recall that the small quantum cohomology ring of $\mathcal{X}^+$ is generated by divisors over $\widetilde{F}_0^+$ by Remark \ref{rmk_of_generator_of_quantum_cohomology}, so we get the surjective map
\begin{equation}\label{eq_polynomial_surj_to_small_quantum}
\begin{tikzcd}[row sep=0pt]
(\widetilde{F}_0^+[h,\xi],\ \cdot\ ) \arrow[rr, two heads] && (A^*_{\operatorname{orb}}(\mathcal{X}^+)_\C\otimes \widetilde{F}_0^+,\ \qsstar\ ) \\
h,\xi \arrow[rr, mapsto] && h^+,\xi^+.
\end{tikzcd}
\end{equation}
Similar to the proof of Theorem \ref{generator of quantum cohomology}, by applying the dequantization procedure to the Picard--Fuch equations $\square_{\ell^+}$ and $\square_{\ell^++\gamma^+}$, we get two relations in the small quantum cohomology ring:
\[(h^+)^{\qsstar(2p+q)}-2^{2p}Q_1^+(\xi^+-h^+)^{\qsstar(2p+q)},\quad \xi^+\qsstar (h^+)^{\qsstar(2p+q)}-Q_1^+Q_2^+\]
and thus the surjective map \eqref{eq_polynomial_surj_to_small_quantum} factors through the Batyrev's ring
\begin{equation}
\begin{tikzcd}\label{eq_Batyrev_surj_to_small_quantum}
\widetilde{F}^+_0[h,\xi]/(h^{2p+q}-2^{2p}Q_1^+(\xi-h)^{2p+q},\xi h^{2p+q}-Q_1^+Q_2^+)\arrow[rr, two heads] && A^*_{\operatorname{orb}}(\mathcal{X}^+)_\C\otimes \widetilde{F}^+_0.
\end{tikzcd}
\end{equation}
Since both sides of \eqref{eq_Batyrev_surj_to_small_quantum} have dimension $(2p+q)(2p+q+1)$ over $\widetilde{F}_0^+$, we conclude that it is an isomorphism. On the Batyrev's ring, 
\[h^{(2p+q)(2p+q+1)}=2^{2p}Q_1^+(h^{2p+q}(\xi-h))^{2p+q}=2^{2p}Q_1^+(Q_1^+Q_2^+-h^{2p+q+1})^{2p+q},\]
which shows that the eigenvalues of the operator $h\cdot$ are exactly
\[2^{2p/(2p+q)(2p+q+1)}(Q_1^+)^{1/(2p+q)}(Q_2^+)^{1/(2p+q+1)}(1+\zeta_{2p+q}^{k_1}2^{2p/(2p+q)}(Q_1^+)^{1/(2p+q)})^{-1/(2p+q+1)}\zeta_{2p+q+1}^{k_2}\]
for $0\leq k_1\leq 2p+q-1$, $0\leq k_2\leq 2p+q$, where $\zeta_k=e^{2\pi i/k}$. By applying $\xi h^{2p+q}$ on the eigenvectors of $h\cdot$, we get that the eigenvalues of $\xi\cdot$ are exactly the following $(2p+q)(2p+q+1)$ distinct roots of $\Xi_0^+$ in $\widetilde{F}_0^+$
\begin{equation}\label{eigenvalue_of_xi}
2^{-2p/(2p+q+1)}(Q_2^+)^{1/(2p+q+1)}(1+\zeta_{2p+q}^{k_1}2^{2p/(2p+q)}(Q_1^+)^{1/(2p+q)})^{(2p+q)/(2p+q+1)}\zeta_{2p+q+1}^{k_2}
\end{equation}
for $0\leq k_1\leq 2p+q-1$, $0\leq k_2\leq 2p+q$.

For a general $\tau$ in the formal neighborhood of $0$, let $\Xi^+(\tau,\lambda)\in\widetilde{F}_0[\![\tau]\!][\lambda]$ be the characteristic polynomial of $\xi\star_\tau$ on $A_{\operatorname{orb}}^*(\mathcal{X}^+)_\C\otimes\widetilde{F}_0^+$. For $\tau=0$, we have known that $\Xi_0^+(0,\lambda)=\Xi_0^+(\lambda)$ has distinct roots in $\widetilde{F}_0^+$. By Hensel's lemma, each root in \eqref{eigenvalue_of_xi} can be uniquely lifted to a root of $\Xi_0^+(\tau,\lambda)$ which belongs to $\widetilde{F}_0^+[\![\tau]\!]$.
\end{proof}
Hence the following proposition follows immediately.

\begin{prop}
$A_{\operatorname{orb}}^*(\mathcal{X}^\pm)_\C\otimes\widetilde{F}_0^\pm$ is a semisimple Frobenius manifold.
\end{prop}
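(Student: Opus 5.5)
Since Lemma \ref{eigenvalue_in_F_tilde} already provides distinct eigenvalues, the plan is to show that an endomorphism of the quantum cohomology ring with simple spectrum forces that ring to be a product of copies of the coefficient ring. I will work over $K^\pm\coloneqq\widetilde{F}_0^\pm[\![\tau]\!]$, or over its fraction field if needed, and treat both sides in the same way. Set $N=\dim A^*_{\operatorname{orb}}(\mathcal{X}^\pm)=(2p+q)(2p+q+1)$. Consider multiplication by $c_1(\mathcal{X}^\pm)$, namely the operator $M\coloneqq c_1\star_\tau$ acting on $A\coloneqq A^*_{\operatorname{orb}}(\mathcal{X}^\pm)_\C\otimes K^\pm$. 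By Lemma \ref{eigenvalue_in_F_tilde}, $M$ has $N$ distinct eigenvalues $\lambda_1,\ldots,\lambda_N\in K^\pm$. Moreover, at $\tau=0$ their pairwise differences are nonzero elements of the field $\widetilde{F}^\pm_0$. Each difference is therefore a unit in $\widetilde F_0^\pm[\![\tau]\!]$, and Lagrange interpolation can be carried out without inverting anything beyond units.

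The first step is to define the idempotents. Put
\[\epsilon_i\coloneqq\prod_{j\neq i}\frac{c_1-\lambda_j}{\lambda_i-\lambda_j},\]
with all products taken as $\star_\tau$-products in $A$. Because $\star_\tau$ is commutative and associative, $A$ is a commutative $K^\pm$-algebra containing the element $c_1$, and $M$ is simply multiplication by this element. The minimal polynomial of $M$ is $\prod_j(x-\lambda_j)$, since $M$ is diagonalizable with distinct eigenvalues. The standard interpolation identities $\sum_i\epsilon_i=1$, $\epsilon_i\star\epsilon_j=\delta_{ij}\epsilon_i$ and $c_1\star\epsilon_i=\lambda_i\epsilon_i$ then follow from this polynomial identity, and each $\epsilon_i\neq0$ because $\lambda_i$ is actually an eigenvalue.

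The second step is to obtain the product decomposition. Each ideal $\epsilon_iA$ is the $\lambda_i$-eigenspace of $M$, which has rank one because the eigenvalues are simple. Hence $\epsilon_iA=K^\pm\epsilon_i$, and $A=\bigoplus_i K^\pm\epsilon_i$ is isomorphic to the product algebra $(K^\pm)^N$. This is exactly the semisimplicity condition of Section \ref{subsubsec_semisimplicity}, and it holds at every $\tau$ in the formal neighbourhood. In particular the semisimple points are Zariski dense, which is what Theorem \ref{Givental_formula} requires. For the later normalization $\tilde\epsilon_i$, I will also check that $(\epsilon_i,\epsilon_i)_{\operatorname{orb}}\neq0$. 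This follows from the Frobenius property $(a\star b,c)=(a,b\star c)$: the $\epsilon_i$ are then mutually orthogonal, and the orbifold Poincar\'e pairing is nondegenerate, so none of them can be isotropic.

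The main obstacle is the base ring. $K^\pm$ is not a field, and I need the interpolation denominators $\lambda_i-\lambda_j$ to be invertible there and not only after localization. I will handle this with the observation above that their constant terms are nonzero elements of $\widetilde F_0^\pm$, so they are invertible power series. If any doubt remains, it is enough to pass to the fraction field, where the statement ``semisimple Frobenius manifold'' only asks for genericity.
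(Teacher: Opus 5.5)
Your proposal is correct and follows the paper's route. The paper deduces the proposition immediately from Lemma \ref{eigenvalue_in_F_tilde}, namely that $c_1\star_\tau$ has distinct eigenvalues. You spell out the standard step the paper leaves implicit: the Lagrange-interpolation idempotents, whose denominators are units because the eigenvalues are Hensel lifts of distinct roots at $\tau=0$.
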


Now, as in Section 5.1.1, let $\{\epsilon_i^\pm\}$ be the canonical frame of $A^*_{\operatorname{orb}}(\mathcal{X}^\pm)_\C\otimes\widetilde{F}_0^\pm$ and  $\{\tilde{\epsilon}_i^\pm=\epsilon_i^\pm/\sqrt{(\epsilon_i^\pm,\epsilon_i^\pm)}_{\operatorname{orb}}\}$ be its normalized frame. Let $\widetilde{F}^\pm$ be the splitting field of the polynomial
\[{\Xi}^\pm(\lambda)\coloneqq \Xi_0^\pm(\lambda)\cdot\prod_i\prod_{\sigma\in\operatorname{Gal}(\widetilde{F}_0^\pm/F_0^\pm)}(\lambda^2-\sigma((\epsilon_i^\pm,\epsilon_i^\pm)_{\operatorname{orb}}))\in F_0^\pm[\lambda]\]
over $F_0^\pm$. 
% Since $\phi_c({\Xi}^+)={\Xi}^-$, we can extend the regularization map $\phi_c:F_0^+\to F_0^-$ to $\phi_c:\widetilde{F}^+\to \widetilde{F}^-$. 
% By adjoining the algebraic functions $\sqrt{(\epsilon_i^\pm,\epsilon_i^\pm)}_{\operatorname{orb}}, \forall i$, we get the extension field $\widetilde{F}^\pm$ of $\widetilde{F}_0^\pm$. 
Since $\phi_c(\Xi_0^+)=\Xi_0^-$, we can extend the regularization map $\phi_c:F_0^+\to F_0^-$ to $\phi_c:\widetilde{F}_0^+\to \widetilde{F}_0^-$. By the definition of a quantum correspondence, $\Phi_c$ is an isomorphism between the Frobenius manifolds $A_{\operatorname{orb}}^*(\mathcal{X}^+)_\C\otimes\widetilde{F}_0^+$ and $A_{\operatorname{orb}}^*(\mathcal{X}^-)_\C\otimes\widetilde{F}_0^-$, and thus it sends the primitive central idempotents of the $\mathcal{X}^+$-side to the primitive central idempotents of the $\mathcal{X}^-$-side up to permutation. We may reorder the indices so that $\Phi_c(\epsilon_i^+)=\epsilon_i^-$. Since $\Phi_c$ also preserves the orbifold Poincar\'e pairing, we have 
\begin{equation}\label{compatible_pairing_on_epsilon}
\phi_c((\epsilon_i^+,\epsilon_j^+)_{\operatorname{orb}})=(\epsilon_i^-,\epsilon_j^-)_{\operatorname{orb}}.
\end{equation} 

Moreover, since the relation \eqref{compatible_pairing_on_epsilon} implies $\phi_c(\Xi^+)=\Xi^-$,
 we can further extend the regularization map $\phi_c:\widetilde{F}_0^+\to \widetilde{F}_0^-$ to $\phi_c:\widetilde{F}^+\to \widetilde{F}^-$. Once we have fixed the extended map $\phi_c:\widetilde{F}^+\to\widetilde{F}^-$ and the branch of $\sqrt{(\epsilon_i^+,\epsilon_i^+)}_{\operatorname{orb}}$, there exists an unique choice of the branch $\sqrt{(\epsilon_i^-,\epsilon_i^-)}_{\operatorname{orb}}$ such that $\Phi_c(\tilde{\epsilon}_i^+)=\tilde{\epsilon}_i^-$.

Via the identification $\Phi_c(\epsilon_i^+)=\epsilon_i^-$, we choose the canonical coordinate $u_i$ such that $\partial/\partial u^i=\epsilon_i^\pm$ on $\mathcal{X}^\pm$ and the Euler vector field $E^\pm$ is of the form in \eqref{Euler_vf_in_canonical_coordinate}. 

We choose a homogeneous basis $\{T^+_\mu\}$ of $A_{\operatorname{orb}}^*(\mathcal{X}^+)$ with the coordinate $\tau^\mu$ and fix the homogeneous basis of $A_{\operatorname{orb}}^*(\mathcal{X}^-)_\C$ by $T_\mu^-=\Phi_c(T_\mu^+)$ with the same coordinate symbol $\tau^\mu$. Since $\Phi_c$ preserves the orbifold degree and $\Phi_c(c_1(\mathcal{X}^+))=c_1(\mathcal{X^-})$, the Euler vector field defined in \eqref{Euler_vector_field} is compatible with $\Phi_c$, that is, $\Phi_c(E^+)=E^-$. 

We may regard $u^i$ as a formal power series in $\tau$, and the expression \eqref{Euler_vf_in_canonical_coordinate} implies

\[u^i(\tau)=(E^\pm,\epsilon^\pm_i)_{\operatorname{orb}}=\sum_\mu(1-\deg_{\operatorname{orb}}T_\mu^\pm)\tau^\mu(T_\mu^\pm,\epsilon_i^\pm)_{\operatorname{orb}}+(c_1(\mathcal X),\epsilon_i^\pm)_{\operatorname{orb}}\in\widetilde{F}^\pm[\![\tau]\!].\]
Since we extend $\phi_c$ on the variables $\tau^\mu$ by identity in Definition \ref{quantum correspond}, the regularization map $\phi_c$ is an identity map on the coordinate $u^i$ by the following calculation
\[u^i\frac{\partial}{\partial u^i}=E^-=\Phi(E^+)=\phi(u^i)\frac{\partial}{\partial u^i}.\]

Let $\Psi^\pm=(\Psi_{\mu i}^\pm)$ be the transition matrix of the frame $\{\tilde{\epsilon}_i^\pm\}$ to the frame $\{T_\mu^\pm\}$. Lemma \ref{eigenvalue_in_F_tilde} implies that $\Psi_{\mu i}^\pm\in\widetilde{F}^\pm[\![\tau]\!]$. Again, since $\Phi_c$ preserves the orbifold Poincar\'e pairing, we have $\phi_c(\Psi^+)=\Psi^-$. Under this identification, the coordinate $\mathbf{t}^i$ of Givental's space of points is compatible when we do the comparsion of Givental--Teleman formula for $\mathcal{X}^+$ and $\mathcal{X}^-$.
% The differential Galois group of $\widetilde{F}^+$ over $\C((Q_1^+)^{1/(2p+q)}, (Q_2^+)^{1/2})$ is isomorphic to
% \[\operatorname{Gal}_{\operatorname{diff}}\left(\quotient{F^+}{F^+\cap\C((Q_1^+)^{1/(2p+q)}, (Q_2^+)^{1/2})}\right),\]
% which is a normal subgroup of $G_{\mathcal{L}^+}\simeq O(V_{\mathcal{L}^+},B_{\mathcal{L}^+})$ with finite index. The possible proper finite index subgroup of $G_{\mathcal{L}^+}$ is $\operatorname{SO}(V_{\mathcal{L}^+},B_{\mathcal{L}^+})$ when $G_{\mathcal{L}^+}=\operatorname{O}(V_{\mathcal{L}^+},B_{\mathcal{L}^+})$. In this case, 
% \[F^+\cap\C((Q_1^+)^{1/(2p+q)}, (Q_2^+)^{1/2})=(F^+)^{G_{\mathcal{L}^+}^\circ}=\C(Q_1^+,W_{p-1}^+),\]
% which lead a contradiction since $W_{p-1}^+\notin\C((Q_1^+)^{1/(2p+q)}, (Q_2^+)^{1/2})$. Similarly result for the Galois group of $\widetilde{F}^-$ over $\C((Q_1^-)^{1/(2p+q)},(Q_1^-Q_2^-)^{1/2})$. Now, we can extend the regularization map $\phi:F^+((Q_2^+)^{1/2})\to F^-((Q_1^-Q_2^-)^{1/2})$ to $\widetilde{F}^+\to \widetilde{F}^-$, which still denoted by $\phi$. 
\begin{lm}\label{compatible_R_matrix}
The $R$-matrix $R_{\mathcal{X}^\pm}(\tau,z)$ has entries in $\widetilde{F}^\pm[\![u^i-u^i(0)]\!][\![z]\!]$ and 
\[\Phi_c\circ R_{\mathcal{X}^+}(\tau,z)\circ\Phi_c^{-1}=R_{\mathcal{X}^-}(\Phi_c(\tau),z)\]
as $\operatorname{End}$-valued functions in $\tau=\sum\tau^\mu T_\mu^+$.
\end{lm}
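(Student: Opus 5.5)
The plan is to use the standard characterization of the $R$-matrix in canonical coordinates (Givental, cf. \cite{ILLW12}). It is the unique solution of a system of recursive equations whose coefficients are built from $\Psi$, $d\Psi$ and $du^i$. I will show that this recursion takes place entirely inside $\widetilde{F}^\pm[\![u^i-u^i(0)]\!]$. I will then show that $\phi_c$ carries the recursion on the $\mathcal{X}^+$-side to the one on the $\mathcal{X}^-$-side. Uniqueness will then force the two $R$-matrices to correspond.

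\textbf{Step 1: rewrite the QDE in the normalized frame.} Writing the fundamental solution as $\Psi^{-1}R(\tau,z)e^{\mathbf u/z}$, the equation $\nabla^z S=0$ becomes
\[
z\,dR_{n}+ (\Psi d\Psi^{-1})R_n = [dU, R_{n+1}],\qquad R=\operatorname{id}+\sum_{n\ge1}R_nz^n,
\]
where $dU=\operatorname{diag}(du^1,\ldots)$. Taking off-diagonal parts determines $(R_{n+1})_{ij}$ for $i\neq j$ algebraically from $R_n$, by dividing by $u^i-u^j$. This division is legitimate because the eigenvalues are distinct (Lemma \ref{eigenvalue_in_F_tilde}). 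Taking diagonal parts determines $(R_{n+1})_{ii}$ up to a constant. That constant is fixed by the homogeneity condition $(z\partial_z+\mathcal L_E)R=0$, which via \eqref{Euler_vf_in_canonical_coordinate} reads $\sum_j u^j\partial_{u^j}(R_{n})_{ii}=-n(R_n)_{ii}$. Since $n\neq0$, this gives a unique solution in the power series ring.

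\textbf{Step 2: control the coefficient field.} By induction on $n$, I will show that every entry of $R_n$ lies in $\widetilde{F}^\pm[\![u-u(0)]\!]$. The inputs are the following:
\begin{itemize}
\item $\Psi^\pm_{\mu i}\in\widetilde{F}^\pm[\![\tau]\!]$, as already noted;
\item the change of variables $\tau\leftrightarrow u$ is invertible over $\widetilde{F}^\pm$, since its Jacobian is $\Psi$ up to the normalizations, and $\Psi$ is invertible;
\item the $u^i-u^j$ are units, because $u^i(0)\neq u^j(0)$ are distinct elements of $\widetilde{F}^\pm$.
\end{itemize}
The remaining issue is the integration in the diagonal step. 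I expect this to be the main obstacle, because integrating a $1$-form with coefficients in a differential field need not stay inside that field. I would avoid integration altogether by using the Euler homogeneity: on the graded piece of degree $m$ in $u-u(0)$, the operator $\sum_j u^j\partial_{u^j}+n$ acts invertibly, so $(R_{n+1})_{ii}$ is recovered purely algebraically. The derivations $\partial/\partial u^i$ act only through the formal variables, because the canonical coordinates were chosen so that $\phi_c(u^i)=u^i$. The Novikov-variable derivations never enter this recursion.

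\textbf{Step 3: compatibility with $\phi_c$.} I will apply $\phi_c$, extended to $\widetilde F^+\to\widetilde F^-$ and acting trivially on $\tau^\mu$ and on $u^i$, to the $\mathcal{X}^+$ recursion. The following facts make the two recursions match:
\begin{itemize}
\item $\phi_c(\Psi^+)=\Psi^-$;
\item $\phi_c(u^i)=u^i$, so $\phi_c$ commutes with $\partial_{u^i}$, hence with $d$ taken in $u$;
\item the Euler fields correspond, $\Phi_c(E^+)=E^-$.
\end{itemize}
Together these show that $\phi_c(R_{\mathcal X^+})$ satisfies exactly the $\mathcal X^-$ recursion with the same initial condition $R_0=\operatorname{id}$. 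By uniqueness from Step 1, $\phi_c(R_{\mathcal X^+})=R_{\mathcal X^-}$ in the normalized frames. Conjugating by $\Psi^\pm$ and using $\Phi_c(\tilde\epsilon_i^+)=\tilde\epsilon_i^-$ and $T^-_\mu=\Phi_c(T^+_\mu)$ translates this into the stated identity $\Phi_c\circ R_{\mathcal X^+}(\tau,z)\circ\Phi_c^{-1}=R_{\mathcal X^-}(\Phi_c(\tau),z)$.
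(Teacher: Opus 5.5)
Your proposal follows the paper's proof. The paper uses the Lee--Pandharipande recursion for $R_n$ in canonical coordinates and checks that it runs inside $\widetilde{F}^\pm[\![u-u(0)]\!]$. It transports the $\mathcal{X}^+$ recursion by $\phi_c$ using $\phi_c(\Psi^+)=\Psi^-$, $\phi_c(u^i)=u^i$ and $\Phi_c(E^+)=E^-$, and concludes by uniqueness of the homogeneous $R$-matrix.

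One justification in your Step 2 is wrong as stated, although its conclusion holds.

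\textbf{The invertibility claim fails.} Write $u^j=u^j(0)+w^j$. The operator $\sum_j u^j\partial_{u^j}+n$ does not preserve the grading by degree in $w$, since it contains the degree-lowering part $\sum_j u^j(0)\partial_{w^j}$. It is also not injective on $\widetilde{F}^\pm[\![u-u(0)]\!]$: since $u^1(0)\neq u^2(0)$, the series $(u^1-u^2)^{-n}$ lies in that ring and is killed by it. So homogeneity alone cannot recover $(R_n)_{ii}$.

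\textbf{What works instead.} Use the combination you already state in Step 1. Contract the diagonal equation $d(R_n)_{ii}=\omega_i$ with $E$ and use $E((R_n)_{ii})=-n(R_n)_{ii}$. This gives $(R_n)_{ii}=-\tfrac{1}{n}\iota_E\omega_i$, which visibly has coefficients in $\widetilde{F}^\pm$.

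\textbf{The obstacle you feared does not arise.} The differential $d$ acts only in the formal variables $\tau$ (equivalently $u$), and every element of $\widetilde{F}^\pm$ is a constant for it. Hence term-by-term integration never leaves $\widetilde{F}^\pm[\![u-u(0)]\!]$, and the integration constant lies in $\widetilde{F}^\pm$, where homogeneity fixes it.

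\textbf{A typo in Step 1.} The factor $z$ in front of $dR_n$ should not be there. Comparing $z^{n+1}$-coefficients in $z(dR+\Psi d\Psi^{-1}R)=[dU,R]$ gives $dR_n+\Psi d\Psi^{-1}R_n=[dU,R_{n+1}]$.
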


\begin{proof}
Recall that the $R$-matrix is determined by the initial condition $R_0=\operatorname{id}$ and the following recursion relations:
\[(R_n)_{ij}(du^i-du^j)=[(\Psi d\Psi^{-1}+d)R_{n-1}]_{ij}\]
for each $i\neq j$, and
\[(dR_n)_{ii}=\sum_{k\neq i}(\Psi d\Psi^{-1})_{ki}(R_n)_{ik}\]
for all $i$ (See \cite[Theorem 1]{lee2004frobenius} for more details). 

Now, we work on $\mathcal{X}^\pm$. According to Lemma \ref{eigenvalue_in_F_tilde}, the coordinate transform matrix is valued in $\widetilde{F}^\pm[\![u^i-u^i(0)]\!]$. If we expand $R_n$ as a power series in $u^i-u^i(0)$, then each coefficient still lies in $\widetilde{F}^\pm$ by the recursive formula. Moreover, both $\{\phi_c(R_n^+)\}_{n\geq 0}$ and $\{R_n^-\}_{n\geq 0}$ satisfy the recession formula in $\mathcal{X}^-$ since $\phi_c(\Psi^+)=\Psi^-$. Since $\Phi_c$ identifies the Euler vector fields of both sides, the homogeneous condition of $R$-matrix is compatible. By the uniqueness of the homogeneous $R$-matrix in \cite[Theorem 1 (iv)]{lee2004frobenius}, we conclude that $\phi_c(R_n^+)=R_n^-$ for all $n\geq 0$. 
\end{proof}

\begin{thm}\label{higher-genus-invariance}
The quantum correspondence $\Phi_c=\phi_c\otimes[\mathscr{F}_c]$ preserves the ancestor potentials in the sense that
\[\phi_c(\mathcal{A}_{\mathcal{X}^+}(t,\tau)^{48})=\mathcal{A}_{\mathcal{X}^-}(\Phi_c(t),\Phi_c({\tau}))^{48},\]
where $\Phi_c(t)$ acts trivially on $t_k^\mu$ and $\overline{\psi}^k$. Moreover, every Gromov--Witten invariant 
\[\gen{T_1,\ldots,T_n}_{g,n}^{\mathcal{X}^\pm}(\tau),\quad \forall g\geq 0\]
is valued in $F_0^\pm[\![\tau]\!]$, and $\Phi_c$ identifies Gromov--Witten potentials for all genus $g\geq 0$ in the sense that
\[\phi_c(\overline{F}_g^{\mathcal{X}^+}(0,\tau))=\overline{F}_g^{\mathcal{X}^-}(0,\Phi_c(\tau)).\] 
\end{thm}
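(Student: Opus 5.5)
The plan is to compare the two sides term by term in the Givental--Teleman formula (Theorem \ref{Givental_formula}), applied to $\mathcal{X}^+$ and $\mathcal{X}^-$ over the fields $\widetilde{F}^\pm$. Semisimplicity holds by Lemma \ref{eigenvalue_in_F_tilde} and the subsequent proposition. For $\mathcal{X}^+$ we write
\[\mathscr A_{\mathcal X^+}(t,\tau)=e^{\bar c^+(\tau)}\hat\Psi^{+,-1}(\tau)\widehat{\mathbf R}_{\mathcal X^+}(\tau,z)e^{\widehat{\mathbf u/z}}\mathscr A_{N}(\mathbf t),\]
and similarly for $\mathcal{X}^-$ with $\Phi_c(\tau)$ in place of $\tau$. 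Then we apply the extended regularization map $\phi_c:\widetilde{F}^+\to\widetilde{F}^-$ factor by factor.

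The canonical coordinates are fixed by $\phi_c$: we already showed $\phi_c(u^i)=u^i$, so $e^{\widehat{\mathbf u/z}}$ is unchanged. The product $\mathscr A_N(\mathbf t)$ of point potentials has rational coefficients, so $\phi_c$ acts trivially on it. The transition matrices satisfy $\phi_c(\Psi^+)=\Psi^-$, with the branches of $\sqrt{(\epsilon_i,\epsilon_i)}$ chosen so that $\Phi_c(\tilde\epsilon_i^+)=\tilde\epsilon_i^-$. Since $\Phi_c$ acts trivially on $t^\mu_k$ and $T_\mu^-=\Phi_c(T_\mu^+)$, the quantized coordinate change $\hat\Psi^{-1}$ matches on both sides. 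By Lemma \ref{compatible_R_matrix}, $\phi_c(R_{\mathcal X^+})=R_{\mathcal X^-}$ in these frames, so the quantized operators $\widehat{\mathbf R}$ correspond, since quantization is defined by universal polynomial formulas in the matrix entries.

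The remaining factor is $e^{\bar c(\tau)}$, which is why the statement uses $48$th powers. We have $e^{48\bar c^\pm}=\det(\epsilon_i^\pm,\epsilon_j^\pm)_{\operatorname{orb}}$. Equation \eqref{compatible_pairing_on_epsilon} gives $\phi_c(\det(\epsilon_i^+,\epsilon_j^+))=\det(\epsilon_i^-,\epsilon_j^-)$. The logarithm itself is not in the field, which is why we raise both sides to the $48$th power. Assembling the factors gives $\phi_c(\mathscr A_{\mathcal X^+}^{48})=\mathscr A_{\mathcal X^-}(\Phi_c(t),\Phi_c(\tau))^{48}$.

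Expanding this identity in $\hbar$ and $t$ shows that each $\bar F_g^{\mathcal X^+}$ has coefficients in $\widetilde{F}^+[\![\tau]\!]$ and that $\phi_c(\bar F_g^{\mathcal X^+})=\bar F_g^{\mathcal X^-}$. For $g\ge 2$ this follows from the exponential structure. For $g=1$ we use $48\bar F_1$; the constant ambiguity is fixed by the normalization at $\tau=0$.

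The main obstacle is to descend the coefficient field from $\widetilde{F}^\pm$ to $F_0^\pm$. The plan here is Galois-theoretic. Any $\sigma\in\operatorname{Gal}(\widetilde{F}^+/F_0^+)$ preserves the Frobenius structure, which is defined over $F_0^+$ by Corollary \ref{proper-field} and Theorem \ref{preserve_small_implies_preserve_big}. So $\sigma$ permutes the idempotents $\epsilon_i$ and possibly flips the signs of $\tilde\epsilon_i$. The Teleman formula is invariant under both operations: the point potentials are permuted, the $R$-matrix is conjugated compatibly by uniqueness, and the sign changes cancel in the quantized products. Hence $\mathscr A^{48}$, and therefore each $\bar F_g$, is $\sigma$-invariant and lies in $F_0^+[\![\tau]\!]$. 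The same argument works on the $\mathcal X^-$ side. Setting $t=0$ then gives $\phi_c(\bar F_g^{\mathcal X^+}(0,\tau))=\bar F_g^{\mathcal X^-}(0,\Phi_c(\tau))$. The invariants $\langle T_1,\dots,T_n\rangle_{g,n}(\tau)$ are derivatives of $\bar F_g(0,\tau)$ in $\tau$, so they are valued in $F_0^\pm[\![\tau]\!]$.
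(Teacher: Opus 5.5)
Your proposal is correct and follows the paper's proof. It makes the same term-by-term comparison in the Givental--Teleman formula (Theorem \ref{Givental_formula}), using $\phi_c(u^i)=u^i$, $\phi_c(\Psi^+)=\Psi^-$, Lemma \ref{compatible_R_matrix} and \eqref{compatible_pairing_on_epsilon} for the $48$th power of the determinant factor, and then expands in $\hbar$. The only difference is in how the descent from $\widetilde F^\pm$ to $F_0^\pm$ is phrased: the paper argues that $\overline F_g^{\mathcal X^-}(0,\Phi_c(\tau))$ does not depend on which lifting $\widetilde F^+\to\widetilde F^-$ of $\phi_c$ is chosen, whereas you show directly that each $\sigma\in\operatorname{Gal}(\widetilde F^+/F_0^+)$ only permutes the idempotents and flips the signs of the $\tilde\epsilon_i$, and that the Teleman formula is invariant under these changes. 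This is the same Galois-invariance argument, applied one side at a time.
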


\begin{proof}
Since we have identified the canonical coordinates $\tilde{\epsilon}^\pm$, the Darboux coordinate systems are compatible. Moreover, Givental's quantizations are also compatible. By Theorem \ref{Givental_formula}, we have that
\[\mathscr A_{\mathcal X}(t,\tau)^{48}=\det(\varepsilon_i,\varepsilon_j)_{\operatorname{orb}}\left(\hat\Psi^{-1}(\tau)\widehat{\mathbf R}_{\mathcal X}(\tau,z)e^{\widehat{u/z}}\prod_{i=1}^{\dim A^*_{\operatorname{orb}}(\mathcal X)}\mathscr A_{pt}(\mathbf t^i)\right)^{48}\]
for $\mathcal{X}=\mathcal{X}^\pm$. By the relation \eqref{compatible_pairing_on_epsilon} and Lemma \ref{compatible_R_matrix}, we get that
\begin{equation}\label{compatible_A48}
\phi_c(\mathcal{A}_{\mathcal{X}^+}(t,\tau)^{48})=\mathcal{A}_{\mathcal{X}^-}(\Phi_c(t),\Phi_c({\tau}))^{48}.
\end{equation}
Since $\mathcal{A}_{\mathcal{X}^\pm}(t,\tau)$ can be regarded as a formal power series in $t$, we may evaluate it at $t=0$. Since $\overline{F}_0^{\mathcal{X}^+}$ has coefficients in $\widetilde{F}^+$, through the regularization map $\phi_c$, the equation \eqref{compatible_A48} turns out to be 
\begin{align*}
&\exp\left(\frac{48}{\hbar}\left(\phi_c(\overline{F}_0^{\mathcal{X}^+}(0,\tau))-\overline{F}_0^{\mathcal{X}^-}(0,\Phi_c(\tau))\right)\right) \\
=&\ \phi_c\left(\exp\left(-48\sum_{g\geq 1}\hbar^{g-1}\overline{F}_g^{\mathcal{X}^+}(0,\tau)\right)\right)\exp\left(48\sum_{g\geq 1}\hbar^{g-1}\overline{F}_g^{\mathcal{X}^-}(0,\Phi_c(\tau))\right).
\end{align*}
It forces that $\phi_c(\overline{F}_0^{\mathcal{X}^+}(0,\tau))=\overline{F}_0^{\mathcal{X}^-}(0,\Phi_c(\tau))$ and 
\begin{equation}\label{compatible_g>0_potential}
\phi_c\left(\exp\left(48\sum_{g\geq 1}\hbar^{g-1}\overline{F}_g^{\mathcal{X}^+}(0,\tau)\right)\right)=\exp\left(48\sum_{g\geq 1}\hbar^{g-1}\overline{F}_g^{\mathcal{X}^-}(0,\Phi_c(\tau))\right).
\end{equation}
We expand the equation \eqref{compatible_g>0_potential} as formal power series in $\hbar$ and compare the coefficients to conclude that 
\[\overline{F}_g^{\mathcal{X}^\pm}(0,\tau)\in\widetilde{F}^\pm[\![\tau]\!],\]
% and the comparison formula of genus $g$ Gromov--Witten potential
\begin{equation}\label{compatible_GW_g>0_potential}
\phi_c(\overline{F}_g^{\mathcal{X}^+}(0,\tau))=\overline{F}_g^{\mathcal{X}^-}(0,\Phi_c(\tau))
\end{equation}
for all $g\geq 1$. Moreover, since the formal series in the right hand side of \eqref{compatible_GW_g>0_potential} is independent of the lifting $\phi_c\colon\widetilde{F}^+\to\widetilde{F}^-$ of $\phi_c\colon F_0^+\to F_0^-$, the Galois theorem forces $\overline{F}_g^{\mathcal{X}^\pm}(0,\tau)\in F_0^\pm[\![\tau]\!]$.
\end{proof}

% \bibliographystyle{alpha}
% \bibliography{bib}

\bibliographystyle{alpha}

\end{document}